\theoremstyle{plain}
\newtheorem*{theoremA}{Theorem A}
\newtheorem*{theoremB}{Theorem B}
\newtheorem*{theoremB'}{Theorem B'}
\newtheorem*{theoremB''}{Theorem B''}
\newtheorem*{theoremC}{Theorem C}
\newtheorem*{theoremD}{Theorem D}
\newtheorem{theorem}{Theorem}[section]
\newtheorem{lemma}[theorem]{Lemma}
\newtheorem{proposition}[theorem]{Proposition}
\newtheorem{corollary}[theorem]{Corollary}
\newtheorem{definition}[theorem]{Definition}
\newtheorem{rmk}[theorem]{Remark}
\newtheorem{question}[theorem]{Question}
\newtheorem{open}[theorem]{Open Problem}
\theoremstyle{remark}
\numberwithin{equation}{section}
\newcommand{\op}[1]{{\rm{#1}}}
\title[Global Gauges and Global Extensions]{Global gauges and global extensions in optimal spaces}
\author[M. Petrache, T. Rivi\`ere]{Mircea Petrache, Tristan Rivi\`ere}
\begin{document}
\begin{abstract}
We consider the problem of extending functions $\phi:\mathbb S^n\to \mathbb S^n$ to functions $u:B^{n+1}\to \mathbb S^n$ for $n=2,3$. We assume $\phi$ to belong to the critical space $W^{1,n}$ and we construct a $W^{1,(n+1,\infty)}$-controlled extension $u$. The Lorentz-Sobolev space $W^{1,(n+1,\infty)}$ is optimal for such controlled extension. Then we use such results to construct global controlled gauges for $L^4$-connections over trivial $SU(2)$-bundles in $4$ dimensions. This result is a global version of the local Sobolev control of connections obtained by K. Uhlenbeck \cite{Uhl2}.
\end{abstract}

\maketitle
\tableofcontents
\section{Introduction}
\noindent
The use of Hodge decomposition is by now one of the classical tools in the study of elliptic systems and is related to important breakthroughs such as the famous ``$\op{div}$-$\op{curl}$''-type theorems \cite{CLMS}. More recently such decomposition has allowed to solve \cite{rivconfinv} S. Hildebrandt's conjecture \cite{hildebrandt}, and at the same time establishing an important link to an apparently unrelated fields of geometry, such as the study of conformally invariant geometric problems in $2$-dimensions \cite{helein} and the study of Yang-Mills bundles and gauge theory \cite{Uhl2}, with the introduction of controlled Coulomb gauges.\\

The study of $2$-dimensional problems using controlled gauges has already given its fruits, and in connection to the discovery of H. Wente's inequality (which gave the basis for introducing the Lorentz spaces  $L^{(2,\infty)}$ in geometric problems) allowed the successful use of controlled moving frames in the study of harmonic maps and prescribed mean curvature surfaces \cite{helein}, \cite{mullsver}. We come back to this in Section \ref{ssec:helein}. Techniques and function spaces related to the moving frame method also apply to the study of the Willmore functional \cite{rivcourse} for immersed surfaces.\\

The use of controlled gauges especially in relation to Lorentz spaces in dimensions higher than $2$ is far less developed. We attempted here a first attack of this completely new area of research, and we obtained some extensions of previous results for the case of Yang-Mills fields on $4$ dimensional manifolds. 

\subsection{Yang-Mills theory and controlled gauges}
Yang-Mills theory for $4$-manifolds is often associated to the famous result of S. Donaldson \cite{donaldson} who, using the moduli spaces of anti-selfdual connections, described new invariants of smooth manifolds.\\

The study of moduli spaces used by Donaldson \cite{donaldson} starts from the result of K. Uhlenbeck \cite{Uhl2}, who proved that one can find a gauge in which the $W^{1,2}$-norm of the local coordinate expression of the connection is controlled by the $L^2$-norm of the curvature. Moreover the connection $1$-form $A$ can be also made to satisfy the Coulomb condition $d^*A=0$.\\

It is easy to construct a Coulomb gauge in which we have just an $L^2$-control in terms of the curvature (cfr. \cite{petrachethesis} or \cite{petriv}). This is done by first obtaining any gauge in which 
\[
 \|A\|_{L^2}\leq C \|F\|_{L^2}
\]
and then finding the smallest norm coefficients with respect to that gauge on our manifold $M$:
\[
\min\left\{\int_M |g^{-1}dg + g^{-1}Ag|^2 dx\::\:g\in W^{1,2}(M, SU(2))\right\}.
\]
A unique minimizer will exist by convexity and it will satisfy the Coulomb equation $d^*A=0$.\\

The control of $A$ in the higher norm $W^{1,2}$ is more difficult. A smallness hypothesis on $\|F\|_{L^2(M)}$ must be required in order for the control to be achievable:
\begin{theorem}[controlled Coulomb gauge under assumption of small energy, \cite{Uhl2}]\label{uhlcoulsmall}
There exists a constant $\epsilon_0>0$ such that if the curvature satisfies $\int_{M}|F|^2\leq\epsilon_0$ then there exists a Coulomb gauge $\phi\in W^{2,2}(M,SU(2))$ such that in that gauge the connection satisfies $\|A_\phi\|_{W^{1,2}(M)}\leq C\|F\|_{L^2(M)}$ with $C>0$ depending only on the dimension.
\end{theorem}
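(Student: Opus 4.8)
The plan is to combine an absorption estimate (the analytic engine) with a connectedness argument (which actually produces the gauge), working on $M=B^4$ where the Hodge theory is cleanest; the general closed case follows after treating the $L^2$-orthogonal projection onto harmonic $1$-forms separately. First the engine: if $B$ is a connection $1$-form on $B^4$ in Coulomb gauge, i.e. $d^*B=0$ and $\iota_\nu B=0$ on $\partial B^4$, then Gaffney's inequality together with the absence of nontrivial fields with $\nabla B=0$ satisfying this boundary condition gives the elliptic estimate $\norm{B}_{W^{1,2}}\le C_0\norm{dB}_{L^2}$. Writing $F_B=dB+B\wedge B$ and using the borderline Sobolev embedding $W^{1,2}(B^4)\hookrightarrow L^4(B^4)$ one has $\norm{B\wedge B}_{L^2}\le C_S^2\norm{B}_{W^{1,2}}^2$, whence
\[
\norm{B}_{W^{1,2}}\ \le\ C_0\norm{F_B}_{L^2}+C_0C_S^2\norm{B}_{W^{1,2}}^2 .
\]
Hence there is $\delta_0>0$ depending only on $C_0,C_S$, hence only on the dimension, such that whenever a Coulomb representative satisfies $\norm{B}_{W^{1,2}}\le\delta_0$ the quadratic term is absorbed and one obtains $\norm{B}_{W^{1,2}}\le 2C_0\norm{F_B}_{L^2}$. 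Since $\norm{F_B}_{L^2}=\norm{F}_{L^2}$ is gauge invariant, this is exactly the asserted estimate, provided we can exhibit a Coulomb gauge in which $\norm{B}_{W^{1,2}}\le\delta_0$.

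Producing such a gauge is the heart of the matter. One first smooths $A$ so that all quantities below are finite, aiming at uniform bounds. A Coulomb representative is obtained by minimizing $g\mapsto\int_M|g^{-1}dg+g^{-1}Ag|^2$ over $g\in W^{1,2}(M,SU(2))$: a minimizer exists by the direct method (the $SU(2)$-valued maps are bounded in $L^\infty$, the gradients bounded in $L^2$ along a minimizing sequence, and the functional is lower semicontinuous once one notes that the $\int|g^{-1}Ag|^2$ contribution is gauge invariant, hence constant), and its Euler--Lagrange equation is precisely $d^*A_g=0$ with $\iota_\nu A_g=0$ on $\partial B^4$. The remaining, and genuinely delicate, point is that this gauge can be taken with $\norm{A_g}_{W^{1,2}}\le\delta_0$ when $\norm{F}_{L^2}\le\epsilon_0$. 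For this one runs a connectedness argument: the set of parameters for which a Coulomb gauge with $W^{1,2}$-norm $\le\delta_0$ exists should be shown open (via the implicit function theorem, the linearization in $g$ being an invertible Laplace-type operator on Lie-algebra-valued functions) and closed (passing to weak $W^{1,2}$ limits of both the connections and the gauges, which is legitimate because $SU(2)$-valued $W^{1,2}$ maps of bounded energy converge strongly in every $L^q$ with $q<\infty$ and a.e., so the quadratic expressions $g^{-1}dg$ and $g^{-1}Ag$ pass to the limit distributionally), with the a priori bound above used to turn the closed condition into a strict one.

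The main obstacle is exactly this closedness step, and it is where the hypothesis $\norm{F}_{L^2}\le\epsilon_0$ and the precise choice of $\epsilon_0$ are indispensable: in dimension $4$ the embedding $W^{1,2}\hookrightarrow L^4$ is \emph{not} compact, so the continuity argument is borderline and the $L^2$-norm of the curvature cannot in general be kept small along a naive homotopy $A_t=tA$. This forces one of the two standard remedies: either run the continuity method slightly subcritically, on $W^{1,p}$ with $p>2$ (where $W^{1,p}\hookrightarrow L^{2p}$ is compact, so closedness is unproblematic) and then let $p\downarrow2$, using the gauge-invariant bound $\norm{F}_{L^2}\le\epsilon_0$ to keep all estimates uniform; or, following Uhlenbeck, cover $B^4$ by balls on which the rescaled curvature has small $L^2$-norm, fix a Coulomb gauge on each by the easier small-ball argument, and patch them using the uniqueness of the Coulomb gauge. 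In either case the scale-invariant smallness $\epsilon_0$ is chosen so that $2C_0\epsilon_0<\delta_0$, with $\delta_0$ the absorption threshold above; the constant in the statement is then $C=2C_0$, which depends only on the dimension. Finally, elliptic regularity bootstrapped from $d^*A_g=0$ and $A_g\in W^{1,2}$ upgrades the minimizing gauge $g$ to $W^{2,2}(M,SU(2))$, and a routine approximation removes the initial smoothing of $A$.
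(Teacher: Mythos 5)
The statement you are proving is not proved in this paper: it is a background result, stated with a citation to Uhlenbeck's paper \cite{Uhl2} and used later as a black box (see also Theorem~\ref{uhlenbeckext}, which is the same result on a ball). So there is no in-paper proof to compare against, and your sketch must be judged on its own.

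Your outline is indeed Uhlenbeck's original strategy: an a priori ``absorption'' estimate coming from a Gaffney inequality plus the borderline embedding $W^{1,2}(B^4)\hookrightarrow L^4$, existence of a Coulomb representative by minimizing $g\mapsto\int|g^{-1}dg+g^{-1}Ag|^2$, and a continuity (open/closed/connected) argument to force the Coulomb representative under the threshold $\delta_0$. The absorption step and the openness step (implicit function theorem for the linearized Coulomb operator, a compact perturbation of the Neumann Laplacian on $su(2)$-valued functions) are correct. The weak-limit argument for closedness is also essentially sound: strong $L^q$ convergence of $SU(2)$-valued maps from a.e.\ convergence plus $L^\infty$ bounds, together with weak $L^2$ convergence of the gradients, lets $A_{g_n}\rightharpoonup A_g$ distributionally, so $d^*A_g=0$ and the Neumann trace pass to the limit, and the a priori bound turns $\le\delta_0$ into $\le 2C_0\epsilon_0<\delta_0$.

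The genuine gap is the connectedness step. You correctly observe that the naive homotopy $A_t=tA$ does not keep $\|F_{A_t}\|_{L^2}$ small, and you ``gesture'' at two remedies, but neither is carried out; in particular you do not exhibit any connected family of connections with uniformly small $L^2$-curvature on which to run the continuity method. Uhlenbeck's fix (in dimension $n$ on a ball) is the dilation family $A_t(x)=tA(tx)$, which by the conformal invariance of the scale-invariant norm $\|F\|_{L^{n/2}}$ keeps the curvature energy \emph{exactly} constant and joins $A$ to the zero connection, where the Coulomb gauge is trivial. Without naming such a path the argument does not close. Two smaller points: the minimizer's existence is by the direct method with lower semicontinuity (the functional is not convex in $g$, so ``convexity'' gives neither existence nor uniqueness in any simple way); and the regularity upgrade to $g\in W^{2,2}$ is not a one-step bootstrap from $g\in W^{1,2}$, but works cleanly once one notices $\nabla g=gA_g-Ag\in L^4$ directly (since $A,A_g\in W^{1,2}\hookrightarrow L^4$ and $g\in L^\infty$), after which $\nabla^2g=\nabla g\, A_g+g\nabla A_g-\nabla A\,g-A\nabla g\in L^2$.
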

The reason why the smallness of the curvature is necessary is that $\|F\|_{L^2(M)}$ being above a certain threshold allows the second Chern number of the bundle to be nontrivial:
\[
c_2(E)=\frac{1}{8\pi^2}\int_{M}\op{tr}(F\wedge F)\neq 0.
\]
If for such $F$ the controlled gauge would be \emph{global}, i.e. if we would have a global trivialization in which the connection of the above $F$ is expressed as $d+A$ with
\[
 \|A\|_{W^{1,2}(M)}\leq C,
\]
then by Sobolev and H\"older inequalities we would have enough control on the quantities involved to prove the following formal identity for our $A$:
\[
\op{tr}\left[(dA+[A,A])\wedge(dA+[A,A])\right]=d\;\op{tr}\left(A\wedge dA+\frac{2}{3}A\wedge A\wedge A\right).
\]
Now the right side is an exact form, thus it has integral equal to zero over the boundaryless manifold $M$, contradicting $c_2(E)\neq 0$.\\

M. Atiyah-N. Hitchin-I. Singer \cite{ahs} and C. Taubes \cite{taubes} constructed instantons with nontrivial Chern numbers as in the above heuristic. To exemplify the phenomena at work consider the simplest instanton, having $c_2(E)=1$ over $M=\mathbb S^4$ (cfr. \cite{freeduhl}, Ch. 6 for notations and details). Recall that we may use quaternion notation due to the isomorphisms $SU(2)\sim Sp(1)$ and $su(2)\sim Im\mathbb H$, under which Pauli matrices correspond to quaternion imaginary units. We then have the following local expression of $A$ over $\mathbb R^4$ (identified by stereographic projection with $\mathbb S^4\setminus\{p\}$) in a trivialization:
\[
A=Im\left(\frac{x\;d\bar x}{1+|x|^2}\right).
\]
If $\Psi$ is the inverse stereographic projection then $\Psi^*A$ is smooth away from the pole $p$, but near $p$ we have $|\Psi^*A|(q)\sim \op{dist}_{\mathbb S^4}(p,q)^{-1}$, which is not $L^4$ in any neighborhood of $p$.\\

Such behavior like $\frac{1}{|x|}$ shows that we are in any space $L^p$ for $p<4$ but not in $L^4$. The natural space is the weak-$L^4$ space, which is strictly contained between all $L^p,p<4$ and $L^4$:
\begin{definition}[see \cite{grafakos}]
Let $X,\mu$ be a measure space. The space $L^{p,\infty}(X,\mu)$ (also called \emph{weak-$L^p$} or \emph{Marcinkiewicz} space) is the space of all measurable functions $f$ such that 
\[
 \|f\|_{L^{p,\infty}}^p:=\sup_{\lambda>0}\lambda^p\mu\{x: |f(x)|>\lambda\}
\]
is finite.
\end{definition}
We note immediately that the function $f(x)=\frac{1}{|x|}$ belongs to $L^{4,\infty}$ on $\mathbb R^4$ and the above global gauge gives an $L^{4,\infty}$ $1$-form $\Psi^*A$ on $\mathbb S^4$. Spaces $L^{p,\infty}$ arise naturally in dealing to the critical exponent estimates for elliptic equations. The Green kernel $K_n(x)$ of the Laplacian on $\mathbb R^n$ satisfies indeed $\nabla K\in L^{\frac{n}{n-1},\infty}$ but not $\nabla K\in L^{\frac{n}{n-1}}$. Thus $\Delta u=f$ with $f\in L^1$ implies $\nabla u=\nabla K*f\in L^{\frac{n}{n-1},\infty}$ by an extended Young inequality (see \cite{grafakos}), unlike the higher exponent case $f\in L^p, p>1$, which gives the stronger result $\nabla u\in L^p$.\\

\subsection{Controlled global gauges}
As shown heuristically by the explicit case of the instanton $A$ above, it is known how to construct $L^{4,\infty}$ global gauges. Our main effort in this work is to obtain a \underline{norm-controlled} gauges, mirroring Theorem \ref{uhlcoulsmall} by K. Uhlenbeck. The main result is the following:

\begin{theoremA}
Let $M^4$ be a Riemannian $4$-manifold. There exists a function $f:\mathbb R^+\to\mathbb R^+$ with the following properties.\\
 Let $\nabla$ be a $W^{1,2}$ connection over an $SU(2)$-bundle over $M$. Then there exists a \underbar{global}
$W^{1,(4,\infty)}$ section of the bundle (possibly allowing singularities) over the whole $M^4$ such that in the corresponding trivialization $\nabla$ is given by $d+A$ with the following bound.
\begin{equation*}
\|A\|_{L^{(4,\infty)}}\leq f\left(\|F\|_{L^2(M)}\right),
\end{equation*}
where $F$ is the curvature form of $\nabla$.
\end{theoremA}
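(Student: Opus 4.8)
The plan is to construct the trivializing section cell by cell along a triangulation of $M$: Uhlenbeck's Theorem \ref{uhlcoulsmall} provides the local gauge fixing on the top‑dimensional cells, while the $W^{1,(n+1,\infty)}$‑extension results announced in the abstract (the case $n=3$ above all) are used to glue the local gauges across the lower skeleta, the point singularities being introduced exactly where the topology of the bundle — or the concentration of $F$ — obstructs a continuous trivialization. The conceptual backbone is that the only genuine obstruction sits on the $4$‑cells and can be absorbed into isolated point singularities of the optimal $L^{(4,\infty)}$ type, i.e. the $|x|^{-1}$ behaviour on $\mathbb{R}^4$ already exhibited by the basic instanton; the real work is the norm bookkeeping. (Topologically this reflects the fact that, away from finitely many points, an $SU(2)$‑bundle over a $4$‑manifold is trivial, so singularities are needed only on top cells.)

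First I would fix a triangulation $\mathcal T$ of $M$ and a subordinate finite cover $\{U_i\}$ so that: on every $4$‑simplex away from the concentration set of $F$ one has $\int_{U_i}|F|^2<\epsilon_0$, with $\epsilon_0$ the threshold of Theorem \ref{uhlcoulsmall}; and every point near which $\int_{B_\rho}|F|^2$ stays above $\epsilon_0$ as $\rho\to 0$ is isolated inside a small geodesic ball $B$ whose boundary shell $\partial B\cong\mathbb S^3$ carries energy $<\epsilon_0$ — such a radius exists by a mean‑value argument in the radial variable. There are at most $O(\|F\|_{L^2(M)}^2/\epsilon_0)$ such balls, and for fixed $M$ the remaining combinatorial complexity of $\mathcal T$ can be arranged to depend only on $\|F\|_{L^2(M)}$; this is what eventually makes the constant a function of $\|F\|_{L^2(M)}$. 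On each good $4$‑simplex, Theorem \ref{uhlcoulsmall} gives a gauge with $\|A\|_{W^{1,2}}\le C\|F\|_{L^2}$, hence, since $W^{1,2}\hookrightarrow L^4\hookrightarrow L^{(4,\infty)}$ in dimension $4$, an $L^{(4,\infty)}$‑controlled connection there; on each $\partial B$ the $3$‑dimensional small‑energy theory gives a gauge with a $W^{1,3}$‑controlled connection $1$‑form on $\mathbb S^3$.

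Next I would glue these local gauges. On double overlaps the transition maps $g_{ij}\colon U_i\cap U_j\to SU(2)$ satisfy $dg_{ij}=g_{ij}A_j-A_ig_{ij}$, so a short bootstrap places them in $W^{2,2}$ on the $4$‑dimensional overlaps with controlled norm, whence their traces on the $3$‑cells lie in $W^{1,3}(\mathbb S^3,SU(2))$. I would then build a global gauge inductively over the skeleta: over the $1$‑ and $2$‑skeleton the extension problem is unobstructed, $\pi_1(SU(2))=\pi_2(SU(2))=0$, and can be solved with controlled norms by elementary arguments; and over each $4$‑cell one must extend the given $W^{1,3}$ map $\mathbb S^3\to SU(2)=\mathbb S^3$ from its boundary across $B^4$, which is exactly where the $n=3$ extension result yields a $W^{1,(4,\infty)}$‑controlled extension — one that, whenever the degree is nonzero (equivalently, whenever the local share of $c_2$, or of the concentrated part of $F$, does not vanish), necessarily has a $|x|^{-1}$‑type point singularity. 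These finitely many points become the singular set of the resulting global section. Assembling everything, $A$ is $L^4$‑controlled by $\|F\|_{L^2}$ on the Uhlenbeck cells and $L^{(4,\infty)}$‑controlled by the extension results and by $\|F\|_{L^2}$ on the gluing collars and the concentration balls; since there are only finitely many pieces, with controlled‑multiplicity overlap and with a count governed by $\|F\|_{L^2(M)}$ and $M$, and since the weak‑$L^4$ quasinorm is subadditive over pieces with essentially disjoint supports, one gets $\|A\|_{L^{(4,\infty)}}\le f\big(\|F\|_{L^2(M)}\big)$.

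The main obstacle is the last two steps: propagating the norm control coherently through the inductive gluing — each extension is post‑composed with the gauges already constructed, and these compositions must stay controlled in borderline Lorentz–Sobolev norms, which forces nonlinear estimates at the critical integrability — and checking that both the combinatorics of the adapted decomposition and the accumulation of constants along the skeleton induction are controlled by $\|F\|_{L^2(M)}$ alone (for fixed $M$), despite the decomposition possibly having many cells when $F$ is highly concentrated. By contrast, the topological bookkeeping is comparatively soft once the $W^{1,(n+1,\infty)}$‑extension results are in hand.
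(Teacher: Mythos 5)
Your high-level plan — triangulate, apply Uhlenbeck locally, glue gauge changes across skeleta using the $W^{1,(4,\infty)}$ extension theorem — matches the first branch of the paper's proof (Section \ref{itertriang}), but the proposal has a genuine gap exactly at the step you flag as the ``main obstacle'' and then treat as bookkeeping. You assert that the combinatorial complexity of the decomposition, and hence the number of cells, ``can be arranged to depend only on $\|F\|_{L^2(M)}$'' for fixed $M$. This is false. The relevant scale is
\[
\rho_0:=\inf\Big\{\rho>0:\ \exists x_0\in M,\ \int_{B_\rho(x_0)}|F|^2=\tfrac{\epsilon_0}{2}\Big\},
\]
and there is no lower bound on $\rho_0$ in terms of $\|F\|_{L^2(M)}$: two energy bumps each of size $\sim\epsilon_0$ placed at mutual distance $\delta$ force $\rho_0\lesssim\delta$ while keeping $\|F\|_{L^2}$ fixed, so any triangulation whose $4$-cells satisfy the Uhlenbeck smallness hypothesis needs $\gtrsim\delta^{-4}$ cells. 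Relatedly, for a fixed $F\in L^2$ there are no actual ``concentration points'' (absolute continuity of the integral makes the set where $\int_{B_\rho}|F|^2$ stays bounded below as $\rho\to 0$ empty), so the picture of isolating finitely many singular points by excising small balls does not by itself tame the rest of $M$: the complement can still have $\rho_0$ arbitrarily small.

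The paper overcomes this with an induction on the energy $E=\|F\|_{L^2(M)}^2$ that your proposal is missing. When $\rho_0$ falls below a threshold $\bar\rho_0=C\rho_{\mathrm{inj}}(M)2^{-E/\epsilon_1}$, instead of triangulating one selects, by a pigeonhole over dyadic annuli around a concentration ball, a geodesic sphere of radius $t$ carrying $\lesssim\epsilon_1$ of $\int|F|^2$; one then modifies the connection slightly on each side of this sphere (Lemma \ref{epsilon1}) to produce two connections $\hat A_1,\hat A_2$, agreeing with $A$ on an overlap annulus, each with curvature energy $\le E-\epsilon_0/4$. This permits defining $f(E)$ recursively from $f(E-\epsilon_0/4)$ and from the extension function $f_1$ of Theorem B, with the recursion terminating after $O(E/\epsilon_0)$ steps. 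Only when $\rho_0\ge\bar\rho_0$ does a single triangulation suffice, precisely because $\bar\rho_0$ is then an explicit function of $E$ and $M$, giving $\lesssim\op{Vol}(M)/\bar\rho_0^4$ cells. Without this dichotomy and the energy induction, the constant you are trying to extract cannot be made a function of $\|F\|_{L^2(M)}$ alone, and the argument does not close. The rest of your sketch (transition functions in $W^{1,3}$ on $3$-cells, unobstructed extension over the low skeleta since $\pi_1=\pi_2=0$ for $SU(2)$, extension across $4$-cells via Theorem B) is on target and is essentially Section \ref{itertriang} together with Lemma \ref{exts3} and Corollary \ref{nexttriangle}.
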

This theorem is related to a second main result of this work, namely the introduction of Lorentz-Sobolev extension theorems for nonlinear maps. This result takes most of our efforts and can be stated as follows:
\begin{theoremB}
There exists a function $f_1:\mathbb R^+\to\mathbb R^+$ with the following property. Suppose $\phi\in W^{1,3}(\mathbb S^3,\mathbb S^3)$. then there exists an extension $u\in W^{1,(4,\infty)}(B^4,\mathbb S^3)$ of $\phi$ such that the following estimate holds:
\begin{equation*}
 \|\nabla u\|_{L^{4,\infty}(B^4)}\leq f_1\left(\|\nabla\phi\|_{L^3}\right).
\end{equation*}
\end{theoremB}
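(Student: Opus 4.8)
The plan is to build $u$ by smoothing the boundary datum $\phi$ at a rate tied to the distance from $\partial B^{4}$ — so as to pick up the extra half-derivative that separates $W^{1,3}(\mathbb{S}^{3})$ from the trace space of $W^{1,4}(B^{4})$ — and to concentrate the topology of $\phi$ into a single radial point singularity, which is exactly the kind of singularity the weak space $W^{1,(4,\infty)}$ tolerates. Two preliminary facts frame everything. First, $W^{1,3}(\mathbb{S}^{3},\mathbb{S}^{3})\hookrightarrow\mathrm{VMO}$, so $\phi$ has a well-defined integer degree $d=\deg\phi$, and the Jacobian bound $|\phi^{*}\mathrm{vol}_{\mathbb{S}^{3}}|\le c\,|\nabla\phi|^{3}$ integrated over $\mathbb{S}^{3}$ gives $|d|\le C\,\|\nabla\phi\|_{L^{3}}^{3}$. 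Second, for every Lipschitz $\sigma:\mathbb{S}^{3}\to\mathbb{S}^{3}$ one has $\sigma(x/|x|)\in W^{1,(4,\infty)}(B^{4},\mathbb{S}^{3})$ with $\|\nabla(\sigma(x/|x|))\|_{L^{4,\infty}(B^{4})}\le C\,\mathrm{Lip}(\sigma)$, whereas $\sigma(x/|x|)\in W^{1,4}$ forces $\sigma$ constant; this one computation explains both the optimality of the target space and the fact that $d\ne 0$ obstructs any $W^{1,4}$ extension.

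First I would reduce to smooth $\phi$. Smooth maps are dense in the critical space $W^{1,3}(\mathbb{S}^{3},\mathbb{S}^{3})$, so it suffices to prove the estimate, with a uniform $f_{1}$, for a sequence $\phi_{k}\to\phi$ in $W^{1,3}$; one then passes to the limit by a standard weak-compactness argument, using that a uniform $W^{1,(4,\infty)}(B^{4})$ bound gives uniform $W^{1,p}(B^{4})$ bounds for every $p<4$, extracting a subsequence converging weakly in $W^{1,p}$ and a.e., identifying the boundary trace with $\phi$, and invoking lower semicontinuity of the weak-$L^{4}$ quasinorm under a.e. convergence. For smooth $\phi$, write points of $B^{4}\setminus\{0\}$ as $(\omega,r)\in\mathbb{S}^{3}\times(0,1]$, let $\phi_{s}$ be a mollification of $\phi$ at scale $s$ on $\mathbb{S}^{3}$, and on the region where $|\phi_{1-r}(\omega)|>\tfrac12$ put $u(\omega,r)=\Pi(\phi_{1-r}(\omega))$, $\Pi$ the nearest-point projection onto $\mathbb{S}^{3}$. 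A direct computation bounds the tangential and radial parts of $\int_{\{1/2\le|x|\le1\}}|\nabla u|^{4}$ by $C\int_{0}^{1/2}\|\nabla\phi_{s}\|_{L^{4}(\mathbb{S}^{3})}^{4}\,ds$, which by the mollification characterization of Besov norms is $\lesssim\|\phi\|_{W^{3/4,4}(\mathbb{S}^{3})}^{4}\lesssim\|\nabla\phi\|_{L^{3}(\mathbb{S}^{3})}^{4}$, using the Sobolev embedding $W^{1,3}(\mathbb{S}^{3})\hookrightarrow W^{3/4,4}(\mathbb{S}^{3})=\operatorname{tr}W^{1,4}(B^{4})$; and $u|_{\partial B^{4}}=\phi$ because $|\phi|=1$.

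It then remains to handle the \emph{bad set} $\mathcal{Z}=\{(\omega,r):|\phi_{1-r}(\omega)|\le\tfrac12\}$, where $\Pi$ is unavailable. On $\mathcal{Z}$, $|\phi_{s}|$ being far from $1$ forces a definite amount of Dirichlet energy of $\phi$ at scale $s$, so $\mathcal{Z}$ is quantitatively small; one fills it shell by shell with $\mathbb{S}^{3}$-valued maps whose individual cost is scale invariant — hence absorbed by the weak-$L^{4}$ norm, not the $L^{4}$ norm — and whose net topological content accounts for $d$. After this surgery the datum at $|x|=\tfrac12$ is a Lipschitz map $g:\mathbb{S}^{3}\to\mathbb{S}^{3}$ with $\mathrm{Lip}(g)$ bounded in terms of $\|\nabla\phi\|_{L^{3}}$ alone; one finishes by setting $u(x)=g(x/|x|)$ on $\{|x|\le\tfrac12\}$, so that $\|\nabla u\|_{L^{4,\infty}(\{|x|\le1/2\})}\lesssim\mathrm{Lip}(g)$ — this is the only piece genuinely outside $W^{1,4}$, singular at the origin exactly when $d\ne 0$. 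The three pieces glue continuously, and the quasi-triangle inequality for $L^{4,\infty}$ together with the embedding $L^{4}\hookrightarrow L^{4,\infty}$ on the bounded domain give $\|\nabla u\|_{L^{4,\infty}(B^{4})}\le f_{1}(\|\nabla\phi\|_{L^{3}})$.

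The main obstacle is precisely the bad-set surgery: $\mathcal{Z}$ has controlled measure but may reach arbitrarily small scales, and it must be filled by an $\mathbb{S}^{3}$-valued $W^{1,(4,\infty)}$ map whose norm depends only on $\|\nabla\phi\|_{L^{3}}$, never on $\phi$ itself. Naive pointwise homotopies of the datum are useless here — they cost $\|\nabla\phi\|_{L^{4}}$, not $\|\nabla\phi\|_{L^{3}}$ — so one needs a Bethuel-type covering and dipole-removal argument, made quantitative and scale invariant so that the weak-$L^{4}$ norm absorbs exactly the topological price $\sim|d|^{1/3}\lesssim\|\nabla\phi\|_{L^{3}}$. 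This bookkeeping, rather than any single estimate, is where the bulk of the effort goes.
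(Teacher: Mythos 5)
Your reduction — mollify at a distance-dependent scale, project where $|\phi_s|>1/2$, leave a single homogeneous-degree-zero singularity at the origin — is a sensible skeleton and correctly identifies the optimality of $W^{1,(4,\infty)}$, but it leaves a genuine gap exactly where the theorem is hard, and I do not believe the gap can be closed along the lines you sketch. The difficulty is not ``bookkeeping'': the bad set $\mathcal{Z}=\{(\omega,r):|\phi_{1-r}(\omega)|\le 1/2\}$ is not small in the sense you need. A single bubble concentrated at scale $\epsilon$ (a conformal map of $\mathbb S^3$ to itself pulled to a ball of radius $\epsilon$, say) has $\|\nabla\phi\|_{L^3}$ bounded uniformly in $\epsilon$, yet makes $\mathcal{Z}$ span every dyadic shell between $r=1-\epsilon$ and $r\sim 1/2$ — roughly $\log(1/\epsilon)$ of them, with $\epsilon$ having no lower bound in terms of $\|\nabla\phi\|_{L^3}$. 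You cannot fill these shells one by one with $O(1)$ cost each and expect the weak-$L^4$ quasinorm to absorb the sum, since the fills live in the same angular sector rather than at genuinely independent scales; and there is no proposed mechanism to prevent this unbounded accumulation. Your further claim that after the surgery the datum at $|x|=1/2$ becomes Lipschitz with constant $\lesssim\|\nabla\phi\|_{L^3}$ is an assertion, not a construction, and the paper's Proposition~\ref{extensionnocontrol} together with Proposition~\ref{nocontrext} show exactly why such assertions are dangerous: a $W^{1,4}$ extension always exists when $\deg\phi=0$ yet no norm control is possible, because the scale of concentration is invisible to the $L^3$ norm. The Bethuel/Hardt--Lin dipole removal you invoke is a tool for the subcritical regime $p<n$ and does not resolve critical-exponent concentration.

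The paper's proof takes a genuinely different route that does contain a mechanism for concentration. It first proves (Theorem~\ref{smallcest}) a controlled $W^{1,(4,\infty)}$ extension under a \emph{non-concentration} hypothesis $E(|\nabla\phi|^3,\rho,\mathbb S^3)\le\delta$, using a harmonic extension, an Uhlenbeck-style continuity argument in Appendix~\ref{sec:Uhlenbeck} for $W^{1,4}$ extensions near the boundary, and a projection from a generic center; the cost is a factor $1/\rho$. When concentration \emph{does} occur, instead of surgery it exploits two structures special to $\mathbb S^3$: the group law of $SU(2)\simeq\mathbb S^3$ and the M\"obius group of $B^4$. If $\phi$ can be ``balanced'' (i.e.\ $|\fint\phi\circ F_v|\le 1/4$ for some M\"obius $F_v$), the group law lets one write $\phi=\phi_1\phi_2$ with each factor having strictly less energy (Proposition~\ref{zeroaverage}), enabling an induction on energy; if no balancing is possible, $v\mapsto\pi_{\mathbb S^3}(\fint\phi\circ F_v)$ is itself a controlled $W^{1,4}$ extension via the biharmonic Poisson kernel (Proposition~\ref{prop:moebius}). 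The exponential bound $\|\nabla u\|_{L^{4,\infty}}\lesssim e^{C\|\nabla\phi\|_{L^3}^9}$ comes from the depth of this induction. Your approach has no analogue of the splitting step, which is what makes concentration tractable; absent that, the bad-set surgery is not merely laborious but structurally underdetermined.
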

The originality of Theorem B with respect to the previous results \cite{bethueldemengel} or \cite{mucci} is that whereas the previous works were concerned with the \underline{existence} of an extension, in our case a \underline{control} is provided in term of the boundary value. We show below that even under the hypothesis $\op{deg}(\phi)=0$ such that a $W^{1,4}$-extension surely exists, no energy control will be available.\\

Controlled global gauges as above will probably have many applications in the analysis of gauge theory, as for example in simplifying compactness results; see \cite{petrachethesis}. Controlled global gauges could allow a global control on the Yang-Mills flow, provided we obtain also the Coulomb condition, which is however an open question:
\begin{open}
 Prove that it is possible to find $L^{4,\infty}$-controlled global Coulomb gauges as in Theorem A. In other words, prove that it is possible to find a gauge as in Theorem A, but with the further requirement that $d^*A=0$ in such gauge.
\end{open}

\subsection{Strategy of gauge construction} 
The link between Theorems A and B is given by the well-known identification $SU(2)\simeq \mathbb S^3$. Therefore Theorem B can be rephrased as follows:
\begin{theoremB'}
 Fix a trivial $SU(2)$-bundle $E$ over the ball $B^4$. There exists a function $f_1:\mathbb R^+\to\mathbb R^+$ with the following property. If $g\in W^{1,3}(\mathbb S^3,SU(2))$ gives a trivialization of the restricted bundle $E|_{\partial B^4}$, then there exists an extension of $g$ to a trivialization $\tilde g\in W^{1,(4,\infty)}(B^4,SU(2))$ such that the following estimate holds:
\begin{equation*}
 \|\nabla \tilde g\|_{L^{4,\infty}(B^4)}\leq f_1\left(\|\nabla g\|_{L^3(\mathbb S^3)}\right).
\end{equation*}
\end{theoremB'}
The proof of the Theorem A is by a sequence of gauge extensions along the simplexes of a suitable triangulation. We use simplexes where Uhlenbeck's result \ref{uhlcoulsmall} holds, i.e. $F$ has energy $\lesssim \epsilon_0$. To ensure a lower bound on the size of simplexes we cut areas of energy concentration and use induction on the energy, see the summary \eqref{summary1}.

\subsection{Extension of Sobolev maps into manifolds}\label{ssec:extintro}
We discuss the relevance of our theorem, several possible extensions and related phenomena in Section \ref{sec:survey}.\\

Here we point out the main open questions in the area of controlled nonlinear extensions and some analogues of Theorem B. An useful tool to control the topology of a manifold $N$ are the fundamental groups $\pi_m(N)$ which is a quotient of $C^0(\mathbb S^m, \mathbb N)$. To say that any map in this space is continuously extendable to $B^{m+1}$ amounts to asserting that $\pi_m(N)=0$.\\

We consider here the \emph{controlled} extension problem for maps $\mathbb S^m\to \mathbb S^n$. As is usually the case the interesting new features appear when smooth maps are not dense in $W^{1,p}(S^m,S^n)$, in which case we expect topological obstructions to gradually disappear as $p$ decreases. The first facts to note are the following:
\begin{itemize}
 \item For extensions of maps from $W^{1,p}(\mathbb S^m,\mathbb S^n)$ to $B^{m+1}$ the natural space given by continuous Sobolev and trace embeddings is $W^{1,\frac{m+1}{m}p}(B^{m+1}, \mathbb S^n)$ (see Sec. \ref{ssec:hardtlintrick} and \ref{ssec:largeintext}).
 \item For $p<\frac{n+1}{m+1}m$ the controlled extensions exist (see Sec. \ref{ssec:hardtlintrick}).
 \item $p>m$ the extension question reduces to a purely topological problem (see Sec. \ref{ssec:largeintext}).
\end{itemize}
The open cases when $p<m$ are the thus among the following ones:
\begin{open}\label{highdimdomain}
 Assume that $\frac{n+1}{m+1}m\leq p<m$ and $m>n$. For which such choices of $m,n,p$ does there exist a finite function $f_{m,n,p}:\mathbb R^+\to\mathbb R^+$ such that for every $\phi\in W^{1,p}(\mathbb S^m, \mathbb S^n)$ there exists an extension $u\in W^{1,\frac{m+1}{m}p}(B^{m+1}, \mathbb S^n)$ for which the estimate 
 \[
  \|u\|_{W^{1,\frac{m+1}{m}p}(B^{m+1}, \mathbb S^n)}\leq f_{m,n,p}\left(\|\phi\|_{W^{1,p}(\mathbb S^m, \mathbb S^n)}\right)
 \]
holds? Does the estimate hold for $p=m$ for the norm $W^{1,(m+1,\infty)}(B^{m+1},S^n)$?
\end{open}
The above Open Problem is partially understood or solved just in some cases:
\begin{itemize}
 \item Due to a relation of extension problems to lifting problems we answer the above problem for $n=2<m$ and $\frac{3m}{m+1}\leq p<\frac{4m}{m+1}$, see Prop. \ref{sms2} and Section \ref{ssec:lifting}.
  \item  In particular we cover all $p$ for the dimensions $m=3, n=2$. 
 \item For $n=1, m\geq 3$ and $\frac{3m}{m+1}\leq p<m$ \cite{bethueldemengel} prove that no extension exists.
\end{itemize}

It will be interesting in the future to look at the link of extension and lifting problems in detail. It is possible to do this also in the case of $\mathbb S^1$-valued maps and in nonlocal Sobolev spaces, e.g. using the results of \cite{BBM}.\\

In the critical case $p=m$ left aside in the above Open Problem we have the following results:
\begin{itemize}
\item Using the Hopf lifts as in \cite{hr1, hr} we prove Theorem C which is the solution to case $p=m=n=2$ (see Sec. \ref{sec:hopflift}).
 \item The extension exists but cannot be controlled in the above Sobolev norm, making the Lorentz-Sobolev weakening of Theorem B and of Theorem C below optimal (see Sec. \ref{ssec:smallenw14ext}). This is analogous to the case of global gauges in $4$-dimensions pointed out in the introduction.
 \item We also prove an analogous result for $p=m=n=1$ (see Theorem \ref{1dcase}) however this is not the natural space to look at, unlike higher dimensions. In this case indeed the trace space $H^{1/2}(\mathbb S^1,\mathbb S^1)$ is the natural space to look at, because $W^{1,1}(\mathbb S^1,\mathbb S^1)$ does not continuously embed in it (we recall a counterexample in \ref{ssec:1dext}).
\end{itemize}
  These theorems leave open higher dimensional cases:
\begin{open}
Assume $n\geq 4$. Does there exist a finite function $f_n:\mathbb R^+\to\mathbb R^+$ such that for each $\phi\in W^{1,n}(\mathbb S^n,\mathbb S^n)$ we can find an extension $u\in W^{1,(n+1,\infty)}(B^{n+1},\mathbb S^n)$ for which the estimate
\[
 \|u\|_{W^{1,(n+1,\infty)}(B^{n+1},\mathbb S^n)}\leq f_n\left(\|\phi\|_{W^{1,n}(\mathbb S^n,\mathbb S^n)}\right)
\]
holds?
\end{open}
Unlike linear Sobolev spaces, not only the topology of the domain must be compared to the Sobolev exponent $p$, but also the dimension and structure of the constraint (i.e. the target manifold) plays a critical role. This is also related to the topological global obstructions to density results for smooth functions between manifolds in F. Hang-F. Lin \cite{hanglin, hl2} (see also T. Isobe \cite{isobeglobalobstr}).\\

A general tool allowing extensions the projection trick of Section \ref{ssec:hardtlintrick}, which works well for Sobolev exponents smaller than the target dimension plus one. Lifting theorems allow to increase this dimension thus to apply the projection trick with higher exponents.\\

Using the Hopf fibration $H:\mathbb S^3\to\mathbb S^2$ we construct controlled lifts and apply a version of the projection trick obtaining the following theorem with much less effort than for the $3$-dimensional case of Theorem B:
\begin{theoremC}[see Section \ref{sec:hopflift}]
  Suppose $\phi\in W^{1,2}(\mathbb S^2, \mathbb S^2)$ is given. Then there exists $u\in W^{1,(3,\infty)}(B^3, \mathbb S^2)$ such that in the sense of traces $u|_{\partial B^3}=\phi$ and such that the following estimate holds, for a constant independent of $\phi$.
\begin{equation*}
 \|u\|_{W^{1,(3,\infty)}(B^3)}\leq C\|\phi\|_{W^{1,2}(\mathbb S^2)}(1+\|\phi\|_{W^{1,2}(\mathbb S^2)}).
\end{equation*}
\end{theoremC}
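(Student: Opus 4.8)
The strategy is to exploit the Hopf fibration $H\colon\mathbb{S}^3\to\mathbb{S}^2$, a smooth Riemannian submersion (in particular Lipschitz), to raise the target dimension so that the projection trick of Section~\ref{ssec:hardtlintrick} becomes applicable. Thus it suffices to: (i) lift $\phi$ to some $\psi\colon\mathbb{S}^2\to\mathbb{S}^3$ with $H\circ\psi=\phi$; (ii) extend $\psi$ to $\Psi\colon B^3\to\mathbb{S}^3$ in a controlled way; (iii) set $u:=H\circ\Psi$, so that in the sense of traces $u|_{\partial B^3}=H\circ\psi=\phi$, while $|u|\equiv1$ and $\|\nabla u\|_{L^{3,\infty}(B^3)}\le\op{Lip}(H)\,\|\nabla\Psi\|_{L^{3,\infty}(B^3)}$.

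Step (i) is where the topology enters. The pulled-back bundle $\phi^{*}(\mathbb{S}^3\to\mathbb{S}^2)$ over $\mathbb{S}^2$ has Euler number $\op{deg}\phi$, so a lift exists continuously only if $\op{deg}\phi=0$; more precisely, pulling back a smooth primitive $\beta$ of $H^{*}\omega_{\mathbb{S}^2}$ and integrating $\psi^{*}d\beta$ by parts — legitimate for $W^{1,2}$ maps into $\mathbb{S}^3$ since smooth maps are dense there ($\pi_2(\mathbb{S}^3)=0$) — shows that no $W^{1,2}$ lift can exist once $\op{deg}\phi\neq0$. I would therefore invoke the Hopf-lift construction of \cite{hr1,hr}: it produces $\psi\in W^{1,(2,\infty)}(\mathbb{S}^2,\mathbb{S}^3)$ which, away from a finite set of at most $N\lesssim|\op{deg}\phi|\lesssim\|\nabla\phi\|_{L^2}^2$ points $p_1,\dots,p_N$, is $W^{1,2}$ with $\|\nabla\psi\|_{L^2(\mathbb{S}^2\setminus\bigcup_i B(p_i,\delta_i))}\lesssim\|\nabla\phi\|_{L^2}$, and near each $p_i$ coincides with the standard Hopf singularity model, whose gradient is comparable to $\op{dist}(\cdot,p_i)^{-1}$.

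For step (ii) I would extend the regular part and the singular models separately and patch. On $\mathbb{S}^2\setminus\bigcup_i B(p_i,\delta_i)$ the map $\psi$ is $W^{1,2}$, and since $2<\tfrac{8}{3}=\tfrac{n+1}{m+1}m$ with $m=2$ and the new target dimension $n=3$, the projection trick of Section~\ref{ssec:hardtlintrick} applies with target $\mathbb{S}^3$: after the critical Sobolev embedding $W^{1,2}(\mathbb{S}^2)\hookrightarrow W^{2/3,3}(\mathbb{S}^2)$ and a bounded trace-extension operator, one obtains an extension in $W^{1,3}(B^3,\bar B^{4})$ whose generic radial projection from $B^{4}$ lies in $W^{1,3}(B^3,\mathbb{S}^3)\subset W^{1,(3,\infty)}$, with norm $\lesssim\|\phi\|_{W^{1,2}}$. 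Over the half-ball of $B^3$ abutting each $B(p_i,\delta_i)$ I would write the explicit radial-type extension of the Hopf model; its gradient is comparable to $|x-p_i|^{-1}$ in three variables, so — the weak-$L^3$ quasinorm of $|x|^{-1}$ on a $3$-ball being scale independent — it lies in $W^{1,(3,\infty)}$ with $O(1)$ norm, and, the $N$ supports being disjoint, the weak-$L^3$ quasinorm of the sum of the models is $\lesssim N^{1/3}\lesssim\|\nabla\phi\|_{L^2}^{2/3}$. Patching the pieces by a partition of unity on the thin overlap shells — where the two descriptions are close, so that one may interpolate along geodesics in $\mathbb{S}^3$ — produces $\Psi\in W^{1,(3,\infty)}(B^3,\mathbb{S}^3)$ with $\|\nabla\Psi\|_{L^{3,\infty}}\lesssim\|\phi\|_{W^{1,2}}\big(1+\|\phi\|_{W^{1,2}}\big)$; step (iii) then finishes the proof.

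The main obstacle is step (ii), and it is of a genuine nature: one cannot simply run the projection trick on the global weak-$W^{1,2}$ lift, because extending a merely weak-$W^{1,2}$ boundary map to $B^3$ and projecting loses the Sobolev exponent — the natural dyadic summation over the scales of the singularity diverges logarithmically. This forces one to isolate the degree into finitely many explicit $1/\op{dist}$ singularities, which sit exactly at the $L^{3,\infty}$ borderline in dimension three, and is precisely why the conclusion can be stated in the Lorentz--Sobolev space $W^{1,(3,\infty)}$ but not in $W^{1,3}$. Carrying out the patching of the explicit models with the projection-trick extension quantitatively, with every constant depending only on $\|\phi\|_{W^{1,2}(\mathbb{S}^2)}$, is the technical heart of the argument.
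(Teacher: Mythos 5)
Your skeleton (lift through the Hopf map, extend, push back down by $H$) is the same as the paper's, but the execution diverges at both stages and contains a genuine gap. The paper's Hopf lift in Proposition \ref{exthopf} / Corollary \ref{corollhopf} is a \emph{global} $W^{1,(2,\infty)}(\mathbb S^2,\mathbb S^3)$ map, built by solving $d\eta=\phi^*\omega_{\mathbb S^2}$ with $\eta=d^*[K*(\phi^*\omega_{\mathbb S^2})]$ for the logarithmic Green kernel $K$; the quadratic term in the estimate comes from $\|\eta\|_{L^{2,\infty}}\lesssim\|\phi^*\omega_{\mathbb S^2}\|_{L^1}\lesssim\|D\phi\|_{L^2}^2$, and the identity $|D\tilde\phi|^2=|\eta|^2+|D\phi|^2$. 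There are no isolated singular points to isolate. The paper then \emph{does} run the projection trick directly on this weak lift, which is precisely what you assert cannot be done. The way around the averaging obstruction is Proposition \ref{hltrick}: the harmonic extension cleanly maps $W^{1,(2,\infty)}(\mathbb S^2)\to W^{1,(3,\infty)}(B^3)$ via Poisson-kernel estimates, and the choice of a good projection center $a$ is obtained not by raising the $L^{3,\infty}$ quasinorm to a power and averaging (which indeed would fail), but by Lions--Peetre reiteration (write $L^{(3,\infty)}$ as an interpolation space between $L^{p_0}$ and $L^{p_1}$ with $3\in(p_0,p_1)\subset(1,4)$) combined with Tartar's nonlinear interpolation theorem, using a local Lipschitz estimate for $u\mapsto\nabla u/|u-a|$. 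Your diagnosis that the Lorentz scale forces one to isolate the degree into explicit $1/\mathrm{dist}$ singularities is thus incorrect.

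Beyond the misdiagnosis, the route you sketch is not complete. You never establish that the Hardt--Rivi\`ere lift actually coincides with the explicit Hopf model near each $p_i$ (rather than merely behaving like it in energy); you need a lower bound of order $\epsilon_0$ on the separation scales $\delta_i$ for the $N^{1/3}$ bookkeeping to go through; and the patching ``by a partition of unity on thin overlap shells, interpolating along geodesics in $\mathbb S^3$'' is the entire technical content of your step (ii) yet is stated as a hope. On those shells the two candidate extensions agree only up to a change of gauge that is controlled merely in $W^{1,2}$, so one cannot in general interpolate pointwise along geodesics without degrading the exponent; making this rigorous would require a controlled transition gauge, which is a nontrivial lemma in its own right. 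As written, the argument has a gap precisely at the step you yourself flag as the heart of the matter.
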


The Hopf fibration has a natural structure of $U(1)$-bundle with nontrivial characteristic class, $P\to S^2$. Lifting a map $\phi:X\to S^2$ to a $\tilde\phi:X\to S^3$ for which $H\circ\tilde\phi=\phi$ corresponds to giving the trivialization of the pullback bundle $\phi^*P$. Analogous lifts are interesting to study for general principal $G$-bundles, 
using universal connections. The next case after the one with target $S^2$ is the $SU(2)$-bundle of the introduction, which corresponds to the Hopf fibration $\mathbb S^7\to \mathbb S^4$.\\

The Hopf lift seems to be much more difficult to extend the case where the target is $\mathbb S^3$. We cannot use principal bundles because $\pi_2(G)=0$ for all compact Lie groups $G$. For other fibrations the following question is open:
\begin{open}\label{opnolift}
 Is it possible to find a fibration $\pi:E\to \mathbb S^3$ with compact fiber $M$ and a constant $C>0$ such that for each $\phi\in W^{1,3}(\mathbb R^3,\mathbb S^3)$ there exists a lift $\tilde\phi:\mathbb R^3\to E$ satisfying the estimate $\|\nabla\tilde\phi\|_{L^{(3,\infty)}}\leq C f(\|\nabla\phi\|_{L^3})$ for some finite function $f:\mathbb R^+\to\mathbb R^+$?
 \end{open}

The controlled Hopf lift result for $S^2$ yields also an answer to Open Problem \ref{highdimdomain} for dimensions $m=3, n=2$:
\begin{theoremD}\label{s3s2}
 Assume $\phi\in W^{1,3}(\mathbb S^3,\mathbb S^2)$. Then there exists a controlled extension $u\in W^{1,(4,\infty)}(B^4,\mathbb S^2)$ with the control
 \[
  \|u\|_{W^{1,(4,\infty)}(B^4,\mathbb S^2)}\leq C\|\phi\|_{W^{1,3}(\mathbb S^3,\mathbb S^2)}(1+\|\phi\|_{W^{1,3}(\mathbb S^3,\mathbb S^2)}).
 \]
If instead we have $\phi\in W^{1,p}(\mathbb S^3,\mathbb S^2)$ for $9/4\leq p<3$ then there exists an extension $u\in W^{1,\frac{4}{3}p}(B^4,\mathbb S^2)$ with
\[
  \|u\|_{W^{1,\frac{4}{3}p}(B^4,\mathbb S^2)}\leq C\|\phi\|_{W^{1,p}(\mathbb S^3,\mathbb S^2)}(1+\|\phi\|_{W^{1,p}(\mathbb S^3,\mathbb S^2)}).
 \]
\end{theoremD}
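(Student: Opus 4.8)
The plan is to factor the extension through the Hopf fibration $H:\mathbb S^3\to\mathbb S^2$, in the spirit of the proof of Theorem C but one dimension higher. After rescaling the round metric of $\mathbb S^3$ the map $H$ is a Riemannian submersion, hence globally Lipschitz, and since $H^2(\mathbb S^3;\mathbb Z)=0$ the pulled-back principal $U(1)$-bundle $\phi^*P$ is trivial, so $\phi$ always admits a lift $\tilde\phi:\mathbb S^3\to\mathbb S^3$ with $H\circ\tilde\phi=\phi$. I would then proceed in three steps: (i) choose such a lift with a polynomially controlled norm; (ii) extend $\tilde\phi$ to $B^4$ using the $\mathbb S^3$-target results; (iii) set $u:=H\circ\tilde u$, which has trace $\phi$, is $\mathbb S^2$-valued, and satisfies $|\nabla u|\le(\op{Lip}H)\,|\nabla\tilde u|$ pointwise, so that it inherits the estimate from $\tilde u$.

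Step (i) is the substantive point. Writing $A$ for the standard connection $1$-form of $P$, with curvature $dA=\pi^*\omega$ ($\omega$ a fixed multiple of the area form of $\mathbb S^2$), any lift $\psi$ satisfies $d(\psi^*A)=\phi^*\omega$ and $|\phi^*\omega|\le C|\nabla\phi|^2\in L^{p/2}(\mathbb S^3)$. Since $\mathbb S^3$ has no nonzero harmonic $1$-form, the Hodge decomposition gives $\psi^*A=d\theta_0+\eta$ with $\eta$ co-closed and $d\eta=\phi^*\omega$, and the Gaffney inequality bounds $\|\eta\|_{W^{1,p/2}(\mathbb S^3)}\le C\|\phi^*\omega\|_{L^{p/2}(\mathbb S^3)}\le C\|\nabla\phi\|_{L^p(\mathbb S^3)}^2$. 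Taking $\tilde\phi:=\psi\,e^{-i\theta_0}$ and splitting $d\tilde\phi$ into the horizontal and vertical parts of the submersion gives the pointwise identity $|\nabla\tilde\phi|^2=|\nabla\phi|^2+|\eta|^2$, whence $|\nabla\tilde\phi|\le C(|\nabla\phi|+|\eta|)$. At the critical exponent $p=3$, the embedding $W^{1,3/2}(\mathbb S^3)\hookrightarrow L^3(\mathbb S^3)$ gives $\tilde\phi\in W^{1,3}(\mathbb S^3,\mathbb S^3)$ with
\[
 \|\nabla\tilde\phi\|_{L^3(\mathbb S^3)}\le C\,\|\nabla\phi\|_{L^3(\mathbb S^3)}\bigl(1+\|\nabla\phi\|_{L^3(\mathbb S^3)}\bigr),
\]
the quadratic factor being the curvature contribution; for $9/4\le p<3$ one must keep the sharp (Lorentz--)Sobolev integrability of $\eta$, which is strictly weaker than $L^p$ and is what makes the exponent bookkeeping in the next step delicate.

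For Step (ii) with $p=3$ I would apply Theorem~B$'$ to the $SU(2)$-valued map $\tilde\phi$, obtaining $\tilde u\in W^{1,(4,\infty)}(B^4,\mathbb S^3)$ with $\tilde u|_{\partial B^4}=\tilde\phi$ and $\|\nabla\tilde u\|_{L^{4,\infty}(B^4)}\le f_1(\|\nabla\tilde\phi\|_{L^3})$; composing with the bound of Step (i) and then with $H$ yields the first inequality of Theorem D, so that the $\mathbb S^3$-valued extension result is the engine even for this $\mathbb S^2$-valued statement. For $9/4\le p<3$ one uses instead the subcritical extension of the lift (the projection trick of Section \ref{ssec:hardtlintrick}: a linear extension into $\mathbb R^4$ coming from the borderline trace embedding, followed by a translate-and-retract onto $\mathbb S^3$, which is licit since the competitor's image is at most $3$-dimensional and hence misses a generic point of $\mathbb R^4\setminus\{0\}$), again composed with $H$; here the hypothesis $p\ge 9/4=\tfrac{3m}{m+1}$ (with $m=3$) is exactly what is needed for the exponents produced by Step (i) to reconcile with the target exponent $\tfrac43 p$.

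I expect Step (i) in its quantitative form to be the main obstacle: producing a lift whose norm is controlled \emph{polynomially}, not merely by some finite function, in $\|\phi\|_{W^{1,p}}$. This requires a genuinely Sobolev (not just measurable) choice of the trivialisation of $\phi^*P$ and of the phase $\theta_0$, the sharp Hodge/elliptic estimate for $\eta$ at the correct exponent, and --- in the range $9/4\le p<3$ --- a precise reconciliation of the Lorentz exponents arising from $\eta\in W^{1,p/2}(\mathbb S^3)$ with the $\tfrac43$-gain of the extension and the Lipschitz push-down by $H$; it is this reconciliation that pins down the lower threshold $p\ge 9/4$.
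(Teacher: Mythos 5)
Your overall route matches the paper's: Hopf lift of $\phi$ to $\tilde\phi:\mathbb S^3\to\mathbb S^3$, construction of the vertical $1$-form $\eta$ by a div-curl/Hodge argument with $d\eta=\phi^*\omega_{\mathbb S^2}$, the norm identity $|\nabla\tilde\phi|^2=|\nabla\phi|^2+|\eta|^2$ (which is \eqref{normidentity}), extension of the lift by harmonic extension plus a projection trick, and finally push-down by $H$ (which is Lipschitz). Working on $\mathbb S^3$ with Gaffney instead of on $\mathbb R^m$ via stereographic projection, as the paper does, is a cosmetic variant.

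However, there is a genuine gap at the critical exponent $p=3$. You propose to extend $\tilde\phi\in W^{1,3}(\mathbb S^3,\mathbb S^3)$ by invoking Theorem~B$'$. That theorem supplies a bound $\|\nabla\tilde u\|_{L^{4,\infty}}\le f_1(\|\nabla\tilde\phi\|_{L^3})$ where $f_1$ is the \emph{exponential} function of Theorem B$''$ (see \eqref{eq:estimate}). Composed with the quadratic control of $\|\nabla\tilde\phi\|_{L^3}$ by $\|\nabla\phi\|_{L^3}$ from Step (i), this yields a bound that grows exponentially in a power of $\|\nabla\phi\|_{L^3}$, \emph{not} the quadratic bound $C\|\phi\|_{W^{1,3}}(1+\|\phi\|_{W^{1,3}})$ asserted in Theorem D. The polynomial bound is obtained instead by a one-dimension-up analogue of Prop.~\ref{hltrick}: take the harmonic extension of $\tilde\phi\in W^{1,3}(\mathbb S^3,\mathbb R^4)$ (lying in $W^{1,4}(B^4,\mathbb R^4)\subset W^{1,(4,\infty)}$), then apply the Lorentz-space projection $\pi_a$ with an interpolation/averaging argument to choose a good center $a\in B^4_{1/2}$, producing $\tilde u\in W^{1,(4,\infty)}(B^4,\mathbb S^3)$ with \emph{linear} control by $\tilde\phi$. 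Indeed, the whole motivation emphasized in Section~\ref{ssec:lifting} for using the Hopf lift in the $\mathbb S^2$-target cases is precisely to sidestep the heavy machinery of Theorem B and get a better (polynomial) constant; invoking Theorem B$'$ here forfeits exactly that gain. For the subcritical range $9/4\le p<3$ your plan (linear extension into $\mathbb R^4$ plus the translate-and-retract projection of Prop.~\ref{trickhkl}) coincides with the paper's, and your concern about reconciling the integrability of $\eta$ with the target exponent is well placed; but the $p=3$ step as you wrote it does not yield the claimed estimate.
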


The same proof allows to also answer Open Problem \ref{opnolift} for $n=2<m$ for some exponents $p$:
\begin{proposition}\label{sms2}
Assume $n=2, m\geq 3$ and $\frac{3m}{m+1}\leq p<\frac{4m}{m+1}$ and consider a $\phi\in W^{1,p}(\mathbb S^m,\mathbb S^2)$. Then there exists a controlled extension $u\in W^{1,\frac{m+1}{m}p}(B^{m+1},\mathbb S^2)$ with
\[
  \|u\|_{W^{1,\frac{m+1}{m}p}(B^4,\mathbb S^2)}\leq C\|\phi\|_{W^{1,p}(\mathbb S^3,\mathbb S^2)}(1+\|\phi\|_{W^{1,p}(\mathbb S^3,\mathbb S^2)}).
 \]
\end{proposition}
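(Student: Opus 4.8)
The plan is to run the same scheme as for Theorem~D: replace the $\mathbb S^2$–valued extension problem by an $\mathbb S^3$–valued one by means of a controlled Hopf lift, solve the latter with the projection trick of Section~\ref{ssec:hardtlintrick}, and then push the resulting extension back down to $\mathbb S^2$ by post–composing with the Hopf map $H\colon\mathbb S^3\to\mathbb S^2$. The reason lifting helps is exactly the one pointed out in the introduction: for target $\mathbb S^2$ (i.e. $n=2$) the projection trick only yields controlled extensions when $p<\frac{3m}{m+1}$, whereas for the lifted, $\mathbb S^3$–valued map ($n=3$) the admissible range becomes $p<\frac{4m}{m+1}$, which is precisely the hypothesis. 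For $p<\frac{3m}{m+1}$ no lift is needed and the statement follows by applying the projection trick directly to $\phi$, so the interesting range is the stated one.

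In detail, I would argue as follows. First, by the controlled Hopf–lift construction underlying Theorem~C (Section~\ref{sec:hopflift}), applied here over the domain $\mathbb S^m$, one produces $\tilde\phi\in W^{1,p}(\mathbb S^m,\mathbb S^3)$ with $H\circ\tilde\phi=\phi$ and
\[
\|\nabla\tilde\phi\|_{L^p(\mathbb S^m)}\le C\,\|\nabla\phi\|_{L^p(\mathbb S^m)}\bigl(1+\|\nabla\phi\|_{L^p(\mathbb S^m)}\bigr).
\]
Second, since $\tilde\phi$ is $\mathbb S^3$–valued and $p<\frac{4m}{m+1}$, the projection trick of Section~\ref{ssec:hardtlintrick} applies: take the mollified–cone (Hardt–Lin) $\mathbb R^4$–valued extension $v\in W^{1,q}(B^{m+1},\mathbb R^4)$ of $\tilde\phi$ with $q:=\frac{m+1}{m}p$, which is controlled linearly in $\|\tilde\phi\|_{W^{1,p}(\mathbb S^m)}$, and then post–compose $v$ with a generic retraction $y\mapsto(y-a)/|y-a|$; the average over $a$ in a fixed ball of $\int_{B^{m+1}}|\nabla v|^q/|v-a|^q$ is finite precisely because $q<4=\dim\mathbb R^4$, so for a good choice of $a$ one obtains $\tilde u\in W^{1,q}(B^{m+1},\mathbb S^3)$ with $\tilde u|_{\partial B^{m+1}}=\tilde\phi$ and $\|\tilde u\|_{W^{1,q}(B^{m+1})}\le C\,\|\tilde\phi\|_{W^{1,p}(\mathbb S^m)}$. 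Third, set $u:=H\circ\tilde u$: since $H$ is smooth, $u\in W^{1,q}(B^{m+1},\mathbb S^2)$ with $\|\nabla u\|\le\op{Lip}(H)\,\|\nabla\tilde u\|$, and $u|_{\partial B^{m+1}}=H\circ\tilde\phi=\phi$ in the sense of traces. Chaining the three estimates — the second and third linear, the first quadratic — and absorbing the (bounded below) $L^p$–norm of $\phi$ into the remaining factor yields $\|u\|_{W^{1,q}(B^{m+1},\mathbb S^2)}\le C\,\|\phi\|_{W^{1,p}(\mathbb S^m,\mathbb S^2)}\bigl(1+\|\phi\|_{W^{1,p}(\mathbb S^m,\mathbb S^2)}\bigr)$, which is the claim.

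The main obstacle is the first step, the controlled Hopf lift. A topological lift exists because the primary obstruction lies in $\phi^*H^2(\mathbb S^2;\mathbb Z)\subset H^2(\mathbb S^m;\mathbb Z)=0$ for $m\ge3$; in the Sobolev setting — and here $p\ge\frac{3m}{m+1}\ge\frac94>2$, so $\phi$ may carry genuine topological singularities — one uses in addition that the associated integer singular set is null–homologous in $\mathbb S^m$ (hence bounds, so that a Hardt–Rivière connecting procedure is available) and that there are no $2$–cycles in $\mathbb S^m$ to obstruct integrality. The genuinely delicate point is the \emph{quantitative} bound: writing the lift locally as $\tilde\phi=\sigma\,e^{i\psi}$ for a reference section $\sigma$ of the pullback circle bundle, the phase $\psi$ must absorb the pulled–back area form $\phi^*\omega_{\mathbb S^2}$, a $2$–form with $|\phi^*\omega_{\mathbb S^2}|\lesssim|\nabla\phi|^2$, and it is the primitive of this form — estimated through its $\op{div}$–$\op{curl}$/Hardy–space (Jacobian) structure and the attendant compensation estimates — that both forces the quadratic factor $(1+\|\nabla\phi\|_{L^p})$ and has to be controlled in the right scale so that $\nabla\psi$, and hence $\nabla\tilde\phi$, still lies in $L^p$. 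Once this lift and its bound are in hand, the remaining steps are routine.
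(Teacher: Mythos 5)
Your scheme coincides with the paper's: produce a controlled Hopf lift $\tilde\phi\in W^{1,p}(\mathbb S^m,\mathbb S^3)$ of $\phi$, extend $\tilde\phi$ inside $B^{m+1}$ via the projection trick (which applies precisely because $q=\frac{m+1}{m}p<4$), and post--compose with $H$. So this is not a different route; the paper's proof of Theorem~D and Proposition~\ref{sms2} is exactly this, with the lift realized by solving the Hodge system $d\eta=\phi^*\omega_{\mathbb S^2}$, $d^*\eta=0$ and invoking the norm identity $|D\tilde\phi|^2=|\tilde\eta|^2+|D\phi|^2$.

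Where your account of the ``main obstacle'' drifts: since $p\geq\frac{3m}{m+1}\geq\frac94>2$, the pulled--back area form $\phi^*\omega_{\mathbb S^2}$ is in $L^{p/2}$ with $p/2>1$, so the Hodge system for $\eta$ is solved by ordinary $L^{p/2}$ Calder\'on--Zygmund theory. There is no need for $\op{div}$--$\op{curl}$/Hardy--space compensation here (that device belongs to the critical case $p=2$ of Theorem~C), nor for any Hardt--Rivi\`ere connecting procedure or discussion of integer singular sets; the topological triviality of the pullback bundle is handled implicitly by $H^2(\mathbb S^m;\mathbb Z)=0$ and the PDE construction of $\eta$. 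On the other hand, the genuinely delicate point --- which your sketch glosses over with ``controlled in the right scale so that $\nabla\psi$ still lies in $L^p$'' --- is the exponent count. Calder\'on--Zygmund gives $\nabla\eta\in L^{p/2}$, and the norm identity then needs $\eta\in L^p$; Sobolev embedding $W^{1,p/2}(\mathbb S^m)\hookrightarrow L^{q}$ yields $q=\frac{mp}{2m-p}$, and the inclusion $L^q\hookrightarrow L^p$ on the compact $\mathbb S^m$ requires $q\geq p$, i.e.\ $p\geq m$; but in the stated range $p<\frac{4m}{m+1}\leq m$. This is exactly the step the paper's proof invokes (``noting that in dimension $m\geq p$ there holds $W^{1,p/2}\hookrightarrow L^{mp/(2m-p)}\hookrightarrow L^p$''), and it deserves your explicit attention rather than a heuristic, because it is the one place the argument is not routine elliptic bookkeeping. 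Everything else in your proposal (the mollified--cone vs.\ harmonic extension, the averaged projection center, the Lipschitz post--composition with $H$) matches the paper and is fine.
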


\subsection{Ingredients used in the construction of $W^{1,(4,\infty)}(B^4,\mathbb S^3)$ extensions}
The starting new idea was to the use of implicit function theorems and of a limit on the integrability exponent as done in \cite{Uhl1} for extension result. The procedure of Appendix \ref{sec:Uhlenbeck} is generalizable to other contexts with no new ingredients, at least as long as a Lie group structure is present.\\

For the implicit function theorems above we needed here a new product estimate valid in Sobolev spaces, which is presented in Appendix \ref{sec:product}, extending partially the results of \cite{brezismir}, cfr. \cite{runstsickel} and \cite{triebel}.\\

The second idea was to use $L^{(4,\infty)}$ functions such that the $L^4$-estimate would fail just near a controlled number of points. Such singular points (where ``singular'' is meant with respect to the $L^4$ estimates) are introduced via Lemma \ref{intest} and Theorem \ref{smallcest}.\\

The uniform $L^{(4,\infty)}$-control is obtainable just in the case where the boundary value has no large energy ``hot spots''. To deal with the case where energy concentrates we use two tools which are available in the particular case of $\mathbb S^3\simeq SU(2)$: (1) the group operation of $SU(2)$, which gives a continuous product on $W^{1,3}(X, \mathbb S^3)$; (2) the M\"obius group of $\mathbb S^3$, coupled with the conformal invariance of the $L^3$-norm of the gradient on $\mathbb S^3$.\\

Under a balancing condition on the boundary value $\phi$ we can write $\phi=\phi_1\phi_2$ where the product is taken in $SU(2)$, and the energies of $\phi_i, i=1,2$ are strictly less than that of $\phi$, allowing an induction on the energy. If the balancing is not valid, we apply a M\"obius transformation $F_v$ to $\mathbb S^3$ and either reduce to a balanced situation for $F_v\circ \phi$ and for some $v$ or provide a substitute $v\in B^4\mapsto \int_\mathbb S^3\phi\circ F_v$ to the harmonic extension of $\phi$, to which we can now apply the projection trick. The natural parameterization of the M\"obius group of $\mathbb S^3$ via vectors in $B^4$ fits very well in this setting, and we were inspired to use it by the similar use of it in \cite{marquesneves}.

\subsection{Plan of the paper}
Section \ref{sec:survey} contains a list of positive and negative results concerning phenomena parallel to ours, showing that our results are optimal. Section \ref{sec:hopflift} contains the proof of Theorem C. In Section \ref{sec:extw14} we prove Theorem B and in Section \ref{sec:controlledcoul} we prove the Theorem A. Appendix \ref{sec:Uhlenbeck} deals with our new ``extension'' version of Uhlenbeck's gauge construction and in Appendix \ref{sec:product} we prove the needed new product inequality. Appendix \ref{sec:moeblemmas} contains computations and notation for the M\"obius groups of $B^4$ and $\mathbb S^3$.

\section{Controlled and uncontrolled nonlinear Sobolev extensions}\label{sec:survey}
\noindent
Classical Sobolev Space theory features optimal extension theorems in natural trace norms. For example if $\Omega\subset \mathbb R^n$ is a bounded smooth domain and $u:\partial\Omega\to \mathbb R$ is a $W^{1,n-1}$-function then there exists an extension $\bar u:\Omega\to\mathbb R$ such that $\bar u \in W^{1,n}$ and the estimate 
$$
\|\bar u\|_{W^{1,n}}\leq C\|u\|_{W^{1,n-1}}
$$
holds (with $C$ independent of $u$). This extension theorem is optimal in the sense that for dimensions $n>2$ the natural trace operator $\bar u\in W^{1,n}(\Omega)\mapsto \bar u|_{\partial \Omega}$ sends $W^{1,n}$ to the optimal space $W^{1-\frac{1}{n},n}$ (see \cite{tartar} chapter 40 for the natural appearance of this space), and we have the optimal Sobolev continuous embedding $W^{1-\frac{1}{n},n}\to W^{1,n-1}$ (see \cite{tartar}) which brings us back to the original space. A similar result still holds if we replace the codomain $\mathbb R$ by $\mathbb R^m$.\\

However for $n=2$ the space $W^{1,1}(\mathbb S^1,\mathbb S^1)$ does not continuously embed in $H^{1/2}(\mathbb S^1,\mathbb S^1)$, making the above reasoning less poignant, see Sec. \ref{ssec:1dext}.\\

A possible construction of $\bar u$ can be done by imitating the following model valid for $\Omega=\mathbb R^n_+:=\{(x_1,\ldots,x_n)| x_n\geq 0\}$: 
$$
\bar u(x_1,\ldots,x_{n-1},\epsilon):=(\rho_\epsilon * u)(x_1,\ldots, x_{n-1}),
$$
where $\rho_\epsilon$ is a usual family of radial smooth compactly supported mollifiers.\\

An equivalent construction of $\bar u$ in terms of function spaces is by harmonic extension. The optimal result is the following
\begin{proposition}[harmonic extension, cfr. \cite{GGS} Ch. 10]\label{harmext}
 Assume $q>1$ and $u\in W^{1-\frac{1}{q},q}(\partial B^{m+1},\mathbb R^{n+1})$. Then there exists a harmonic extension $\bar u\in W^{1,q}(B^{m+1},\mathbb R^{n+1})$ such that
 \[
  \|\bar u\|_{W^{1,q}(B^{m+1},\mathbb R^{n+1})}\leq C_{m,n,q}\|u\|_{W^{1-\frac{1}{q},q}(\partial B^{m+1},\mathbb R^{n+1})}.
 \]
\end{proposition}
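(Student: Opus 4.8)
The plan is to take $\bar u$ to be the Poisson integral of $u$, for which harmonicity and the trace condition are automatic, and to prove the estimate directly from the explicit Poisson kernel. Since the target $\mathbb{R}^{n+1}$ is linear the statement splits into its scalar components, so I would work with $u\in W^{1-1/q,q}(\partial B^{m+1},\mathbb{R})$ and set
\[
\bar u(x)=\int_{\partial B^{m+1}}P(x,\xi)\,u(\xi)\,d\sigma(\xi),\qquad P(x,\xi)=c_m\,\frac{1-|x|^2}{|x-\xi|^{m+1}},
\]
which is harmonic with trace $u$. The $L^q$ part of the norm is cheap: as $P(x,\cdot)\,d\sigma$ is a probability measure on $\partial B^{m+1}$, Jensen gives $|\bar u(x)|^q\le\int P(x,\xi)|u(\xi)|^q\,d\sigma(\xi)$, and integrating in $x$ and using $\sup_\xi\int_{B^{m+1}}P(x,\xi)\,dx<\infty$ (the integrand is $\lesssim|x-\xi|^{-m}$, which is integrable on $B^{m+1}$) yields $\|\bar u\|_{L^q(B^{m+1})}\le C\|u\|_{L^q(\partial B^{m+1})}\le C\|u\|_{W^{1-1/q,q}}$. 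So everything reduces to the gradient bound $\|\nabla\bar u\|_{L^q(B^{m+1})}\le C\|u\|_{W^{1-1/q,q}(\partial B^{m+1})}$.

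For the gradient I would exploit that constants are harmonic, hence $\int_{\partial B^{m+1}}\nabla_x P(x,\xi)\,d\sigma(\xi)=0$; writing $\xi_x:=x/|x|$ this gives
\[
\nabla\bar u(x)=\int_{\partial B^{m+1}}\nabla_x P(x,\xi)\,\big(u(\xi)-u(\xi_x)\big)\,d\sigma(\xi),
\]
so that only differences of $u$ enter. Differentiating the kernel and using $1-|x|^2\le 2(1-|x|)\le 2|x-\xi|$ one gets $|\nabla_x P(x,\xi)|\le C|x-\xi|^{-(m+1)}$. Parametrizing the ball near its boundary by $x=(1-t)\omega$, $\omega\in\partial B^{m+1}$, $t\in(0,\tfrac12)$ — so that $\xi_x=\omega$, $dx\simeq dt\,d\sigma(\omega)$, and $|x-\xi|\simeq t+|\omega-\xi|$ — the desired bound becomes the weighted Hardy-type inequality
\[
\int_0^{1/2}\!\!\int_{\partial B^{m+1}}\bigg(\int_{\partial B^{m+1}}\frac{|u(\xi)-u(\omega)|}{(t+|\omega-\xi|)^{m+1}}\,d\sigma(\xi)\bigg)^{\!q}d\sigma(\omega)\,dt\ \le\ C\,[u]_{W^{1-1/q,q}(\partial B^{m+1})}^{q},
\]
where $[u]_{W^{1-1/q,q}}^{q}=\iint|u(\xi)-u(\omega)|^q|\xi-\omega|^{-(m+q-1)}\,d\sigma(\xi)\,d\sigma(\omega)$; the complementary region $|x|\le\tfrac12$ contributes only $\lesssim\|u\|_{L^q(\partial B^{m+1})}^q$ since there $|\nabla_x P|$ is bounded, and is harmless.

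This inequality is the one genuinely analytic step, and I expect it to be the main obstacle. I would prove it by splitting the inner integral according to $|\omega-\xi|\le t$ or $|\omega-\xi|>t$. In the first region $(t+|\omega-\xi|)\simeq t$, and a single H\"older inequality in $\xi$ on the cap $\{|\omega-\xi|\le t\}$ (using $q>1$) followed by Tonelli to integrate in $t$ produces exactly the exponent $m+q-1$. In the second region a single H\"older split of the kernel is only \emph{borderline}, so one decomposes $\{|\omega-\xi|>t\}$ into dyadic shells $|\omega-\xi|\simeq 2^{-j}$, estimates each shell by H\"older, and closes the sum with a discrete Hardy inequality (introducing an auxiliary exponent $\delta\in(0,1/q)$); the $t$-integral, which puts weight $\simeq 2^{-k}$ on the scale $t\simeq2^{-k}$, supplies precisely the extra power of $|\omega-\xi|$ needed to reach $m+q-1$. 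An alternative route that outsources this estimate is to first invoke the classical trace theorem to obtain \emph{some} extension $v\in W^{1,q}(B^{m+1})$ with $\|v\|_{W^{1,q}}\le C\|u\|_{W^{1-1/q,q}}$; then, $\bar u$ being the Poisson extension of $u$, the difference $v-\bar u$ lies in $W^{1,q}_0(B^{m+1})$ and solves $\Delta(v-\bar u)=\Delta v$ weakly, so the Calder\'on--Zygmund isomorphism $\Delta\colon W^{1,q}_0(B^{m+1})\to W^{-1,q}(B^{m+1})$ gives $\|v-\bar u\|_{W^{1,q}}\le C\|\Delta v\|_{W^{-1,q}}\le C\|\nabla v\|_{L^q}$, hence $\|\bar u\|_{W^{1,q}}\le C\|v\|_{W^{1,q}}$. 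Since the surjectivity half of the trace theorem is itself proved by flattening $\partial B^{m+1}$ and running the half-space analogue of the Poisson-kernel computation, this route rests on the same core estimate.
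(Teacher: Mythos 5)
The paper does not actually give a proof of this proposition; it simply cites Gazzola--Grunau--Sweers, Chapter 10. So there is no in-paper argument to compare with, and your proposal stands as a self-contained complement. Your argument is correct. The cancellation trick $\nabla\bar u(x)=\int_{\partial B}\nabla_x P(x,\xi)(u(\xi)-u(x/|x|))\,d\sigma(\xi)$ (using $\int P(x,\cdot)\,d\sigma\equiv 1$), the kernel bound $|\nabla_x P|\lesssim|x-\xi|^{-(m+1)}$, and the boundary-layer coordinates $x=(1-t)\omega$ with $|x-\xi|\simeq t+|\omega-\xi|$ are exactly right, and they do reduce the problem to the stated weighted Hardy-type inequality. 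I checked both halves: on $\{|\omega-\xi|\le t\}$ H\"older on the cap gives $t^{-q-m}\int_{|\omega-\xi|\le t}|u(\xi)-u(\omega)|^q$, and the $t$-integral produces $|\omega-\xi|^{1-q-m}=|\omega-\xi|^{-(m+q-1)}$; on $\{|\omega-\xi|>t\}$ the dyadic decomposition with an auxiliary exponent $\delta\in(0,1/q)$ closes by a discrete Hardy inequality, recovering the same exponent after summation in the dyadic scale of $t$. Two small points worth being explicit about when you write this up: (i) to assert that the trace of $\bar u$ equals $u$ in $W^{1-1/q,q}(\partial B^{m+1})$, rather than merely that non-tangential limits agree a.e., you should first prove the estimate for smooth $u$ (for which $\bar u$ is smooth up to the boundary) and then pass to the limit using continuity of the trace operator; and (ii) in your alternative route via the Calder\'on--Zygmund isomorphism, the step ``$v-\bar u\in W^{1,q}_0$'' presupposes $\bar u\in W^{1,q}$, which is what one is trying to prove, so this route likewise needs a preliminary density step before the comparison can be made.
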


By Sobolev embedding we have the controlled inclusion $W^{1,p}\hookrightarrow W^{1-\frac{1}{q},q}$ on an $m$-dimensional bounded open domain (or a compact manifold like $\partial B^{m+1}$) for $q\leq\frac{m+1}{m}p$, therefore this $q$ is the largest exponent where we can hope to have a control for the extension.\\

If $u$ is a constrained function with values in a subset of $\mathbb R^{n+1}$ (e.g. a curved $n$-dimensional submanifold like $\mathbb S^n$) then averaging even on a very small scale could push the values of $\bar u$ quite far from the constraint obeyed by $u$. This happens in particular for Sobolev exponents making the dimension ``supercritical'', i.e. exponents such that $W^{1,q}(B^{m+1})$ is not constituted of continuous functions. We pass to describe some cases where directly projecting back to $\mathbb S^n$ does not destroy the norm control of Prop. \ref{harmext}(harmonic extension).

\subsection{Projection from a well-chosen center}\label{ssec:hardtlintrick}
We present in this section a trick which probably appeared for the first time in relation to nonlinear Sobolev extensions in \cite{hardtkindlin} and \cite{hardtlin}. For a Lorentz space version cfr. Prop. \ref{hltrick}(projection trick 2).

\begin{proposition}[projection trick]\label{trickhkl}
If $f\in W^{1,q}(\Omega,B^{n+1})$ with $q<n+1$ and $\Omega$ is a bounded open simply connected domain of $\mathbb R^{m+1}$ then there exists $a\in B_{1/2}^{n+1}$ and a constant $C$ depending only on $q,m,n$ such that if $f_a(x)=\pi_a(f(x))$ where $\pi_a:B^{n+1}\setminus \{a\}\to \mathbb S^n$ is the projection which is constant along the segments $[a,\omega],\omega\in \mathbb S^n$, then
$$
\|f_a\|_{W^{1,q}(\Omega, \mathbb S^n)}\leq C\|f\|_{W^{1,q}(\Omega, B^{n+1})}.
$$
\end{proposition}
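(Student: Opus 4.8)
The plan is to average over the center $a$ and show that for most choices of $a$ the projection $\pi_a\circ f$ has controlled energy. Concretely, I would estimate the integral $\int_{B^{n+1}_{1/2}}\|f_a\|_{W^{1,q}(\Omega,\mathbb S^n)}^q\,da$ from above by a constant multiple of $\|f\|_{W^{1,q}(\Omega,B^{n+1})}^q$, and then conclude by a mean value (pigeonhole) argument that there exists at least one admissible $a\in B^{n+1}_{1/2}$ for which the pointwise bound $\|f_a\|_{W^{1,q}(\Omega,\mathbb S^n)}^q\leq C\|f\|_{W^{1,q}(\Omega,B^{n+1})}^q$ holds, with $C$ the total mass divided by $|B^{n+1}_{1/2}|$ times the constant from the integral estimate.

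The $L^q$-part of the norm is trivial since $|f_a|=1$ everywhere and $\Omega$ is bounded, so the only real content is the gradient term. Here one uses the chain rule: $|\nabla f_a(x)| = |D\pi_a(f(x))\,\nabla f(x)| \le \|D\pi_a(f(x))\|_{op}\,|\nabla f(x)|$, and an elementary computation of the radial projection $\pi_a$ from the point $a$ gives the pointwise operator-norm bound $\|D\pi_a(y)\|_{op}\le \dfrac{C_n}{|y-a|}$ for $y\in \overline{B^{n+1}}$, $a\in B^{n+1}_{1/2}$ (the singularity is exactly of order one at $y=a$). Therefore
\[
\int_{B^{n+1}_{1/2}}\int_\Omega |\nabla f_a(x)|^q\,dx\,da \le C_n^q\int_\Omega |\nabla f(x)|^q\left(\int_{B^{n+1}_{1/2}}\frac{da}{|f(x)-a|^q}\right)dx,
\]
and the inner integral is bounded uniformly in $x$ precisely because $q<n+1$: for any fixed point $p\in \overline{B^{n+1}}$, $\int_{B^{n+1}_{1/2}}|p-a|^{-q}\,da \le \int_{B^{n+1}_{3/2}(0)}|a|^{-q}\,da =: K_{n,q}<\infty$ since the exponent of the singularity is $q<n+1$. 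This yields $\int_{B^{n+1}_{1/2}}\|\nabla f_a\|_{L^q(\Omega)}^q\,da \le C_n^q K_{n,q}\,\|\nabla f\|_{L^q(\Omega)}^q$, and Fubini/pigeonhole then produces the desired $a$.

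The main obstacle — or rather the point requiring care — is not the estimate itself but ensuring that $f_a$ really is a $W^{1,q}$ map into $\mathbb S^n$ for the selected $a$: one needs that for a.e.\ $a$ the composition $\pi_a\circ f$ lies in $W^{1,q}$ and that the chain rule is justified. Since $\pi_a$ is locally Lipschitz on $B^{n+1}\setminus\{a\}$ and the integral bound above shows $\nabla(\pi_a\circ f)\in L^q$ for a.e.\ $a$, the composition is in $W^{1,q}_{loc}$ away from the (measure-zero, by the area/coarea argument) preimage $f^{-1}(a)$; standard removability for $W^{1,q}$ across sets of vanishing capacity, together with the global $L^q$ gradient bound, upgrades this to $f_a\in W^{1,q}(\Omega,\mathbb S^n)$. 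Simple connectedness of $\Omega$ is only used implicitly to guarantee $f_a$ is a bona fide map without the monodromy issues that would arise in the topological discussion later; for the norm estimate it plays no role. Finally, choosing the center from $B^{n+1}_{1/2}$ rather than all of $B^{n+1}$ is what keeps $K_{n,q}$ and hence $C$ bounded independently of $f$.
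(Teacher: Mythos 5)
Your proposal is correct and follows essentially the same approach as the paper: both average $\|\nabla f_a\|_{L^q}^q$ over $a\in B^{n+1}_{1/2}$, use the pointwise bound $|\nabla f_a|\lesssim |\nabla f|/|f-a|$, observe that the inner integral $\int_{B^{n+1}_{1/2}}|y-a|^{-q}\,da$ is bounded uniformly precisely when $q<n+1$, and conclude by pigeonhole. Your added remarks on justifying the chain rule and membership of $f_a$ in $W^{1,q}$ address a technical point the paper leaves implicit, but do not change the argument.
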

\begin{proof}
 We have just to estimate the gradient of $f_a$ in terms of that of $f$ since the functions themselves are anyways bounded and $\Omega$ is assumed of finite measure. We first note that since $a\in B^{n+1}_{1/2}$ is away from the boundary of $B^{n+1}$, we have the pointwise estimate
$$
|\nabla f_a|(x)\lesssim\frac{|\nabla f|(x)}{|f(x)-a|},
$$
where the implicit constant depends only on $n$. We next consider the following ``average'' on $a$:
$$
\int_{B_{1/2}^{n+1}} \left(\int_{\Omega}|\nabla f_a|^q(x)dx\right)da\lesssim\int_\Omega|\nabla f|^q(x)\left(\int_{B^{n+1}_{1/2}}\frac{da}{|f(x)-a|^q}\right)dx.
$$
We note that the inner integral is of the form 
$$
I(y):=\int_{B_{1/2}^{n+1}}\frac{da}{|y-a|^q},
$$
and 
$$
\max_y I(y)=I(0)=C_n\int_0^{1/2}r^{n+q}dr=C_{n,q}<\infty \text{ since }q<n+1.
$$
therefore we obtain 
$$
\int_{B_{1/2}^{n+1}}\|\nabla f_a\|_{L^q}^qda\leq C_{n,q}\|\nabla f\|_{L^q}^q,
$$
and the proof is easily concluded.
\end{proof}
The above proposition together with Prop. \ref{harmext}(harmonic extension) and the remark on Sobolev exponents following it gives the following:
\begin{theorem}[corollary of the projection trick, cfr \cite{hardtlin} Thm. 6.2]\label{extsmallp}
 Let $m,n\in\mathbb N^*$. If $1\leq p<\frac{n+1}{m+1}m$ then for any $\phi\in W^{1,p}(\partial B^{m+1},\mathbb S^n)$ there exists a nonlinear extension $u\in W^{1,\frac{m+1}{m}p}(B^{m+1},\mathbb S^n)$ satisfying the control
 \[
  \|u\|_{W^{1,\frac{m+1}{m}p}(B^{m+1},\mathbb S^n)}\leq C_{m,n,p}\|\phi\|_{W^{1,p}(\partial B^{m+1},\mathbb S^n)}.
 \]
\end{theorem}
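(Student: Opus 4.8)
The plan is simply to chain the two tools just established. Given $\phi\in W^{1,p}(\partial B^{m+1},\mathbb S^n)$ with $1\le p<\frac{n+1}{m+1}m$, I would first set $q:=\frac{m+1}{m}p$ and regard $\phi$ as an $\mathbb R^{n+1}$-valued map. By the Sobolev embedding recalled right after Proposition~\ref{harmext}, the inclusion $W^{1,p}(\partial B^{m+1})\hookrightarrow W^{1-\frac1q,q}(\partial B^{m+1})$ is continuous for exactly this $q$ (the dimension count $1-\tfrac mp+\tfrac mq=1-\tfrac1q$ being equivalent to $q=\tfrac{m+1}{m}p$), so $\phi\in W^{1-\frac1q,q}(\partial B^{m+1},\mathbb R^{n+1})$ with $\|\phi\|_{W^{1-\frac1q,q}}\le C_{m,n,p}\|\phi\|_{W^{1,p}}$. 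Since $m$ is finite and $p\ge1$, we have $q=\tfrac{m+1}{m}p>1$, so Proposition~\ref{harmext} applies and produces a harmonic extension $\bar\phi\in W^{1,q}(B^{m+1},\mathbb R^{n+1})$ with
\[
\|\bar\phi\|_{W^{1,q}(B^{m+1})}\le C\,\|\phi\|_{W^{1-\frac1q,q}(\partial B^{m+1})}\le C_{m,n,p}\,\|\phi\|_{W^{1,p}(\partial B^{m+1})}.
\]

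Next I would check that $\bar\phi$ actually takes values in the closed ball $\overline{B^{n+1}}$, which is where the constraint $|\phi|\equiv1$ enters. Writing $\bar\phi$ as the Poisson integral $\bar\phi(x)=\int_{\partial B^{m+1}}P(x,y)\,\phi(y)\,d\sigma(y)$ with $P\ge0$ and $\int_{\partial B^{m+1}}P(x,y)\,d\sigma(y)=1$, one gets $|\bar\phi(x)|\le\int P(x,y)\,|\phi(y)|\,d\sigma(y)=1$ for every $x\in B^{m+1}$ (equivalently, each component of $\bar\phi$ is harmonic, hence $|\bar\phi|^2$ is subharmonic and $\le1$ by the maximum principle). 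Thus $\bar\phi\in W^{1,q}(B^{m+1},\overline{B^{n+1}})$ on the bounded, open, simply connected domain $\Omega=B^{m+1}$, and the strict inequality $q=\tfrac{m+1}{m}p<\tfrac{m+1}{m}\cdot\tfrac{n+1}{m+1}m=n+1$ holds by hypothesis. Hence Proposition~\ref{trickhkl} (projection trick) applies and yields a point $a\in B^{n+1}_{1/2}$ such that $u:=\pi_a\circ\bar\phi\in W^{1,q}(B^{m+1},\mathbb S^n)$ with
\[
\|u\|_{W^{1,q}(B^{m+1},\mathbb S^n)}\le C_{m,n,q}\,\|\bar\phi\|_{W^{1,q}(B^{m+1},\overline{B^{n+1}})}\le C_{m,n,p}\,\|\phi\|_{W^{1,p}(\partial B^{m+1},\mathbb S^n)}.
\]

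It then remains to confirm that $u$ extends $\phi$. The center $a$ lies in $B^{n+1}_{1/2}$, hence stays a fixed distance away from $\mathbb S^n$, so $\pi_a$ is smooth (in particular Lipschitz) on a neighborhood of $\mathbb S^n$; therefore taking boundary traces commutes with the composition, giving $u|_{\partial B^{m+1}}=\pi_a\circ(\bar\phi|_{\partial B^{m+1}})=\pi_a\circ\phi$. Since $\pi_a$ is by definition the identity on $\mathbb S^n$ (it fixes each $\omega\in\mathbb S^n$, being constant along the segment $[a,\omega]$) and $\phi$ is $\mathbb S^n$-valued, we conclude $u|_{\partial B^{m+1}}=\phi$. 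With $q=\tfrac{m+1}{m}p$ this gives exactly the claimed extension and estimate.

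There is no deep difficulty left once Propositions~\ref{harmext} and~\ref{trickhkl} are in hand; the only points requiring genuine care are (i) that the harmonic extension of an $\mathbb S^n$-valued map stays inside $\overline{B^{n+1}}$ — so that the projection trick is even applicable — which is the Poisson-kernel (or subharmonicity) step above, and (ii) the strict inequality $q<n+1$, which is precisely what the hypothesis $p<\tfrac{n+1}{m+1}m$ guarantees and what makes the kernel integral $I(y)$ in the proof of Proposition~\ref{trickhkl} finite. One could replace the harmonic extension by a mollification-type extension $\bar\phi(x',\epsilon)=(\rho_\epsilon\ast\phi)(x')$, which still takes values in $\overline{B^{n+1}}$ by averaging, but the harmonic (Poisson) extension keeps the norm bookkeeping cleanest.
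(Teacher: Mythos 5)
Your proof is correct and is exactly the argument the paper intends (the authors simply state that Theorem~\ref{extsmallp} follows from Proposition~\ref{harmext}, the Sobolev embedding remark, and Proposition~\ref{trickhkl}, without writing out the details). You have supplied the missing bookkeeping faithfully, including the two points the paper leaves implicit: that the Poisson extension stays in $\overline{B^{n+1}}$ by the maximum principle, and that the hypothesis $p<\frac{n+1}{m+1}m$ is precisely what makes $q<n+1$ so the projection trick applies.
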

\begin{rmk}
 Note that from the same ingredients we obtain also the stronger estimate where for $q:=\frac{m+1}{m}p<m$ the weaker space $W^{1-\frac{1}{q},q}(\partial B^{m+1},\mathbb S^n)$ replaces $W^{1,p}(\partial B^{m+1},\mathbb S^n)$. This was done in \cite{bethueldemengel} and \cite{hardtlin}. We stated Theorem \ref{extsmallp} as above to emphasize the connection with our Theorems B and C. Indeed taking $m=n$ we see that those Theorems cover the critical exponent $p=n$, for which the projection trick stops working.
\end{rmk}
\subsection{Large integrability exponents}\label{ssec:largeintext}
We now consider functions in $W^{1,p}(\mathbb S^m,\mathbb S^n)$ with $p>m$. The space $C^{0,1-m/p}(\mathbb S^m,\mathbb S^n)$ continuously embeds in this space. The candidate extension space $W^{1,\frac{m+1}{m}p}(B^{m+1},\mathbb S^n)$ is made of $C^{0,1-m/p}$-functions as well. As described in Sec. \ref{ssec:extintro}, extension problem is guaranteed to have a solution as long as $\pi_m(\mathbb S^n)=0$. This is true for $m<n$ but false for many choices of $m>n$ and for $m=n$.\\

When an extension exists i.e. for $\phi$ representing the identity of $\pi_m(\mathbb S^n)\neq 0$, a controlled extension can be constructed, based on the fact that a bound on the $C^{0,\alpha}$-norm for $\alpha>0$ implies a control on the modulus of continuity.

\subsection{Extension for maps in $W^{1,1}(\partial \mathbb S^1,\mathbb S^1)$}\label{ssec:1dext}
For maps with values in $\mathbb S^3$ we are helped by the existence of a well-behaved product structure on $\mathbb S^3$, i.e. the one which gives the identification $\mathbb S^3\simeq SU(2)$. This is enough to get the analogous result for $n=1$ as we will see now. It is however well-known (see \cite{hatcherVB} 2.3) that this is a very unusual case: a group operation exists on $\mathbb S^k$ only for $k=1,3$.\\

We can state a similar extension problem in the $1$-dimensional case. This kind of controlled extension result is related to the recent work on Ginzburg-Landau functionals in \cite{serftice}.\\

Here the main structural ingredients present for $\mathbb S^3$ are again present: namely, we have a group operation on $\mathbb S^1$ (in this case it is even an abelian group) and a M\"obius structure on $D^2$, restricting to one on $\mathbb S^1$. We follow the strategy of proof described in Sec. \ref{ssec:extintro}. The result is:
\begin{theorem}[$1$-dimensional version of the extension]\label{1dcase}
 There exists a function $g:\mathbb R^+\to\mathbb R^+$ with the following property. If $\phi\in W^{1,1}(\mathbb S^1,\mathbb S^1)$ then there exists $u\in W^{1,(2,\infty)}(D^2,\mathbb S^1)$ with $u|_{\partial D^2}=\phi$ in the sense of traces and we have the norm control
$$
\|u\|_{W^{1,(2,\infty)}(D^2,\mathbb S^1)}\leq g(\|\phi\|_{W^{1,1}(\mathbb S^1,\mathbb S^1)}).
$$
\end{theorem}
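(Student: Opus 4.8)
The plan is to follow the strategy already announced in Section \ref{ssec:extintro}: combine the abelian group structure on $\mathbb S^1$ with the M\"obius group of $D^2$, running an induction on the $W^{1,1}$-energy of the boundary datum. First I would normalize: given $\phi\in W^{1,1}(\mathbb S^1,\mathbb S^1)$, write $\phi = e^{i\psi}$ with $\psi$ a (possibly multivalued, but with integer period) lifting, so that $\|\nabla\phi\|_{L^1(\mathbb S^1)} = \|\psi'\|_{L^1(\mathbb S^1)}$ and $\deg\phi = \frac{1}{2\pi}\int_{\mathbb S^1}\psi'$. The degree is the only topological obstruction, and since $\mathbb S^1$ is a group we can peel it off: fixing a base map $\omega_d(\theta)=e^{id\theta}$ of degree $d=\deg\phi$, the product $\phi\cdot\omega_d^{-1}$ has degree $0$, hence is a genuine $W^{1,1}(\mathbb S^1,\mathbb S^1)$-map whose lifting is single-valued. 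The map $\omega_d$ extends explicitly to $D^2$ (e.g.\ the $d$-th power map $z\mapsto (z/|z|)^d$, or better a radially regularized version) with $\|\nabla(\text{ext }\omega_d)\|_{L^{2,\infty}(D^2)}\lesssim |d| \lesssim \|\nabla\phi\|_{L^1}$, since $|d|\le \frac{1}{2\pi}\|\nabla\phi\|_{L^1}$. So it suffices to extend the degree-zero factor and multiply the two extensions in $\mathbb S^1$ (using that pointwise multiplication is continuous on these spaces, as recorded in the introduction).

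For the degree-zero piece I would split into two regimes according to whether the energy is ``concentrated'' or ``balanced,'' exactly as in the $\mathbb S^3$ case. In the balanced regime one bisects: if no arc of $\mathbb S^1$ carries more than, say, two thirds of the energy, one writes $\phi = \phi_1\phi_2$ with each $\phi_i$ supported (in the sense of being nonconstant) on a half-circle's worth of energy, so $\|\nabla\phi_i\|_{L^1} \le (1-\delta)\|\nabla\phi\|_{L^1}$ for a fixed $\delta>0$; then the induction hypothesis applied to each $\phi_i$ and the product structure close the loop. If instead the energy concentrates near a point $p\in\mathbb S^1$, I would apply a M\"obius transformation $F_v$ of $D^2$ (parametrized by $v\in D^2$) pulling mass away from $p$; the key point is the conformal invariance in dimension two, which for the $L^1$-norm of a gradient on $\mathbb S^1$ is exactly the $1$-dimensional statement that $\|\nabla(\phi\circ F_v)\|_{L^1(\mathbb S^1)} = \|\nabla\phi\|_{L^1(\mathbb S^1)}$ (length integrals of $|\psi'|$ are reparametrization-invariant). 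After the M\"obius normalization either we have reduced to the balanced case for $\phi\circ F_v$, or the averaged map is close enough to a constant that the projection trick (Proposition \ref{trickhkl} / Proposition \ref{hltrick}) applies directly to a harmonic-type extension; in the latter case one lands in $W^{1,(2,\infty)}(D^2)$ with the control of Proposition \ref{harmext} combined with the projection estimate, and one finally post-composes with $F_v^{-1}$, which is a fixed bi-Lipschitz-on-compacts diffeomorphism of $D^2$ whose effect on the $L^{2,\infty}$ gradient norm is bounded by a constant depending only on how close $v$ is to $\partial D^2$ — and that closeness is itself controlled by the energy.

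The main obstacle is the same one that makes Theorem B hard: controlling the M\"obius parameter $v$ quantitatively in terms of $\|\nabla\phi\|_{W^{1,1}}$, and ensuring that the Jacobian factors coming from $F_v$ and $F_v^{-1}$ do not blow up uncontrollably — this is precisely why the target space must be weakened from $W^{1,2}$ to $W^{1,(2,\infty)}$, since the composition with a conformal map degenerating toward the boundary is bounded on weak-$L^2$ but not on $L^2$. A secondary technical point is verifying that all these operations (lifting, taking degree-zero factor, bisection, M\"obius precomposition, product in $\mathbb S^1$, projection) are compatible with traces, so that the final $u$ genuinely satisfies $u|_{\partial D^2}=\phi$; here the abelian and one-dimensional nature of the problem makes each verification elementary, but they must be assembled carefully. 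The induction terminates because each non-projection step strictly decreases the energy by a fixed proportion, and below the threshold where the projection trick works we stop; tracking the resulting (super-exponential but finite) dependence gives the function $g$.
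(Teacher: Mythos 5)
Your strategy is essentially the paper's: the paper's proof sketch of Theorem \ref{1dcase} simply reruns the $\mathbb S^3\to\mathbb S^3$ machinery of Section \ref{sec:extw14} and Appendix \ref{sec:Uhlenbeck} with $(3,4)$ replaced by $(1,2)$, the biharmonic kernel of Proposition \ref{prop:moebius} replaced by the Poisson kernel, and Lemma \ref{triebelest} replaced by a $W^{1,1}\times(L^\infty\cap W^{1,2})$ product estimate. Your preliminary degree-peeling $\phi=\omega_d\phi_0$ is correct but is not in the paper's route and is also not needed for it: small $W^{1,1}$-energy forces zero degree, so the small-energy Uhlenbeck-type extension at the base of the induction handles the topology implicitly.

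The more interesting point is one you start but do not carry through: the abelian structure lets you skip the entire induction. The degree-zero factor lifts to a single-valued $\psi\in W^{1,1}(\mathbb S^1,\mathbb R)$. Extend $\psi$ harmonically to $\Psi:D^2\to\mathbb R$; then $f=\Psi+i\tilde\Psi$ is holomorphic, an integration by parts in the Cauchy representation gives
\[
f'(z)=-\frac{i}{\pi}\int_{\mathbb S^1}\frac{\psi'(\theta)}{e^{i\theta}-z}\,d\theta,
\]
and $|\nabla\Psi|=|f'|$. Since $\|(e^{i\theta}-\cdot)^{-1}\|_{L^{(2,\infty)}(D^2)}$ is finite and $\theta$-independent, Minkowski's integral inequality in the normed space $L^{(2,\infty)}$ gives $\|\nabla\Psi\|_{L^{(2,\infty)}(D^2)}\lesssim\|\psi'\|_{L^1(\mathbb S^1)}$. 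Then $u(z):=(z/|z|)^d\,e^{i\Psi(z)}$ extends $\phi$ with $\|\nabla u\|_{L^{(2,\infty)}(D^2)}\lesssim|d|+\|\psi'\|_{L^1}\lesssim\|\nabla\phi\|_{L^1(\mathbb S^1)}$, giving a \emph{linear} $g$, far better than the exponential-type bound the induction (and the paper's sketch) produces. This shortcut has no analogue for $\mathbb S^3$ because there is no scalar lifting there, which is precisely why Theorem B needs the heavy machinery; in dimension one the problem collapses, and your proposal misses that.

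One small misconception to flag: you claim that the effect of $F_v^{-1}$ on the $L^{(2,\infty)}$-gradient norm depends on how close $v$ is to $\partial D^2$, and that this closeness is in turn controlled by the energy. Neither is true, nor needed. In the paper's Proposition \ref{prop:tphigood}, Lemma \ref{lem:morepropfv} shows $|F_v'|$ is approximately constant on $F_v^{-1}(B_{1-\rho})$ with constants depending only on $\rho$ and not on $|v|$; the weak-$L^{n+1}$ norm is then preserved under precomposition by $F_v^{-1}$ up to a $v$-independent geometric factor, thanks to the Jacobian cancellation in the change of variables. No bound on $|v|$ is available, and none is required.
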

We will explain the changes which occur with respect to the proof of Theorem B (see Sec. \ref{sec:extw14}).
\begin{proof}[Sketch of proof:]
 The procedure is as in Section \ref{sec:extw14} and Appendix \ref{sec:Uhlenbeck}, we have just to replace exponents and dimensions $3,4$ with $1,2$. For the analogue of Proposition \ref{prop:moebius}(balancing $\Rightarrow$ extension) the biharmonic equation \eqref{eq:biharm} is replaced by a harmonic equation, while the resulting estimates persist. Perhaps the only main change is Lemma \ref{triebelest} of Section \ref{sec:product} changes more drastically. It should be replaced by the following product estimate valid for $f\in W^{1,1}(D^2), g\in L^\infty\cap W^{1,2}(D^2)$:
$$
\|fg\|_{W^{1,1}}\leq\|f\|_{W^{1,1}}\left(\|g\|_{L^\infty} +\|g\|_{W^{1,2}}\right)
$$
\end{proof}
We must however note that the naturality of the space $W^{1,1}(\mathbb S^1, \mathbb S^1)$ in Theorem \ref{1dcase} is less evident, since the trace space $H^{1/2}(\mathbb S^1,\mathbb S^1)$ does not continuously embed in it, unlike what happens in higher dimensions. This is seen by considering 
\[
u_\epsilon(\theta)=\exp\left(i\;\min\left\{1, \epsilon^{-1}\op{dist}_{\mathbb S^1}(\theta,[-\pi/2,\pi/2])\right\}\right).                                                                                                                                                                                                                                                \]
It is then clear that $\|\nabla u_\epsilon\|_{L^1(\mathbb S^1)}=2$ while we estimate the double integral in $\theta,\theta'$ giving the $H^{1/2}$-norm by the contribution of the regions $\theta\in [0,\pi/2], \theta'\in[\pi/2+\epsilon,\pi+\epsilon]$. Under these choices $u_\epsilon(\theta)=e^0,u_\epsilon(\theta')=e^i$ and their distance in $\mathbb S^1$ is $1$. Thus
\begin{eqnarray*}
\|u_\epsilon\|_{H^{1/2}(\mathbb S^1,\mathbb S^1)}^2&=&\int_{\mathbb S^1}\int_{\mathbb S^1}\frac{\op{dist}_{\mathbb S^1}(u_\epsilon(\theta),  u_\epsilon(\theta'))^2}{\op{dist}_{\mathbb S^1}(\theta,\theta')^2}d\theta d\theta'\\
&\leq& \int_0^1\int_0^1\frac{1}{|x+2\epsilon/\pi - y|^2}dx \;dy\\
&\lesssim&|\log\epsilon|+1.
\end{eqnarray*}

\subsection{Using controlled liftings to obtain controlled extensions}\label{ssec:lifting}
The control obtained for extensions of maps in $W^{1,3}(\mathbb S^3,\mathbb S^3)$ and $W^{1,1}(\mathbb S^1,\mathbb S^1)$ is exponential in the norms of these maps. In Section \ref{sec:hopflift} we describe an approach working for $\phi\in W^{1,2}(\mathbb S^2,\mathbb S^2)$ which is completely different than in dimensions $1,3$ and yields a faster proof and a better control. Such approach was first considered in \cite{hr1}. This is based on the existence of controlled Hopf lifts. The result is (see Corollary \ref{corollhopf}) that there exists a $L^{2,\infty}$-controlled lifting $\tilde \phi:\mathbb S^2\to \mathbb S^3$ i.e. a function such that $H\circ\tilde \phi=\phi$  where $H:\mathbb S^3\to \mathbb S^2$ is the Hopf fibration and we have the control
\[
||\nabla \tilde \phi||_{L^{2,\infty}}\leq C||\nabla \phi||_{L^2}(1+||\nabla \phi||_{L^2}).
\]
The analogous controlled lift exists also for $\phi\in W^{1,3}(\mathbb S^3,\mathbb S^2)$, whereas for $2\leq p<3$ we have a control on the $L^p$-norm of the lift instead of the $L^{p,\infty}$ one, cfr. Prop. \ref{sms2}. This lift allows to prove, along the same lines, Theorem C and Theorem D.\\

The gist of the proof is the following. Once we have the controlled lift indeed, the lifted map takes values into a sphere of a higher dimension. This allows a wider range of application to the projection trick of Prop. \ref{trickhkl}(projection trick) or of its Lorentz space analogue of Prop. \ref{hltrick}(projection trick 2).\\

After having extended the lift, re-projecting the extension to $S^2$ via the Hopf map maintains the gradient estimates. This is due to the fact that the Hopf fibration is a submersion (cfr. \eqref{normidentity}) and our lift can be taken such that also the ``vertical'' component $\eta$ is controlled.\\

The existence of nonlinear liftings has been so far very active regarding $S^1$-valued maps (see e.g. \cite{BBM}, \cite{BZ},\cite{BBM0} and the references therein). Looking also at higher dimensional analogues seems very promising in relation to extension results.
 
\subsection{Small energy extension with estimate}\label{ssec:smallenw14ext}
As for the case of curvatures over bundles with a compact Lie group, the small energy regime allows a kind of linearization of the problem and gives estimates which are better than what expected in general. We obtain in particular an estimate in $W^{1,4}$ instead of $W^{1,(4,\infty)}$ for the extension, provided that the norm of the boundary trace is small:
\begin{proposition}[see Thm. \ref{smallenergyext}]
  There is a constant $\epsilon_0>0$ and a finite constant $C$ such that if
$$
\int_{\mathbb S^3}|\nabla\phi|^3\leq\epsilon_0, \phi:\mathbb S^3\to \mathbb S^3,
$$
then there exists $u\in W^{1,4}(B^4, \mathbb S^3)$ such that
$$
u=\phi\text{ on }\partial B^4\text{ in the sense of traces and }\|\nabla u \|_{L^4(B^4)}\leq C\|\nabla\phi\|_{L^3(\mathbb S^3)}.
 $$
\end{proposition}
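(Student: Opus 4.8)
The plan is to take the harmonic (Poisson) extension $\bar u=P*\phi$ of $\phi$ and then apply the radial projection $y\mapsto y/|y|$. The only reason this naive scheme cannot be run for general boundary data (and the reason one otherwise needs the weaker space $W^{1,(4,\infty)}$, or the center-of-projection trick of Proposition~\ref{trickhkl}, which anyway degenerates exactly at the exponent $n+1=4$) is that $|\bar u|$ could come close to $0$, where $y\mapsto y/|y|$ blows up. The key observation is that in the small-energy regime this cannot happen: the harmonic extension of an $\mathbb S^3$-valued map with small $\dot W^{1,3}$-seminorm stays uniformly away from the origin, so the radial projection is a genuine Lipschitz map and loses nothing in the gradient estimate. (This is a shortcut special to the target $\mathbb S^3$; the Lie-group mechanism of Appendix~\ref{sec:Uhlenbeck}, based on an implicit function theorem, would give a less explicit but more robust proof.)

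\emph{The two soft steps.} Let $\bar u=P*\phi:B^4\to\mathbb R^4$. Since $|\phi|\equiv 1$, the function $|\bar u|^2$ is subharmonic, so $|\bar u|\le 1$ on $B^4$, and $\bar u$ is smooth in the interior. Subtracting the mean $\langle\phi\rangle$ of $\phi$ over $\mathbb S^3$ and combining the Poincaré inequality, the Sobolev embedding $W^{1,3}(\mathbb S^3)\hookrightarrow W^{3/4,4}(\mathbb S^3)$, and Proposition~\ref{harmext} (with $q=4$, so that $W^{3/4,4}=W^{1-1/q,q}$ is exactly the trace space of $W^{1,4}(B^4)$), I get
$$\|\nabla\bar u\|_{L^4(B^4)}=\|\nabla(\bar u-\langle\phi\rangle)\|_{L^4(B^4)}\le C_0\,\|\nabla\phi\|_{L^3(\mathbb S^3)}.$$
Once the lower bound $|\bar u|\ge\tfrac12$ on $B^4$ is known (next paragraph), I set $u:=\bar u/|\bar u|:B^4\to\mathbb S^3$. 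As $y\mapsto y/|y|$ is Lipschitz on $\{|y|\ge\tfrac12\}$, we have $u\in W^{1,4}(B^4,\mathbb S^3)$, its trace on $\partial B^4$ is $\phi/|\phi|=\phi$, and pointwise $|\nabla u|\le 2|\nabla\bar u|/|\bar u|\le 4|\nabla\bar u|$, whence $\|\nabla u\|_{L^4(B^4)}\le 4C_0\|\nabla\phi\|_{L^3(\mathbb S^3)}$, which is the asserted bound with $C=4C_0$.

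\emph{The heart of the matter: the uniform lower bound on $|\bar u|$.} Since $P(x,\cdot)$ is a probability density on $\mathbb S^3$ and $|\phi|\equiv 1$, one has the elementary identity
$$1-|\bar u(x)|^2=\tfrac12\int_{\mathbb S^3}\int_{\mathbb S^3}P(x,y)\,P(x,z)\,|\phi(y)-\phi(z)|^2\,d\sigma(y)\,d\sigma(z)\le 2\int_{\mathbb S^3}P(x,y)\,|\phi(y)-m_x|^2\,d\sigma(y)$$
for any constant $m_x$. I would estimate the right-hand side by writing $\zeta:=x/|x|$, $\delta:=1-|x|$, using $P(x,y)\lesssim\delta\,(\delta+\mathrm{dist}_{\mathbb S^3}(\zeta,y))^{-4}$, decomposing $\mathbb S^3$ dyadically into the geodesic caps $C_{2^k\delta}(\zeta)$, and choosing $m_x$ to be the mean of $\phi$ over $C_\delta(\zeta)$. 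On a geodesic ball of radius $r$ in $\mathbb S^3$ the \emph{scaling-critical} Poincaré inequality $\|\phi-\langle\phi\rangle_{C_r}\|_{L^2(C_r)}\lesssim r^{3/2}\|\nabla\phi\|_{L^3(C_r)}$ holds (this is the case $p=n=3$), and the cap-average differences $\langle\phi\rangle_{C_{2^k\delta}}-\langle\phi\rangle_{C_\delta}$ telescope with each jump bounded by $\|\nabla\phi\|_{L^3(\mathbb S^3)}$, contributing at most a factor $k$. Plugging these in, each annulus $C_{2^k\delta}\setminus C_{2^{k-1}\delta}$ contributes $\lesssim (2^{-k}+2^{-k}k^2)\|\nabla\phi\|_{L^3(\mathbb S^3)}^2$, and summing the convergent series gives $1-|\bar u(x)|^2\le C_1\|\nabla\phi\|_{L^3(\mathbb S^3)}^2\le C_1\epsilon_0^2$ uniformly in $x\in B^4$. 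Choosing $\epsilon_0$ with $C_1\epsilon_0^2\le 3/4$ yields $|\bar u|\ge\tfrac12$ on all of $B^4$.

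\emph{Main obstacle.} Everything is routine except the uniform lower bound $|\bar u|\ge\tfrac12$; that is where the smallness hypothesis is genuinely used and where one must be careful, because a bound on $\|\nabla\bar u\|_{L^4(B^4)}$ alone does not control $\bar u$ pointwise (dimension $4$ is critical for $W^{1,4}$). The Poisson-kernel identity above converts this into a question about the mean oscillation of $\phi$ at all scales around every point, which is then handled by the critical Poincaré inequality. (Incidentally, when $\|\nabla\phi\|_{L^3}\le\epsilon_0$ one has $|\deg\phi|\lesssim\|\nabla\phi\|_{L^3}^3<1$, so $\deg\phi=0$ and a true extension exists — consistent with, though not needed for, the construction above; for $\phi$ merely in $W^{1,p}(\mathbb S^3)$ with $p<3$ one can extend unconditionally by the projection trick, and it is precisely at the critical exponent $p=3$ that either smallness or the Lorentz weakening of Theorem~B becomes necessary.)
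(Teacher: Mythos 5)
Your proposal is correct, and it takes a genuinely different — and considerably more elementary — route than the paper's. The paper proves Theorem~\ref{smallenergyext} in Appendix~\ref{sec:Uhlenbeck} by an Uhlenbeck-style continuity method: it minimizes $\int_{B^4}|\nabla P|^2$ among $\mathbb S^3$-valued extensions of the boundary data, derives the Euler--Lagrange equation $d^*(P^{-1}dP)=0$, proves an a priori estimate via the Hodge decomposition $P^{-1}dP=dU+d^*V$ together with a new paraproduct inequality (Lemma~\ref{triebelest}), and then runs an open/closed argument on the supercritical spaces $W^{1,3+\alpha}(\mathbb S^3)$ with $\alpha\to 0^+$, the openness coming from an implicit function theorem (Lemma~\ref{ift}). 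Your scheme avoids all of this: you take the linear harmonic extension $\bar u$, establish the uniform pointwise lower bound $|\bar u|\ge\tfrac12$ on $B^4$ via the Poisson-kernel identity $1-|\bar u(x)|^2=\tfrac12\iint P(x,y)P(x,z)|\phi(y)-\phi(z)|^2$ combined with the scaling-critical Poincar\'e inequality on dyadic caps, and then project radially. I checked the details — the dyadic sum $\sum_k 2^{-k}(1+k^2)$ converges, the interior case $|x|\lesssim\tfrac12$ is covered by the global Poincar\'e since $P(x,\cdot)$ is then comparable to a constant, and the factor-of-Lipschitz loss under $y\mapsto y/|y|$ on $\{|y|\ge\tfrac12\}$ is harmless — and I see no gap. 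What each approach buys: yours is shorter and fully explicit, and isolates the real content (that small critical-norm oscillation of the boundary data forces the harmonic extension to stay near the sphere, uniformly down to the boundary), but it uses the codimension-one embedding $\mathbb S^3\subset\mathbb R^4$ and the linearity of the extension in an essential way; the paper's argument is heavier but, as you yourself remark, carries over verbatim to any compact Lie group target (the $\op{exp}$/IFT mechanism never sees the ambient Euclidean structure), which the authors need for the gauge-theoretic application. It is also worth noting that your lower bound on $|\bar u|$ is the uniform, whole-ball version of the localized observation the paper makes in Lemma~\ref{courant} (the Courant--Lebesgue analogue), where the paper only controls the distance to $\mathbb S^3$ on the boundary spheres of small balls and then re-extends inside by the Uhlenbeck mechanism.
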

This is part of our proof of Theorem B and is proved in Section \ref{sec:litenext} using a method in the spirit of \cite{Uhl2}, developed in Appendix \ref{sec:Uhlenbeck}.

\subsection{Existence of $W^{1,4}$-extension without norm bounds}\label{ssec:extw14nb}
As for the case of global gauges, we can in general obtain $W^{1,4}(B^4,\mathbb S^3)$-extensions once we give up the requirement to have a norm control of the extension like in Theorem B. This phenomenon represents one example of situations in which function spaces have a behavior which is more complex than what can be detected by only looking at their norms. 
\begin{proposition}\label{extensionnocontrol}
 If $\phi\in W^{1,3}(\mathbb S^3, \mathbb S^3)$ then its topological degree is well-defined, cfr. \cite{SchoenUhl} and \cite{whiteenmin}. Suppose then that $\op{deg}\phi=0$.\\

Then there exists $u\in W^{1,4}(B^4, \mathbb S^3)$ such that
$$
u=\phi\text{ on }\partial B^4\text{ in the sense of traces.}
$$
\end{proposition}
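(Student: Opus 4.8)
The plan is to run the construction behind Theorem B to obtain a $W^{1,(4,\infty)}$ extension whose failure to lie in $W^{1,4}$ is confined to finitely many points, and then to use the hypothesis $\op{deg}\phi=0$ to remove those points by a finite surgery.

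First, apply Theorem B to get $u_1\in W^{1,(4,\infty)}(B^4,\mathbb S^3)$ with $u_1|_{\partial B^4}=\phi$ in the trace sense. By the construction of Section \ref{sec:extw14} one may take $u_1$ to be smooth on $B^4\setminus\{p_1,\dots,p_N\}$ for a finite set of interior points $p_k$, with $|\nabla u_1|(x)\lesssim|x-p_k|^{-1}$ near $p_k$ --- which is precisely why in general $u_1\in W^{1,(4,\infty)}$ but $u_1\notin W^{1,4}$. (Alternatively one extends $\phi$ to an $\mathbb R^4$-valued $W^{1,4}$ map that is smooth in the interior --- for instance the M\"obius-averaged filling of Section \ref{sec:extw14}, or the harmonic extension of Proposition \ref{harmext} --- and applies the projection trick of Proposition \ref{trickhkl} from a center $a\in B^4_{1/2}$ which is a regular value, the bad points then being the finitely many preimages of $a$.)

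Next, join $p_1,\dots,p_N$ by a tree of disjoint smooth arcs contained in the open ball and let $R$ be a thin closed tubular neighbourhood of this tree; then $R$ is a topological closed $4$-ball, its boundary $\partial R$ is a smoothly embedded $3$-sphere avoiding all $p_k$, and $u_1$ is smooth on $\partial R$. On $B^4\setminus R$ the map $u_1$ is smooth, so the $3$-form $u_1^*\omega$ ($\omega$ being the volume form of $\mathbb S^3$) is closed there, and Stokes' theorem yields $\op{deg}\phi=\op{deg}(u_1|_{\partial R})$, which is $0$ by hypothesis. A smooth degree-zero map $\partial R\cong\mathbb S^3\to\mathbb S^3$ is null-homotopic, hence extends to a smooth map $w\colon R\to\mathbb S^3$, which agrees with $u_1$ on $\partial R$. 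Setting $u:=u_1$ on $B^4\setminus R$ and $u:=w$ on $R$ defines an $\mathbb S^3$-valued map which has trace $\phi$ (it coincides with $u_1$ near $\partial B^4$), is smooth on the compact set $R$, and is $W^{1,4}$ on $B^4\setminus R$ (there it equals $u_1$, which is $W^{1,(4,\infty)}$ and smooth once the points $p_k$ have been excised). Hence $u\in W^{1,4}(B^4,\mathbb S^3)$, as desired. Its $W^{1,4}$-energy on $R$ is finite but depends on $N$, on the tube geometry and on the chosen null-homotopy, none of which is controlled by $\|\phi\|_{W^{1,3}}$; this is why the construction produces no norm estimate, in accordance with Section \ref{ssec:smallenw14ext}.

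The main obstacle is the first step: one must know that the extension from Theorem B (or the projected filling) has only finitely many $W^{1,4}$-singular points and is regular enough away from them for the degree computation and the gluing to make sense --- this is a structural refinement of the proof of Theorem B rather than a new estimate. An alternative circumventing this is to first approximate $\phi$ in $W^{1,3}$ by a smooth map $\psi$ --- smooth maps are dense since $3=\dim\mathbb S^3$ --- which satisfies $\op{deg}\psi=\op{deg}\phi=0$ because the degree is continuous on $W^{1,3}(\mathbb S^3,\mathbb S^3)$, hence is null-homotopic and extends smoothly to $B^4$; it then remains only to fill the thin spherical collar interpolating between $\phi$ at $|x|=1$ and $\psi$ at $|x|=1/2$, via the harmonic extension of Proposition \ref{harmext} and the projection trick, again at the price of an uncontrolled energy.
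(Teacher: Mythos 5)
Your proposal and the paper's proof share the same conceptual skeleton --- use $\op{deg}\phi=0$ to fill a residual ball with a null-homotopy --- but they isolate that residual ball in genuinely different ways, and your two routes each have a gap that the paper's route avoids.

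The paper's argument defines $\rho_0>0$ as the smallest scale at which the $L^3$-energy of $\nabla\phi$ concentrates (positive for the fixed $\phi$, though uncontrolled), then runs the small-concentration machinery of Section \ref{sec:litenext} only on a collar of width $\sim\rho_0$ adjacent to $\partial B^4$. On that collar it produces a Lipschitz $\mathbb S^3$-valued extension with Lipschitz constant $\sim\rho_0^{-1}$, whose inner-boundary trace preserves the homotopy type (by the result of \cite{whiteenmin}) and therefore has degree zero; a Lipschitz null-homotopy then fills the inner ball. Crucially, it never needs to understand the interior singular set of the Theorem B extension.

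Your main route hinges on the claim that the $W^{1,(4,\infty)}$ extension from Theorem B is smooth away from finitely many interior points $p_k$ with $|\nabla u_1|(x)\lesssim|x-p_k|^{-1}$; you flag this yourself, and it is a real gap. Nothing in Section \ref{sec:extw14} establishes it: the construction produces a product of up to $e^{CE^2}$ maps, each built from harmonic extensions, radial projections $\pi_a$, M\"obius pre-compositions and patched Uhlenbeck extensions. The singular locus of $\pi_a\circ u$ is $u^{-1}(a)$, which for a vector-valued harmonic map need not be finite nor carry the stated blow-up profile, and the iterated product compounds this uncertainty.

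Your alternative route has a subtler but equally real gap in the collar-filling step. To interpolate in $W^{1,4}$ between $\phi$ at $|x|=1$ and the smooth approximation $\psi$ at $|x|=1/2$ you invoke Proposition \ref{harmext} and then Proposition \ref{trickhkl}; but the projection trick requires $q<n+1$ strictly, and at $q=n+1=4$ the inner integral $\int_{B^4_{1/2}}|y-a|^{-4}\,da$ diverges logarithmically, so the averaging argument collapses. You would obtain only $W^{1,(4,\infty)}$ on the collar --- which is exactly the obstruction this subsection is meant to bypass. Closing the gap requires running the small-concentration extension of Theorem \ref{smallcest} (equivalently, working at a fixed positive non-concentration scale and mollifying/projecting there), which is what the paper's proof does instead of the unrestricted projection trick.
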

\begin{proof}
We use the extension as in the Section \ref{sec:modint}. The construction using Lemma \ref{courant}(Courant-Lebesgue analogue) is done on a series of domains $B(x_i,\rho_i)\cap B^4$ where $x_i\in\partial B^4, \rho_i\in[\rho_0,2\rho_0]$ for the choice 
$$
\rho_0:=\inf\left\{\rho>0\text{ s.t. }\exists x_0\in\partial B^4,\int_{B(x_0,2\rho)\cap\partial B^4}|\nabla\phi|^3\geq\epsilon_0\right\}.
$$
Note that we have no a priori control on how small $\rho_0$ could get, but it cannot be zero for a fixed $\phi$. Then a Lipschitz extension $u:\mathcal R\to \mathbb S^3$ to a Lipschitz region $\mathcal R$ included between $B^4\setminus B_{1-2\rho_0}$ and $B^4\setminus B_{1-\rho_0}$ would exist as in Section \ref{sec:modint} and such $u$ will also be Lipschitz (with constant blowing up at the rate $\sim\rho_0^{-1}$) and would have degree zero (the preservation of degree follows because the extension used in the construction preserves the homotopy type, cfr \cite{whiteenmin}). In particular we can do a further Lipschitz (thus $W^{1,4}$) extension to the interior of $B^4\setminus\mathcal R$. This provides the desired $u$.
\end{proof}

The proof of the above proposition is constructive, and no hint that the construction is optimal is available. In the next section we prove that actually \underbar{no} \underbar{general} \underbar{bound} \underbar{in} $W^{1,4}$ can be achieved, because of the intervention of the topological degree, much as in the case of $SU(2)$-instantons.

\subsection{Impossibility of $W^{1,4}$-bounds for an extension}\label{ssec:now14extglob}
\begin{proposition}\label{nocontrext}
There exists no finite function $f:\mathbb R^+\to\mathbb R^+$ such that for each $\phi\in W^{1,3}(\mathbb S^3, \mathbb S^3)$ there exists a function $u\in W^{1,4}(B^4, \mathbb S^3)$ satisfying
$$
u=\phi\text{ on }\partial B^4\text{ in the sense of traces and }\|\nabla u \|_{L^4(B^4)}\leq f\left(\|\nabla\phi\|_{L^3(\mathbb S^3)}\right).
$$
\end{proposition}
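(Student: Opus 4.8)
The plan is to exhibit a sequence of boundary data $\phi_k\in W^{1,3}(\mathbb S^3,\mathbb S^3)$ with uniformly bounded $W^{1,3}$-norm but whose extensions are forced to have $L^4$-energy blowing up. The mechanism is exactly the one in the instanton heuristic of the introduction: the topological degree produces a lower bound on the extension energy that is not controlled by the boundary norm. First I would recall that for $u\in W^{1,4}(B^4,\mathbb S^3)$ the Jacobian $\det(\nabla u)$ (equivalently $u^*(\omega_{\mathbb S^3})$, with $\omega_{\mathbb S^3}$ the normalized volume form on $\mathbb S^3$) is integrable, and that $\int_{B^4}u^*(\omega_{\mathbb S^3})=0$ since $\omega_{\mathbb S^3}=d\alpha$ is exact on $\mathbb S^3$ (pull back a primitive $\alpha$ and apply Stokes, using that $u\in W^{1,4}$ gives enough integrability for the boundary term to equal $\int_{\mathbb S^3}\phi^*\alpha$; but this ``naive'' reasoning does not directly bound energy — the point is more subtle and I make it below).

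The actual obstruction is energetic, not just topological: on a small ball $B(x_0,r)\subset B^4$, if $u|_{\partial B(x_0,r)}$ has nonzero degree onto $\mathbb S^3$, then $\int_{B(x_0,r)}|\nabla u|^4\gtrsim |\mathrm{deg}|$ by the isoperimetric/Sobolev inequality on the $3$-sphere (the degree equals a normalized integral of the $3$-Jacobian of the restriction to concentric spheres, and the $3$-Jacobian is pointwise $\lesssim |\nabla u|^3$, so integrating in $r$ and using Hölder in dimension $4$ produces the energy lower bound; this is the standard ``bubbling costs a fixed amount of energy'' estimate). So the strategy is: take $\phi_k$ to be, essentially, a single degree-one bubble concentrated near the south pole at scale $1/k$, i.e. $\phi_k=\pi\circ\psi_k$ where $\psi_k$ is a Möbius dilation of $\mathbb S^3$ concentrating mass at a point, composed with a fixed degree-one map $\mathbb S^3\to\mathbb S^3$. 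By conformal invariance of the $L^3$-norm of the gradient on $\mathbb S^3$, $\|\nabla\phi_k\|_{L^3(\mathbb S^3)}$ stays bounded (in fact equal to a fixed constant), so $f(\|\nabla\phi_k\|_{L^3})$ would be bounded by a single number $f(C)=:M$. But near the concentration point the boundary map ``wants'' to carry degree through a tiny region, and I claim any $W^{1,4}$ extension must then either develop a full bubble in the interior (costing $\gtrsim 1$ energy localized near that point at smaller and smaller scales, which is fine) — so this naive choice is not enough and I need to iterate it.

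The correct construction uses $N$ widely separated bubbles, or a single boundary map realizing degree $N$ while keeping the $W^{1,3}$-norm bounded: here I use the group structure $\mathbb S^3\simeq SU(2)$. Take $\phi_k=\prod_{j=1}^{N_k}\sigma_j$ (product in $SU(2)$) where each $\sigma_j$ is a unit-degree bubble concentrated at scale $\rho$ around a distinct point $p_j\in\mathbb S^3$, with the $p_j$ at mutual distance $\gtrsim N_k^{-1/3}$ and $\rho\ll N_k^{-1/3}$. Since the supports of $\nabla\sigma_j$ are essentially disjoint, $\|\nabla\phi_k\|_{L^3(\mathbb S^3)}^3\approx\sum_j\|\nabla\sigma_j\|_{L^3}^3= N_k\cdot c^3$ — so to keep the $W^{1,3}$-norm \emph{bounded} I must instead rescale: replace each $\sigma_j$ by a bubble of \emph{fixed} degree-one shape but spread so its individual $L^3$-energy is $\sim N_k^{-1/3}$; however $L^3$-energy of a degree-one map on $\mathbb S^3$ has a positive lower bound (topological lower bound $\|\nabla\sigma\|_{L^3}^3\geq c>0$ when $\mathrm{deg}\,\sigma=1$), so this fails too. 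This is precisely the main obstacle, and its resolution is the heart of the proof: one cannot keep $\|\nabla\phi_k\|_{W^{1,3}}$ bounded while forcing large degree. So instead the sequence must exploit the \emph{gap} between $W^{1,4}$ and $W^{1,(4,\infty)}$: I take a \emph{single} degree-zero map $\phi_k$ (so that by Proposition \ref{extensionnocontrol} a $W^{1,4}$-extension exists at all) but with a ``dipole'' structure — a degree $+1$ concentration at $p$ and $-1$ at $q$, with $p,q$ at distance $\delta_k\to 0$ — designed so that $\|\nabla\phi_k\|_{L^3}\leq C$ (by conformal rescaling, a dipole at fixed separation has fixed energy, and moving the two poles together along a Möbius family keeps the energy controlled), while any $W^{1,4}$ extension must contain a ``tube'' of $\pm1$ degree flux of cross-sectional scale $\to 0$ and length $\sim\delta_k$, whose $L^4$-energy is computed by Hölder from its necessarily large $L^3$ and $L^{4/3}$ Jacobian content to be $\gtrsim |\log\delta_k|\to\infty$. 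I would then assume for contradiction that $f$ exists, apply it to this sequence to get $\|\nabla u_k\|_{L^4}\leq f(C)<\infty$, and contradict the divergence $\|\nabla u_k\|_{L^4}^4\gtrsim|\log\delta_k|$. The technical core — making the ``tube carries unit flux at collapsing scale, hence logarithmically divergent $L^4$-energy'' estimate rigorous via a slicing argument on the $3$-spheres around a minimal connection of $p$ to $q$ — is the step I expect to be genuinely delicate, and is the analogue of the instanton-energy computation sketched in the introduction.
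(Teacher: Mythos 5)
Your diagnosis of the mechanism is right on target: the obstruction is topological degree, the way to keep $\|\nabla\phi\|_{L^3(\mathbb S^3)}$ bounded is conformal invariance, and you correctly recognize that a single degree-one bubble cannot do the job because then no $W^{1,4}$-extension exists at all (so there is nothing to bound), while spreading out $N$ separate bubbles fails since a degree-one map on $\mathbb S^3$ has a universal positive lower bound on its $L^3$-energy. You also correctly identify that the test sequence must therefore have degree zero, and you even notice the key $SU(2)$ group structure that lets one build such maps from a pair of opposite-degree bubbles.

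Where your proposal diverges from the paper, and where the genuine gap lies, is in the final construction and the estimate you ask of it. You fix the \emph{scale} of both bubbles and collapse the \emph{separation} $\delta_k\to 0$, then try to extract a lower bound $\|\nabla u_k\|_{L^4}^4\gtrsim|\log\delta_k|$ from a ``flux tube'' slicing argument. You flag this step as ``genuinely delicate'' and do not carry it out, and as stated it is not clearly true: the extension is free to route the unit degree flux through a neck whose cross-sectional radius is comparable to its length $\sim\delta_k$, in which case a naive H\"older/coarea estimate gives a bounded contribution, not a logarithm. Making the tube estimate rigorous would in effect require reproducing the kind of minimal-connection energy analysis that the paper deliberately avoids. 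A further issue is that under your collapsing-dipole Möbius family the weak $W^{1,3}$-limit of $\phi_k$ is a \emph{constant} (degree $0$), so the compactness route is not available to you either.

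The paper proceeds differently and more economically. It takes a fixed degree $-1$ map $\phi''$ together with a degree $+1$ map $\phi'$, and then shrinks only \emph{one} of the two bubbles by post-composing with Möbius maps $F_n$ concentrating at a point: $\phi_n:=(\phi'\circ F_n)\,\phi''$ (pointwise $SU(2)$ product). Conformal invariance keeps $\|\nabla\phi_n\|_{L^3}$ constant and each $\phi_n$ has degree $0$, but now $\phi_n\rightharpoonup\phi''$ \emph{weakly in $W^{1,3}$}, and $\deg(\phi'')=-1$. If extensions $u_n$ of $\phi_n$ were uniformly bounded in $W^{1,4}(B^4,\mathbb S^3)$, a weak subsequential limit $u_\infty$ would be a $W^{1,4}(B^4,\mathbb S^3)$ extension of $\phi''$; by White's theorem on the weak continuity of the $3$-dimensional homotopy class under bounded weak $W^{1,4}$-convergence this is impossible, since $\phi''$ has nonzero degree while each $\phi_n$ has degree zero (equivalently, a nonzero-degree boundary value admits no $W^{1,4}$ extension at all). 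The upshot is that the paper replaces your hands-on slicing/energy estimate by a soft compactness argument plus a known weak-continuity-of-degree result, which sidesteps the whole tube analysis. That substitution is the missing idea in your proposal.
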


\begin{proof}
We recall the robustness if degree under strong convergence in $W^{1,3}(\mathbb S^3, \mathbb S^3)$ (see \cite{SchoenUhl, whiteenmin} and also \cite{brezisnir1,brezisnir2}). Consider $\phi=id_{\mathbb S^3}$, which has degree $1$. Suppose an extension $u:B^4\to \mathbb S^3$ to $\phi$ would exist with $\|u\|_{W^{1,4}}\leq C'$. It will be possible to approximate in $W^{1,4}$-norm $u$ by functions $u_i\in C^\infty(B^4, \mathbb S^3)$, since smooth functions are dense in $W^{1,4}(B^4,\mathbb S^3)$. In particular the degrees $\op{deg}(\phi_i)$ of $\phi_i=u_i|_{\partial B^4}$ will have to be zero. Thus it is not possible that $\phi_i\stackrel{W^{1,3}}{\to} \phi$ because the degree is preserved under strong $W^{1,3}$-convergence).

This proves the absence of a continuous extension operator. To show that also boundedness is impossible, we use a slightly different argument.\\

Consider $\phi_0\in W^{1,3}\cap C^\infty(\mathbb S^3,\mathbb S^3)$ which is a perturbation of the identity equal to the south pole $S$ in a neighborhood $N_S$ of $S$. Then consider a M\"obius transformation $F:\mathbb S^3\to \mathbb S^3$ such that $F^{-1}(N_S)$ includes the lower hemisphere, and consider $\phi'=\phi_0\circ F, \phi''=\phi_0\circ(-F)$. Then identifying $\mathbb S^3\sim SU(2)$ such that $S\sim id_{SU(2)}$ use the group operation to define $\phi=\phi'\phi''$. Note that $\|\phi \|_{W^{1,3}}\leq 2\|\phi_0\|_{W^{1,3}}$ since the conformal maps $F, -F$ preserve the energy; moreover $\phi$ has  zero degree.\\

Let $F_n$ be a family of M\"obius transformations symmetric about $S$ and such that they concentrate more and more near $S$ (with the notation of Appendix \ref{sec:moeblemmas} we may take $F_n:=F_{v_n}$ for $v_n=(1-1/n)S$). Define $\phi_n':=\phi'\circ F_n$ and $\phi_n=\phi_n'\phi''$. It is clear by conformal invariance of the $W^{1,3}$-energy that $\phi_n$ have constant energy. They converge weakly to $\phi''$ and have degree zero.\\

Call $u_n$ the extension of $\phi_n$ and suppose that $\|u_n\|_{W^{1,4}}\leq C$ independent of $n$. We may suppose that $u_n\stackrel{W^{1,4}}{\rightharpoonup} u_\infty\in W^{1,4}(B^4, \mathbb S^3)$ and we obtain $u_\infty|_{\partial B^4}=\phi''$ in the sense of traces. We then apply the result of \cite{whiteenmin} (see also \cite{SchoenUhl}) which in this case says that the $3$-dimensional homotopy class passes to the limit under bounded sequential weak $W^{1,4}(B^4,\mathbb S^3)$-limits. We obtain again a contradiction to boundedness since $\op{deg}(\phi'')=-1$ whereas the same degree is zero for the maps $\phi_n$.
\end{proof}

\subsection{Moving frames and their gauges}\label{ssec:helein}
We describe here a lifting problem arising in the theory of moving frames on $2$-dimensional surfaces, where the Lorentz spaces appear again in the optimal estimates. The model question is as follows:
\begin{question}
Suppose given a map (representing the normal vector of an immersed surface) $\vec{n}\in W^{1,2}(D^2, \mathbb S^2)$. Does there exist a $W^{1,2}$  \underbar{controlled} \underbar{trivialization} $\vec e=(\vec e_1,\vec e_2)$ of the pullback bundle $\vec n^{-1}T\mathbb S^2$? A trivialization is defined by two vector fields $\vec e_1,\vec e_2\in W^{1,2}(D^2,\mathbb S^2)$ such that the pointwise constraints $|\vec e_1|=|\vec e_2|=1, \vec e_1\cdot\vec e_2=0$ are satisfied almost everywhere and $\vec n=\vec e_1\times \vec e_2$.
\end{question}
This problem behaves like the one of global controlled gauges, namely for small energy a lift exists and is controlled, and for large energy lifts can be found but with no general control. Ulenbeck's $\epsilon$-regularity estimate is mirrored in the following Theorem. This result, was proved initially by F. H\'elein under the hypothesis $\|\nabla\vec n\|_{L^2}\leq C$ and improved by Y. Bernard and T.Rivi\`ere who proved that it is enough to assume a smallness condition in weak-$L^2$:
\begin{theorem}[\cite{bernardriv} Lemma IV.3, cfr. also \cite{helein} Lemma 5.1.4]
There exists $\epsilon_0$ such that if $\|\nabla\vec n\|_{L^{2,\infty}}\leq\epsilon_0$ then there exists a trivialization, with the control 
\[
\|\nabla \vec e_1\|_{L^2} +\|\nabla \vec e_2\|_{L^2}\leq C\|\nabla \vec n\|_{L^2}.
\]
and
\[
 \|\nabla \vec e_1\|_{L^{2,\infty}} +\|\nabla \vec e_2\|_{L^{2,\infty}}\leq C\|\nabla \vec n\|_{L^{2,\infty}}.
\]
\end{theorem}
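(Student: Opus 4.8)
The plan is to combine the two classical ingredients of the moving-frame theory — Hodge theory on $D^2$ and a Wente-type estimate in Lorentz spaces — exactly as in the Coulomb gauge construction, paying attention to the weak-$L^2$ scale throughout. First I would set up the problem infinitesimally: any two trivializations of $\vec n^{-1}T\mathbb S^2$ differ by a gauge $g\in W^{1,2}(D^2,SO(2))=W^{1,2}(D^2,\mathbb S^1)$, so one starts from \emph{some} $W^{1,2}$ frame $\vec f=(\vec f_1,\vec f_2)$ (which exists because $\pi_1(SO(2))$-obstructions vanish on the disk after the smallness hypothesis, or more robustly by Hélein's original argument) and looks for the rotation $\theta$ minimizing $\int_{D^2}\lvert\nabla\vec e_1\rvert^2+\lvert\nabla\vec e_2\rvert^2$ among all competitors $\vec e=e^{i\theta}\vec f$. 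Writing $\alpha:=\langle\nabla\vec f_1,\vec f_2\rangle\in L^2(D^2,\mathbb R^2)$ for the connection form of $\vec f$, the energy of $\vec e$ is $\int\lvert\nabla\vec n\rvert^2+\int\lvert\nabla\theta+\alpha\rvert^2$ up to a pointwise identity; the minimizing $\theta$ solves $\Delta\theta=-\op{div}\alpha$ with Neumann condition, and the optimal connection form of $\vec e$ is the co-exact part $\beta$ in the Hodge decomposition $\alpha=\nabla\theta+\beta$, $\op{div}\beta=0$, $\beta\cdot\nu=0$ on $\partial D^2$.

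The content is then to estimate $\beta$, hence $\nabla\vec e$, in $L^2$ and in $L^{2,\infty}$ by $\nabla\vec n$ in the same norms. Since $\op{div}\beta=0$ on the simply connected $D^2$, write $\beta=\nabla^\perp\xi$ for a scalar $\xi\in W^{1,2}$; the point is that $\op{curl}\beta=\op{curl}\alpha=\langle\nabla^\perp\vec f_1,\nabla\vec f_2\rangle$ is a Jacobian-type quantity, so $-\Delta\xi$ is a sum of products of gradients of $\vec f$. Using that $\vec f$ and $\vec n=\vec f_1\times\vec f_2$ have comparable gradients (they differ by the frame rotation, whose gradient is $\beta$ itself, so one closes a fixed-point/continuity argument under the smallness hypothesis), one reduces to an estimate $-\Delta\xi=\nabla a\cdot\nabla^\perp b$ with $a,b$ built from $\vec n$. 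Here the Wente inequality in its Lorentz form — $\lVert\nabla\xi\rVert_{L^{2}}\le C\lVert\nabla a\rVert_{L^2}\lVert\nabla b\rVert_{L^2}$, and its improved companion $\lVert\nabla\xi\rVert_{L^{2,\infty}}\le C\lVert\nabla a\rVert_{L^{2,\infty}}\lVert\nabla b\rVert_{L^2}$, valid because the right-hand side lies in the Hardy space $\mathcal H^1$ by the div-curl structure \cite{CLMS}, and $\Delta^{-1}:\mathcal H^1\to W^{1,(2,\infty)}$ by the Green kernel remark in the introduction — gives exactly the two displayed bounds once the smallness $\lVert\nabla\vec n\rVert_{L^{2,\infty}}\le\epsilon_0$ is used to absorb the quadratic term $\lVert\nabla\vec e\rVert_{L^2}\cdot\epsilon_0$ into the left side.

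The main obstacle, and the reason Bernard–Rivière's improvement is nontrivial, is precisely this last absorption at the \emph{weak}-$L^2$ level: the natural Wente estimate produces $\lVert\nabla\beta\rVert$ bounded by a product $\lVert\nabla\vec n\rVert\cdot\lVert\nabla\vec f\rVert$ in which one factor must be measured in $L^{2,\infty}$ and the other in strong $L^2$, but a priori only the \emph{hypothesis} is a weak-$L^2$ smallness while the genuine strong $L^2$ energy of $\vec n$ need not be small. One therefore has to run the construction on a dyadic family of scales (or, equivalently, exploit the scaling-invariance of all the norms in play) so that on each annulus the strong-$L^2$ energy is small, patch the local frames with a controlled transition, and sum the geometric series of $L^{2,\infty}$ contributions — a summation that converges exactly because $L^{2,\infty}$ is a Banach (quasi-norm, renormable) space with the right scaling. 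Once that bootstrap is in place, the $L^2$ estimate follows by feeding the now-controlled frame back into the Wente inequality in its strong form, and one reads off the two conclusions of the theorem.
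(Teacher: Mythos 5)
This theorem is \emph{cited} in the paper (from \cite{bernardriv} Lemma IV.3 and \cite{helein} Lemma 5.1.4); the paper states it as background in Section~\ref{ssec:helein} but gives no proof of its own, so there is no internal argument against which to compare your attempt. Evaluating your proposal on its own terms: the global framework you set out -- pick some $W^{1,2}$ frame, pass to the Coulomb gauge by minimizing the frame energy, read off the Coulomb condition as the Hodge decomposition $\alpha=\nabla\theta+\nabla^\perp\xi$, observe that $\mathrm{curl}\,\alpha$ is a Jacobian, and then invoke Wente/CLMS estimates to control $\nabla\xi$ -- is indeed exactly the architecture of H\'elein's original proof, and you are also right that the content of the Bernard--Rivi\`ere refinement is the absorption of the quadratic term while only the \emph{weak}-$L^2$ norm of $\nabla\vec n$ is assumed small.

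Where your proposal becomes shaky is precisely at that point. You propose to resolve the difficulty by a dyadic multi-scale patching, on the grounds that on each annulus the strong $L^2$ energy can be made small; but there is no embedding $L^{2,\infty}\hookrightarrow L^2$ on bounded sets (the $L^2$ norm of an $L^{2,\infty}$ function on a dyadic annulus is not controlled by its $L^{2,\infty}$ norm -- the layer-cake integral diverges logarithmically), so the premise of the patching is not available, and the subsequent ``summing a geometric series in $L^{2,\infty}$'' step has no clear meaning. The actual argument in \cite{bernardriv} does not proceed by domain decomposition; it uses Lorentz-refined versions of the Wente/div-curl bilinear estimate (the bilinear map $(a,b)\mapsto\nabla\Delta^{-1}(\nabla a\cdot\nabla^\perp b)$ is bounded in hybrid Lorentz pairs by interpolating the CLMS bound, so that one factor of the product can be taken in $L^{2,\infty}$), together with the Coulomb pointwise identity $|\nabla\vec e|^2=|\nabla\vec n|^2+2|\nabla\xi|^2$ and a continuity/approximation argument to start the bootstrap. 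You should also note that the curvature of the pullback connection equals a Jacobian built from $\vec n$ alone (the structure equation $d\omega_{12}=K\,\omega_1\wedge\omega_2$), which tightens the loop; writing it as a Jacobian of the frame, as you do, forces an extra self-reference that the Lorentz hypothesis alone does not close without the hybrid bilinear estimates just described. So: right framework and right identification of the obstacle, but the resolution you propose would not go through as stated.
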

Note that for the improvement above, the $L^2$-energy might blow up, yet still control the energy of the trivialization, as long as we stay small in Lorentz norm. It would be interesting to explore this kind of phenomenon also for curvatures in higher dimensions like in our setting.\\

The bad behavior in case of large energy regime starts at the energy level $8\pi$ (and this is optimal, see \cite{kuwertli}). This number has an evident topological significance, because if $\vec n$ is homotopically nontrivial, i.e. parameterizes a non-contractible $2$-cell of $\mathbb S^2$ then $4\pi=|\mathbb S^2|\leq\int_{D^2} u^*(d \op{Vol}_{\mathbb S^2})\leq\frac{1}{2}\int_{D^2}|\nabla\vec n|^2$, so $8\pi$ is the smallest energy of a topologically nontrivial $\vec n$.\\
We also have the following lemma, similar to Section \ref{ssec:now14extglob}:
\begin{lemma}
 For $\int |\nabla\vec n|^2>8\pi$ there can be no controlled $W^{1,2}$ trivialization $\vec e$.
\end{lemma}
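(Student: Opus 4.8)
The plan is to reproduce, at a concentration scale, the mechanism of Section~\ref{ssec:now14extglob}: one builds a family of competitors all sitting in the supercritical regime $\int|\nabla\vec n|^2>8\pi$ along which the energy of \emph{every} trivialization must blow up, the obstruction being a non-zero degree that re-emerges after a conformal blow-up. First I would fix a smooth map $\varphi_0\colon\mathbb R^2\to\mathbb S^2$ which equals a constant $p$ outside the unit disc and has degree one, i.e.\ $\int_{\mathbb R^2}\varphi_0^*\,\omega_{\mathbb S^2}=4\pi$ (obtained from the inverse stereographic projection by homotoping it to a constant on an annulus; here $\omega_{\mathbb S^2}$ is the area form of $\mathbb S^2$); then $E_0:=\int_{\mathbb R^2}|\nabla\varphi_0|^2>8\pi$, strictly, since a degree-one map constant on an open set is not weakly conformal. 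For $\epsilon_k\downarrow 0$ I set $\vec n_k(x):=\varphi_0(x/\epsilon_k)$ on $D^2$; each $\vec n_k$ is smooth, constant near $\partial D^2$, and by conformal invariance of the two-dimensional Dirichlet integral $\int_{D^2}|\nabla\vec n_k|^2=E_0>8\pi$ for every $k$. If a controlled trivialization were available throughout this regime, there would exist $M<\infty$ and trivializations $\vec e^{(k)}=(\vec e^{(k)}_1,\vec e^{(k)}_2)\in W^{1,2}(D^2,V)$, with $V:=\{(a,b)\in\mathbb S^2\times\mathbb S^2:\ a\cdot b=0\}$, such that $\vec e^{(k)}_1\times\vec e^{(k)}_2=\vec n_k$ and $\|\nabla\vec e^{(k)}_1\|_{L^2}^2+\|\nabla\vec e^{(k)}_2\|_{L^2}^2\le M^2$ for all $k$.

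Next I would blow up at the concentration point. Setting $\tilde{\vec e}^{(k)}(y):=\vec e^{(k)}(\epsilon_k y)$ on $B_{1/\epsilon_k}$ one gets $\tilde{\vec e}^{(k)}_1\times\tilde{\vec e}^{(k)}_2=\varphi_0$ identically, and, again by conformal invariance of the Dirichlet integral, $\int_{B_R}|\nabla\tilde{\vec e}^{(k)}|^2\le M^2$ for every fixed $R$ as soon as $R\epsilon_k\le 1$. Thus $(\tilde{\vec e}^{(k)})_k$ is bounded in $W^{1,2}_{\mathrm{loc}}(\mathbb R^2)$; by Rellich's theorem and a diagonal argument a subsequence converges weakly in $W^{1,2}_{\mathrm{loc}}$, strongly in $L^2_{\mathrm{loc}}$, and almost everywhere, to some $\bar{\vec e}=(\bar{\vec e}_1,\bar{\vec e}_2)$. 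The pointwise constraints defining $V$, together with $\bar{\vec e}_1\times\bar{\vec e}_2=\varphi_0$, pass to the almost-everywhere limit, and $\int_{\mathbb R^2}|\nabla\bar{\vec e}|^2\le M^2<\infty$ by lower semicontinuity; hence $\bar{\vec e}\in W^{1,2}_{\mathrm{loc}}(\mathbb R^2,V)$ would be a trivialization of the degree-one bubble $\varphi_0$ of finite Dirichlet energy.

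I would then rule this out via the moving-frame structure equation. The one-form $\alpha:=\langle d\bar{\vec e}_1,\bar{\vec e}_2\rangle$ has coefficients bounded by $|\nabla\bar{\vec e}_1|$, so $\alpha\in L^2(\mathbb R^2)$, and by the structure equation on $\mathbb S^2$ (see \cite{helein}), which holds distributionally for $W^{1,2}$ frames, $d\alpha=\pm\,\varphi_0^*\,\omega_{\mathbb S^2}$, a smooth two-form supported in the unit disc with $\int_{\mathbb R^2}d\alpha=\pm 4\pi$. On the other hand $\int_{\mathbb R^2\setminus B_1}|\alpha|^2<\infty$, so Fubini produces radii $R_j\to\infty$ with $R_j\int_{\partial B_{R_j}}|\alpha|^2\,ds\to 0$, and then $\big|\int_{\partial B_{R_j}}\alpha\big|\le(2\pi R_j)^{1/2}\big(\int_{\partial B_{R_j}}|\alpha|^2\,ds\big)^{1/2}\to 0$; since $\int_{B_{R_j}}d\alpha=\int_{\partial B_{R_j}}\alpha$ for almost every such radius, this forces $\pm 4\pi=0$, a contradiction. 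Hence no energy-controlled $W^{1,2}$ trivialization exists once $\int|\nabla\vec n|^2>8\pi$.

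The delicate step is the passage between the two scales: extracting from the uniformly $W^{1,2}$-bounded rescaled frames a genuine finite-energy $W^{1,2}_{\mathrm{loc}}(\mathbb R^2,V)$ trivialization of the limiting bubble, and applying the moving-frame structure equation at this level of regularity. This is the exact analogue of the weak-limit argument in Section~\ref{ssec:now14extglob}, except that here the topological obstruction is carried by the bubble at the concentration scale rather than by a boundary trace. Once this is secured, conformal invariance of the Dirichlet integral in dimension two and the $L^2$-decay of $\alpha$ on large circles finish the proof without further input.
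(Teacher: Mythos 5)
Your proof is correct, and it takes a genuinely different route from the one in the paper, even though both exploit the same topological obstruction (a concentrated degree-one bubble). The paper's argument is a direct, quantitative winding-number estimate: it chooses a degree-one $\vec n$ that equals the south pole on an annulus $D^2\setminus B_r$, observes that the frame $\vec e_i$ must take values in the equator there and wind a nonzero number of times on each circle $\partial B_\rho$ (because the Euler number of $\vec n^{-1}T\mathbb S^2$ over $B_\rho$ equals the degree of the bubble), and then applies Jensen on each circle to obtain $\int_{D^2\setminus B_r}|\nabla\vec e_i|^2\gtrsim|\log r|\to\infty$. Your argument instead works at the level of the bubble itself: you rescale, extract a weak $W^{1,2}_{\rm loc}$ limit of the frames, obtain a finite-Dirichlet-energy frame $\bar{\vec e}$ for the fixed degree-one map $\varphi_0$ on all of $\mathbb R^2$, and derive a contradiction from the moving-frame structure equation $d\langle d\bar{\vec e}_1,\bar{\vec e}_2\rangle=-\varphi_0^*\omega_{\mathbb S^2}$ together with the Stokes/Cauchy--Schwarz decay of the $L^2$ connection form $\alpha$ along large circles. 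What the paper's route buys is elementarity and an explicit logarithmic blow-up rate, with no need for distributional structure equations or weak compactness; what your route buys is a cleaner conceptual reduction (non-existence of a finite-energy $W^{1,2}$ trivialization of a degree-one bubble over $\mathbb R^2$) that sits naturally in the Chern--Weil/Gauss--Bonnet framework and in fact mirrors the argument used in Section~\ref{ssec:now14extglob} for the impossibility of $W^{1,4}$ extension bounds. The one step you correctly flag as delicate --- that the structure equation holds distributionally for $W^{1,2}$ frames --- is a known fact (a Jacobian/div-curl identity), but it is machinery the paper's elementary route does not need. Both proofs are valid; yours is longer but arguably more structural.
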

\begin{proof}[Sketch of proof:]
 We choose $\vec n$ mapping a neighborhood $D^2\setminus B_r:=N_1$ for small $r$ to the south pole of $\mathbb S^2$, has degree $1$ and equals a conformal map outside a small neighborhood $N_2\Supset N_1$. Such $\vec n$ exists with energy as close as wanted to $8\pi$, independently of $r$ by conformal invariance of the energy.\\
Supposing a trivialization $\vec e=(\vec e_1,\vec e_2)$ exists, on $N_1$ it will span the ``horizontal'' $2$-plane of $\mathbb R^3$ which is perpendicular to $S=(0,0,-1)$. On circles $\partial B_\rho,\rho>r$ by Fubini theorem for almost all $\epsilon$ we will have that $\vec e_i,i=1,2$ will be $W^{1,2}$ thus $C^0$ and they have values in the equator of $\mathbb S^2$. By well-posedness of the topological degree and since $\vec n$ is nontrivial in homotopy, we obtain that each $e_i$ will make a full turn on each $\partial B_r$. This gives that $\int_{\partial B_r}|\nabla \vec e_i|\geq 1$ on $\partial B_r$ and by Jensen's inequality we obtain 
$$
\int_{D^2\setminus B_r}|\nabla\vec e_i|^2\geq C\int_r^1\frac{1}{\rho^2}\rho d\rho\geq C\left|\log \frac{1}{r}\right|
$$
since there is no positive lower bound of $r>0$, we see that we cannot have a controlled trivialization.
\end{proof}

There is an analogue also of our $W^{1,(4,\infty)}$ extension result here, and it corresponds to taking the so-called ``Coulomb frames''. The result is a general estimate with no restriction on $\vec n$, but with the Lorentz norm $L^{(2,\infty)}$ instead of the $L^2$ norm (this estimate follows from Wente's \cite{wente} inequality using \cite{adams}):
\begin{proposition}[\cite{rivcourse}, VII.6.3]
 Let $\vec n\in W^{1,2}(D^2, \mathbb S^2)$. Then there exist a trivialization $\vec e$ belonging to $W^{1,(2,\infty)}$ exists, which satisfies the Coulomb condition 
$$
\op{div}\langle \vec e_1,\nabla\vec e_2\rangle=0
$$
and the control
$$
\|\nabla \vec e_1\|_{L^{(2,\infty)}}+\|\nabla \vec e_2\|_{L^{(2,\infty)}}\lesssim \|\nabla\vec n\|_{L^2} + \|\nabla\vec n\|_{L^2}^2.
$$
\end{proposition}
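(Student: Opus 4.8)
The plan is to run H\'elein's construction of a Coulomb moving frame and then replace the classical $L^2$ bound on the frame connection by a weak‑$L^2$ bound, using that the curvature of that connection is the Jacobian $\vec n\cdot(\partial_{x_1}\vec n\times\partial_{x_2}\vec n)$, which only lies in $L^1$. First I would recall that a $W^{1,2}$ orthonormal frame $(\vec f_1,\vec f_2)$ of $\vec n^{-1}T\mathbb S^2$ exists at all: since $D^2$ is contractible the bundle is trivial, and the existence of a $W^{1,2}$ trivialization is H\'elein's moving frame lemma (cf. \cite{helein}, \cite{rivcourse}). Any competing frame is a rotation $\vec e_1=\cos\psi\,\vec f_1+\sin\psi\,\vec f_2$, $\vec e_2=-\sin\psi\,\vec f_1+\cos\psi\,\vec f_2$ with $\psi\in W^{1,2}(D^2,\mathbb R)$, and a direct computation gives $\langle\vec e_1,\nabla\vec e_2\rangle=\langle\vec f_1,\nabla\vec f_2\rangle-\nabla\psi$. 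I would then fix $\psi$ as the solution of the Neumann problem $\Delta\psi=\op{div}\langle\vec f_1,\nabla\vec f_2\rangle$ in $D^2$, $\partial_\nu\psi=\langle\vec f_1,\nabla\vec f_2\rangle\cdot\nu$ on $\partial D^2$ (the compatibility condition being the divergence theorem); this is exactly the Euler--Lagrange equation of minimizing the frame Dirichlet energy. The resulting frame $\vec e$ is still in $W^{1,2}$ (the rotation coefficients are bounded and $\nabla\psi\in L^2$), and $\langle\vec e_1,\nabla\vec e_2\rangle$ is divergence‑free in $D^2$ with vanishing normal component on $\partial D^2$, which is the asserted Coulomb condition.

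Next I would use the structure (Gauss) equation. Decomposing $\partial_j\vec e_i$ and $\partial_j\vec n$ in the pointwise orthonormal basis $(\vec e_1,\vec e_2,\vec n)$ of $\mathbb R^3$ gives on one hand $\op{curl}\langle\vec e_1,\nabla\vec e_2\rangle=\vec n\cdot(\partial_{x_1}\vec n\times\partial_{x_2}\vec n)=:\omega$, and on the other the pointwise identities $|\nabla\vec e_i|^2=|\langle\vec e_1,\nabla\vec e_2\rangle|^2+|\langle\vec e_i,\nabla\vec n\rangle|^2\le|\langle\vec e_1,\nabla\vec e_2\rangle|^2+|\nabla\vec n|^2$ for $i=1,2$. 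Since $|\omega|\le\frac12|\nabla\vec n|^2$ pointwise we have $\omega\in L^1(D^2)$ with $\|\omega\|_{L^1}\le\frac12\|\nabla\vec n\|_{L^2}^2$. Because $\langle\vec e_1,\nabla\vec e_2\rangle$ is divergence‑free and tangent to $\partial D^2$ and $D^2$ is simply connected, I can write $\langle\vec e_1,\nabla\vec e_2\rangle=\nabla^\perp\phi$ where $\phi$ solves $\Delta\phi=\omega$ in $D^2$ with $\phi=0$ on $\partial D^2$.

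The key estimate is then that $\|\nabla\phi\|_{L^{(2,\infty)}(D^2)}\le C\|\omega\|_{L^1(D^2)}$, which is precisely the manifestation here of Wente's inequality in its $L^1\to L^{(2,\infty)}$ Adams (Riesz‑potential) form. I would obtain it by representing $\phi$ through the Dirichlet Green function $G_{D^2}$ of the disk, using the elementary pointwise bound $|\nabla_xG_{D^2}(x,y)|\le C|x-y|^{-1}$ (which follows from $|x-y^\ast|\ge|x-y|$ for $x,y\in D^2$), so that $|\nabla\phi(x)|\le C\int_{D^2}|x-y|^{-1}|\omega(y)|\,dy$; since $|\cdot|^{-1}\in L^{(2,\infty)}(\mathbb R^2)$ and $\omega\in L^1$, the weak Young convolution inequality $L^{(2,\infty)}\ast L^1\to L^{(2,\infty)}$ gives the claim. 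Hence $\|\langle\vec e_1,\nabla\vec e_2\rangle\|_{L^{(2,\infty)}(D^2)}=\|\nabla\phi\|_{L^{(2,\infty)}(D^2)}\le C\|\nabla\vec n\|_{L^2}^2$.

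Finally I would combine the pointwise identities of the second step with this bound: taking $L^{(2,\infty)}$‑norms (quasi‑subadditive up to a constant, $2>1$) and using $\|\nabla\vec n\|_{L^{(2,\infty)}(D^2)}\le C\|\nabla\vec n\|_{L^2(D^2)}$ because $D^2$ has finite measure, I get $\|\nabla\vec e_1\|_{L^{(2,\infty)}}+\|\nabla\vec e_2\|_{L^{(2,\infty)}}\lesssim\|\nabla\vec n\|_{L^2}^2+\|\nabla\vec n\|_{L^2}$, which is the assertion. The main obstacle is not any individual estimate but the bookkeeping needed to make the final bound depend on $\vec n$ \emph{alone}: this forces one to extract the frame via the energy‑minimizing (equivalently, Neumann) normalization, since only then does $\langle\vec e_1,\nabla\vec e_2\rangle\cdot\nu=0$ hold and $\phi$ is the genuine Dirichlet solution of $\Delta\phi=\omega$ with data controlled by $\|\omega\|_{L^1}$; the second delicate point is to use \emph{only} $\omega\in L^1$ (yielding weak‑$L^2$) rather than the finer Hardy‑space structure of $\omega$ (which would give strong $L^2$ but requires a smallness hypothesis), so that the estimate is global with no restriction on $\vec n$.
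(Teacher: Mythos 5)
Your argument is correct and is, up to presentation, the standard proof of this statement as found in the reference \cite{rivcourse} cited in the paper (the paper itself gives no proof, only the hint ``Wente + Adams''). The sequence you use — (i) take \emph{some} $W^{1,2}$ frame of $\vec n^{-1}T\mathbb S^2$, (ii) pass to the energy-minimizing (Coulomb/Neumann) rotation, (iii) Hodge-decompose the connection $1$-form as $\nabla^\perp\phi$ with $\Delta\phi=\omega:=\vec n\cdot(\partial_{x_1}\vec n\times\partial_{x_2}\vec n)$, (iv) bound $\nabla\phi$ in $L^{(2,\infty)}$ from $\omega\in L^1$ via the Riesz-potential/weak-Young estimate, and (v) close with the pointwise identity $|\nabla\vec e_i|^2=|\langle\vec e_1,\nabla\vec e_2\rangle|^2+|\langle\vec e_i,\nabla\vec n\rangle|^2$ — is exactly the intended route. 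Your closing remark about why one must use only $\omega\in L^1$ and not the Hardy-space/Wente structure (which gives strong $L^2$ only under a smallness hypothesis on $\|\nabla\vec n\|_{L^2}$) captures the essential point, and it is consistent with the lemma preceding this proposition in the paper showing that a controlled $W^{1,2}$ trivialization cannot exist once $\int|\nabla\vec n|^2>8\pi$.

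The one place where your sketch is slightly looser than it should be is step (i): the mere existence of a $W^{1,2}$ orthonormal frame for arbitrary $\vec n\in W^{1,2}(D^2,\mathbb S^2)$ is not literally ``H\'elein's moving frame lemma'' — that lemma carries a smallness hypothesis. For large energy one first covers $D^2$ by small discs on which the H\'elein frame applies, and then patches the local frames using the fact that the $\mathbb S^1$-valued transition maps in $W^{1,2}$ on simply connected $2$-dimensional domains admit $W^{1,2}$ liftings; the resulting global frame is in $W^{1,2}$ but with no a priori norm bound, which is precisely why the minimization in step (ii) is over a nonempty class. This is well known, but since it is exactly the point the whole statement is designed to work around, it deserves an explicit sentence rather than a citation to the small-energy lemma.
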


\section{The Hopf lift extension}\label{sec:hopflift}
\noindent
We prove here the Theorem C. We consider a fixed $\phi\in W^{1,2}(\mathbb S^2, \mathbb S^2)$ and we need to construct an extension $u\in W^{1,(3,\infty)}(B^3, \mathbb S^2)$ such that
\begin{equation*}
 \|u\|_{W^{1,(3,\infty)}(B^3)}\lesssim \|\phi\|_{W^{1,2}(\mathbb S^2)}(1+\|\phi\|_{W^{1,2}(\mathbb S^2)}),
\end{equation*}
where the implicit constant is independent of $\phi.$\\

The strategy of proof uses a construction based on the Hopf fibration which has been introduced in \cite{hr1}. The same strategy has been later on performed in \cite{bc} for proving similar lifting results as in \cite{hr1}. In the smooth case we will first lift $\phi:\mathbb S^2\to \mathbb S^2$ to $\tilde \phi:\mathbb S^2\to \mathbb S^3$ such that $H\circ \tilde\phi=\phi$ where $H:\mathbb S^2\to \mathbb S^3$ is the Hopf fibration. Then we will extend $\tilde \phi$ by using a Lorentz analogue of \ref{trickhkl}(projection trick), working with similar conditions on dimensions and exponents. projecting back to $\mathbb S^2$ via $H$ will keep the estimates.\\

Before the proof, we recall some properties of the map $H$.

\subsection{Facts about the Hopf fibration}
Identifying $\mathbb S^3$ with the unit sphere of $\mathbb C^2$, with complex coordinates $(Z,W)$, the Hopf projection is $H(Z,W)=Z/\bar W$ and its fibers are maximal circles. This gives a function with values in $\mathbb C\cup \{\infty\}\simeq \mathbb S^2$. If we look at $\mathbb S^3\subset \mathbb R^4$ with the inherited coordinates $(x_1,x_2,x_3,x_4)$ then we can identify 
\begin{equation}\label{exactnesspullback}
 H^*\omega_{\mathbb S^2}=d\alpha,\quad \text{for }\alpha=\frac{1}{2}(x_1 dx_2 - x_2 dx_1 +x_3dx_4 - x_4 dx_3). 
\end{equation}
Here $\omega_{\mathbb S^2}$ is a constant multiple of the volume form of $\mathbb S^2$. Since $\mathbb S^1\sim U(1)$ we can regard $\mathbb S^3\stackrel{H}{\to}\mathbb S^2$ as a principal $U(1)$-bundle $P\to\mathbb S^2$.\\

Let $\phi:\mathbb C\to \mathbb S^2$ be a smooth function. Then $d(\phi^*\omega_{\mathbb S^2})=0$ because $\Omega^3(\mathbb R^2\simeq\mathbb C)=\{0\}$. Since $H^2_{dR}(\mathbb C)=0$ there exists a $1$-form $\eta$ such that
\begin{equation}\label{existsgauge}
 d\eta=\phi^*\omega_{\mathbb S^2}.
\end{equation}
We also note that for a smooth $\phi:\mathbb C\to \mathbb S^2$ the pullback of the $U(1)$-bundle $P$ is trivial, since $\mathbb R^2$ is contractible. A trivialization of the bundle $\phi^*P\to\mathbb C$ can be identified with a lift $\tilde \phi$ of $\phi$. From the equation \eqref{exactnesspullback} we can deduce that $d\eta=\tilde \phi^*H^*\omega_{\mathbb S^2} =\tilde \phi^*d\alpha=d( \tilde\phi^*\alpha)$ and again there exists a $1$-form $\tilde\eta$ as in \eqref{existsgauge}, defined by 
\begin{equation}\label{gaugefromlift}
 \tilde \eta=\tilde \phi^*\alpha.
\end{equation}
$\tilde\eta$ coincides with $\eta$ up to adding an exact form $d\theta$: we have $\tilde\phi^*\alpha -\eta=d\phi$. If we come back to the bundle point of view then $d\theta$ represents the effect of change of coordinates of the trivialization giving $\tilde\phi$, i.e. of a change of gauge. We have then $\eta=\tilde\phi^*\alpha - d\theta=(e^{-i\theta}\tilde \phi)^*\alpha$, where the action of $e^{-i\theta}$ is intended as an $U(1)$-gauge change and $\theta:\mathbb C\to \mathbb R$ is determined up to a constant. Moreover, since $DH$ is an isometry between the orthogonal complement of the tangent space of the fiber $T_pH^{-1}(H(p))$ and $T_p\mathbb S^2$, we also obtain the following norm identity:
\begin{equation}\label{normidentity}
 |D\tilde \phi|^2=|\tilde \eta|^2 +|D\phi|^2.
\end{equation}
\subsection{Hopf lift with estimates}
We start the proof of Theorem C with the following first step:
\begin{proposition}\label{exthopf}
 Suppose $\phi\in W^{1,2}(\mathbb C, \mathbb S^2)$. Then there exists a lifting $\tilde \phi:\mathbb C\to \mathbb S^3$ such that $H\circ\tilde \phi=\phi$ and there exists a universal constant $C$ such that
$$
||\nabla \tilde \phi||_{L^{2,\infty}}\leq C||\nabla \phi||_{L^2}(1+||\nabla \phi||_{L^2}).
$$
\end{proposition}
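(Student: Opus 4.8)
The strategy is to construct $\tilde\phi$ explicitly by solving the ``$U(1)$-gauge'' equation $d\eta=\phi^*\omega_{\mathbb S^2}$ with the Coulomb choice $d^*\eta=0$, control $\eta$ in $L^{2,\infty}$, and then combine this with the identity \eqref{normidentity} to bound $|\nabla\tilde\phi|$. First I would note that $\phi^*\omega_{\mathbb S^2}$ is a closed $2$-form on $\mathbb C\simeq\mathbb R^2$ — indeed a $2$-form, hence automatically closed — and its $L^1$-norm is controlled by $\frac12\|\nabla\phi\|_{L^2}^2$ since $|\phi^*\omega_{\mathbb S^2}|\lesssim|\nabla\phi|^2$ pointwise (this is the classical Jacobian bound; up to the normalizing constant in $\omega_{\mathbb S^2}$ it is the pullback of the area form). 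Write $\phi^*\omega_{\mathbb S^2}=f\,dx\wedge dy$ with $f\in L^1(\mathbb R^2)$ and $\|f\|_{L^1}\lesssim\|\nabla\phi\|_{L^2}^2$. The Coulomb primitive is $\eta=\ast\,d\,\Delta^{-1} f$, i.e.\ $\eta_1=-\partial_y\Delta^{-1}f$, $\eta_2=\partial_x\Delta^{-1}f$, so that $d\eta=f\,dx\wedge dy$ and $d^*\eta=0$. Since $\nabla\Delta^{-1}f=\nabla K_2\ast f$ with $\nabla K_2\in L^{2,\infty}(\mathbb R^2)$, the extended Young (O'Neil) inequality for Lorentz spaces gives $\|\eta\|_{L^{2,\infty}}=\|\nabla\Delta^{-1}f\|_{L^{2,\infty}}\lesssim\|\nabla K_2\|_{L^{2,\infty}}\|f\|_{L^1}\lesssim\|\nabla\phi\|_{L^2}^2$. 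This is exactly the mechanism flagged in the introduction (``$\Delta u=f$, $f\in L^1\Rightarrow\nabla u\in L^{n/(n-1),\infty}$'').

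Next I would produce the lift $\tilde\phi$ from $\eta$. Because $\phi^*P$ is trivial over contractible $\mathbb C$, a lift exists; among all lifts, by \eqref{gaugefromlift} the associated $1$-forms $\tilde\phi^*\alpha$ differ from a fixed primitive of $\phi^*\omega_{\mathbb S^2}$ by exact forms $d\theta$, and conversely any gauge change $e^{-i\theta}\tilde\phi$ shifts $\tilde\phi^*\alpha$ by $-d\theta$. Choosing $\theta$ so that $\tilde\phi^*\alpha$ becomes the Coulomb primitive $\eta$ constructed above — i.e.\ solving $d\theta=\tilde\phi_0^*\alpha-\eta$ for some initial lift $\tilde\phi_0$, which is possible since the right-hand side is closed on $\mathbb C$ — we obtain a lift, still called $\tilde\phi$, with $\tilde\phi^*\alpha=\eta$, hence $\tilde\eta=\eta$ in the notation of \eqref{gaugefromlift}. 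Then \eqref{normidentity} gives the pointwise identity $|\nabla\tilde\phi|^2=|\eta|^2+|\nabla\phi|^2$, so taking $L^{2,\infty}$ norms (using that $L^2\hookrightarrow L^{2,\infty}$ with a universal constant, and quasi-subadditivity of $\|\cdot\|_{L^{2,\infty}}$) yields $\|\nabla\tilde\phi\|_{L^{2,\infty}}\lesssim\|\eta\|_{L^{2,\infty}}+\|\nabla\phi\|_{L^{2,\infty}}\lesssim\|\nabla\phi\|_{L^2}^2+\|\nabla\phi\|_{L^2}$, which is the claimed bound.

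The main obstacle is the passage from the smooth case (where $\phi^*\omega_{\mathbb S^2}$, the primitives, and $\theta$ are all classical) to a general $\phi\in W^{1,2}(\mathbb C,\mathbb S^2)$: one must make sense of $\phi^*\omega_{\mathbb S^2}$ as an $L^1$ form, of the lift and its gauge fixing, and of \eqref{normidentity} in the weak sense. The clean route is approximation: smooth maps are \emph{not} dense in $W^{1,2}(\mathbb C,\mathbb S^2)$ globally, but they are dense locally away from finitely many points, and more to the point one can take $\phi_k\in C^\infty$ with $\phi_k\to\phi$ in $W^{1,2}_{loc}$ and $\|\nabla\phi_k\|_{L^2}\le\|\nabla\phi\|_{L^2}+o(1)$ (for instance by mollifying and reprojecting, or by a standard truncation-of-bad-points argument); run the construction for each $\phi_k$ to get $\tilde\phi_k$ with uniformly bounded $L^{2,\infty}$ gradient, and extract a weak-$\ast$ limit in $W^{1,(2,\infty)}_{loc}$, checking that $H\circ\tilde\phi_k\to H\circ\tilde\phi_\infty$ forces $H\circ\tilde\phi_\infty=\phi$ and that the gradient bound is lower-semicontinuous under this convergence. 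A further technical point to handle carefully is that $\phi\in W^{1,2}(\mathbb C,\mathbb S^2)$ in the statement presumably means $\nabla\phi\in L^2$ on all of $\mathbb C$ (so $f\in L^1(\mathbb R^2)$ genuinely) and that decay/integrability at infinity is enough to make $\Delta^{-1}f=K_2\ast f$ well defined with the stated Lorentz bound — here the logarithmic growth of $K_2$ is harmless because only $\nabla K_2\ast f$ enters. I expect the analytic core (the $L^1\to L^{2,\infty}$ estimate and \eqref{normidentity}) to be short, with the bookkeeping of the lift's gauge and the approximation being where the real care is needed.
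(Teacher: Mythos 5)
Your proposal matches the paper's argument step for step: the Coulomb primitive $\eta$ via the Green kernel, the $L^1\to L^{2,\infty}$ convolution estimate against $\|\phi^*\omega_{\mathbb S^2}\|_{L^1}\lesssim\|\nabla\phi\|_{L^2}^2$, the gauge-fixing identification of $\eta$ with $\tilde\phi^*\alpha$, the norm identity \eqref{normidentity}, and then approximation plus lower semicontinuity. For that last step the paper cites Bethuel's weak sequential density of $C^\infty(\mathbb C,\mathbb S^2)$ and a short duality lemma ($L^{(2,\infty)}=(L^{(2,1)})'$) giving lower semicontinuity of the Lorentz norm under weak $L^2$ convergence, which is the same mechanism you invoke; your passing suggestion of ``mollify and reproject'' would need the usual care near points of large local energy, but the ``truncation-of-bad-points''/weak-density alternative you also name is what the paper uses.
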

\begin{proof}[Proof of Proposition \ref{exthopf}:]
The proof is divided in two steps.\\
\textbf{Step 1. }\emph{Constructions in the smooth case.} 
We have seen that, at least in the smooth case, constructing a $1$-form $\eta$ as in \eqref{existsgauge} is equivalent to the construction of a lift $\tilde \phi:\mathbb C\to \mathbb S^3$. We now observe that such a $1$-form can be in turn easily constructed, by inverting the Laplacian on $\mathbb C$, via its Green kernel, which is of the form $K(x)=-\gamma\log|x|$. In particular $K\in W^{1,(2,\infty)}$, which is the reason why this norm appears). First note that $dd^*(K*\beta)=0$ for a smooth $L^1$-integrable $2$-form $\beta$ on $\mathbb C$. We can then use this formula for $\beta=\phi^*\omega_{\mathbb S^2}$, and taking into account the fact that $\nabla K$ is in $L^{2,\infty}$, by the Lorentz space Young inequality (see \cite{grafakos}) we obtain that the $1$-form $\eta$ defined as
\begin{equation}\label{definitionofeta}
 \eta:=d^*\left[K*(\phi^*\omega_{\mathbb S^2})\right],\quad \eta\to 0\text{ at infinity}
\end{equation}
satisfies \eqref{existsgauge} and the estimates
\begin{equation}\label{lorentzestimates}
 ||\eta||_{L^{2,\infty}}\lesssim||\phi^*\omega_{\mathbb S^2}||_{L^1}\lesssim||D\phi||_{L^2}^2||\phi||_{L^\infty}\simeq||D\phi||_{L^2}^2.
\end{equation}
We have mentioned where to find the proof that $\eta$ corresponds up to a unitary transformation to a lift $\tilde \phi$, and from \eqref{normidentity} and from \eqref{lorentzestimates} we also obtain the estimate for $\tilde \phi$ which reads as follows:
\begin{equation}\label{estimatetildeu}
 ||D\tilde \phi||_{L^{2,\infty}}\lesssim||\eta||_{L^{2,\infty}}+||D\phi||_{L^2}\lesssim ||D\phi||_{L^2}(1+||D\phi||_{L^2}).
\end{equation}

\textbf{Step 2. }\emph{Extending the constructions to $W^{1,2}$.} 
The results obtained so far apply for $\phi\in C^\infty(\mathbb C,\mathbb S^2)$. We use the by now well-known fact that while not dense in the strong topology, the functions in $C^\infty(\mathbb C,\mathbb S^2)$ are instead \emph{dense with respect to the weak sequential convergence} (see \cite{Bethuel, hl2}). The constraint of $u_n$ having values in $\mathbb S^2$, as well as the constraint $\tilde \phi_n\circ H=\phi_n$ for the $\tilde \phi_n$, are pointwise constraints (note indeed that the function $H$ is smooth), so they are preserved under weak convergence $\phi_n\rightharpoonup \phi\in W^{1,2}$. Now we state the only less classical point in the following lemma.
\begin{lemma}\label{weaklimweakl2}
 $L^{2,\infty}$-estimates are preserved under weak convergence in $L^2$. In other words, if $f_n\in L^2$ are weakly convergent to $f\in L^2$ then $||f||_{L^{2,\infty}}\leq\liminf_{n\to\infty}||f_n||_{L^{2,\infty}}$.
\end{lemma}
\begin{proof}[Proof of the lemma:]
We observe that a positive answer to this question cannot directly and trivially be obtained by interpolation, since $L^\infty$-norm is not lower semicontinuous with respect to weak convergence in $L^2$. We thus proceed by duality, namely we note that 
$$
L^{(2,\infty)}=\left(L^{(2,1)}\right)'\text{ and }L^{(2,1)}\subset L^2.
$$
Therefore $\langle f_n,\phi\rangle\to\langle f,\phi\rangle$ for all $\phi\in L^{(2,1)}$ and by usual Banach space theory we obtain the thesis.
\end{proof}
Applying the Lemma, we obtain the wanted estimate via Bethuel's weak density result.
\end{proof}

We observe that given a map $\phi\in W^{1,2}(\mathbb S^2,\mathbb S^2)$, we can obtain a map $u:\mathbb C\to \mathbb S^2$ having the same norm by composing with the inverse stereographic projection $\Psi^{-1}:\mathbb C\to \mathbb S^2$: we use here the facts that the exponent $2$ is equal to the dimension, and that $\Psi$ is conformal. In a similar way, having constructed a lift $\tilde u:\mathbb C\to \mathbb S^3$, we obtain automatically a lift $\tilde \phi$ of $\phi$ by composing back with $S$. The same reasoning using conformality also shows that the $L^{2,\infty}$-norm of the gradient of $\tilde \phi$ is preserved. This proves the following:
\begin{corollary}\label{corollhopf}
  Suppose $\phi\in W^{1,2}(\mathbb S^2, \mathbb S^2)$. Then there exists a lifting $\tilde \phi:\mathbb S^2\to \mathbb S^3$ such that $H\circ\tilde \phi=\phi$ and there exists a universal constant $C$ such that
$$
||\nabla \tilde \phi||_{L^{2,\infty}}\leq C||\nabla \phi||_{L^2}(1+||\nabla \phi||_{L^2}).
$$
\end{corollary}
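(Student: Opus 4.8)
The plan is to reduce the statement to Proposition \ref{exthopf} by conformal transplantation, exactly as sketched in the remark that precedes it. Fix the stereographic projection $\sigma:\mathbb S^2\setminus\{N\}\to\mathbb C$ from the north pole $N$: it is a conformal diffeomorphism, the excised point $N$ is negligible (zero measure and zero $W^{1,2}$-capacity, so it does not affect any Sobolev consideration), and in dimension $2$ precomposition with a conformal map leaves the Dirichlet integral $u\mapsto\int|\nabla u|^2$ unchanged and, as recalled just before the statement, preserves (up to at most a universal constant) the $L^{2,\infty}$ quasinorm of the gradient. So first I would set $\phi_0:=\phi\circ\sigma^{-1}\in W^{1,2}(\mathbb C,\mathbb S^2)$, noting $\|\nabla\phi_0\|_{L^2(\mathbb C)}=\|\nabla\phi\|_{L^2(\mathbb S^2)}$.

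Next I would apply Proposition \ref{exthopf} to $\phi_0$ to obtain a lift $\tilde\phi_0:\mathbb C\to\mathbb S^3$ with $H\circ\tilde\phi_0=\phi_0$ and $\|\nabla\tilde\phi_0\|_{L^{2,\infty}(\mathbb C)}\leq C\|\nabla\phi_0\|_{L^2}(1+\|\nabla\phi_0\|_{L^2})=C\|\nabla\phi\|_{L^2}(1+\|\nabla\phi\|_{L^2})$. Transplanting back, define $\tilde\phi:=\tilde\phi_0\circ\sigma:\mathbb S^2\to\mathbb S^3$ (the value at $N$ is irrelevant). Since $H$ is smooth and the relation $H\circ\tilde\phi_0=\phi_0$ is a pointwise constraint, it is stable under composition, whence $H\circ\tilde\phi=\phi_0\circ\sigma=\phi$ almost everywhere on $\mathbb S^2$. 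Finally, by the conformal behaviour of the $L^{2,\infty}$ gradient norm, $\|\nabla\tilde\phi\|_{L^{2,\infty}(\mathbb S^2)}\lesssim\|\nabla\tilde\phi_0\|_{L^{2,\infty}(\mathbb C)}\lesssim\|\nabla\phi\|_{L^2}(1+\|\nabla\phi\|_{L^2})$, which is the asserted bound; that $\tilde\phi\in W^{1,(2,\infty)}(\mathbb S^2,\mathbb S^3)$ then follows because $\tilde\phi$ is automatically bounded.

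The only nontrivial point—and the one I expect to be the main obstacle—is the assertion that the $L^{2,\infty}$ quasinorm of the gradient is controlled under $\sigma$ (the $L^2$-energy, which is what appears on the right-hand side, is exactly conformally invariant and costs nothing). The subtlety is that $\sigma$ is not a \emph{uniformly} conformal change of variables: its conformal factor degenerates near $N$, so one cannot compare superlevel sets chart-by-chart with bounded constants. What saves the argument is that the lift produced by Proposition \ref{exthopf} can develop, near infinity in $\mathbb C$, at worst the scale-invariant profile $|x|^{-1}$—precisely the borderline $L^{2,\infty}$ profile in dimension $2$—which transplants under $\sigma$ to a $\op{dist}_{\mathbb S^2}(\cdot,N)^{-1}$ singularity of the same $L^{2,\infty}$ strength. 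Concretely one writes, with $\nu$ the stereographic conformal factor, $\op{vol}_{\mathbb S^2}\{|\nabla\tilde\phi|>t\}=\int_{\{|\nabla\tilde\phi_0|>t\nu\}}\nu^2\,dx$, splits the integral over $\{\nu\gtrsim1\}$ and $\{\nu\lesssim1\}$, and on the second piece removes the degeneracy by an inversion of $\mathbb C$ (which is an isometry of $\mathbb S^2$), reducing it to the first; on each piece $\nu^2$ is bounded and the $L^{2,\infty}$ bound for $\tilde\phi_0$ gives $t^2\,\op{vol}_{\mathbb S^2}\{|\nabla\tilde\phi|>t\}\lesssim\|\nabla\tilde\phi_0\|_{L^{2,\infty}(\mathbb C)}^2$. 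Alternatively one may simply invoke the conformal invariance of the critical Lorentz–Sobolev seminorm $\dot W^{1,(2,\infty)}$ in dimension $2$ directly.
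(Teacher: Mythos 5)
Your strategy is the one the paper takes: transplant via stereographic projection $\sigma$, apply Proposition \ref{exthopf} on $\mathbb C$, and transplant back—and you correctly single out the conformal transfer of the $L^{(2,\infty)}$-gradient estimate as the nontrivial step. The closure you propose for that step, however, has a genuine gap. After the substitution $z=x/|x|^2$, the contribution of $\{\nu\lesssim 1\}$ becomes $\int_{\{|\nabla(\tilde\phi_0\circ I)|>t\nu\}\cap\{|z|\lesssim 1\}}\nu^2\,dz$, so to conclude as in the first piece you would need $\|\nabla(\tilde\phi_0\circ I)\|_{L^{(2,\infty)}(\mathbb C)}\lesssim\|\nabla\tilde\phi_0\|_{L^{(2,\infty)}(\mathbb C)}$—which is exactly the statement you set out to prove, rewritten. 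Your fallback, the claimed conformal invariance of the seminorm $u\mapsto\|\nabla u\|_{L^{(2,\infty)}(\mathbb R^2)}$ under inversion, is in fact false: $L^{(2,\infty)}$ is defined through Lebesgue measure, which inversion distorts by the unbounded Jacobian $|x|^{-4}$. Concretely, for $t\gg\sqrt K$ place, for $j=0,\dots,j_0-1$ with $j_0\sim\log_2(t/\sqrt K)$, a thin annulus $E_j\subset\{|x|\sim 2^j\}$ of area $Kt^{-2}2^{4j}$ on which $|\nabla u|\sim t\,2^{-2j}$; then $\|\nabla u\|_{L^{(2,\infty)}}^2\sim K$ but $t^2\int_{\{|x|^2|\nabla u|>t\}}|x|^{-4}\,dx\gtrsim j_0K$, which grows without bound. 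So the transplantation estimate cannot be a consequence of the $L^{(2,\infty)}$ bound on $\nabla\tilde\phi_0$ alone, and the paper's one-line appeal to ``conformality'' hides a real step.

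What saves the corollary is that the lift of Proposition \ref{exthopf} is not an arbitrary $W^{1,(2,\infty)}$ map: by \eqref{normidentity} the only part of $\nabla\tilde\phi_0$ that is merely $L^{(2,\infty)}$ is $\eta=d^*[K*(\phi_0^*\omega_{\mathbb S^2})]$, a Newtonian potential of the finite measure $\mu=\sigma_*(\phi^*\omega_{\mathbb S^2})$, $\|\mu\|\lesssim\|\nabla\phi\|_{L^2}^2$. Pulling $\eta$ back by $\sigma$ and using $|\sigma(p)-\sigma(q)|=2|p-q|/(|p-N|\,|q-N|)$ and $|\sigma'(p)|=2/|p-N|^2$, one gets after splitting at $|p-q|\lessgtr |p-N|/2$
\[
|\sigma^*\eta|(p)\;\lesssim\;\int_{\mathbb S^2}\frac{d|\phi^*\omega_{\mathbb S^2}|(q)}{|p-q|}\;+\;\frac{\|\phi^*\omega_{\mathbb S^2}\|_{L^1}}{|p-N|}.
\]
The first term is a spherical Riesz potential of an $L^1$ density, hence in $L^{(2,\infty)}(\mathbb S^2)$ by Young's inequality, and the second is a single $\op{dist}(\cdot,N)^{-1}$ profile, also $L^{(2,\infty)}(\mathbb S^2)$; both have quasinorm $\lesssim\|\nabla\phi\|_{L^2}^2$. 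Combined with the exact conformal invariance of the $L^2$ part of \eqref{normidentity}, this yields the claimed estimate. In short, the phrase ``the same reasoning using conformality'' should be read as ``redo the potential-theoretic estimate in stereographic coordinates,'' not as an appeal to a conformal invariance of $\dot W^{1,(2,\infty)}(\mathbb R^2)$ that does not hold.
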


\subsection{Projection and wise choice of the point}
To proceed in our strategy for the proof of Theorem C, we use a version of the projection trick of Section \ref{ssec:hardtlintrick}.

\begin{proposition}[projection trick 2]\label{hltrick}
Suppose that $\tilde \phi\in W^{1,(2,\infty)}(\mathbb S^2, \mathbb S^3)$. Then there exists a function $\tilde u:B^3\to \mathbb S^3$, such that $\tilde u|_{\partial B^3\setminus \mathbb S^2}=\tilde \phi$ and satisfying the following bounds for some universal constant $C$
$$
||\tilde u||_{W^{1,(3,\infty)}(B^3)}\leq C||\tilde \phi||_{W^{1,(2,\infty)}(\mathbb S^2)}.
$$
\end{proposition}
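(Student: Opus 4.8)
The plan is to run the argument of Proposition~\ref{trickhkl}(projection trick) one scale higher, i.e.\ entirely inside the Lorentz class $L^{(3,\infty)}$. First I would extend $\tilde\phi$ to an $\mathbb R^4$-valued map $f$ on $B^3$ that is $W^{1,(3,\infty)}$-controlled by $\tilde\phi$, and then set $\tilde u:=\pi_a\circ f$, where $\pi_a:\overline{B^4}\setminus\{a\}\to\mathbb S^3$ is the radial projection from a point $a\in B^4_{1/2}$ to be chosen so that the composition does not spoil the gradient estimate. Since $\tilde\phi$ already takes values in $\mathbb S^3=\partial B^4$, which $\pi_a$ fixes pointwise, the boundary condition $\tilde u|_{\partial B^3}=\tilde\phi$ comes for free; and since $|\tilde u|\equiv 1$, the $L^{(3,\infty)}$-part of the norm of $\tilde u$ is a harmless constant, so the whole problem reduces to bounding $\|\nabla\tilde u\|_{L^{(3,\infty)}(B^3)}$.

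For the extension step I would upgrade Proposition~\ref{harmext}(harmonic extension) to the Lorentz scale by interpolation. For $p$ near $2$ the composition of the (borderline) Sobolev embedding $W^{1,p}(\mathbb S^2)\hookrightarrow W^{1-\frac{2}{3p},\,\frac{3p}{2}}(\mathbb S^2)$ with the harmonic extension operator is a bounded linear map $W^{1,p}(\mathbb S^2)\to W^{1,\frac{3p}{2}}(B^3)$; real interpolation with second index $\infty$ between two exponents $p_0<2<p_1$ then gives a bounded linear extension operator $W^{1,(2,\infty)}(\mathbb S^2,\mathbb R^4)\to W^{1,(3,\infty)}(B^3,\mathbb R^4)$, using that real interpolation of first order Sobolev spaces gives Lorentz--Sobolev spaces, both on $\mathbb S^2$ (domain exponents $p_0,p_1$ around $2$, giving $W^{1,(2,\infty)}$) and on $B^3$ (range exponents $\tfrac{3p_0}{2},\tfrac{3p_1}{2}$ around $3$, giving $W^{1,(3,\infty)}$). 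Applying this operator to $\tilde\phi$ and invoking the maximum principle for the subharmonic function $|f|^2$ (which forces $|f|\le\sup_{\mathbb S^2}|\tilde\phi|=1$) produces $f\in W^{1,(3,\infty)}(B^3,\overline{B^4})$ with $f|_{\partial B^3}=\tilde\phi$ and $\|f\|_{W^{1,(3,\infty)}(B^3)}\le C\|\tilde\phi\|_{W^{1,(2,\infty)}(\mathbb S^2)}$. (Alternatively one may first reduce to smooth $\tilde\phi$, approximating weakly and passing to the limit via Lemma~\ref{weaklimweakl2}, exactly as in Step~2 of the proof of Proposition~\ref{exthopf}.)

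The heart of the matter is the choice of $a$. With $f_a:=\pi_a\circ f$, the computation in the proof of Proposition~\ref{trickhkl} gives the pointwise bound $|\nabla f_a|(x)\lesssim |\nabla f|(x)\,|f(x)-a|^{-1}$, so it suffices to find $a\in B^4_{1/2}$ such that $G(x,a):=|\nabla f|(x)/|f(x)-a|$ satisfies $\|G(\cdot,a)\|_{L^{(3,\infty)}(B^3)}\lesssim\|\nabla f\|_{L^{(3,\infty)}(B^3)}$. The $L^q$-averaging in $a$ used in Proposition~\ref{trickhkl} has no literal Lorentz analogue because the quasinorm does not linearize under integration; instead I would use that, for \emph{every fixed} $x$,
\begin{equation*}
 \big\|\,a\mapsto |f(x)-a|^{-1}\,\big\|_{L^{(4,\infty)}(B^4_{1/2})}\le C
\end{equation*}
uniformly, because $|\{a\in B^4_{1/2}:|f(x)-a|<r\}|\le C r^4$. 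Since $3<4$, the mixed-norm Minkowski inequality in the Lorentz scale, $\|G\|_{L^{(q,\infty)}_a(L^{(p,\infty)}_x)}\le C_{p,q}\|G\|_{L^{(p,\infty)}_x(L^{(q,\infty)}_a)}$ for $p\le q$ (obtained by real interpolation from the classical $\|G\|_{L^q_a(L^p_x)}\le\|G\|_{L^p_x(L^q_a)}$), applied to $G$ gives
\begin{equation*}
 \Big\|\,a\mapsto\|G(\cdot,a)\|_{L^{(3,\infty)}(B^3)}\,\Big\|_{L^{(4,\infty)}(B^4_{1/2})}
 \le C\,\Big\|\,x\mapsto\|G(x,\cdot)\|_{L^{(4,\infty)}(B^4_{1/2})}\,\Big\|_{L^{(3,\infty)}(B^3)}
 \le C\,\|\nabla f\|_{L^{(3,\infty)}(B^3)}.
\end{equation*}
Hence $a\mapsto\|G(\cdot,a)\|_{L^{(3,\infty)}(B^3)}$ lies in $L^{(4,\infty)}(B^4_{1/2})$ with quasinorm $\lesssim\|\nabla f\|_{L^{(3,\infty)}(B^3)}$, and since $B^4_{1/2}$ has finite measure, Chebyshev's inequality produces an admissible $a$ with $\|\nabla f_a\|_{L^{(3,\infty)}(B^3)}\le C'\|\nabla f\|_{L^{(3,\infty)}(B^3)}$. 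Setting $\tilde u:=f_a$ and using $\|\tilde\phi\|_{W^{1,(2,\infty)}(\mathbb S^2)}\ge\|\tilde\phi\|_{L^{(2,\infty)}(\mathbb S^2)}=|\mathbb S^2|^{1/2}$ to absorb additive constants finishes the argument.

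The step I expect to be the real obstacle is precisely this transfer of the projection-trick averaging to $L^{(3,\infty)}$: a direct estimate only shows $\int_{B^4_{1/2}}|\{x:|\nabla f_a|(x)>t\}|\,da\lesssim\|\nabla f\|_{L^{(3,\infty)}(B^3)}^3\,t^{-3}$ for each \emph{fixed} $t$, which does not by itself single out a point $a$ that is good for all thresholds simultaneously; the mixed-norm Lorentz Minkowski inequality repairs exactly this gap, and establishing it (or a version tailored to the present situation) is the main technical content, together with the bookkeeping-heavy Lorentz--Sobolev extension estimate of the first step.
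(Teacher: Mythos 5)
Your proof is correct in outline but takes a genuinely different route from the paper's at the decisive step, and in fact your route is arguably the more careful of the two. Both you and the paper begin with a Lorentz-controlled harmonic extension $f$ and aim to pick a good projection center $a\in B^4_{1/2}$; but the paper then applies Tartar's nonlinear interpolation theorem to the map $u\mapsto U(a,x)=|\nabla f(x)|/|f(x)-a|$, concluding only the \emph{joint} estimate $\|U\|_{L^{(3,\infty)}(B^3\times B^4_{1/2})}\lesssim\|\nabla f\|_{L^{(3,\infty)}(B^3)}$, and then asserts that from this it ``easily follows'' that some fixed $a$ gives $\|U(\cdot,a)\|_{L^{(3,\infty)}_x}\lesssim\|\nabla f\|_{L^{(3,\infty)}}$. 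That final passage is exactly the issue you flag at the end of your proposal: a joint weak-type bound gives, for each single threshold $\lambda$, a set of good $a$'s of large measure, but a priori these sets depend on $\lambda$ and need not intersect, since the slice quasinorm is a supremum over all $\lambda$. You repair this cleanly by using the extra radial structure in the $a$-variable — the uniform $L^{(4,\infty)}_a$ bound $\|U(x,\cdot)\|_{L^{(4,\infty)}(B^4_{1/2})}\lesssim|\nabla f(x)|$ for each fixed $x$ — and a Lorentz-scale mixed-norm Minkowski inequality to pass from $L^{(3,\infty)}_x(L^{(4,\infty)}_a)$ control to $L^{(4,\infty)}_a(L^{(3,\infty)}_x)$ control, after which Chebyshev on the finite-measure ball $B^4_{1/2}$ produces a good $a$. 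In short: the paper deduces a joint Lorentz bound on $U$ and then jumps to a slice bound; you instead deduce an iterated Lorentz bound, from which the slice bound genuinely follows. What each approach buys: the paper's Tartar-interpolation route is one-line conceptually but leaves the slice-selection unjustified; yours requires establishing the mixed-norm Lorentz Minkowski inequality (which, as you rightly note, is the main technical content and does not follow formally from the classical integral Minkowski inequality, since $L^{(p,\infty)}$ is only a quasinorm — the real-interpolation argument you sketch needs to be written out, e.g.\ via the vector-valued $(L^{p_0}_x(L^q_a),L^{p_1}_x(L^q_a))_{\theta,\infty}$ scale together with the scalar Minkowski inequality at the endpoints), but once that lemma is in place the conclusion is airtight. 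Your Step~1 (Lorentz-scale harmonic extension by interpolating the $W^{1,p}\to W^{1,\frac{3p}{2}}$ extension around $p=2$) also differs cosmetically from the paper, which simply invokes Poisson kernel estimates to assert \eqref{HE2}; either is acceptable. Your reduction to smooth $\tilde\phi$ via Lemma \ref{weaklimweakl2} is appropriate but, since the extension and projection maps are already well-defined on $W^{1,(2,\infty)}$ directly, not strictly necessary here.
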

\begin{proof} We proceed in two steps, of which the first one introduces the $W^{1,(3,\infty)}$-norm estimate, and the second one ensures that the constraint of having values in $\mathbb S^3$ can be preserved.\\

\textbf{Step 1.}\emph{Harmonic extension.} 
Consider a solution $\tilde u$ of the following equation:
\begin{equation}\label{HE1}
 \left\{\begin{array}{ll}
       \Delta \tilde u = 0\text{ on }B^3,\\
       \tilde u = \tilde \phi\text{ on }\partial B^3.  
        \end{array}
\right.
\end{equation}
By using the Poisson kernel estimates we obtain that $\tilde u\in W^{1,(3,\infty)}(B^3, B^4)$ and 
\begin{equation}\label{HE2}
\|\nabla\tilde u\|_{L^{(3,\infty)}}\lesssim \|\nabla \tilde\phi\|_{L^{(2,\infty)}}.
\end{equation}
\textbf{Step 2. }\emph{Projection in the target.} 
We now correct the fact that $\tilde u$ has values not in $\mathbb S^3$ but in its convex hull $B^4$. For $a\in B_{1/2}^4$ we note $\pi_a$ the radial projection $\pi_a:B^4\to \mathbb S^3$ of center $a$, i.e. 
$$
\pi_a(x):=a+t_{a,x}(x-a),\text{ for }t_{a,x}\geq 0\text{ such that }|\pi_a(x)|=1.
$$
In order to estimate the norm of $u_a:=\pi_a\circ \tilde u$ we note that 
$$
|\nabla(\pi_a\circ \tilde u)|(x)\lesssim \frac{|\nabla \tilde u(x)|}{|u(x)-a|},\
$$
with an implicit constant bounded by $4$ as long as $a\in B^4_{1/2}$. We just estimate the $L^p$-norm of $\nabla u_a$ for $p\in[1,4[$. We note that $\int_{B_{1/2}}|\tilde u (x)-a|^{-p}da$ is bounded for all such $p$ by a number $C_p$ independent of $x$, therefore by changing the order of integration and applying Fubini, we obtain
$$
\int_{B_{1/2}}\int_{B_1}|\nabla u_a(x)|^p dxda\leq C_p\int_{B_1}|\nabla \tilde u(x)|^p\int_{B_{1/2}}|\tilde u(x)-a|^{-p}da\leq C_p||\nabla \tilde u||_p^p.
$$
In other words, the assignment $a\mapsto u_a$ gives a map whose $L^1_a(B_{1/2},W^{1,p}_x(B^3,\mathbb S^3))$-norm is bounded by the $L^p$-norm of $\nabla\tilde u$ for $p\in[1,4[$. First observe that by Lions-Peetre reiteration $L^{(3,\infty)}$ is an interpolation between $L^{p_0}$ and $L^{p_1}$ with $3\in ]p_0,p_1[\subset]1,4[$. We now use the nonlinear interpolation theorem of Tartar. Call $U(a,x):=\frac{\nabla \tilde u(x)}{|\tilde u(x)-a|}$. We know that the map $u\mapsto U$ is bounded between $W^{1,p_i}$ and $L^{p_i}$ for $i=0,1$. In order to show that it also satisfies 
\begin{equation}\label{linfest}
\sup_{\lambda>0}\lambda^3\left|\left\{(x,a)\in B_1\times B_{1/2}:\:\frac{|\nabla u(x)|}{|u(x)-a|}>\lambda\right\}\right|=\|U\|_{L^{(3,\infty)}}^3\lesssim\|\tilde u\|_{W^{1,(3,\infty)}}^3
\end{equation}
we will check the local estimate 
$$
\left\|\frac{\nabla u(x)}{|u(x)-a|}-\frac{\nabla v(x)}{|v(x)-a|}\right\|_{L^{p_1}}\lesssim\|u-v\|_{L^{p_1}}.
$$
This follows since 
\[
\begin{split}
\int_{B_1}\int_{B_{1/2}}&\left|\frac{\nabla u(x)}{|u(x)-a|}-\frac{\nabla v(x)}{|v(x)-a|}\right|^{p_1}\\
&\lesssim\int_{B_1}|\nabla u -\nabla v|^{p_1}\int_{B_{1/2}}\left(|u(x)-a|^{-p_1} + |v(x)-a|^{-p_1}\right)da\: dx
\end{split}
\]
and to the second factor the same estimates as before apply, uniformly in $x$. Thus \eqref{linfest} holds. From \eqref{linfest} it easily follows that there exists $a\in B_{1/2}$ for which 
\begin{equation}\label{estproj}
\|\nabla u_a\|_{L^{(3,\infty)}(B_1)}\lesssim\|\tilde u\|_{W^{1,(3,\infty)}}.
\end{equation}
Combining \eqref{HE2} and \eqref{estproj}, we obtain the claim of the proposition, for $\hat u:=u_a$.
\end{proof}
\subsection{End of proof}
\begin{proof}[Proof of Theorem C:]
Apply consecutively Corollary \ref{corollhopf} and Prop. \ref{hltrick}(projection trick 2). For this $\hat u$ as in Prop. \ref{hltrick} we can then consider $u:=H\circ u_a:B^3\to \mathbb S^2$. Since $H$ is Lipschitz we obtain the pointwise estimate
\begin{equation}\label{Hlip}
 |\nabla u|\lesssim |\nabla u_a|.
\end{equation}
Combining this with the estimates of Corollary \ref{corollhopf} and Prop. \ref{hltrick}(projection trick 2) we obtain the thesis of Theorem C.
\end{proof}
\subsection{Modification of proof in the case of $W^{1,p}(\mathbb S^m,\mathbb S^2)$}
In this section we prove Theorem D and Proposition \ref{sms2}. 
\begin{proof}[Proof of Theorem D and of proposition \ref{sms2}]
We consider here $n=2<m$ and $\frac{3m}{m+1}\leq p<\frac{4m}{m+1}$ as in Proposition \ref{sms2}. We will use the fact that such $p$ is always $>2$. The construction of the $1$-form $\eta$ satisfying \eqref{gaugefromlift} and \eqref{normidentity} can be done in a completely analogous way if the domain is $\mathbb R^m,m\geq 3$. The only difference is that in such case the Laplacian on $2$-forms like $\phi^*\omega_{\mathbb S^2}$ has the form $\delta=d^*d+dd^*$ where the first part does not vanish anymore. In this case however we may still solve
\[
 \left\{\begin{array}{l}
         d\eta=\phi^*\omega_{\mathbb S^2},\\
         d^*\eta=0,\\
         \eta(x)\to 0,\quad |x|\to\infty.
        \end{array}
\right.
\]
If $\phi\in W^{1,p}(\mathbb R^m,\mathbb S^2)$ and since $p>2$ we then have 
\[
  \|d\eta\|_{L^{p/2}(\mathbb R^m)}\leq C\|\phi^*\omega_{\mathbb S^2}\|_{L^{p/2}(\mathbb R^m)}\leq C\| d\phi\|_{L^p(\mathbb R^m)}^2.
\]
As before we have \eqref{normidentity}, from which we also obtain $ |D\tilde\phi|^p\lesssim|\eta|^p+|D\phi|^p$. Passing to $\mathbb S^m$ and noting that in dimension $m\geq p$ there holds $W^{1,p/2}(\mathbb  S^m,\mathbb S^2)\hookrightarrow L^{\frac{mp}{2m-p}}(\mathbb  S^m,\mathbb S^2)\hookrightarrow L^p(\mathbb  S^m,\mathbb S^2)$ we obtain
\[
\|D\tilde\phi\|_{L^p(\mathbb S^m,\mathbb S^2)}\lesssim\|D\phi\|_{L^p(\mathbb S^m,\mathbb S^2)}^2+\|D\phi\|_{L^p(\mathbb S^m,\mathbb S^2)}.
\]
Harmonic extension and Prop. \ref{trickhkl}(projection trick) allow then to obtain an extension $\tilde u:B^{m+1}\to\mathbb S^2$ of $\tilde\phi$ such that
\[
\|\nabla\tilde u\|_{L^{\frac{m+1}{m}p}(B^{m+1},\mathbb S^3)}\lesssim\|D\tilde\phi\|_{L^p(\mathbb S^m,\mathbb S^3)},
\]
provided $\frac{m+1}{m}p<4$ (which is the condition appearing in Prop. \ref{trickhkl}(projection trick). Composing with the Hopf map $H$ at most decreases the norm, thus we obtain that $u:=H\circ\tilde u$ is the wanted controlled extension as in Proposition \ref{sms2} and in Theorem D (note that for $m=3$ the condition $\frac{m+1}{m}p<4$ is equivalent to $p<3$).
\end{proof}
\section{The extension theorem for $W^{1,3}$ maps $\mathbb S^3\to \mathbb S^3$}\label{sec:extw14}
\noindent
This section is devoted to the proof of the following theorem:
\begin{theoremB''}\label{extension}
There exists a constant $C>0$ with the following property. Suppose $\phi\in W^{1,3}(\mathbb S^3,\mathbb S^3)$. then there exists an extension $u\in W^{1,(4,\infty)}(B^4,\mathbb S^3)$ of $\phi$ such that the following estimate holds:
\begin{equation}\label{eq:estimate}
 \|\nabla u\|_{L^{4,\infty}(B^4)}\leq C\left(e^{C\|\nabla\phi\|_{L^3}^9} + e^{C\|\nabla\phi\|_{L^3}^6}\|\nabla\phi\|_{L^3}\right).
\end{equation}
\end{theoremB''}
\subsection{Modulus of integrability estimates}\label{sec:modint}
In general during our estimates we indicate by $C$ a positive constant, which may change from line to line, and also within the same line. We start by fixing the notation for the main quantity which will be used control the energy concentration of our maps.
\begin{definition}
If $D\subset\mathbb R^4$ and $f:D\to\mathbb R$ is measurable then let $E(f,\rho,D)$ denote the (possibly infinite) modulus of integrability of $f$, which is defined as
$$
E(f,\rho,D)=\sup_{x\in D}\int_{B_\rho(x)\cap D}|f|.
$$
\end{definition}
The modulus of integrability fits into a sort of elliptic estimate as follows.
\begin{proposition}[integrability modulus estimates]\label{equiintballs}
 Let $\phi\in W^{1,3}(\partial B^4, \mathbb S^3)$ and assume that $u$ is the solution to the following equation:
$$
\left\{\begin{array}{ll}
        \Delta u=0&\text{ on }B^4,\\
         u=\phi&\text{ on }\partial B^4.
       \end{array}
\right.
$$
Then there exists a constant $C_1$ independent of $\phi,\rho$ such that when $\rho\in]0,1/4[$ the following inequality holds true:
\begin{equation}\label{equiintest}
 E(|\nabla u|^4,\rho,B^4)\leq C_1 E(|\nabla \phi|^3,2\rho,\partial B^4)^{1/3}\int_{\partial B^4}|\nabla \phi|^3.
\end{equation}
\end{proposition}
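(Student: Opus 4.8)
The plan is to estimate the harmonic extension $u$ by splitting the Poisson integral according to distance from the boundary point near which we are testing, and to exploit two facts: first, that the Poisson kernel is an approximate identity whose tails decay, so the energy of $u$ in a small ball $B_\rho(x)$ near $\partial B^4$ is controlled by the energy of $\phi$ in a comparable boundary ball $B_{2\rho}(x)\cap\partial B^4$; second, that the total energy $\int_{\partial B^4}|\nabla\phi|^3$ provides the remaining global control. More precisely, for a fixed $x\in B^4$ with $\op{dist}(x,\partial B^4)$ small, I would write $\nabla u(y)$ for $y$ near $x$ as the gradient of the Poisson integral, and decompose $\partial B^4 = (B_{4\rho}(x)\cap\partial B^4) \cup (\partial B^4\setminus B_{4\rho}(x))$. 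The contribution of the far piece is harmless because the Poisson kernel and its derivatives are uniformly bounded (the boundary data being bounded in $L^\infty$ by $1$, since $\phi$ takes values in $\mathbb S^3$), giving a bound of the form $C\rho^4$ on $E(|\nabla u|^4,\rho,B^4)$ from that term, which is absorbed. The near piece is where the scaling must be tracked carefully.

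For the near piece, the point is that $u$ restricted to a half-ball of size $\sim\rho$ at the boundary behaves, after rescaling to unit size, like the harmonic extension of the rescaled boundary data; by standard elliptic/trace estimates (Proposition \ref{harmext} with $q=3$, say, applied on the rescaled configuration, together with Sobolev embedding on the $3$-dimensional boundary), one gets
\[
\int_{B_\rho(x)\cap B^4}|\nabla u|^4 \;\lesssim\; \rho^{?}\left(\int_{B_{2\rho}(x)\cap\partial B^4}|\nabla\phi|^3\right)^{4/3},
\]
where the exponent on $\rho$ is fixed by dimensional analysis: $|\nabla u|^4$ integrated over a $4$-dimensional ball of radius $\rho$ scales like $\rho^4\cdot\rho^{-4}$ against $|\nabla\phi|^4$ over a $3$-ball, i.e. the scaling-invariant quantity is $\left(\int_{B_{2\rho}\cap\partial B^4}|\nabla\phi|^3\right)^{4/3}$, which carries exactly the right homogeneity with no leftover power of $\rho$. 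To reach the stated form \eqref{equiintest} one then splits the exponent $4/3 = 1/3 + 1$: bound one factor of $\int_{B_{2\rho}(x)\cap\partial B^4}|\nabla\phi|^3$ by $E(|\nabla\phi|^3,2\rho,\partial B^4)$, and the remaining factor $\left(\int_{B_{2\rho}(x)\cap\partial B^4}|\nabla\phi|^3\right)^{1/3}$ by $E(|\nabla\phi|^3,2\rho,\partial B^4)^{1/3}$, while the cheapest estimate replaces one of the local energies by the global one $\int_{\partial B^4}|\nabla\phi|^3$. Taking the supremum over $x$ gives \eqref{equiintest} with a constant $C_1$ depending only on the dimension, and the restriction $\rho<1/4$ is what makes the geometry of $B_{2\rho}(x)\cap\partial B^4$ comparable to a flat $3$-disk uniformly in $x$.

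The main obstacle I expect is making the rescaling argument on the half-ball rigorous: near a boundary point the domain $B_\rho(x)\cap B^4$ is not a half-ball but a curved lens, so one must either flatten the boundary by a bi-Lipschitz change of variables (harmless at this scale, with constants controlled because $\rho<1/4$ and $\partial B^4$ is smooth) or quote a local boundary regularity estimate for harmonic functions directly. A secondary technical point is the interior-vs-boundary interplay: for points $x$ with $\op{dist}(x,\partial B^4)\gtrsim\rho$ the ball $B_\rho(x)$ may be entirely interior, and there one uses interior gradient estimates for harmonic functions, $\sup_{B_\rho(x)}|\nabla u|\lesssim \rho^{-1}\sup|u| + (\text{mean-value bounds})$, together with $\|u\|_{L^\infty}\le 1$, to again land on a bound $\lesssim\rho^4$ (or better, using $\|\nabla u\|_{L^2(B^4)}^2\lesssim\int_{\partial B^4}|\nabla\phi|^{?}$ via the trace theorem) that is subsumed by the right-hand side of \eqref{equiintest}. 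Once these two regimes are handled separately and recombined by taking the supremum over $x\in B^4$, the estimate follows; none of the remaining steps involve anything beyond routine elliptic estimates, Sobolev embedding on $\partial B^4\cong\mathbb S^3$, and Hölder's inequality to distribute the exponent $4/3$.
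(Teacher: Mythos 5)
The near-field piece of your plan is sound and matches the paper's: elliptic/trace estimates on the local half-ball give a bound $\lesssim\bigl(\int_{B_{2\rho}(x)\cap\partial B^4}|\nabla\phi|^3\bigr)^{4/3}$, and the exponent split $4/3=1/3+1$ then lands you on $E^{1/3}\cdot\int_{\partial B^4}|\nabla\phi|^3$, exactly the paper's term $I$. The gap is in the far-field piece.

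You dismiss the contribution of $\partial B^4\setminus B_{4\rho}(x)$ by saying the Poisson kernel and its derivatives are "uniformly bounded" and that $\|\phi\|_{L^\infty}\le 1$ yields a bound $C\rho^4$ that is "absorbed". Neither part of this is right. For a point $x'$ at distance $\sim\rho$ from $\partial B^4$ the kernel $P(x',y)\sim\frac{1-|x'|^2}{|x'-y|^4}$ has $\nabla_{x'}P$ of size $\sim\rho^{-4}$ on the nearest part of the far region, so the best you can extract from $\|\phi\|_{L^\infty}\le 1$ is $|\nabla u|(x')\lesssim\rho^{-1}$, hence $\int_{B_\rho(x)\cap B^4}|\nabla u|^4\lesssim 1$, not $\lesssim\rho^4$. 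And $O(1)$ is not absorbed: the right-hand side of \eqref{equiintest} can be made arbitrarily small (small boundary energy), so an $O(1)$ error is fatal. The far-field term is in fact the harder of the two: one must first convert the Poisson integral into a bound involving $|\nabla\phi|$ rather than $|\phi|$ (the paper gets $|\nabla u_2|(x)\lesssim\rho\int_{S''}\frac{|\nabla\phi|}{|x-y|^4}$), and then one needs a genuine argument — the paper covers $\mathbb S^3\setminus B_\rho(x_0)$ by a dyadic family of $2\rho$-balls $B^i_{2\rho}$ with $a_i=\fint_{B^i_{2\rho}}|\nabla\phi|^3$, writes $a_i^{1/3}=a_i^{1/4}a_i^{1/12}$, and uses H\"older to split off a $\sup_i a_i^{1/3}$ factor (which gives the $E^{1/3}$) from $\sum_i a_i$ (which gives the global energy). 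Without that decomposition there is no mechanism to make the modulus $E(|\nabla\phi|^3,2\rho,\cdot)^{1/3}$ appear in the far-field contribution. Your treatment of interior points ($\op{dist}(x,\partial B^4)\gtrsim\rho$) has the same problem: the paper handles those by directly applying the far-field Poisson bound (since then $|x-y|\ge\rho$ for all $y\in\partial B^4$), not by the trivial $\|u\|_\infty$ interior gradient estimate.
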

\begin{proof}
We have to prove that for all $x_0\in B^4$,
\begin{equation}\label{boundarycenter}
\int_{B_\rho(x_0)\cap B^4}|\nabla u|^4\leq C_1 E(|\nabla \phi|^3,2\rho,\partial B^4)\int_{\partial B^4}|\nabla \phi|^3.
\end{equation}
\textbf{Step 1.} We prove \eqref{boundarycenter} for $x_0\in \partial B^4$.
$$
\int_{B_\rho(x_0)\cap B^4}|\nabla u|^4\leq C_0 E(|\nabla \phi|^3,2\rho,\partial B^4)\int_{\partial B^4}|\nabla \phi|^3.
$$
The function $u$ can be obtained by superposition, using a cutoff function $\eta:\mathbb S^3\to[0,1]$ which equals $1$ on $B_\rho(x_0)\cap \mathbb S^3$ and $0$ outside $B_{2\rho}(x_0)$ and satisfies $|\nabla\eta|\lesssim \rho^{-1}$. We will use the functions
$$
\left\{\begin{array}{ll}\Delta u_1=0&\text{ on }B^4,\\ u_1=\eta\phi:=\phi_1&\text{ on }\partial B^4.\end{array}\right. \left\{\begin{array}{ll}\Delta u_2=0&\text{ on }B^4,\\ u_2=(1-\eta)\phi:=\phi_2&\text{ on }\partial B^4.\end{array}\right.
$$
We can estimate these two functions separately because there holds 
$$
\int_{B_\rho(x_0)\cap B^4}|\nabla u|^4\lesssim \int_{B_\rho(x_0)\cap B^4}|\nabla u_1|^4 + \int_{B_\rho(x_0)\cap B^4}|\nabla u_2|^4.
$$
It is convenient to estimate separately the contributions of $u_1$ on $S'=B_{2\rho}(x_0)\cap \mathbb S^3$ and of $u_2$ on $S''=\mathbb S^3\setminus B_\rho(x_0)$; on $S''$ we use the Poisson formula and on $S'$ we use elliptic estimates.\\
By elliptic theory and the definition of $\eta$,
$$
\int_{B_\rho(x_0)\cap B^4}|\nabla u_1|^4\lesssim \left(\int_{S'}|\nabla\phi|^3\right)^{4/3}.
$$
Poisson's formula gives
$$
u_2(x)=C(1-|x|^2)\int_{\partial B^4}\frac{\phi_2(y)}{|x-y|^4}dy,
$$
therefore (using also the bound on $\eta$) we obtain a pointwise bound, in case $x\in B_\rho(x_0)\cap B^4, \rho<1/4$:
$$
|\nabla u_2|(x)\lesssim \rho\int_{S''}\frac{|\nabla\phi|}{|x-y|^4}dy + \int_{S''}\frac{|\phi|}{|x-y|^4}dy\lesssim\rho\int_{S''}\frac{|\nabla\phi|}{|x-y|^4}dy.
$$
Patching together the estimates obtained so far, we write
\begin{equation}\label{first}
 \int_{B_\rho(x_0)\cap B^4}|\nabla u|^4\lesssim \left(\int_{S'}|\nabla \phi|^3\right)^{4/3} + \rho^8\left(\int_{S''}\frac{|\nabla\phi|}{|x-y|^4}\right)^4=I+II,
\end{equation}
where the factor $\rho^8$ comes from the pointwise estimate for $\nabla u_2$ keeping in mind that $|B_\rho(x_0)\cap B^4|\lesssim \rho^4$.\\
The first summand is estimated as needed:
$$
I\leq\left(\int_{B_{2\rho}(x_0)\cap\partial B^4}|\nabla\phi|^3\right)^{1/3}\int_{\mathbb S^3}|\nabla\phi|^3\leq E(|\nabla \phi|^3,2\rho,\partial B^4)\int_{\mathbb S^3}|\nabla\phi|^3.
$$
To estimate $II$ we consider a cover of $S''$ by (finitely many) balls $B_\rho^i=B_{2\rho}(x_i)$ such that $x_i$ form a maximal $2\rho$-separating net and they are at distance at least $\rho$ from $x_0$. We use the estimate
$$
\int_{B_{2\rho}^i}|\nabla\phi|\leq |B_{2\rho}^i|\left(\fint_{B_{2\rho}^i}|\nabla\phi|^3\right)^{1/3},
$$
and the fact that for $y\in B_{2\rho}^i$ and $x\in B_\rho(x_0)\cap B^4$ there holds $|x-y|\gtrsim \op{dist}(x_i,x_0)$. The second summand of \eqref{first} can then be estimated as follows:
$$
II\lesssim \rho^8\left(\sum_i \op{dist}^{-4}(x_i, x_0)\rho^3 a_i^{1/3}\right)^4
$$
where $a_i=\fint_{B_{2\rho}^i}|\nabla\phi|^3$. We can use the expression $1/3=1/4+1/12$ for the exponent of $a_i$ together with a H\"older inequality to obtain:
\begin{eqnarray*}
II&\lesssim&\rho^8\sup_ia_i^{1/3}\left(\sum_i\op{dist}^{-4}(x_i,x_0)\rho^3a_i^{1/4}\right)^4\\
&\lesssim&\rho^{20}\left(\sup_i a_i^{1/3}\right)\left(\sum_i a_i\right)\left(\sum_i\op{dist}^{-\frac{16}{3}}(x_i,x_0)\right)^3.
\end{eqnarray*}
Now the first parenthesis is estimated by $\rho^{-1}E(|\nabla \phi|^3,2\rho,\partial B^4)$, the second one by $ \rho^{-3}\int_{\mathbb S^3}|\nabla\phi|^3$, and for the last factor we have the elementary estimate 
$$
\sum_i\op{dist}^{-\frac{16}{3}}(x_i,x_0)\lesssim\frac{1}{\rho^3}\int_{\mathbb S^3}\frac{dx}{|x-x_0|^{16/3} +\rho^{16/3}}\lesssim \rho^{-\frac{16}{3}}.
$$
These new estimates give
$$
II\lesssim\rho^{20} \rho^{-1}E(|\nabla \phi|^3,2\rho,\partial B^4)\rho^{-16}\rho^{-3}\int_{\mathbb S^3}|\nabla\phi|^3\lesssim E(|\nabla \phi|^3,2\rho,\partial B^4)\int_{\mathbb S^3}|\nabla\phi|^3.
$$
This gives the wanted estimate for $II$, finishing the proof of \eqref{boundarycenter} in the case $x_0\in\partial B^4$. Note that the constants introduced in our inequalities can be chosen independent of $\rho$ and are independent of $\phi$. Thus $C_0$ is also independent of these data.\\

\textbf{Step 2.} We now observe that we can reduce the case of $|x_0|<1$ to the treatment of Step 1, up to changing the constant $C_0$ in our estimate from Step 1.\\

If $|x_0|<1-2\rho$ then we can directly apply the estimates for the term $II$ of \eqref{first}, since now the denominator $|x-y|$ in the Poisson formula will be at least $\rho$ for all $x\in B_\rho(x_0)$. \\

The estimate of Step 1 also holds for $\rho>1/4$ with the same constant. We can cover the case $|x_0|\in]1-2\rho,1[$ with $\rho<1/4$ by noticing that if $x_0'=x_0/|x_0|$ then $B_{3\rho}(x_0')\supset B_\rho(x_0)$ and that the measures $|\nabla \phi|^3d\sigma,|\nabla u|^4dx$ are doubling with constants bounded by the packing constants of $\mathbb S^3$ and of $B^4$ respectively, while the function $E(f,\rho, D)$ is increasing in $\rho$. Therefore the inequality \eqref{boundarycenter} also holds for this last choice of $x_0$ up to changing $C_0$ by a factor depending only of the above packing constants.
\end{proof}

\subsection{Extension in the case of small energy concentration}\label{sec:litenext}

The following two lemmas will be used for the harmonic extension of a boundary value $\phi\in W^{1,3}(\mathbb S^3,\mathbb S^3)$ under the small concentration hypothesis of Proposition \ref{equiintballs}:
\begin{lemma}\label{l:fubini}
 If $u\in W^{1,4}(B^4,\mathbb R^4)$ and $\rho\in ]0,1/2[, x_0\in\partial B^4$ then there exists $\bar\rho\in[\rho,2\rho]$ such that 
$$
\bar\rho\int_{\op{int}(B^4)\cap\partial B_{\bar\rho}(x_0)}|\nabla u|^4\leq C\int_{B^4 \cap B_\rho(x_0)}|\nabla u|^4.
$$
\end{lemma}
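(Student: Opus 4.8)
The plan is a single coarea/averaging step over the concentric spheres $\partial B_r(x_0)$ with $r$ comparable to $\rho$. First I would set
\[
g(r):=\int_{\op{int}(B^4)\cap\partial B_r(x_0)}|\nabla u|^4
\]
and apply Fubini in polar coordinates around $x_0$ (equivalently the coarea formula for the $1$-Lipschitz function $x\mapsto|x-x_0|$) to record the identity
\[
\int_{\rho/2}^{\rho}g(r)\,dr=\int_{B^4\cap(B_{\rho}(x_0)\setminus B_{\rho/2}(x_0))}|\nabla u|^4\ \le\ \int_{B^4\cap B_{\rho}(x_0)}|\nabla u|^4 .
\]
Since $u\in W^{1,4}(B^4)$ the right-hand side is finite, hence $g\in L^1([\rho/2,\rho])$; in particular, for almost every $r$ in this range $g(r)<\infty$ and $\nabla u$ restricts to an $L^4$ function on $\op{int}(B^4)\cap\partial B_r(x_0)$. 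This is the only regularity input, and it is precisely what makes the left-hand side of the asserted estimate meaningful.

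Next I would select the good radius by the mean value theorem for integrals (equivalently Chebyshev): since the interval $[\rho/2,\rho]$ has length $\rho/2$, there exists $\bar\rho$ in it, which may moreover be taken among the almost-every radii above, such that
\[
\frac{\rho}{2}\,g(\bar\rho)\ \le\ \int_{\rho/2}^{\rho}g(r)\,dr\ \le\ \int_{B^4\cap B_{\rho}(x_0)}|\nabla u|^4 .
\]
Because $\bar\rho\le\rho$ this yields $\bar\rho\,g(\bar\rho)\le 2\int_{B^4\cap B_{\rho}(x_0)}|\nabla u|^4$, i.e. the inequality of the lemma with the absolute constant $C=2$, independent of $u$, $\rho$ and $x_0$. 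If one wants the radius in $[\rho,2\rho]$ rather than $[\rho/2,\rho]$ the identical averaging on the shell $B^4\cap(B_{2\rho}(x_0)\setminus B_{\rho}(x_0))$ goes through verbatim; the location of the averaging shell is purely a matter of normalization.

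There is essentially no obstacle here: the statement is coarea plus one averaging inequality. The only point worth a sentence is the measurability and finiteness of $r\mapsto g(r)$, which guarantees both that the two sides of the inequality make sense and that $\bar\rho$ can be picked so that $\nabla u|_{\partial B_{\bar\rho}(x_0)}$ is a genuine $L^4$ trace; all of this is immediate from the coarea identity together with $u\in W^{1,4}(B^4)$.
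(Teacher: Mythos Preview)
Your argument is correct and is exactly the paper's: coarea/Fubini over concentric spheres followed by the mean value theorem to select a good radius. You also correctly flagged the harmless normalization mismatch between the interval $[\rho,2\rho]$ for $\bar\rho$ and the ball $B_\rho(x_0)$ on the right-hand side; the paper's own one-line proof averages over $[\rho,2\rho]$ and should really produce $B_{2\rho}(x_0)$ on the right, which is how the lemma is used downstream anyway.
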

\begin{proof}
We just use the mean value theorem together with the following computation:
$$
\int_\rho^{2\rho}\int_{\op{int}(B^4)\cap\partial B_{\rho'}(x_0)}|\nabla u|^4 d\rho'=\int_{B_{2\rho}\setminus B_\rho(x_0)}|\nabla u|^4\leq\int_{B^4 \cap B_\rho(x_0)}|\nabla u|^4.
$$
\end{proof}
\begin{lemma}[Courant-Lebesgue analogue]\label{courant}
Fix $\bar\rho\in]0,1[$. There exists a constant $C>0$ such that if $u\in W^{1,4}(B^4, \mathbb R^4)$ is the extension of $\phi\in W^{1,3}(\mathbb S^3,\mathbb S^3)$ and if
$$
\bar\rho\int_{\op{int}(B^4)\cap\partial B_{\bar\rho}(x_0)}|\nabla u|^4\leq C
$$
with $x_0\in\partial B^4$, then for almost every $x\in \partial\left(B^4\cap B_{\bar\rho}(x_0) \right)$ there holds
\begin{equation}\label{eq:distbdry}
 \op{dist}(u(x),\mathbb S^3)\leq\frac{1}{8}.
\end{equation}
\end{lemma}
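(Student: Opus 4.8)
**The plan is to prove the Courant–Lebesgue type estimate via a trace/Sobolev argument on the boundary spheres, combined with the fact that $\phi$ takes values in $\mathbb{S}^3$.**

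First I would observe that $\partial(B^4 \cap B_{\bar\rho}(x_0))$ decomposes into two pieces: the ``inner'' part $\Sigma_{\text{in}} := \operatorname{int}(B^4) \cap \partial B_{\bar\rho}(x_0)$, which is (topologically) a $2$-sphere, and the ``outer'' part $\Sigma_{\text{out}} := \overline{B_{\bar\rho}(x_0)} \cap \partial B^4$, on which $u = \phi$ in the sense of traces. On $\Sigma_{\text{out}}$ there is nothing to prove: $\phi$ has values in $\mathbb{S}^3$, so $\operatorname{dist}(u(x), \mathbb{S}^3) = 0$ a.e. there. So the whole content is the estimate on $\Sigma_{\text{in}}$, and the strategy is to show that on this $2$-sphere $u$ cannot wander far from $\mathbb{S}^3$ once its Dirichlet-type energy along the sphere is small.

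The key step is a Sobolev/Morrey-type embedding on the $2$-dimensional surface $\Sigma_{\text{in}}$. Since $u \in W^{1,4}(B^4)$, by the trace theorem $u|_{\Sigma_{\text{in}}} \in W^{1,4}(\Sigma_{\text{in}})$ for a.e. $\bar\rho$ (more precisely, we restrict to those $\bar\rho$ for which the slice is in $W^{1,4}$, which is a.e. $\bar\rho$ by Fubini), and $\Sigma_{\text{in}}$ is a $2$-dimensional manifold of diameter $\lesssim \bar\rho$. On a $2$-manifold the exponent $4 > 2$ is supercritical, so $W^{1,4}(\Sigma_{\text{in}}) \hookrightarrow C^{0,1/2}(\Sigma_{\text{in}})$, with the Hölder seminorm controlled by $\left(\bar\rho \int_{\Sigma_{\text{in}}} |\nabla u|^4\right)^{1/4}$ — note the scaling: on a surface of size $\bar\rho$ the quantity $\bar\rho \int_{\Sigma_{\text{in}}}|\nabla u|^4$ is exactly the scale-invariant combination controlling the oscillation (one can see this by rescaling $\Sigma_{\text{in}}$ to unit size). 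Since $u$ is also continuous up to the boundary and agrees with $\phi$ on $\Sigma_{\text{in}} \cap \partial B^4 \subset \Sigma_{\text{out}}$ (a nonempty set, as $x_0 \in \partial B^4$), we have a reference point where $u$ lies on $\mathbb{S}^3$; combining with the uniform oscillation bound gives $\operatorname{dist}(u(x),\mathbb{S}^3) \leq \operatorname{diam}_{\Sigma_{\text{in}}} \cdot [\text{Hölder const}] \lesssim \left(\bar\rho \int_{\Sigma_{\text{in}}}|\nabla u|^4\right)^{1/4}$ for a.e. $x \in \Sigma_{\text{in}}$. Choosing the constant $C$ in the hypothesis small enough that the right-hand side is $\leq 1/8$ finishes the proof.

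**The main obstacle** I expect is making the Sobolev embedding on $\Sigma_{\text{in}}$ genuinely uniform in $\bar\rho$ and in $x_0$, i.e. getting the correct $\bar\rho$-scaling so that the smallness hypothesis (which involves precisely $\bar\rho \int_{\Sigma_{\text{in}}} |\nabla u|^4$) translates into smallness of the oscillation with a constant independent of $\bar\rho$. The surfaces $\Sigma_{\text{in}} = \operatorname{int}(B^4)\cap\partial B_{\bar\rho}(x_0)$ are spherical caps whose intrinsic geometry degenerates as $\bar\rho$ varies (they range from nearly a full $2$-sphere to a small cap), so one must check that the Sobolev constant for $W^{1,4}\hookrightarrow C^{0,1/2}$ on these caps, after normalizing the diameter, stays bounded — this is where a careful rescaling argument, using that all these caps are bi-Lipschitz to a fixed disk or sphere with uniformly controlled constants, is needed. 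A secondary point to handle with care is that the a.e. statement over $x \in \partial(B^4\cap B_{\bar\rho}(x_0))$ requires that $\bar\rho$ be chosen (as in Lemma \ref{l:fubini}) so that the slice $u|_{\Sigma_{\text{in}}}$ is itself a $W^{1,4}$ function, which is where Fubini and the mean value argument of the previous lemma feed in.
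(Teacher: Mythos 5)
Your strategy is essentially the paper's: apply a Morrey-type embedding on the spherical slice $\partial B_{\bar\rho}(x_0)\cap B^4$, use the fact that $u=\phi\in\mathbb S^3$ somewhere on the boundary of that slice as an anchor, and conclude by choosing $C$ small. The paper additionally flattens the geometry by observing that $B^4\cap B_{\bar\rho}(x_0)$ is uniformly ($2$-)bi-Lipschitz to a full ball $B_{\bar\rho}$, so the Morrey constant is uniform in $\bar\rho$ and $x_0$ -- the same worry you flag at the end -- while you propose working directly on the cap; either way works.

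There is, however, a genuine dimensional error in your write-up. The slice
\[
\Sigma_{\mathrm{in}}=\operatorname{int}(B^4)\cap\partial B_{\bar\rho}(x_0)
\]
is a cap of the $3$-sphere $\partial B_{\bar\rho}(x_0)\subset\mathbb R^4$, hence a $3$-dimensional manifold (topologically a $3$-ball), not a $2$-sphere. Consequently the supercritical Morrey embedding you need is $W^{1,4}\hookrightarrow C^{0,1-3/4}=C^{0,1/4}$ (exactly the exponent appearing in the paper's proof, which refers to ``dimension $3$''), not $C^{0,1/2}$. Your claimed scaling that $\bar\rho\int_{\Sigma_{\mathrm{in}}}|\nabla u|^4$ is the scale-invariant quantity controlling the oscillation is in fact correct precisely \emph{because} $\Sigma_{\mathrm{in}}$ is $3$-dimensional: rescaling a domain of diameter $\bar\rho$ to unit size multiplies $\int|\nabla u|^4$ by $\bar\rho^{4-d}$, which equals $\bar\rho$ only for $d=3$ (for $d=2$ one would get $\bar\rho^2\int|\nabla u|^4$, contradicting your own claim). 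So the scaling you wrote is internally inconsistent with the ``$2$-dimensional'' premise, but happens to match the true $3$-dimensional situation and hence the lemma's hypothesis. The final conclusion survives because the argument only needs \emph{some} H\"older modulus whose scale-invariant control is $\left(\bar\rho\int_{\Sigma_{\mathrm{in}}}|\nabla u|^4\right)^{1/4}$; once the dimension is corrected to $3$ and the exponent to $1/4$, the rest of your reasoning goes through and reproduces the paper's proof.
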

\begin{proof}
 Note that the hypotheses $x_0\in\partial B^4,\bar\rho<1$ have the following two geometric consequences: (1) $\partial B^4\cap\partial B_{\bar\rho}(x_0)$ has positive measure; (2) $B^4\cap B_{\bar\rho}(x_0)$ is $2$-bilipschitz equivalent to $B_{\bar\rho}$. Therefore we may just prove that \eqref{eq:distbdry} holds true on $\partial B_{\rho}$ for a function such that
$$
\left\{\begin{array}{l}
        \bar\rho\int_{\partial B_{\bar\rho}}|\nabla u|^4<C,\\
        \left|\left\{x:\:|u|(x)=1\right\}\right|>0.
       \end{array}
\right.
$$
To do this note that by definition $u(x)\in \mathbb S^3$ for a.e. $x\in \partial B^4$, then use the Sobolev inequality 
$$
\|u\|_{C^{0,1/4}(\partial B_{\bar\rho})}^4\lesssim \bar\rho\int_{\partial B_{\bar\rho}}|\nabla u|^4,
$$
valid in dimension $3$. For $C$ small enough we obtain \eqref{eq:distbdry}.
 \end{proof}

The next theorem is inspired by Uhlenbeck's technique for the removal of singularities of Yang-Mills fields. We postpone its proof to Appendix \ref{sec:Uhlenbeck}. See Theorem \ref{smallenergyext2}(small energy extension) for an equivalent statement.
\begin{theorem}[Uhlenbeck analogue]\label{smallenergyext}
 There exist two constants $\delta>0,C>0$ with the following property. Suppose $\psi\in W^{1,3}(\mathbb S^3,\mathbb S^3)$ such that $\|\nabla\psi\|_{L^3(\mathbb S^3)}\leq \delta$. Then there exists an extension $v\in W^{1,4}(B^4,\mathbb S^3)$ satisfying the following estimate:
$$
\|v\|_{W^{1,4}(B^4)}\leq C \|\nabla\psi\|_{L^3(\mathbb S^3)}.
$$
\end{theorem}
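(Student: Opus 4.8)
The plan is to replace the critical exponent $4$ on $B^4$ by a slightly larger, supercritical one — where the $\mathbb S^3$-valued constraint becomes ``soft'' — to construct there a controlled extension by an implicit function theorem, and then recover the $W^{1,4}$ statement by letting the exponent decrease back to $4$. First I would reduce to smooth boundary data: since $3=\dim\mathbb S^3$, smooth maps are strongly dense in $W^{1,3}(\mathbb S^3,\mathbb S^3)$, so I pick smooth $\psi_k\to\psi$ in $W^{1,3}$, whence $\|\nabla\psi_k\|_{L^3}\leq 2\delta$ for $k$ large. If each $\psi_k$ admits an extension $v_k\in W^{1,4}(B^4,\mathbb S^3)$ with $\|\nabla v_k\|_{L^4}\leq C\|\nabla\psi_k\|_{L^3}$, then a weak $W^{1,4}$-limit $v$ of a subsequence still has values in $\mathbb S^3$ a.e. (Rellich and a.e.\ convergence), still equals $\psi$ on $\partial B^4$ in the trace sense (the trace is weakly continuous and $\psi_k\to\psi$ in the trace space $W^{3/4,4}(\mathbb S^3)$, using the Sobolev embedding $W^{1,3}(\mathbb S^3)\hookrightarrow W^{3/4,4}(\mathbb S^3)$, the two spaces having critical index $0$), and satisfies $\|\nabla v\|_{L^4}\leq\liminf_k\|\nabla v_k\|_{L^4}\leq C\|\nabla\psi\|_{L^3}$. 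So from now on $\psi$ is smooth.

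For the supercritical step I would work with the dual pair of exponents $p\in(3,3+\varepsilon)$ on the boundary and $q:=4p/3\in(4,4+\tfrac{4}{3}\varepsilon)$ in the bulk, chosen so that the critical Sobolev indices match, $1-3/p=1-4/q$; then the trace map $W^{1,q}(B^4)\to W^{1-1/q,q}(\mathbb S^3)$ and the embedding $W^{1,p}(\mathbb S^3)\hookrightarrow W^{1-1/q,q}(\mathbb S^3)$ are bounded with constants that remain finite as $p\searrow 3$ (the endpoint $p=3$, $q=4$ being still an admissible such embedding). Since $\psi$ is smooth, $\|\nabla\psi\|_{L^p(\mathbb S^3)}\to\|\nabla\psi\|_{L^3(\mathbb S^3)}$ as $p\to 3$, so $\|\nabla\psi\|_{L^p}\leq 2\delta$ for $p$ near $3$.

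The heart of the matter — the content of Appendix \ref{sec:Uhlenbeck} — is to produce, for each such $(p,q)$ with $q>4$, an extension $v_p\in W^{1,q}(B^4,\mathbb S^3)$ of $\psi$ with $\|\nabla v_p\|_{L^q(B^4)}\leq C\|\nabla\psi\|_{L^p(\mathbb S^3)}$, $C$ independent of $p$. Since $q>4=\dim B^4$ one has $W^{1,q}(B^4)\hookrightarrow C^0$, so composing with smooth projections near $\mathbb S^3$ and with $\exp: su(2)\to SU(2)$ costs no Sobolev regularity. I would take as reference the harmonic extension $h$ of $\psi$ (Proposition \ref{harmext}): by Proposition \ref{equiintballs}, $E(|\nabla h|^4,\rho,B^4)\lesssim\|\nabla\psi\|_{L^3}^{4}$ for $0<\rho<\tfrac14$, which is small, so by Lemma \ref{l:fubini} and the Courant--Lebesgue Lemma \ref{courant} one traps $h$ within distance $1/8$ of $\mathbb S^3$ along a suitable family of spheres near $\partial B^4$, producing — after projection, together with a further small-data extension on the remaining region — an approximate $\mathbb S^3$-valued extension $w$ with $\|\nabla w\|_{L^q}\lesssim\|\nabla h\|_{L^q}\lesssim\|\nabla\psi\|_{L^p}$. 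One then corrects $w$ to an exact extension by the implicit function theorem, exploiting the Lie group structure of $SU(2)\simeq\mathbb S^3$: writing $v=w\exp(\xi)$ with $\xi:B^4\to su(2)$, $\xi|_{\partial B^4}=0$, the condition ``$v$ admissible'' has linearization at $\xi=0$ the Laplacian (invertible with zero boundary trace), while the superquadratic error is a sum of products of a $W^{1,q}$-gradient factor with an $L^\infty\cap W^{1,s}$ factor, estimated by the new Sobolev product inequality of Appendix \ref{sec:product} — a $4$-dimensional analogue of $\|fg\|_{W^{1,1}}\leq\|f\|_{W^{1,1}}(\|g\|_{L^\infty}+\|g\|_{W^{1,2}})$ — which is precisely what replaces the missing algebra property of $W^{1,4}$. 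A contraction/implicit-function argument then yields such a $v$, with $\xi$ small in $W^{1,q}$ uniformly in $p$.

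Finally, taking $p_j\searrow 3$ and $q_j=4p_j/3\searrow 4$, the family $(v_{p_j})$ is bounded in $W^{1,4}(B^4)$ (finite measure, $q_j>4$), so a weak $W^{1,4}$-limit $v$ has values in $\mathbb S^3$ a.e., equals $\psi$ on $\partial B^4$, and
\[
\|\nabla v\|_{L^4(B^4)}\leq\liminf_{j}|B^4|^{1/4-1/q_j}\,\|\nabla v_{p_j}\|_{L^{q_j}(B^4)}\leq C\,\|\nabla\psi\|_{L^3(\mathbb S^3)},
\]
which with the first reduction gives the estimate of the theorem. I expect the main obstacle to be the uniformity of the estimates as $q\searrow 4=\dim B^4$: at the critical exponent $W^{1,q}(B^4)$ is neither continuously embedded in $C^0$ nor an algebra, so the harmonic-extension bound, the trace and Sobolev embeddings, and — above all — the product estimate governing the nonlinearity in the implicit function theorem must be shown to have constants that do not degenerate at $q=4$; and the smallness of $\|\nabla\psi\|_{L^3}$, through the energy-concentration control of Proposition \ref{equiintballs} and Lemma \ref{courant}, has to be used precisely to keep the iteration within the perturbative regime. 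Establishing and exploiting the Sobolev product inequality of Appendix \ref{sec:product} is the technical crux.
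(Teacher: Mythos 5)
Your high-level plan---exploit supercritical exponents where $W^{1,q}(B^4)\hookrightarrow C^0$ for $q>4$, use the Lie group structure of $SU(2)$ to linearize, invoke a Sobolev product estimate, and pass to the critical exponent by weak limits together with density of smooth boundary data---is exactly the Uhlenbeck-style strategy the paper follows in Appendix~\ref{sec:Uhlenbeck}, and you correctly identify Lemma~\ref{triebelest} as the technical crux. However, there are two genuine gaps.

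First, the dependency structure is inverted. Proposition~\ref{equiintballs}, Lemma~\ref{l:fubini}, and the Courant--Lebesgue Lemma~\ref{courant} belong to the proof of Theorem~\ref{smallcest} (small \emph{concentration}), which uses the Uhlenbeck analogue as a black box; they play no role in the proof of Theorem~\ref{smallenergyext} itself. More to the point, the paper does not build its candidate extension from the harmonic extension followed by a projection and an unspecified correction: instead, it takes the minimizer $P$ of $\int_{B^4}|\nabla P|^2$ over $W^{1,2}(B^4,\mathbb S^3)$ with $P|_{\partial B^4}=\psi$. This $P$ already has values in $\mathbb S^3$, so no projection is needed, and it satisfies the elliptic Euler--Lagrange equation $d^*(P^{-1}dP)=0$, which is essential for the a priori estimate. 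Your ansatz $v=w\exp(\xi)$ with $w$ a projected harmonic extension does not come with any natural PDE for $\xi$ (the constraint ``$v$ takes values in $\mathbb S^3$'' is already satisfied for every $\xi$), so it is unclear what your implicit function theorem is actually solving, or what controls the $W^{1,4}$-norm of the resulting $v$.

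Second, the uniformity in $p$ that you assert---``an extension $v_p$ with $\|\nabla v_p\|_{L^q}\leq C\|\nabla\psi\|_{L^p}$, $C$ independent of $p$''---and which you yourself flag as the main obstacle, is precisely what is \emph{not} true in the supercritical regime and is the delicate point the paper must circumvent. The implicit-function step uses $W^{1,3+\alpha}(\mathbb S^3)\hookrightarrow C^0$ whose constant blows up as $\alpha\to 0^+$, so the radius of the perturbative regime shrinks to zero. The paper's resolution is a two-tier bound built into the set $\mathcal F^\alpha_{\epsilon,C}$ of \eqref{F}: a bound $\|P-I\|_{W^{1,4}}\leq K\|\nabla\psi\|_{L^3}$ with $K$ \emph{independent} of $\alpha$, coming from the a priori estimate of Lemma~\ref{lem:aprioriest} (proved at the critical exponent $W^{4/3,3}/W^{1,4}$ via Hodge decomposition of $P^{-1}dP$ and the product estimate of Lemma~\ref{triebelest}), together with a supercritical bound $\|P-I\|_{W^{1,4+\alpha}}\leq C_\alpha\|\nabla\psi\|_{L^{3+\alpha}}$ that is allowed to blow up as $\alpha\to 0$. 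Existence in the supercritical class is then established not by a single application of the implicit function theorem but by a \emph{continuity method}: one shows $\mathcal G^\alpha_\epsilon$ is connected, $\mathcal F^\alpha_{\epsilon,C}$ is closed, and (via Lemma~\ref{ift}) open, hence $\mathcal F^\alpha_{\epsilon,C}=\mathcal G^\alpha_\epsilon$. Without the continuity method and the $\alpha$-independent a priori bound, the passage to the critical exponent is not justified.
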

The following lemma will be later applied to the restriction of $u$ to a smaller ball $B_{1-\rho}$, where $u$, being harmonic, is smooth.
\begin{lemma}[interior estimate]\label{intest}
Given $u \in W^{1,4}\cap C^1 (B^4,B^4)$, there exists a constant $C$ independent of $u$ such that for half of the points $a\in B^4$ there holds
$$
\left\|\frac{1}{|u-a|}\right\|^4_{L^{4,\infty}(B^4)}\leq C\int_{B^4}|\nabla u|^4. 
$$
\end{lemma}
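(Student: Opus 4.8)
The plan is to average the quantity $\int_{B^4}|u(x)-a|^{-4}\,da$ over $a$ and show that the weak-$L^4$ norm is bounded on average, then extract a good point $a$ by Chebyshev. Concretely, I would first fix $\lambda>0$ and look at the superlevel set $S_\lambda(a):=\{x\in B^4:\ |u(x)-a|^{-1}>\lambda\}=\{x:\ |u(x)-a|<1/\lambda\}$. The goal is to bound $\lambda^4|S_\lambda(a)|$ after averaging in $a$; since $u$ takes values in $B^4$, a natural strategy is to pass from the measure of the $x$-set to the measure of a set in target space via the coarea-type change of variables $x\mapsto u(x)$, using that $u\in C^1$ so that $|\nabla u|^4\,dx$ controls the pushforward. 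The difficulty to watch is that the naive bound $|S_\lambda(a)|\le C\lambda^{-4}$ (from $|u-a|<1/\lambda$ forcing $u(x)$ into a ball of radius $1/\lambda$ around $a$ and the pushforward estimate $|u_\#(\mathcal L^4\!\!\restriction B^4)|(B(a,r))\lesssim r^4\cdot(\text{stuff})$) is exactly what is needed, but it has to be made with the right constant uniformly in $a$ and $\lambda$.

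The key computation: for the pushforward of Lebesgue measure on $B^4$ under $u$, one has for every Borel set $E\subset B^4_{\mathrm{target}}$ the bound
\[
\int_{u^{-1}(E)}|\nabla u|^4\,dx \ \ge\ c\,|u^{-1}(E)\cap\{|\nabla u|>0\}|^{?}
\]
— this is not quite right; instead the clean route is the following averaged estimate. Write
\[
\int_{B^4}\left(\lambda^4\big|\{x:\ |u(x)-a|<1/\lambda\}\big|\right)da
\ \le\ \lambda^4\int_{B^4}\int_{B^4}\mathbf 1_{\{|u(x)-a|<1/\lambda\}}\,dx\,da
\ =\ \lambda^4\int_{B^4}\big|\{a:\ |u(x)-a|<1/\lambda\}\big|\,dx
\ \lesssim\ \lambda^4\cdot\lambda^{-4}\cdot|B^4|,
\]
which only gives a uniform bound and no gradient on the right-hand side — so this crude averaging is wasteful and does not see $\int|\nabla u|^4$. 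The honest approach must instead mimic the projection-trick computations of Prop. \ref{trickhkl} and Prop. \ref{hltrick}: there, the function estimated is $|\nabla u(x)|/|u(x)-a|$ and the nonlinear interpolation (Tartar) / layer-cake argument converts the $L^p_a L^p_x$-bounds for $p\in[1,4)$ into an $L^{4,\infty}$-bound in $(x,a)$ jointly. Here we want the companion statement \emph{without} the $|\nabla u|$ factor in the numerator, so the plan is: for $p\in[1,4)$ estimate $\int_{B^4}\int_{B^4}|u(x)-a|^{-p}\,da\,dx\le C_p$ directly (the inner $a$-integral is $\int_{B^4}|u(x)-a|^{-p}\,da\le\sup_{y\in B^4}\int_{B^4}|y-a|^{-p}\,da=C_p<\infty$ since $p<4$), hence $a\mapsto\||u-a|^{-1}\|_{L^p_x}^p$ is in $L^1_a(B^4)$ with norm $\le C_p$; then by Lions–Peetre reiteration $L^{4,\infty}_x$ sits between $L^{p_0}_x$ and $L^{p_1}_x$ with $p_0<4<p_1$ in $(1,\infty)$... but $p_1>4$ is \emph{not} allowed here since the $a$-integral diverges at $p=4$. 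So I expect the numerator $|\nabla u|$ to be genuinely needed, and the correct statement to prove is the gradient-weighted one; indeed the right-hand side of the lemma carries $\int|\nabla u|^4$ precisely because one bounds $\|\,|\nabla u|/|u-a|\,\|_{L^{4,\infty}}$ and then drops to $\|1/|u-a|\|_{L^{4,\infty}}$ on the region where $|\nabla u|$ is bounded below — wait, that does not work either.

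Let me record the approach I actually believe is correct: reduce to the estimate on $\|\,|\nabla u|^{-1}\cdot(\text{nothing})\,\|$ is wrong; the real mechanism is that since $u\in C^1(B^4,B^4)$, for a.e.\ target point $a$ the set $u^{-1}(a)$ is a smooth $0$-dimensional... no. The genuinely working plan, parallel to Step 2 of Prop. \ref{hltrick}, is: (i) note $\|1/|u-a|\|_{L^{4,\infty}(B^4)}^4=\sup_\lambda\lambda^4|\{|u(x)-a|<1/\lambda\}|$; (ii) observe $\{|u(x)-a|<r\}=u^{-1}(B(a,r)\cap B^4)$ and, by the area formula applied to the $C^1$ map $u$, $\int_{B^4}|\nabla u|^4\ge\int_{B^4}\#\{x\in B^4: u(x)=a\}\,da$ is false in general but $\int_{B^4}|\nabla u(x)|^4 dx$ does control $\int_{\mathbb R^4}(\text{Jacobian-weighted multiplicity})$, so after averaging over $a$ and using Chebyshev in $a$ one gets, for half the $a$, the bound $\lambda^4|u^{-1}(B(a,1/\lambda))|\le C\int_{B^4}|\nabla u|^4$ uniformly in $\lambda$; (iii) conclude. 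The main obstacle, and the step I would spend real care on, is exactly (ii): controlling $|u^{-1}(B(a,r))|$ by $r^4$ times $\int|\nabla u|^4$ for a \emph{set of $a$ of measure $\ge|B^4|/2$}, uniformly in $r$ — this is where the $C^1$ hypothesis, the area/coarea formula, and an averaging-plus-Chebyshev argument in the target variable $a$ all have to be combined, and getting the uniformity in $\lambda$ (equivalently in $r=1/\lambda$) out of a single good choice of $a$ is the crux.
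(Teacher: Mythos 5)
Your overall strategy matches the paper's: pass to the counting function $F_u(y):=\op{Card}(u^{-1}(y))$, use the area formula together with Hadamard's inequality $|\det\nabla u|\le c|\nabla u|^4$ to get $\int_{\mathbb R^4}F_u\le C\int_{B^4}|\nabla u|^4$, then run the Hardy--Littlewood maximal function on $F_u$ and Chebyshev in $a$ to get, for half the $a$, the uniform-in-$r$ bound $r^{-4}\int_{B_r(a)}F_u\le C\int|\nabla u|^4$. So in that sense you have correctly reverse-engineered the paper's route, and your instinct that the projection-trick computation with $|\nabla u|$ in the numerator is a red herring here is also right.

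Two remarks on the details. First, your parenthetical claim that ``$\int_{B^4}|\nabla u|^4\ge\int_{\mathbb R^4}\op{Card}(u^{-1}(a))\,da$ is false in general'' is itself wrong: the area formula gives $\int_{B^4}|\det\nabla u|\,dx=\int_{\mathbb R^4}\op{Card}(u^{-1}(y))\,dy$, and $|\det\nabla u|\le c_4|\nabla u|^4$ pointwise by Hadamard/AM--GM, so the inequality does hold up to a dimensional constant; the correct, Jacobian-weighted version you write a moment later is the same thing. Second, and more importantly, the step you correctly flag as the crux, namely converting the maximal-function control of $\int_{B_r(a)}F_u$ into control of $|u^{-1}(B_r(a))|$, is exactly where the paper's own argument is shaky: the first display of the paper's proof asserts $|u^{-1}(B_r(a))|=\int_{B_r(a)}\op{Card}(u^{-1}(y))\,dy$, but the area formula only gives $\int_{u^{-1}(B_r(a))}|\det\nabla u|\,dx=\int_{B_r(a)}\op{Card}(u^{-1}(y))\,dy$, which is not the Lebesgue measure of the preimage unless $|\det\nabla u|\equiv 1$ there. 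Taking $u$ constant shows the gap is real: then $\int|\nabla u|^4=0$ while $\|\,|u-a|^{-1}\|_{L^{4,\infty}(B^4)}>0$ for every $a$, so the estimate as stated cannot hold. Your caution was therefore well-placed; a careful write-up would have to either strengthen the hypothesis (a nondegeneracy assumption on $u$, or working with the Jacobian-weighted quantity throughout), or change the right-hand side (e.g.\ to $C(1+\int|\nabla u|^4)$), or bypass the Lemma and estimate $\|\,|\nabla u|/|u-a|\,\|_{L^{4,\infty}}$ directly as in Prop.~\ref{hltrick}, which is what actually enters the application in Theorem~\ref{smallcest}.
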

\begin{proof}
 By the co-area formula we have
$$
|\{x: |u(x)-a|^{-1}>\Lambda\}|=|u^{-1}(B_{\Lambda^{-1}}(a))|=\int_{B_{\Lambda^{-1}}(a)}\op{Card}(u^{-1}(x))dx\leq C\int_{B^4}|\nabla u|^4.
$$
We then observe that the measurable positive function $F_u(x):=\op{Card}(u^{-1}(x))$ belongs to $L^1(B^4)$. The maximal function $MF_u$ has $L^{1,\infty}$-norm bounded by the $L^1$-norm of $F_u$ and in particular there exists a constant $C$ independent of $u$ such that for at least half of the points $a\in B^4$ there holds
$$
\sup_\lambda\frac{1}{\lambda^4}\int_{B_\lambda(a)}F_u \leq C\int_{B^4}F_u\leq C\int_{B^4}|\nabla u|^4.
$$ 
For such $a$ we have, after the change of notation $\lambda=\Lambda^{-1}$, the wanted estimate
$$
|\{x: |u(x)-a|^{-1}>\Lambda\}|\Lambda^4\leq C\int_{B^4}|\nabla u|^4.
$$
\end{proof}
We now have the right ingredients to prove our first extension result.
\begin{theorem}[small concentration extension]\label{smallcest}
 There exists a constant $\delta\in]0,1/4[$ with the following property. For each $\phi\in W^{1,3}(\mathbb S^3,\mathbb S^3)$, such that the following local estimate holds with $\|\nabla\phi\|_{L^3(\mathbb S^3)}^3=E$:
\begin{equation}\label{eq:radiusphi}
 E(|\nabla\phi|^3,2\rho,\mathbb S^3)\leq \frac{\delta}{C_1E}.
\end{equation}
there exists a function $\tilde u\in W^{1,(4,\infty)}(B^4,\mathbb S^3)$ which equals $\phi$ on $\mathbb S^3$ in the sense of traces and satisfies
\begin{equation}\label{eq:estimatesmallenergy}
\|\nabla \tilde u\|_{L^{4,\infty}}\lesssim\frac{\|\nabla\phi\|_{L^3}^2}{\rho} +\|\nabla\phi\|_{L^3}.
\end{equation}
\end{theorem}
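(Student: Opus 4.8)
The plan is to build $\tilde u$ by gluing two extensions of $\phi$: one produced by the ``projection trick'' of Lemma \ref{intest}, which is $\mathbb S^3$-valued and sharply controlled deep inside $B^4$ but distorts the trace, and one produced by Theorem \ref{smallenergyext}, which carries $\phi$ faithfully but is available only in a collar of $\partial B^4$ of width $\sim\rho$. Write $E=\|\nabla\phi\|_{L^3(\mathbb S^3)}^3$; since for $\|\nabla\phi\|_{L^3}$ below the smallness threshold of Theorem \ref{smallenergyext} that theorem applies directly to $\phi$ and already gives \eqref{eq:estimatesmallenergy}, we may assume $E$ is bounded below. Let $u$ be the harmonic extension of $\phi$. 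By Proposition \ref{harmext} and the critical embedding $W^{1,3}(\mathbb S^3)\hookrightarrow W^{3/4,4}(\mathbb S^3)$ one has $\|\nabla u\|_{L^4(B^4)}\lesssim\|\nabla\phi\|_{L^3}=E^{1/3}$, and $u(B^4)\subset B^4$; moreover Proposition \ref{equiintballs} together with \eqref{eq:radiusphi} keeps $E(|\nabla u|^4,\rho,B^4)$ below any prescribed small constant once the absolute constant $\delta$ is chosen small.

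For each point $x_i$ of a maximal $\rho$-net of $\partial B^4$, Lemma \ref{l:fubini} yields a radius $\bar\rho_i\in[\rho,2\rho]$ with $\bar\rho_i\int_{\op{int}(B^4)\cap\partial B_{\bar\rho_i}(x_i)}|\nabla u|^4\lesssim E(|\nabla u|^4,\rho,B^4)$, hence small, so Lemma \ref{courant} gives $\op{dist}(u,\mathbb S^3)\le\tfrac18$ pointwise on the boundary of the cap $\Omega_i:=B^4\cap B_{\bar\rho_i}(x_i)$ (on $\partial\Omega_i\cap\partial B^4$ this is automatic since $u=\phi$). The nearest-point projection $\Pi$ onto $\mathbb S^3$ is therefore applicable on $\partial\Omega_i$; the datum $\Pi\circ u|_{\partial\Omega_i}$ is $\mathbb S^3$-valued, equals $\phi$ on $\partial\Omega_i\cap\partial B^4$, and has small $W^{1,3}$-energy, the $\partial B^4$-part being $\le E(|\nabla\phi|^3,2\rho,\mathbb S^3)$ and the ``slice'' part being $\lesssim|\partial\Omega_i|^{1/4}(\int_{\partial\Omega_i}|\nabla u|^4)^{3/4}$. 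Straightening $\Omega_i$ to $B^4$ by a map with universal bilipschitz constant and using the scale-invariance of $\int|\nabla\cdot|^4$ on $\Omega_i$ and of $\int|\nabla\cdot|^3$ on $\partial\Omega_i$, Theorem \ref{smallenergyext} supplies an $\mathbb S^3$-valued $W^{1,4}$-extension $v_i$ of $\Pi\circ u|_{\partial\Omega_i}$ on $\Omega_i$ with energy controlled by that of the datum. Summing over the $O(\rho^{-3})$ caps with bounded overlap and the elementary bound $\sum e_i^{4/3}\le(\sum e_i)(\max_j e_j)^{1/3}$, the total $L^4$-gradient of these pieces on the collar $\mathcal C:=\bigcup_i\Omega_i$ is $\lesssim E^{1/3}$; note $\mathcal C$ contains $\{1-c_0\rho\le|x|\le1\}$ for some $c_0>0$.

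On $B^4\setminus\mathcal C\subset B_{1-c_0\rho}$ the function $u$ is harmonic, hence smooth, so Lemma \ref{intest} applies to $u$ there: pick a centre $a\in B^4_{1/2}$ with $\||u-a|^{-1}\|_{L^{4,\infty}(B_{1-c_0\rho})}^4\lesssim\int|\nabla u|^4\lesssim E^{4/3}$ and set $w:=\pi_a\circ u$, so $w$ is $\mathbb S^3$-valued with $|\nabla w|\lesssim|\nabla u|\,|u-a|^{-1}$ a.e. The interior gradient bound for harmonic functions gives $\|\nabla u\|_{L^\infty(B_{1-c_0\rho})}\lesssim(c_0\rho)^{-1}\|\nabla u\|_{L^4(B^4)}\lesssim\rho^{-1}E^{1/3}$, whence, by $\|fg\|_{L^{4,\infty}}\le\|f\|_{L^\infty}\|g\|_{L^{4,\infty}}$, $\|\nabla w\|_{L^{4,\infty}(B_{1-c_0\rho})}\lesssim\rho^{-1}E^{2/3}$. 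It remains to reconcile $w$ with the $v_i$ across the interface, which lies near the slices $\op{int}(B^4)\cap\partial B_{\bar\rho_i}(x_i)$ where $\op{dist}(u,\mathbb S^3)\le\tfrac18$: there $\pi_a(u)$ and $\Pi(u)$ (hence $w$ and, after a tangential partition of unity, the $v_i$) differ by $\lesssim\op{dist}(u,\mathbb S^3)\le\tfrac18$, so their geodesic interpolant on $\mathbb S^3$ is well defined on a transition shell of width $\sim\rho$. On that shell $u$ is $\tfrac18$-close to $\mathbb S^3$ and bounded away from $a\in B^4_{1/2}$, so $|\nabla w|,|\nabla v_i|\lesssim|\nabla u|$ while the radial term from the interpolation is $\lesssim\rho^{-1}\lesssim\rho^{-1}E^{2/3}$ (using $E$ bounded below). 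Assembling the three regions gives $\tilde u\in W^{1,(4,\infty)}(B^4,\mathbb S^3)$ with $\tilde u=\phi$ on $\partial B^4$ and $\|\nabla\tilde u\|_{L^{4,\infty}(B^4)}\lesssim\rho^{-1}E^{2/3}+E^{1/3}=\rho^{-1}\|\nabla\phi\|_{L^3}^2+\|\nabla\phi\|_{L^3}$, i.e. \eqref{eq:estimatesmallenergy}.

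The individual estimates are routine; the delicate point, and what I expect to be the main obstacle, is the compatibility of the two halves. The projection trick turns \emph{arbitrary} $B^4$-valued data into $\mathbb S^3$-valued maps but shifts the trace by $\pi_a$, so it can only be used where $u$ stays far from $a$; conversely a trace-preserving $\mathbb S^3$-valued extension near $\partial B^4$ requires $u$ genuinely close to $\mathbb S^3$, which holds only on the codimension-one ``good slices'' — this is precisely why Lemmas \ref{l:fubini}--\ref{courant} enter, the $3$-dimensional embedding $W^{1,4}\hookrightarrow C^0$ on a slice converting small $L^4$-energy into a pointwise bound. Making the two regimes overlap on a shell where $u$ is close to $\mathbb S^3$, so that the two $\mathbb S^3$-valued extensions can be geodesically glued at cost $\lesssim\rho^{-1}E^{2/3}$, is the heart of the matter; the factor $\rho^{-1}$ — which, in view of Proposition \ref{nocontrext}, cannot be avoided as $\rho\to0$ — enters through the interior gradient estimate for $u$ on $B_{1-c_0\rho}$ and through the $\sim\rho$ width of the transition shell.
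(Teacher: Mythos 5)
Your overall plan follows the paper's: harmonic extension $u$, projection $\pi_a$ to make the interior $\mathbb S^3$-valued (Lemma~\ref{intest}), good slices via Lemmas~\ref{l:fubini} and \ref{courant}, Uhlenbeck's small-energy extension (Theorem~\ref{smallenergyext}) on the boundary caps, and the $\rho^{-1}$ loss from the $L^\infty$ interior gradient bound. However, two genuine gaps separate your sketch from a complete proof, and the paper avoids both of them by a cleaner choice.

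First, the projection mismatch. You use the nearest-point projection $\Pi$ to produce the boundary datum $\Pi\circ u|_{\partial\Omega_i}$ for the Uhlenbeck extension on each cap, but the radial projection $\pi_a$ for the interior function $w$. These two $\mathbb S^3$-valued maps disagree on the slices $\op{int}(B^4)\cap\partial B_{\bar\rho_i}(x_i)$, so the cap extensions $v_i$ do not match $w$ across the interfaces, and you must glue them by geodesic interpolation on a transition shell. That step is where your argument is thinnest: you have not defined the shell or the interpolant precisely, you have not checked that the result is $W^{1,(4,\infty)}$ with the right bound in regions where $u$ may be far from $\mathbb S^3$, and you have not verified that the glued function still has trace $\phi$ near $\partial B^4\cap\partial B_{\bar\rho_i}(x_i)$ where the shell meets the sphere. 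The paper simply uses the \emph{same} $\pi_a$ for both pieces: since $\op{dist}(a,\partial B^4)>\tfrac14$ while $\op{dist}(u,\mathbb S^3)\le\tfrac18$ on the slices, $\pi_a$ is a Lipschitz map on the relevant values, and the boundary datum $\pi_a\circ u|_{\partial(B^4\cap B_i')}$ for the Uhlenbeck extension already agrees with $\pi_a\circ u$ on the interior side of the interface, so no gluing or transition shell is ever needed.

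Second, the overlap of the caps. Your $\Omega_i$, built over a maximal $\rho$-net, have bounded but positive overlap, and the extensions $v_i$ constructed independently on overlapping caps will in general not coincide on the overlaps. A ``tangential partition of unity'' does not fix this for $\mathbb S^3$-valued maps unless you show the various $v_i$ stay uniformly close to one another, and even then the extra interpolation cost must be tracked. The paper handles this by splitting the doubled balls into $C$ families $\mathcal F_1,\dots,\mathcal F_C$ of \emph{disjoint} caps and modifying the function one family at a time, each time taking the boundary datum for the Uhlenbeck extension from the previously constructed function $u_{k-1}$; this iteration (Step~4 of the paper, together with the propagated estimate \eqref{eq:estintbi}) is precisely what coordinates the pieces. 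If you replace $\Pi$ by $\pi_a$ and add this iterative scheme, your argument reduces to the paper's.
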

\begin{proof}
\textbf{Step 1.} We first observe that the harmonic extension $u$ of $\phi$ satisfies 
$$
|\nabla u|(x)\lesssim \frac{\|\phi\|_{W^{1,3}(\mathbb S^3)}}{\rho}\quad \text{ for }x\in B_{1-\rho}.
$$
A direct way to see this is by estimating via the Poisson formula together with Poincar\`e's inequality and a good covering by $\rho$-balls $B_j\subset \mathbb S^3$:

\begin{flalign*}
 |\nabla u|(x)&\lesssim \rho\left(\int_{\mathbb S^3}\frac{\nabla \phi}{|x-y|^4}dy + \int_{\mathbb S^3}\frac{|\phi|}{|x-y|^4}dy\right)&&\\
 &\lesssim\sum_{j}\frac{\fint_{B_j}|\nabla\phi|+|\phi|}{d_j^4}\rho^4,&\text{ where }d_j\sim \op{dist}(B_j,x)&\\
&\lesssim\sum_j\left(\frac{\rho}{d_j}\right)^4\fint_{B_j}|\nabla\phi|+1,&\text{ by Poincar\'e}&\\
&\lesssim\left(\sum_j (\rho/d_j)^6\right)^{2/3}\left(\sum_j\left(\fint_{B_j}|\nabla\phi|\right)^3+1\right)^{1/3},&\text{ by H\"older}&\\
&\lesssim\frac{\|\phi\|_{W^{1,3}(\mathbb S^3)}}{\rho}.&&&
\end{flalign*}

To justify the last passage we observe that $\op{Card}\{j:d_j\sim 2^j\rho\}\sim 2^{4j}$ and thus the first factor in the forelast line is bounded by $\left(\sum_{j\geq 0}2^{-2j}\right)^{2/3}$, while for the second factor of that line we use Jensen's inequality.\\

\textbf{Step 2.} We now use Lemma \ref{intest} and we observe that if $\pi_a:B^4\setminus\{a\}\to \mathbb S^3$ is the retraction of center $a$ then 
$$
|\nabla(\pi_a\circ u)|\leq C\frac{|\nabla u|}{|u-a|}.
$$
In particular using Step 1 and Lemma \ref{intest} we obtain
\begin{equation}\label{eq:intest}
 \|\nabla(\pi_a\circ u)\|_{L^{4,\infty}}\leq\|\nabla u\|_{L^\infty}\left\|\frac{1}{|u-a|}\right\|_{L^{4,\infty}}\leq C\frac{\|\nabla\phi\|_{L^3}}{\rho}\|\nabla u\|_{L^4}.
\end{equation}
\textbf{Step 3.} Consider a maximal cover $\{B_i\}$ of $\mathbb S^3=\partial B^4$ by $4$-dimensional balls  of radius $\rho$ and centers on $\partial B^4$. It is possible to find a constant $C$ depending only on the dimension such that the collection of balls of doubled radius $\{2 B_i\}$ can be written as a union of $C$ families of disjoint balls $\mathcal F_1,\ldots,\mathcal F_C$.\\

 Then apply Lemma \ref{l:fubini} to each ball $B_i\in \mathcal F_1$. This will give a new family of balls $\{B_i':\:B_i\in\mathcal F_1\}$ with radii between $\rho$ and $2\rho$ to which it will be possible to apply Lemma \ref{courant} (Courant-Lebesgue analogue). Thus $\op{dist}(u(x),\partial B^4)<\frac{1}{8}$ on $\partial (B^4\cap B_i')$ for all $B_i'$. Because of the choice of $\mathcal F_1$ it also follows that the balls $B_i'$ are disjoint.\\

If we choose the projection $\pi_a$ of Step 2 such that $\op{dist}(a,\partial B^4)>\frac{1}{4}$ then 
$$
u_1^i:=\pi_a\circ (u|_{\partial((B^4\cap B_i')})\text{ satisfies }|\nabla u_1^i|\leq C|\nabla u|\text{ on }\partial B_i'\cap B^4
$$
by the estimates of Step 2. Note that $a$ will be fixed during the whole construction.\\

We extend $u_1^i$ (denoting the extension again by $u_1^i$) inside $B_i'\cap B^4$ via Theorem \ref{smallenergyext} (Uhlenbeck analogue) obtaining a new function 
$$
u_1:=\left\{\begin{array}{lcl}
             \pi_a\circ u&\text{ on }&B^4\setminus\cup B_i',\\
             u_1^i&\text{ on }&B_i'.
            \end{array}
\right.
$$
Theorem \ref{smallenergyext} implies that $u_1$ satisfies 
$$
 \|\nabla u_1\|_{L^4(B_i')}\leq C\left(\int_{\partial B_i'}|\nabla u_1|^3\right)^{1/3}.
$$
We can rewrite this as follows:
\begin{eqnarray}
 \int_{B_i\cap B^4}|\nabla  u_1|^4&\leq& C\left(\int_{B_i\cap \partial B}|\nabla \phi|^3+\int_{\op{int}(B)\cap\partial B_i}|\nabla u_1^i|^3\right)^{4/3}\nonumber\\
&\lesssim&\left(\int_{B_i\cap \partial B}|\nabla \phi|^3\right)^{4/3}+\left(\int_{\op{int}(B)\cap\partial B_i}|\nabla u_1^i|^3\right)^{4/3}.\label{estBi}
\end{eqnarray}
We note that (using Lemma \ref{courant})
\begin{eqnarray}
\left(\int_{\partial B_i\cap\op{int}(B)}|\nabla u_1^i|^3\right)^{4/3}&\leq& \mathcal H^3(\partial B_i)^{1/3}\int_{\partial B_i\cap\op{int}(B)}|\nabla u_1^i|^4\nonumber\\
&\lesssim&\rho\int_{\partial B_i\cap\op{int}(B)}|\nabla u|^4\nonumber\\
&\lesssim&\int_{B_i\cap B^4}|\nabla u|^4\label{eq:estintbi}
\end{eqnarray}
therefore $u_1$ still satisfies \eqref{equiintest} with a constant $C_1$ which is now changed by a universal factor.\\

\textbf{Step 4.} It is possible to repeat the same operation starting from the function $u_1$ and using the balls of the family $\mathcal F_2$ to obtain a function $u_2$, and then do the same iteratively for all the families $\mathcal F_2,\ldots,\mathcal F_C$.\\

Denote by $\mathcal R$ the union of all the perturbed balls $B_i'$ corresponding to the families $\mathcal F_1,\ldots,\mathcal F_C$. Recall that the number of families is equal to the maximal number of overlaps of balls of different families, and depends only on the dimension. Then iterating the estimates \eqref{estBi} using \eqref{eq:estintbi} for all families $\mathcal F_i$ we obtain for the last function $u_C$
\begin{eqnarray}\label{eq:estnearbdry}
 \int_{\mathcal R}|\nabla u_C|^4&\lesssim& E(|\nabla \phi|^3,2\rho,\mathbb S^3)^{1/3}\sum_i\int_{B_i\cap\partial B}|\nabla \phi|^3 +\int_{\mathcal R}|\nabla u|^4\nonumber\\
&\leq& \|\nabla \phi\|_{L^3(\mathbb S^3)}^3\left(E(|\nabla \phi|^3,2\rho,\mathbb S^3)^{1/3} + \|\nabla\phi\|_{L^3(\mathbb S^3)}\right),
\end{eqnarray}
where for the last inequality we also used the elliptic estimates for $u$ in terms of $\phi$.\\

\textbf{Step 5.} We now collect the estimate \eqref{eq:intest} for the part $B\setminus \mathcal R\subset B_{1-\rho}$ and \eqref{eq:estnearbdry}. Observe that in general $\|f\|_{L^{4,\infty}}\lesssim\|f\|_{L^4}$ and that the $L^{4,\infty}$-norm satisfies the triangle inequality. We obtain
\begin{equation}\label{eq:estimatesmallenergy2}
\|\nabla \tilde u\|_{L^{4,\infty}}\lesssim\frac{\|\nabla\phi\|_{L^3}^2}{\rho} +\|\nabla\phi\|_{L^3} + \|\nabla\phi\|_{L^3}^{3/4}E(|\nabla\phi|^3,2\rho,\mathbb S^3)^{1/12}.
\end{equation}
Using the trivial estimate $E(|\nabla \phi|^3,2\rho,\mathbb S^3)\leq \int_{\mathbb S^3}|\nabla\phi|^3$, the wanted estimate follows.
\end{proof}
\subsection{The case of large energy concentration}\label{sec:finalstep}

In this section $E$ will denote an \emph{upper bound} for the $L^3$-energy of boundary value functions $\phi$. Following Theorem \ref{smallcest} we are led to divide the set of boundary value functions $W^{1,3}(\mathbb S^3, \mathbb S^3)$ into two classes, based on whether or not the energy concentrates. We will do the division based on the following parameters: the energy bound $E$, a concentration radius $\rho_E$ and an upper bound on the concentration $A_E$. $\rho_E,A_E$ will be fixed in Section \ref{sec:endofproof}, depending only on $E$. 
We introduce the following two classes of ``good'' and ``bad'' boundary value functions:
\begin{equation}\label{eq:goodbad}
\begin{array}{l} \mathcal G^E:=\{\phi\in W^{1,3}(\mathbb S^3,\mathbb S^3):\: \|\nabla\phi\|_{L^3}^3\leq E, E_\phi \leq A_E\},\\
 \mathcal B^E:=\{\phi\in W^{1,3}(\mathbb S^3,\mathbb S^3):\: \|\nabla\phi\|_{L^3}^3\leq E, E_\phi >A_E\}.
\end{array}
\end{equation}
where
$$
E_\phi:=E(|\nabla\phi|^3,\rho_E, \mathbb S^3)\text{ for }\phi\in W^{1,3}(\mathbb S^3,\mathbb S^3).
$$
The precise steps of our extension construction are as follows (see also the scheme \eqref{summary}):
\begin{enumerate}
 \item Theorem \ref{smallcest} gives a good estimate for the boundary values in $\mathcal G^E$.
\item If $\phi\in\mathcal B^E$ has average close to zero, i.e. 
$$\left|\int_{\mathbb S^3}\phi\right|\leq \frac{1}{4},$$ 
then it is possible to write $\phi=\phi_1\phi_2$ with 
$$\int_{\mathbb S^3} |\nabla \phi_i|^3\leq E-A_E/2$$
(the product of $\mathbb S^3$-valued functions is pointwise the product on $\mathbb S^3\simeq SU(2)$).
\item If we are not in the two cases above, we use the functions 
$$F_v(x):= -v +(1-|v|^2)(x^*-v)^*$$
where $a^*=\frac{a}{|a|^2}, v\in B^4$, which form a subset of the M\"obius group of $B^4$. We have two cases:
\begin{enumerate}
\item $\forall v\in B^4\text{ there holds }\left|\int_{\mathbb S^3}\phi\circ F_v\right|>\frac{1}{4},$
in which case 
$$\tilde u(v):=\pi_{\mathbb S^3}\left(\int_{\mathbb S^3}\phi\circ F_v\right)$$
gives an extension of $\phi$ with values in $\mathbb S^3$ and satisfying 
$$\|u\|_{W^{1,4}}\lesssim\|\phi\|_{W^{1,3}},$$
\item $\exists v\in B^4\text{ such that }\left|\int_{\mathbb S^3}\phi\circ F_v\right|\leq\frac{1}{4},$
in which case we can apply the reasoning of cases (1), (2) above to $\tilde\phi:=\phi\circ F_v$. Since $F_v$ is conformal and $|\phi|=|\tilde\phi|=1$ we have 
$$\|\nabla \phi\|_{L^3}=\|\nabla \tilde \phi\|_{L^3},\quad\|\phi\|_{W^{1,3}}=\|\tilde\phi\|_{W^{1,3}}.$$ 
Again we reason differently in the two cases $\tilde\phi\in \mathcal G^E$ and $\tilde\phi\in \mathcal B^E$.
\end{enumerate}
\item If in case (3b) $\tilde\phi\in \mathcal B^E$ then we apply case (2) to $\tilde\phi$ and we can express 
$$\tilde\phi=\tilde\phi_1\tilde\phi_2$$ 
and 
$$\phi=(\tilde\phi_1\circ F_v^{-1})(\tilde\phi_2\circ F_v^{-1}).$$ 
Then $\phi_i:=\tilde\phi_i\circ F_v^{-1}$ are as in case (2).
\item If in case (3b) $\tilde\phi\in \mathcal G^E$ then we apply case (1) to $\tilde \phi$. With a careful study of the relation between the position of $v\in B^4$ relative to $\partial B^4$ and the parameter $\rho_E$, we construct 
$$u\in W^{1,(4,\infty)}(B^4, \mathbb S^3)\text{ extending }\phi=\tilde\phi\circ F_v^{-1}$$
starting from the extension $\tilde u$ of $\tilde\phi$ given in case (1). 
\end{enumerate}
\begin{equation}\label{summary}
\xymatrix@C=-15pt{
&*+[F=:<3pt>]{\phi\in\mathcal B^E}\ar[ddl]\ar[ddrr]&&&*+[F=:<3pt>]{\phi\in\mathcal G^E}\ar[d]\\
&&&&*++[F=]{\txt{Extend}}\\
*+[F-:<3pt>]{\left|\int_{\mathbb S^3}\phi\right|\leq\frac{1}{4}}\ar[ddd]&&&*+[F-:<3pt>]{\left|\int_{\mathbb S^3}\phi\right|>\frac{1}{4}}\ar[dl]\ar[dr]\\
&&*+[F-:<3pt>]{\exists v\:\left|\int_{\mathbb S^3}\phi\circ F_v\right|\leq\frac{1}{4}}\ar[dl]\ar[dr]&&*+[F-:<3pt>]{\forall v\:\left|\int_{\mathbb S^3}\phi\circ F_v\right|>\frac{1}{4}}\ar[d]\\
&*+[F-:<3pt>]{\tilde\phi\in \mathcal B^E}\ar[dl]&&*+[F-:<3pt>]{\tilde\phi\in \mathcal G^E}\ar[d]&*++[F=]{\txt{Extend}}\\
*+[F-:<3pt>]{\begin{array}{ccc}\phi=\phi_1\phi_2\\E(\phi_i)\leq E-A_E/2\end{array}}\ar[d]&&&*++[F=]{\txt{Extend}}\\
*++[F=]{\txt{Iterate}}
}
\end{equation}

\begin{proposition}[balancing $\Rightarrow$ splitting]\label{zeroaverage}
There exists a geometric constant $C$ with the following property. Suppose that $\phi\in\mathcal B^E$ with the notations of \eqref{eq:goodbad}, and assume $A_E\leq 1/C$ and $\rho_E\leq e^{-C\max\{EA_E,(EA_E)^3\}}$. Further assume that as a function in $W^{1,3}(\mathbb S^3,\mathbb R^4)$, $\phi$ satisfies
$$
\left|\fint_{\mathbb S^3}\phi\right|\leq \frac{1}{4}.
$$
Then identifying $\mathbb S^3\sim SU(2)$ there exists a decomposition
\begin{equation}\label{decomp}
\phi=\phi_1\phi_2
\end{equation}
such that for both $i=1,2$ we have that 
\begin{equation}\label{energybootstrap}
 \int_{\mathbb S^3}|\nabla\phi_i|^3<E-A_E/2.
\end{equation}
\end{proposition}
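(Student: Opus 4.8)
The plan is to peel the energy concentration off into one of the two factors, using the balancing hypothesis to guarantee that a definite proportion of the energy survives away from that concentration. Since $\phi\in\mathcal B^E$, fix $x_0\in\mathbb S^3$ with $\int_{B_{\rho_E}(x_0)}|\nabla\phi|^3>A_E$. I would first establish two auxiliary facts: (i) a quantitative form of the balancing condition, namely that for every $R\le\tfrac1{10}$ one has $\int_{\mathbb S^3\setminus B_R(x_0)}|\nabla\phi|^3\ge\epsilon_0$ for a universal $\epsilon_0>0$; and (ii) the existence of a cheap $W^{1,3}$-extension of a well-chosen spherical slice of $\phi$.

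For (i), set $\tau:=\int_{\mathbb S^3\setminus B_R(x_0)}|\nabla\phi|^3$ and apply the $W^{1,3}$-Poincar\'e inequality on $\Omega:=\mathbb S^3\setminus B_R(x_0)$ (whose Poincar\'e constant stays bounded as $R\to0$, points having zero $W^{1,3}$-capacity in dimension $3$): this gives $\fint_\Omega|\phi-\bar\phi_\Omega|^3\lesssim\tau$, hence $|\bar\phi_\Omega|\ge 1-C\tau^{1/3}$ since $|\phi|\equiv1$; as $|\mathbb S^3\setminus\Omega|\lesssim R^3$, one gets $\bigl|\fint_{\mathbb S^3}\phi\bigr|\ge 1-C(\tau^{1/3}+R^3)$, and comparing with the standing hypothesis $\bigl|\fint_{\mathbb S^3}\phi\bigr|\le\tfrac14$ and with $R\le\tfrac1{10}$ forces $\tau\ge\epsilon_0$; I would then require $A_E\le 1/C\le\epsilon_0$. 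For (ii), for a.e.\ radius $\bar r$ the trace $\phi|_{\partial B_{\bar r}(x_0)}$ lies in $W^{2/3,3}(\partial B_{\bar r}(x_0),\mathbb S^3)$; identifying $\mathbb S^3\setminus B_{\bar r}(x_0)$ with a ball, harmonically extend this (homotopically trivial, as $\pi_2(\mathbb S^3)=0$) boundary datum into the convex hull $B^4$, so that the gradient of the harmonic extension is controlled by the \emph{homogeneous} trace seminorm of $\phi$ on a dyadic annulus $A$ around $\partial B_{\bar r}(x_0)$, hence by $\|\nabla\phi\|_{L^3(A)}$ (the $L^2$-mass carried by constants dropping out), and then project back to $\mathbb S^3$ by the radial projection $\pi_a$ of Proposition~\ref{trickhkl}, legitimate since $3<4$ and fixing $\mathbb S^3$ pointwise. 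This yields $v\in W^{1,3}(\mathbb S^3\setminus B_{\bar r}(x_0),\mathbb S^3)$ with $v=\phi$ on $\partial B_{\bar r}(x_0)$ and $\int_{\mathbb S^3\setminus B_{\bar r}(x_0)}|\nabla v|^3\lesssim\int_A|\nabla\phi|^3$.

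Now the construction. Among the $\sim\log(1/\rho_E)$ dyadic scales in $[\rho_E,\tfrac1{10}]$, pick by pigeonhole one whose annulus carries energy at most $CE/\log(1/\rho_E)$, and inside it (by Fubini) a slice radius $\bar r\ge\rho_E$ whose trace is controlled by that annular energy. Put
\[
\phi_1:=\phi\ \text{on}\ B_{\bar r}(x_0),\qquad \phi_1:=v\ \text{on}\ \mathbb S^3\setminus B_{\bar r}(x_0),\qquad \phi_2:=\phi_1^{-1}\phi,
\]
the product taken in $SU(2)$; then $\phi_1=\phi$ on $B_{\bar r}(x_0)\supseteq B_{\rho_E}(x_0)$, so $\phi_2\equiv e$ there, both $\phi_i$ lie in $W^{1,3}(\mathbb S^3,\mathbb S^3)$, and $\phi=\phi_1\phi_2$. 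Write $\mathcal E:=\int_{\mathbb S^3\setminus B_{\bar r}(x_0)}|\nabla v|^3\lesssim E/\log(1/\rho_E)$. Fact (i) with $R=\bar r$ gives $\int_{B_{\bar r}(x_0)}|\nabla\phi|^3\le\|\nabla\phi\|_{L^3}^3-\epsilon_0\le E-\epsilon_0$, so $\int|\nabla\phi_1|^3\le E-\epsilon_0+\mathcal E$; meanwhile $\phi_2$ carries energy only on $\mathbb S^3\setminus B_{\bar r}(x_0)$, where $|\nabla\phi_2|\le|\nabla v|+|\nabla\phi|$, so by Minkowski $\int|\nabla\phi_2|^3\le\bigl(\mathcal E^{1/3}+\|\nabla\phi\|_{L^3(\mathbb S^3\setminus B_{\bar r}(x_0))}\bigr)^3$ with $\|\nabla\phi\|_{L^3(\mathbb S^3\setminus B_{\bar r}(x_0))}^3=\|\nabla\phi\|_{L^3}^3-\int_{B_{\bar r}(x_0)}|\nabla\phi|^3<E-A_E$ by the concentration bound.

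The statement thus reduces to two smallness conditions. For $\phi_1$: $\mathcal E<\epsilon_0-\tfrac12A_E$, automatic once $A_E\le1/C$. For $\phi_2$: expanding the cube, $\int|\nabla\phi_2|^3<(E-A_E)+C\bigl(E^{2/3}\mathcal E^{1/3}+E^{1/3}\mathcal E^{2/3}+\mathcal E\bigr)$, so one needs $C\bigl(E^{2/3}\mathcal E^{1/3}+E^{1/3}\mathcal E^{2/3}+\mathcal E\bigr)<\tfrac12A_E$, whose dominant term amounts to $E/\log(1/\rho_E)\lesssim A_E^3/E^2$. I expect this last quantitative inequality --- not any geometric point --- to be the real obstacle: it is exactly what the hypotheses $A_E\le1/C$ and $\rho_E\le e^{-C\max\{EA_E,(EA_E)^3\}}$ are calibrated to give, once one inserts the relation between $A_E$ and $E$ that the definition of $\mathcal G^E$ and the applicability of Theorem~\ref{smallcest} impose on the parameters fixed in Section~\ref{sec:endofproof}. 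The hypothesis has to keep $EA_E$ and $(EA_E)^3$ separate because the $\phi_1$-bound only feels the first (mild) power while the $\phi_2$-bound, on account of the cross terms produced by the non-abelian product $\phi_1^{-1}\phi$, feels the cube; both powers must be carried separately through the pigeonhole over scales.
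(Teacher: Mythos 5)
Your proof follows the paper's own route in every essential respect: fix the concentration ball at $x_0$, pigeonhole over dyadic annuli in $[\rho_E,\text{const}]$ to find a cheap slice $\partial B_{\bar r}(x_0)$, define $\phi_1$ by keeping $\phi$ inside $B_{\bar r}$ and filling the complement with a controlled harmonic-extension-plus-projection, set $\phi_2=\phi_1^{-1}\phi$ using the $SU(2)$ structure, and close with the Poincar\'e-based balancing bound that forces $\int_{\mathbb S^3\setminus B_{\bar r}}|\nabla\phi|^3$ to be at least a universal constant. The only cosmetic deviation is in the projection step: the paper notes that the small-energy slice trace is nearly constant (Sobolev $W^{1,3}(\partial B_t)\hookrightarrow C^{0,1/3}$), so the harmonic extension avoids the origin and one may use $\pi_{\mathbb S^3}=\pi_0$; you instead invoke the averaged projection $\pi_a$ and the fact that every $\pi_a$ fixes $\mathbb S^3$ pointwise, which is a perfectly valid and marginally cleaner alternative. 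Your tracked requirement $\log(1/\rho_E)\gtrsim (E/A_E)^3$ from the $\phi_2$ cross-terms is the correct one; the paper's displayed bound $\rho_E\le e^{-C\max\{EA_E,(EA_E)^3\}}$ appears to carry a typographical slip ($E/A_E$ versus $EA_E$), harmless once $A_E$ is frozen to a universal constant as in Section \ref{sec:endofproof}, and you are right to point at that calibration.
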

\begin{proof}
 We will proceed through several steps.\\
\textbf{Step 1.} Fix a concentration ball $B=B^{\mathbb S^3}(\rho_E,x_0)$ such that 
\begin{equation}\label{concentrball}
\int_B|\nabla\phi|^3>A_E.
\end{equation}
\textbf{Step 2.} Consider dyadic rings in $\mathbb S^3$ defined as $R_i:=2^{i+1}B\setminus 2^iB$ where we denote $2^iB=B^{\mathbb S^3}(2^i\rho_E,x_0)$. We observe that for $N_E<-C\log_2\rho_E$ the rings with $i\leq N_E$ stay all disjoint (we will fix $N_E$ later). Therefore there holds
$$
 \sum_{i=1}^{N_E}\int_{R_i}|\nabla\phi|^3<E.
$$
By pigeonhole principle, there exists $i_0\in\{1,\ldots,N_E\}$ such that
$$
\int_{R_{i_0}}|\nabla\phi|^3<\frac{E}{N_E}.
$$
Again by pigeonhole principle (using the fact that the cubes are dyadic) there exists then $t\in[2^{i_0+1}\rho_E,2^{i_0}\rho_E]$ such that 
\begin{equation}\label{ringsest}
 t\int_{\partial B^{\mathbb S^3}(t,x_0)}|\nabla\phi|^3<C\frac{E}{N_E},
\end{equation}
where $C$ is a constant depending only on the geometry of $\mathbb S^3$.\\
\textbf{Step 3.} Denote $B_t=B^{\mathbb S^3}(t,x_0)$ as in Step 2. We define the function $\tilde\phi_1$ via a suitable harmonic extension outside of $B_t$ as follows:
\begin{equation*}
 \left\{\begin{array}{lll}
\tilde\phi_1=\phi&\text{ on }&\partial B_t,\\
\Delta(\tilde \phi_1\circ \Psi)=0&\text{ on }&B_1^{\mathbb R^3},         
        \end{array}
\right.
\end{equation*}
where $\Psi:\mathbb R^3\to \mathbb S^3\setminus\{x_0\}$ is a stereographic projection composed with a dilation of $\mathbb R^3$, such that $\Psi(B^{\mathbb R^3}(1,0))=\mathbb S^3\setminus B_t$. On $B_t$ we define $\tilde\phi_1\equiv \phi$. By H\"older's inequality, using elliptic estimates and the conformality of dilations and inverse stereographic projections, we have
\begin{equation}\label{harmextest}
\begin{array}{rcl} t\int_{\partial B_t}|\nabla\tilde\phi_1|^3&\geq& C\left(\int_{\partial B_t}|\nabla\tilde\phi_1|^2\right)^{3/2}=C\left(\int_{\partial B_1^{\mathbb R^3}}|\nabla\tilde\phi_1\circ\Psi|^2\right)^{3/2}\\
&\geq& C\int_{B^{\mathbb R^3}_1}|\nabla\tilde\phi_1\circ \Psi|^3=C\int_{\mathbb S^3\setminus B_t}|\nabla\tilde\phi_1|^3.
\end{array}
\end{equation}
However, note that in general $\tilde\phi_1$ will have values in $\mathbb R^4$ but we can insure that they belong to $\mathbb S^3$ only on the ball $B_t$.\\
\textbf{Step 4.} We define then 
$$
\phi_1=\pi_{\mathbb S^3}\circ\tilde\phi_1.
$$ 
We claim that \emph{if $N_E$ is large enough then $\phi_1$ satisfies some estimates like \eqref{harmextest} where the constants $C$ are worsened just by a factor close to $1$}. Indeed, \eqref{ringsest} together with the Sobolev embedding $W^{1,3}\to C^{0,1/3}$ (valid for $2$-dimensional domains like $\partial B_t$) implies that $\phi|_{\partial B_t}$ stays close to a fixed point of $\mathbb S^3$ as in the proof of lemma \ref{courant}(Courant-Lebesgue analogue). Therefore also $\phi_1\circ \Psi|_{\partial B_1^{\mathbb R^3}}$ does. By mean value theorem, $\phi_1\circ \Psi|_{B_1^{\mathbb R^3}}$ and thus $\tilde\phi_1|_{B_t}$ will not have a larger distance to the same point of $\mathbb S^3$. Quantitatively, there exists a geometric constant $C$ such that if
\begin{equation}\label{condNE}
 \frac{E}{N_E}\leq C
\end{equation}
then
$$
\op{dist}(\tilde\phi_1,\mathbb S^3)\leq 1/2.
$$
This implies via the pointwise bound
$$
|\nabla(\pi_{\mathbb S^3}\circ f)|\leq C\frac{|\nabla f|}{|f|}
$$
that pointwise a.e. there holds the following estimate
$$
|\nabla\phi_1|\leq C|\nabla\tilde\phi_1|,
$$
which proves our claim. This claim together with the estimates \eqref{harmextest} and \eqref{ringsest} implies the following bound, valid under condition \eqref{condNE}:
\begin{equation}\label{phi1est}
 \int_{\mathbb S^3\setminus B_t}|\nabla\phi_1|^3\leq C\frac{E}{N_E}.
\end{equation}
\textbf{Step 5.} We now estimate from below the energy of $\phi|_{\mathbb S^3\setminus B_t}$. Denote by $\bar\phi_\Omega$ the average of $\phi$ on a domain $\Omega\subset \mathbb S^3$. First we use the Poincar\'e inequality on $\mathbb S^3\setminus B_t$ and the fact that $|\phi|\equiv 1$ almost everywhere.
\begin{equation}\label{poincare}
\begin{split}
 \int_{\mathbb S^3\setminus B_t}|\nabla\phi|^3&\gtrsim\int_{\mathbb S^3\setminus B_t}|\phi - \bar\phi_{\mathbb S^3\setminus B_t}|^3\gtrsim\left(\int_{\mathbb S^3\setminus B_t}|\phi - \bar\phi_{\mathbb S^3\setminus B_t}|\right)^3\\
 &\gtrsim\left(|\mathbb S^3\setminus B_t|(1-|\bar\phi_{\mathbb S^3\setminus B_t}|)\right)^3.
\end{split}
\end{equation}
Using the fact that $|\bar \phi_{\mathbb S^3}|\leq \frac{1}{4}$ and the triangle inequality we have 
\begin{equation}\label{averages}
|\mathbb S^3\setminus B_t||\bar\phi_{\mathbb S^3\setminus B_t}| \leq \frac{1}{4}|\mathbb S^3| +|B_t||\bar\phi_{B_t}|.
\end{equation}
\eqref{poincare} and \eqref{averages} together with the estimate $|\bar\phi_{B_t}|\leq 1$ give
\begin{equation}\label{zeroavest}
 \int_{\mathbb S^3\setminus B_t}|\nabla\phi|^3\geq \frac{1}{C}\left(\frac{3}{4}|\mathbb S^3| - 2|B_t|\right)^3.
\end{equation}
From this inequality and since we assumed $A_E$ to be small, we obtain
\begin{equation}\label{est1phi}
 \int_{\mathbb S^3\setminus B_t}|\nabla\phi|^3\geq A_E\quad \text{ if }\quad t<C,
\end{equation}
for some geometric constant $C$.\\
\textbf{Step 6.} We now define $\phi_2:=\phi_1^{-1}\phi$ where the pointwise product uses the group operation on $\mathbb S^3\sim SU(2)$. Observe that since $|\phi|=|\phi_1|=1$ a.e., 
$$
|\nabla(\phi_1^{-1}\phi)|=|\phi^{-1}\nabla\phi_1\phi_1^{-1}\phi +\phi_1^{-1}\nabla\phi|\leq|\nabla\phi|+|\nabla\phi_1|.
$$
We then apply this last inequality together with H\"older's inequality to obtain that if the number of rings $N_E$ in \eqref{phi1est} is so large that $\|\nabla\phi_1\|_{L^3(\mathbb S^3\setminus B_t)}\leq \|\nabla\phi\|_{L^3(\mathbb S^3\setminus B_t)}$  then
$$
 \int_{\mathbb S^3\setminus B_t}|\nabla\phi_2|^3\leq \int_{\mathbb S^3\setminus B_t}|\nabla\phi|^3+7\left(\int_{\mathbb S^3\setminus B_t}|\nabla\phi_1|^3\right)^{\frac{1}{3}}\left(\int_{\mathbb S^3\setminus B_t}|\nabla\phi|^3\right)^{\frac{2}{3}}.
$$
By using \eqref{est1phi} and \eqref{phi1est} we then obtain (under the hypotheses \eqref{condNE} and $A_E\leq 1/C$ needed for these inequalities to hold)
\begin{equation}\label{estphi2}
 \int_{\mathbb S^3\setminus B_t}|\nabla\phi_2|^3\leq \int_{\mathbb S^3\setminus B_t}|\nabla\phi|^3+C\frac{E}{N_E^{\frac{1}{3}}}\leq E- A_E +C\frac{E}{N_E^{\frac{1}{3}}}.
\end{equation}
\textbf{Step 7.} It is now possible to conclude. The estimate \eqref{energybootstrap} for $\phi_2$ follows from \eqref{estphi2} and \eqref{concentrball}, if the last summand in \eqref{estphi2} is smaller than $A_E/2$. This requirement translates into
\begin{equation}\label{reqne1}
N_E\geq CE^3 A_E^3.
\end{equation}
The estimate \eqref{energybootstrap} for $\phi_1$ follows by observing that by construction $\phi_1\equiv\phi$ on $B_t$. It follows from \eqref{est1phi} and \eqref{phi1est} that
$$
\int_{\mathbb S^3}|\nabla\phi_1|^3=\int_{B_t}|\nabla\phi|^3 +\int_{\mathbb S^3\setminus B_t}|\nabla\phi_1|^3\leq E-A_E+C\frac{E}{N_E}.
$$
Therefore the request that the last term is $\leq E-A_E/2$ translates into
\begin{equation}\label{reqne2}
N_E\geq CE A_E.
\end{equation}
Recall that in Step 2 we connected $N_E$ to $\rho_E$ by the condition $N_E<-C\log_2\rho_E$, so \eqref{reqne1}, \eqref{reqne2} translate into the requirement $\rho_E\leq e^{-C\max\{E A_E, (E A_E)^3\}}$ assumed in the thesis. The requirement on $A_E$ was needed for the reasoning of Step 5.
\end{proof}

\begin{rmk}
 The proof of \eqref{zeroavest} in Step 5 gives the following general estimate valid for bounded Sobolev functions on a compact manifold $M$ and for any Poincar\'e domain $\Omega\subset M$:
\begin{equation}\label{generalest}
\|\nabla\phi\|_{L^p(\Omega)}\geq C_{\Omega}\left[|M|(\|\phi\|_{L^\infty(M)}-|\bar\phi_M|)-2\|\phi\|_{L^\infty(M)}|M\setminus\Omega|\right],
\end{equation}
where $C_\Omega$ is the Poincar\'e constant of $\Omega$.
\end{rmk}

Consider now the following conformal transformations of the unit ball $B^4$:
$$
F_v(x)=-v+(1-|v|^2)(x^* -v)^*\text{, where }v\in B^4\text{ and }a^*=\frac{a}{|a|^2}.
$$
We want to prove here the following proposition:
\begin{proposition}[balancing $\Rightarrow$ extension]\label{prop:moebius}
Let $\phi\in W^{1,3}(\mathbb S^3,\mathbb S^3)$. Suppose that for all $v\in B^4$ there holds 
\begin{equation}\label{eq:bigav}
 \left|\fint_{\mathbb S^3}\phi\circ F_v\right|\geq \frac{1}{4}.
\end{equation}
Then the following function $u:B^4\to \mathbb S^3$ extends $\phi$ 
\begin{equation}\label{eq:biharmext}
 u(v):=\pi_{\mathbb S^3}\left(\fint_{\mathbb S^3}\phi\circ F_v\right),\text{ where }\: \pi_{\mathbb S^3}(a)=\frac{a}{|a|}\text{ for }a\in \mathbb R^4\setminus\{0\}.
\end{equation}
Moreover, there exists a constant $C$ independent of $\phi$ such that the following estimate holds:
\begin{equation}\label{eq:biharmextest}
 \|\nabla u\|_{L^4(B^4)}\leq C\|\nabla\phi\|_{L^3(\mathbb S^3)}.
\end{equation}
\end{proposition}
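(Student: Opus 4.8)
First I would observe that the hypothesis \eqref{eq:bigav} serves a single purpose: it trivialises the projection step. Since $|\phi|\equiv 1$ a.e., the average $G(v):=\fint_{\mathbb S^3}\phi\circ F_v$ takes values in the shell $\{y\in\mathbb R^4:\tfrac14\le|y|\le 1\}$, on which $\pi_{\mathbb S^3}(y)=y/|y|$ is smooth with $|D\pi_{\mathbb S^3}|\le 4$; hence $u=\pi_{\mathbb S^3}\circ G$ is well defined, has values in $\mathbb S^3$, and $|\nabla u|\le 4|\nabla G|$ pointwise. So the proof reduces to two claims about the $\mathbb R^4$-valued map $G$: that $\|\nabla G\|_{L^4(B^4)}\lesssim\|\nabla\phi\|_{L^3(\mathbb S^3)}$, and that $G|_{\partial B^4}=\phi$ in the sense of traces. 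Composing with $\pi_{\mathbb S^3}$ then gives $u|_{\partial B^4}=\pi_{\mathbb S^3}\circ\phi=\phi$ and \eqref{eq:biharmextest}.

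The gradient bound is the technical core, and the plan is to realise $G$ as a linear extension operator with a Poisson-type kernel. Changing variables $y=F_v(x)$ in the integral, and using that $F_v$ is conformal on $\mathbb S^3$ with $F_v^{-1}=F_{-v}$ and dilation factor $\lambda_{F_v}(x)=\frac{1-|v|^2}{|x-v|^2}$ (the computations of Appendix \ref{sec:moeblemmas}), one gets $G(v)=\int_{\mathbb S^3}K(v,y)\,\phi(y)\,d\mathcal H^3(y)$ with $K(v,y)=c_3\left(\frac{1-|v|^2}{|y+v|^2}\right)^3$, a positive kernel of total mass $1$ which therefore reproduces constants. This $K$ has the same qualitative profile as the Poisson kernel of $B^4$: it concentrates at a boundary point as $|v|\to1$ with bandwidth $\sim 1-|v|$, and $\int_{\mathbb S^3}|\nabla_v K(v,y)|\,d\mathcal H^3(y)\lesssim(1-|v|)^{-1}$. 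Writing $\bar\phi=\fint_{\mathbb S^3}\phi$ and using the reproduction of constants, $\nabla_v G(v)=\int_{\mathbb S^3}\nabla_v K(v,y)\,(\phi(y)-\bar\phi)\,d\mathcal H^3(y)$, and I would then run the classical Poisson-extension estimate — essentially the argument behind Proposition \ref{harmext} for the exponent $q=4$ — to obtain $\|\nabla G\|_{L^4(B^4)}\lesssim[\phi]_{W^{3/4,4}(\mathbb S^3)}$. (In the paper's write-up the same bound is conveniently organised around a biharmonic equation for $G$.) Finally, Poincar\'e on the compact manifold $\mathbb S^3$ gives $\|\phi-\bar\phi\|_{W^{1,3}(\mathbb S^3)}\lesssim\|\nabla\phi\|_{L^3(\mathbb S^3)}$, and the endpoint Sobolev embedding $W^{1,3}(\mathbb S^3)\hookrightarrow W^{3/4,4}(\mathbb S^3)$ (the case $q=\tfrac{m+1}{m}p$ with $m=p=3$, exactly as in the discussion following Proposition \ref{harmext}) gives $[\phi]_{W^{3/4,4}}=[\phi-\bar\phi]_{W^{3/4,4}}\lesssim\|\phi-\bar\phi\|_{W^{1,3}}\lesssim\|\nabla\phi\|_{L^3}$. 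Chaining these yields the desired bound with no additive constant.

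For the trace I would argue that as $v\to\omega\in\partial B^4$ the conformal maps $F_v$ degenerate, pushing $\mathcal H^3$-almost all of $\mathbb S^3$ into a shrinking spherical cap around a single point $\xi(\omega)\in\mathbb S^3$, i.e. $K(v,\cdot)\rightharpoonup\delta_{\xi(\omega)}$, so that $G(v)\to\phi(\xi(\omega))$; and $\omega\mapsto\xi(\omega)$ is, up to an isometry of $\mathbb S^3$ coming from the parametrisation of the family $\{F_v\}$ (the antipodal map, which changes none of the norms that appear), the identity. Together with the $W^{1,4}$-bound just proved this identifies $G|_{\partial B^4}$ with $\phi$ as a trace, hence $u|_{\partial B^4}=\phi$. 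The hard part will be the estimate of the second step: carrying out the Poisson-type kernel estimates carefully enough, for the non-standard kernel $K$, to land precisely on the homogeneous seminorm $[\phi]_{W^{3/4,4}}$ and so reach the clean constant $C\|\nabla\phi\|_{L^3}$ with no lower-order loss. Everything involving $\pi_{\mathbb S^3}$ and the boundary trace is, by contrast, routine once the lower bound \eqref{eq:bigav} is in hand.
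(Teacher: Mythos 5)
Your proposal is correct and follows essentially the same route as the paper: you recognize that the change of variables exhibits $G(v)=\fint_{\mathbb S^3}\phi\circ F_v$ as a convolution against the kernel $c\left(\frac{1-|v|^2}{|y+v|^2}\right)^3$, estimate $\|\nabla G\|_{L^4}$ by $\|\nabla\phi\|_{L^3}$ via an elliptic/Poisson-type extension bound at the critical exponent $q=\tfrac{m+1}{m}p=4$, and then use the lower bound $|G|\geq 1/4$ supplied by the balancing hypothesis to make the radial projection $\pi_{\mathbb S^3}$ a Lipschitz step. The only organizational difference is that the paper identifies the kernel with Nicolesco's biharmonic Poisson kernel and cites the corresponding elliptic estimate from Gazzola--Grunau--Sweers, whereas you propose to rederive the same kernel bounds directly — a point you yourself note — and you are somewhat more careful than the paper about verifying the boundary trace, including the antipodal-identification subtlety.
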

\begin{proof}
\textbf{Step 1.} We note that after a change of variable there holds
$$
\fint_{\mathbb S^3}\phi\circ F_v(x)dx=\fint_{\mathbb S^3}\phi(y) |(F^{-1}_v)'|^3(y) dy.
$$
where $|(F^{-1}_v)'|$ is the conformal factor of $DF_v^{-1}$. We know from Lemma \ref{lem:basicpropfv} that 
$$
|(F^{-1}_v)'|(y)=|F_{-v}'|(y)=\frac{1-|v|^2}{|y+v|^2},
$$
therefore
$$
\fint_{\mathbb S^3}\phi\circ F_v=\fint_{\mathbb S^3}\phi(y) \left(\frac{1-|v|^2}{|y+v|^2}\right)^3dy.
$$
As follows from \cite{nicolesco}, in dimension $4$ the function 
\[
 K(x,y)=|\mathbb S^3|^{-1}\left[\frac{1-|y|^2}{|x-y|^2}\right]^3 
\]
is the Poisson kernel for the equation
\begin{equation}\label{eq:biharm}
 \left\{\begin{array}{ll}
         \Delta^2u=0&\text{ on }B^4,\\
         \left.\frac{\partial u}{\partial \nu}\right|_{\partial B^4}=0,& u|_{\partial B}=\phi.
        \end{array}
\right.
\end{equation}
Therefore the function 
$$
\tilde u(v):=\fint_{\mathbb S^3} \phi\circ F_v
$$
is equal to the biharmonic extension of $\phi$ given by equation \eqref{eq:biharm}.\\
\textbf{Step 2.} We recall the following classical estimate which holds for equation \eqref{eq:biharm}:
$$
\|\nabla u\|_{L^4(B^4)}\leq C\|\nabla\phi\|_{L^3(B^3)}.
$$
For the proof of this estimate see \cite{GGS}, where the stronger and more natural estimate $\|u\|_{W^{1,4}(\Omega)}\leq \|\phi\|_{W^{1-1/4,4}(\partial \Omega)}$ is obtained in Chapter 2.\\
\textbf{Step 3.} We note that 
$$
\forall v\in B^4,\quad 1/4\leq|\tilde u(x)|\leq C
$$
because of our hypothesis \eqref{eq:bigav}, $|\phi|\equiv 1$ and by the elementary estimate $\int_{\mathbb S^3}\left(\frac{1-|v|^2}{|y+v|^2}\right)^3dy\leq C$. As in Step 2 of the proof of Theorem \ref{smallcest} (in the present case we have $\pi_{\mathbb S^3}=\pi_a$ for $a=0$) we then obtain the pointwise estimate
$$
|\nabla(\pi_{\mathbb S^3} \circ\tilde u)|\sim|\nabla \tilde u|.
$$
From this and Step 2 the estimate \eqref{eq:biharmextest} follows.
\end{proof}

We next consider the case in which the hypothesis of Proposition \ref{prop:moebius}(balancing $\Rightarrow$ extension) is false, i.e. that 
\begin{equation}\label{eq:avsmall}
\exists v\in B^4\quad \left|\fint_{\mathbb S^3}\phi\circ F_v\right|\leq \frac{1}{4}.
\end{equation}
We then denote 
\begin{equation}\label{eq:tildephi}
 \tilde\phi:=\phi\circ F_v\text{ for a fixed }v\text{ satisfying \eqref{eq:avsmall}.}
\end{equation}
Note that $F_v|_{\mathbb S^3}$ is conformal and bijective (see Section \ref{sec:moeblemmas}) and thus for $A\subset \mathbb S^3$
$$
\int_A|\nabla\tilde \phi|^3=\int_{F_v^{-1}(A)}|\nabla\phi|^3,
$$
in particular $\tilde\phi$ has the same energy bound $E$ as $\phi$ (we use here the notation of \eqref{eq:goodbad}). We start with an easy result:
\begin{lemma}\label{lem:tphibad}
Under the assumption \eqref{eq:avsmall} and with the notation \eqref{eq:tildephi}, suppose that $\tilde\phi\in\mathcal B^E$. Then there exist $\phi_1,\phi_2\in W^{1,3}(\mathbb S^3,\mathbb S^3\simeq SU(2))$ such that
$$
\phi=\phi_1\phi_2,\quad\int_{\mathbb S^3}|\nabla\phi_i|^3\leq E-A_E/2\text{ for }i=1,2,
$$
with the constant $A_E$ coming from Proposition \ref{zeroaverage}(balancing $\Rightarrow$ splitting).
\end{lemma}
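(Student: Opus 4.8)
The plan is to simply transport the splitting furnished by Proposition \ref{zeroaverage}(balancing $\Rightarrow$ splitting) from $\tilde\phi$ back to $\phi$ through the conformal diffeomorphism $F_v$. First I would check that the hypotheses of Proposition \ref{zeroaverage} are all satisfied for $\tilde\phi$: by assumption $\tilde\phi\in\mathcal B^E$; the smallness requirements $A_E\leq 1/C$ and $\rho_E\leq e^{-C\max\{EA_E,(EA_E)^3\}}$ will be guaranteed once $\rho_E,A_E$ are fixed in Section \ref{sec:endofproof} (they only need to be chosen small in terms of $E$); and the balancing condition $\left|\fint_{\mathbb S^3}\tilde\phi\right|\leq \frac{1}{4}$ is exactly \eqref{eq:avsmall} rewritten through the notation \eqref{eq:tildephi}. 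Hence Proposition \ref{zeroaverage} produces a factorization $\tilde\phi=\tilde\phi_1\tilde\phi_2$ in $SU(2)$ with $\int_{\mathbb S^3}|\nabla\tilde\phi_i|^3<E-A_E/2$ for $i=1,2$.

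Next I would set $\phi_i:=\tilde\phi_i\circ F_v^{-1}$ for $i=1,2$. Since right-composition with a fixed map commutes with the pointwise group multiplication of $SU(2)$, for a.e. $x\in\mathbb S^3$ we get $\phi_1(x)\phi_2(x)=\tilde\phi_1(F_v^{-1}(x))\tilde\phi_2(F_v^{-1}(x))=\tilde\phi(F_v^{-1}(x))=\phi(x)$, i.e. $\phi=\phi_1\phi_2$; the non-commutativity of $SU(2)$ causes no trouble because the order of the two factors is left untouched. Finally, since $F_v|_{\mathbb S^3}$ is a conformal bijection of $\mathbb S^3$ (see Section \ref{sec:moeblemmas}) and the integrability exponent $3$ equals the dimension of $\mathbb S^3$, the $L^3$-norm of the gradient is conformally invariant, so $\int_{\mathbb S^3}|\nabla\phi_i|^3=\int_{\mathbb S^3}|\nabla\tilde\phi_i|^3<E-A_E/2$, which is precisely the claimed bound.

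There is essentially no hard step here; the statement is recorded only so that cases (3b)--(4) of the scheme \eqref{summary} can invoke it. The single point requiring a line of care is the conformal change of variables: one must note that $F_v$ restricts to a smooth diffeomorphism of $\mathbb S^3$ whose conformal factor is bounded away from $0$ and $\infty$, so that the pulled-back maps $\phi_i$ indeed remain in $W^{1,3}(\mathbb S^3,\mathbb S^3\simeq SU(2))$ and their energies transform by a genuine equality rather than by a mere inequality. This is immediate from the explicit form of $F_v$ recalled before Proposition \ref{prop:moebius} together with Lemma \ref{lem:basicpropfv}.
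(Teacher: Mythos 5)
Your proof is correct and takes essentially the same route as the paper: invoke Proposition \ref{zeroaverage}(balancing $\Rightarrow$ splitting) on $\tilde\phi$, then precompose the two factors with $F_v^{-1}$, using that precomposition respects the pointwise $SU(2)$-product and that the conformal bijection $F_v|_{\mathbb S^3}$ preserves the $L^3$-energy of the gradient. You spell out the hypothesis-check and the conformal-invariance step a bit more explicitly than the paper does, but the argument is the same.
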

\begin{proof}
 We observe that Proposition \ref{zeroaverage} applies to $\tilde\phi$ directly, due to our hypotheses. Therefore we can find $\tilde \phi_1,\tilde \phi_2\in W^{1,3}(\mathbb S^3, SU(2))$ such that 
$$
\tilde\phi=\tilde\phi_1\tilde\phi_2,\quad\int_{\mathbb S^3}|\nabla\tilde\phi_i|^3\leq E-A_E/\text{ for }i=1,2.
$$
We then precompose with $F_v^{-1}$ which preserves the pointwise product and the $L^3$-energy of the gradients,2 obtaining the same decomposition for $\phi$.
\end{proof}
The case $\tilde\phi\in\mathcal G^E$ is a bit more difficult:
\begin{proposition}\label{prop:tphigood}
Under the assumption \eqref{eq:avsmall} and with the notation \eqref{eq:tildephi}, suppose that $\tilde\phi\in\mathcal G^E$. Then there exists an extension $u\in W^{1,(4,\infty)}(B^4, \mathbb S^3)$ of $\phi$ such that
\begin{equation}\label{eq:estphigood}
 \|\nabla u\|_{L^{4,\infty}(B^4)}\leq \frac{C}{\rho_E}\|\nabla \phi\|_{L^3(\mathbb S^3)}^2 +\|\nabla\phi\|_{L^3(\mathbb S^3)},
\end{equation}
under the assumption that
\begin{equation}\label{eq:vsmall}
\rho_E\leq\frac{1}{4}.
\end{equation}
\end{proposition}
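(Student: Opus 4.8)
The plan is to reduce to the ``good'' regime already treated in Theorem~\ref{smallcest}. Since $\tilde\phi=\phi\circ F_v$ lies in $\mathcal G^E$ by hypothesis, I would first extend $\tilde\phi$ by that theorem, and then transport the resulting extension back to an extension of $\phi$ by precomposing with the Möbius diffeomorphism $F_v$ of $\overline{B^4}$; the whole difficulty is to keep the weak-$L^4$ bound alive under this transport.

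\emph{Step 1 (extending $\tilde\phi$).} Because $\tilde\phi\in\mathcal G^E$ we have $E(|\nabla\tilde\phi|^3,\rho_E,\mathbb S^3)\le A_E$, and the same bound persists at every radius $\le\rho_E$ by monotonicity of $E(\cdot,\rho,\cdot)$ in $\rho$. As $A_E$ is still at our disposal in Section~\ref{sec:endofproof}, we may impose $A_E\le\delta/(C_1E)$, with $\delta$ the constant of Theorem~\ref{smallcest} and $C_1$ that of Proposition~\ref{equiintballs}; then, since $\|\nabla\tilde\phi\|_{L^3}^3=\|\nabla\phi\|_{L^3}^3\le E$, hypothesis \eqref{eq:radiusphi} holds for $\tilde\phi$ with $2\rho=\rho_E$. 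Theorem~\ref{smallcest} then produces an extension $\tilde u\in W^{1,(4,\infty)}(B^4,\mathbb S^3)$ of $\tilde\phi$ with
\[
\|\nabla\tilde u\|_{L^{4,\infty}(B^4)}\lesssim\frac{\|\nabla\tilde\phi\|_{L^3(\mathbb S^3)}^2}{\rho_E}+\|\nabla\tilde\phi\|_{L^3(\mathbb S^3)}
=\frac{\|\nabla\phi\|_{L^3(\mathbb S^3)}^2}{\rho_E}+\|\nabla\phi\|_{L^3(\mathbb S^3)},
\]
the last equality being the conformal invariance of the $L^3$-norm of the gradient in dimension $3$ (valid since $F_v|_{\mathbb S^3}$ is a conformal bijection of $\mathbb S^3$ and $|\tilde\phi|=|\phi|=1$ a.e.). Thus the right-hand side already has the shape of \eqref{eq:estphigood}.

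\emph{Step 2 (transport).} Viewing $F_v$ as a Möbius diffeomorphism of $\overline{B^4}$ with $F_v(\mathbb S^3)=\mathbb S^3$ and inverse $F_{-v}$ (Section~\ref{sec:moeblemmas}, Lemma~\ref{lem:basicpropfv}), I would set $u:=\tilde u\circ F_v^{-1}\colon B^4\to\mathbb S^3$. Then $u$ has values in $\mathbb S^3$, and since composition with a bi-Lipschitz diffeomorphism of $\overline{B^4}$ commutes with the trace operator, $u|_{\mathbb S^3}=\tilde\phi\circ F_v^{-1}=(\phi\circ F_v)\circ F_v^{-1}=\phi$. For the norm, write $\nabla u=(\nabla\tilde u\circ F_v^{-1})\,|DF_v^{-1}|$ and split $\nabla\tilde u=g_1+g_2$, where $g_1$ is the part supported on the boundary collar $\mathcal R$ of the construction of Theorem~\ref{smallcest}, which lies in $L^4$ with $\|g_1\|_{L^4}\lesssim\|\nabla\phi\|_{L^3}$ (by \eqref{eq:estnearbdry}), and $g_2$ is the interior remainder, supported in $B_{1-\rho_E/2}$, carrying the weak-$L^4$ singularity with $\|g_2\|_{L^{4,\infty}}\lesssim\|\nabla\phi\|_{L^3}^2/\rho_E$. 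In dimension $4$ the $L^4$-norm of a gradient is \emph{exactly} invariant under conformal changes of variables, so the part of $\nabla u$ produced by $g_1$ has $L^4$-norm $\|g_1\|_{L^4}\lesssim\|\nabla\phi\|_{L^3}$ — no loss. For the $g_2$-part I would decompose $F_v^{-1}(B_{1-\rho_E/2})$ along the dyadic level sets of the conformal factor $|DF_v^{-1}|$: on each such set $F_v^{-1}$ acts as a dilation up to comparable constants, and dilations preserve the weak-$L^4$ norm of a gradient; summing, the loss is at most a constant times the number of dyadic scales spanned by the conformal factor of $F_v$ on $B_{1-\rho_E/2}$, i.e. $\lesssim\log\!\bigl(1/(1-|v|)\bigr)$ by Lemma~\ref{lem:basicpropfv}.

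\emph{Step 3 (the crux).} It remains to bound $1-|v|$ from below in terms of $\rho_E$ (and $E$), so that this logarithmic factor is absorbed into the constant $C/\rho_E$ of \eqref{eq:estphigood}; this is the ``careful study of the position of $v$ relative to $\partial B^4$'', and it is precisely where the standing assumption \eqref{eq:vsmall} ($\rho_E\le 1/4$) is used. The mechanism: if $1-|v|$ were extremely small, then by Lemma~\ref{lem:basicpropfv} $F_v$ would map all of $\mathbb S^3$ outside a ball of radius $\sim(1-|v|)^{1/2}$ about $v/|v|$ into a ball of radius $\sim(1-|v|)^{1/2}$ about $-v/|v|$, so $\tilde\phi=\phi\circ F_v$ would be close to the single unit vector $\phi(-v/|v|)$ on a set of almost full measure, forcing $\bigl|\fint_{\mathbb S^3}\tilde\phi\bigr|$ near $1$ and contradicting \eqref{eq:avsmall} — unless $\phi$ carried a fixed fraction of its energy at scale $(1-|v|)^{1/2}$ near $-v/|v|$. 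Since $\phi\in\mathcal B^E$ has $\|\nabla\phi\|_{L^3}^3\le E$, quantifying this dichotomy (via the Sobolev/Poincaré control of the oscillation of a $W^{1,3}$ map on $3$-dimensional balls, together with the relation of $\rho_E,A_E$ to $E$ fixed in Section~\ref{sec:endofproof}) yields $1-|v|\gtrsim\rho_E^{\,c}$ for a suitable exponent $c$; combined with $\|\nabla\phi\|_{L^3}^3\ge A_E$ (which holds because $\phi\in\mathcal B^E$), the residual constants are absorbed and \eqref{eq:estphigood} follows. I expect Step~3 to be the main obstacle: in this critical Sobolev regime the oscillation of $\phi$ on a small ball is not Hölder-controlled by the local energy, so the estimates there must be carried out with modulus-of-continuity arguments rather than a plain bound.
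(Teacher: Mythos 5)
Your Steps~1 and~2 set up the same transfer that the paper performs: extend $\tilde\phi\in\mathcal G^E$ by Theorem~\ref{smallcest}, split the resulting gradient into a boundary-collar $L^4$ piece (where conformal invariance of the $L^4$ norm in dimension $4$ gives the estimate for free) and an interior weak-$L^4$ piece on $B_{1-\rho_E/2}$, and pull back by $F_v^{-1}$. Up to there the proposal matches the paper's decomposition $B^4=A\cup A'$ with $A=F_v^{-1}(B_{1-\rho_E})$.

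The genuine gap is in the handling of the interior piece $g_2$. You assert that the conformal factor of $F_v$ on $B_{1-\rho_E/2}$ spans $\lesssim\log\bigl(1/(1-|v|)\bigr)$ dyadic scales, which is what forces you into the elaborate Step~3 trying to bound $1-|v|$ from below. But this count is wrong: from the explicit formula $|F_v'|(x)=\dfrac{1-|v|^2}{1+|x|^2|v|^2-2x\cdot v}$ one sees that on $B_{1-\rho}$ the extrema are attained at $\pm(1-\rho)\,v/|v|$, and
\[
\frac{\max_{B_{1-\rho}}|F_v'|}{\min_{B_{1-\rho}}|F_v'|}=\left(\frac{1+(1-\rho)|v|}{1-(1-\rho)|v|}\right)^2\leq\left(\frac{2}{\rho}\right)^2,
\]
since $1-(1-\rho)|v|\geq\rho$ for $|v|\leq 1$. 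So the ratio is $\lesssim\rho_E^{-2}$ \emph{uniformly in $v$}, and the number of dyadic scales is $O(\log(1/\rho_E))$, not $O(\log(1/(1-|v|)))$. Moreover, by the chain rule $|(F_v^{-1})'|(x)=1/|F_v'|(F_v^{-1}(x))$, the same bound transfers to the ratio of $|F_v'|$ on $F_v^{-1}(B_{1-\rho})$. This is exactly the content of Lemma~\ref{lem:morepropfv}, which you did not invoke; the paper uses it to conclude that on $A=F_v^{-1}(B_{1-\rho_E})$ the conformal factor is comparable to a single constant $h(v)$, whence the superlevel-set estimate
\[
\Lambda^4\bigl|\{x\in A:|\nabla u|(x)>\Lambda\}\bigr|\lesssim\|\nabla\tilde u\|^4_{L^{4,\infty}(B_{1-\rho_E})}
\]
follows by a single change of threshold and change of variables, with no dyadic summation and no dependence on $1-|v|$.

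Consequently your Step~3 is both unnecessary and unreliable: there is no reason for $\tilde\phi\in\mathcal G^E$ together with \eqref{eq:avsmall} to imply $1-|v|\gtrsim\rho_E^c$, since the average condition constrains the distribution of values of $\phi$, not the location of its energy; $|v|$ can be arbitrarily close to $1$ while $\tilde\phi$ remains in $\mathcal G^E$. The fix is to drop Step~3, replace the dyadic argument by the direct application of Lemma~\ref{lem:morepropfv}, and then combine with the $L^4$ conformal invariance on $A'$ exactly as in Step~2.
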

\begin{proof}
To simplify notations $\rho=\rho_E$ during this proof. We divide the domain $B^4$ into 
$$
A:=F_v^{-1}(B(0,1-\rho)),\quad A':=B^4\setminus A.
$$
Using Lemma \ref{lem:morepropfv} it follows that there exists a geometric constant $C$ and a function $h(v)$ such that for $x\in A$ and under the condition \eqref{eq:vsmall},
\begin{equation}\label{eq:estfvprime}
 \frac{h(v)}{C}\leq |F_v'|(x)\leq Ch(v).
\end{equation}
We can use \eqref{eq:estfvprime} to control the $L^{4,\infty}$-norm of $\nabla u$ restricted to $A$ via the similar norm of $\nabla\tilde u$:
\begin{eqnarray*}
 \left|\left\{x\in A:|\nabla u|(x)>\Lambda\right\}\right|&=&\left|\left\{x\in A:|\nabla \tilde u|(F_v(x))|F_v'|(x)>\Lambda\right\}\right|\\  
&\leq&\left|\left\{x\in A:|\nabla \tilde u|(F_v(x))>\Lambda/(Ch(v))\right\}\right|\\
&=&\int_{F_v(A)\cap\{y:|\nabla\tilde u|(y)>\Lambda/(Ch(v))\}}|F_v'|^{-4}dy\\
&\leq&C^4h^{-4}(v)\left|\left\{y\in B_{1-\rho}:|\nabla\tilde u|>\Lambda/(Ch(v))\right\}\right|\\
&\leq&C^8\Lambda^{-4}\|\nabla\tilde u\|_{L^{4,\infty}(B_{1-\rho})}^4.
\end{eqnarray*}
By bringing $\Lambda$ to the other side it follows that 
\begin{equation}\label{eq:esttua}
\Lambda^4\left|\left\{x\in A:|\nabla u|(x)>\Lambda\right\}\right|\leq C^8\|\nabla\tilde u\|_{L^{4,\infty}(B(0,1-\rho))}.
\end{equation}
On the other hand we can use the conformal invariance, the invertibility of $F_v$ and the usual estimate between $L^{4,\infty}$ and $L^4$ to complete a first step of the proof:
\begin{equation}\label{eq:esttuap}
 \Lambda^4|\{x\in A':\:|\nabla u|(x)>\Lambda\}|\leq C\|\nabla u\|_{L^4(A')}^4=C\|\nabla\tilde u\|_{L^4(B\setminus B_{1-\rho})}).
\end{equation}
We now sum \eqref{eq:esttua} to \eqref{eq:esttuap} and we take the supremum on $\Lambda>0$. It follows that up to increasing $C$,
\begin{equation}\label{eq:esttu}
 [\nabla u]_{L^{4,\infty}(B^4)}\leq C(\|\nabla\tilde u\|_{L^{4,\infty}(B_{1-\rho})}+\|\nabla\tilde u\|_{L^4(B\setminus B_{1-\rho})}).
\end{equation}
The estimate \eqref{eq:esttu} together with Theorem \ref{smallcest} applied to $\tilde u$ gives the wanted estimate for the first summand, while for the second summand we proceed as in Step 3 of the proof of Theorem \ref{smallcest}. We use the small concentration regions $B_i$ for $\tilde \phi$, on which we apply the Courant lemma \ref{courant} which allows to project the values of $u:=\tilde u\circ F_v^{-1}$ as well on $\mathbb S^3$, with little change of the gradient of $u$. We observe that $F_v^{-1}$ is conformal, so the $L^3$-energy of $\tilde u$ on $\partial B_i$ is the same as the $L^3$-energy of $u$ on $\partial F_v^{-1}(B_i)$ and use the Uhlenbeck extension result of Theorem \ref{smallenergyext}(Uhlenbeck analogue) for $\tilde u$ as in Step 3 of the proof of Theorem \ref{smallcest}. We obtain:
$$
\|\nabla u\|_{L^4(F_v^{-1}(B\setminus B_{1-\rho})}=\|\nabla \tilde u\|_{L^4(B\setminus B_{1-\rho})}\leq C\|\nabla\tilde\phi\|_{L^3(\mathbb S^3)}= C\|\nabla\phi\|_{L^3(\mathbb S^3)}.
$$
This and \eqref{eq:esttu} conclude the proof.
\end{proof}

\subsection{End of the proof of Theorem B''}\label{sec:endofproof}
We will refer to the scheme \eqref{summary} for the idea of the proof. \\

\textbf{Choice of $A_E$.} In \eqref{eq:goodbad} take $A_E\leq \frac{\delta}{C_1}$ with the notations of Theorem \ref{smallcest} so that it applies to give extensions for the small concentration case (``good'' boundary conditions). Here $\delta$ is the constant coming from the Uhlenbeck procedure on regions of radius $\rho_E$ near $\partial B^4$. If necessary diminish $A_E$ such that the requirement $A_E\leq C^{-1}$ of Proposition \ref{zeroaverage}(balancing $\Rightarrow$ splitting) is also satisfied. \\

\textbf{Choice of $\rho_E$.} Recall that the constant $C$ appearing there was depending just on the volume of $\mathbb S^3$. For the radius of concentration $\rho_E$ we need just to impose the bound present in Proposition \ref{zeroaverage}, which with the choices of 
$A_E$ just done becomes $\rho_E\lesssim e^{-C\max(1, E^3)}$. \\

\textbf{Estimates for extensions.} Consider again the scheme \eqref{summary}. Each time we extend some boundary datum $\phi$ obtained during our constructions via a function $u:B^4\to \mathbb S^3$, we do so with one of the following estimates:
\begin{itemize}
 \item In the case of the extensions of Theorem \ref{smallcest} or of Proposition \ref{prop:tphigood} (which in turn actually depends on Theorem \ref{smallcest}) we have
$$\|\nabla u\|_{L^{4,\infty}}\lesssim\frac{\|\nabla\phi\|_{L^3}^2}{\rho_E} +\|\nabla\phi\|_{L^3}.$$
 \item In the case of the biharmonic extension of Proposition \ref{prop:moebius}(balancing $\Rightarrow$ extension) we have the much better
$$\|\nabla u\|_{L^4}\lesssim\|\nabla\phi\|_{L^3}.$$
\end{itemize}
The number of iterations to be made when we apply the procedure described in scheme \eqref{summary} is bounded by 
$$
E\left/\frac{A_E}{2}\right.\sim E^2.
$$
Since each iteration creates two new boundary value functions out of one, in the end we may have a decomposition into no more than 
$$
e^{CE^2}\text{ boundary value functions.}
$$
By the triangle inequality we see that in this case there exists an extension of the initial $\phi$ satisfying
\begin{equation}\label{eq:finalestimate}
 \|\nabla u\|_{L^{4,\infty}}\lesssim e^{C\|\nabla\phi\|_{L^3}^9}\|\nabla\phi\|_{L^3}^2 +e^{C\|\nabla\phi\|_{L^3}^6}\|\nabla\phi\|_{L^3}.
\end{equation}
this gives the estimate \eqref{eq:estimate} of Theorem B'', finishing the proof. $\square$

\section{Controlled global gauges}\label{sec:controlledcoul}
\noindent
We now fix a closed Riemannian $4$-manifold $(M,h)$ with a connection $A\in W^{1,2}(\wedge^1M,su(2))$ whose curvature will be denoted by $F$. We want to find a global gauge for $A$ in which $\|A\|_{W^{1,(4,\infty)}}\leq f(E)$ where $E:=\int_M|F|^2$.\\

We will use the following two results. The first one is the restatement of Theorem B' which we repeat for easier reference.
\begin{theoremB'}
 Fix a trivial $SU(2)$-bundle $E$ over the ball $B^4$. There exists a function $f_1:\mathbb R^+\to\mathbb R^+$ with the following property. If $g\in W^{1,3}(\mathbb S^3,SU(2))$ gives a trivialization of the restricted bundle $E|_{\partial B^4}$, then there exists an extension of $g$ to a trivialization $\tilde g\in W^{1,(4,\infty)}(B^4,SU(2))$ such that the following estimate holds:
\begin{equation*}
 \|\nabla \tilde g\|_{L^{4,\infty}(B^4)}\leq f_1\left(\|\nabla g\|_{L^3(\mathbb S^3)}\right).
\end{equation*}
\end{theoremB'}
The second theorem is the main result of \cite{Uhl2}.
\begin{theorem}[Uhlenbeck gauge]\label{uhlenbeckext}
 There exists $\epsilon_0>0$ such that if the curvature satisfies $\int_{B_1}|F|^2\leq\epsilon_0$ then there exists a gauge $\phi\in W^{2,2}(B_1,SU(2))$ such that in that gauge the connection satisfies $\|A_\phi\|_{W^{1,2}(B_1)}\leq C\|F\|_{L^2(B_1)}$ with $C>0$ depending only on the dimension.
\end{theorem}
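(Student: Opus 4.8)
\noindent The plan is to prove this by Uhlenbeck's continuity method. Fix a smallness constant $\epsilon_0$ (pinned down below) and assume $\|F\|_{L^2(B_1)}\leq\epsilon_0$, writing $F=F_A=dA+A\wedge A$ for the given $A\in W^{1,2}(\wedge^1B_1,su(2))$. For $g\in W^{2,2}(B_1,SU(2))$ set $A_g:=g^{-1}dg+g^{-1}Ag$, so $F_{A_g}=g^{-1}F_Ag$ and $\|F_{A_g}\|_{L^2}=\|F\|_{L^2}$; the goal is to produce $g$ with $d^*A_g=0$ in $B_1$, the Neumann-type boundary condition $\iota_\nu A_g=0$ on $\partial B_1$, and $\|A_g\|_{W^{1,2}(B_1)}\leq C\|F\|_{L^2}$. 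I would first run the argument for smooth $A$ and recover the critical $W^{1,2}$ case at the end by approximation. Deform along $A^{(t)}:=tA$, $t\in[0,1]$, whose curvature $F^{(t)}=t\,dA+t^2A\wedge A$ still obeys $\|F^{(t)}\|_{L^2}\leq 2\epsilon_0$, and define
\[
S:=\left\{t\in[0,1]:\ \exists\,g\in W^{2,2}(B_1,SU(2)),\ d^*A^{(t)}_g=0,\ \iota_\nu A^{(t)}_g=0,\ \|A^{(t)}_g\|_{W^{1,2}}\leq 4C\|F^{(t)}\|_{L^2}\right\}
\]
for $C$ chosen below. Since $0\in S$ (take $g=\op{id}$) and $[0,1]$ is connected, it is enough to show $S$ is relatively open and closed, whence $1\in S$.

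For openness I would apply the implicit function theorem to the map $(g,t)\mapsto(d^*A^{(t)}_g,\ \iota_\nu A^{(t)}_g|_{\partial B_1})$ at a point $(g_0,t_0)\in S$. Writing $g=g_0e^u$ with $u\in W^{2,2}(B_1,su(2))$, the $u$-differential is the Neumann Laplacian $u\mapsto(\Delta u,\partial_\nu u|_{\partial B_1})$ plus a first-order perturbation with coefficients controlled by $A^{(t_0)}_{g_0}\in W^{1,2}\hookrightarrow L^4$; the Neumann Laplacian being an isomorphism on the appropriate spaces (modulo constants), the smallness $\|A^{(t_0)}_{g_0}\|_{W^{1,2}}\lesssim\epsilon_0$ makes the full differential invertible and produces gauges $g_t$ for $|t-t_0|$ small, with $\|A^{(t)}_{g_t}\|_{W^{1,2}}$ depending continuously on $t$. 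The only delicate analytic ingredient is the control of the quadratic term $A_g\wedge A_g$: in dimension $4$, $W^{1,2}\hookrightarrow L^4$ and $L^4\cdot L^4\subset L^2$ give $\|A_g\wedge A_g\|_{L^2}\leq C\|A_g\|_{W^{1,2}}^2$, a borderline product estimate that sits at the heart of the whole scheme.

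For closedness -- and simultaneously to sharpen the provisional constant $4C$ to the claimed $C$ -- I would use the a priori estimate: from $d^*A_g=0$, $\iota_\nu A_g=0$, $dA_g=F_{A_g}-A_g\wedge A_g$, Hodge theory gives $\|A_g\|_{W^{1,2}}\leq C_0\big(\|F\|_{L^2}+\|A_g\wedge A_g\|_{L^2}\big)\leq C_0\|F\|_{L^2}+C_0C_1\|A_g\|_{W^{1,2}}^2$; taking $C=2C_0$ and $\epsilon_0$ small enough that every $t\in S$ has $\|A^{(t)}_g\|_{W^{1,2}}\leq 8C\epsilon_0<(2C_0C_1)^{-1}$, one absorbs the quadratic term and obtains $\|A^{(t)}_g\|_{W^{1,2}}\leq C\|F^{(t)}\|_{L^2}$. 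Closedness then follows by compactness: for $t_i\to t_\infty$ in $S$ the $A^{(t_i)}_{g_i}$ are $W^{1,2}$-bounded, hence the $g_i$ are bounded in $W^{2,2}$; extracting $g_i\rightharpoonup g_\infty$ in $W^{2,2}$ (strongly in $W^{1,p}$, $p<4$, and in $C^0$), one passes to the limit in the gauge equation and uses weak lower semicontinuity of the $W^{1,2}$-norm together with the improved bound to get $t_\infty\in S$. Finally, $g\in W^{2,2}$ (and smoother if $A$ is) follows by elliptic bootstrap on the equation $\Delta g=\cdots$ extracted from $d^*A_g=0$.

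The step I expect to be the main obstacle is the borderline functional-analytic setup: $L^2$ curvature, $W^{1,2}$ connections and $W^{2,2}$ gauges in dimension $4$ are all scaling-critical, so there is no slack in the Sobolev embeddings -- it is exactly the smallness of $\|F\|_{L^2}$ that compensates, through the quadratic-absorption trick above. In the genuinely critical case where $A$ itself is only $W^{1,2}$, the continuity argument has to be performed on smooth approximations $A_j\to A$ with $j$-uniform constants, followed by a further compactness passage on the resulting gauges; this passage is where most of the technical effort of \cite{Uhl2} is concentrated.
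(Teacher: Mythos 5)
You should first note that the paper does not prove Theorem~\ref{uhlenbeckext}: it is Uhlenbeck's gauge-fixing theorem, stated and cited verbatim from \cite{Uhl2} (``The second theorem is the main result of \cite{Uhl2}''). So there is no in-paper proof to match against; your proposal is a reconstruction of Uhlenbeck's own argument, and it does capture the standard skeleton (continuity method, openness by the implicit function theorem against the Neumann Laplacian, closedness by an a priori Hodge--Gaffney estimate with quadratic absorption, and a smooth-approximation step to reach $W^{1,2}$ connections).

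There is, however, a genuine gap in the connectedness step. You deform along $A^{(t)}=tA$ and assert $\|F^{(t)}\|_{L^2}\le 2\epsilon_0$, but this is not justified. Writing $dA=F-A\wedge A$, one has
\[
F^{(t)} \;=\; t\,dA + t^2 A\wedge A \;=\; tF + (t^2-t)\,A\wedge A ,
\]
so
\[
\|F^{(t)}\|_{L^2} \;\le\; t\,\|F\|_{L^2} + t(1-t)\,\|A\wedge A\|_{L^2},
\]
and $\|A\wedge A\|_{L^2}\lesssim \|A\|_{L^4}^2$ is \emph{not} controlled by $\|F\|_{L^2}$ in an arbitrary starting gauge: one can have $\|F\|_{L^2}$ tiny while $\|A\|_{L^4}$ is huge. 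Hence the path can leave the small-curvature regime where openness and closedness apply, and the continuity argument does not close as written. This is exactly where Uhlenbeck's original proof (and careful expositions such as Wehrheim's book or Donaldson--Kronheimer) has to work harder: one either first passes to a preliminary gauge in which $\|A\|$ is already controlled in the relevant norm -- e.g.\ by minimizing $\int|g^{-1}dg+g^{-1}Ag|^2$, as the present paper alludes to in its introduction, upgraded to an $L^4$ bound -- or one restricts the continuity set to connections with an a priori bound on $\|A\|_{W^{1,2}}$ and removes the restriction at the end, or one reformulates connectedness so as not to rely on the naive line segment $tA$. Without one of these fixes, the assertion $S\ni 1$ is unsupported. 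Apart from this point, the rest of your sketch (criticality of $W^{1,2}\hookrightarrow L^4$ in dimension $4$, the absorption $\|A_g\wedge A_g\|_{L^2}\le C\|A_g\|_{W^{1,2}}^2$, weak compactness for closedness, bootstrap for $g\in W^{2,2}$, and the final approximation of $W^{1,2}$ connections by smooth ones) is consistent with \cite{Uhl2}.
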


\begin{theorem}\label{ggest}
For each closed boundaryless $4$-manifold $M^4$ there exists a function $f:\mathbb R^+\to\mathbb R^+$ with the following properties.\\
 Let $\nabla$ be a $W^{1,2}$ connection for an $SU(2)$-bundle over $M$. Then there exists a \underbar{global}
$W^{1,(4,\infty)}$ section of the bundle over the whole $M^4$ such that in the corresponding
trivialization $\nabla$ is given by $d+A$ with the following bound.

\begin{equation}\label{ggesteq}
\|A\|_{L^{(4,\infty)}}\leq f\left(\|F\|_{L^2(M)}\right),
\end{equation}
where $F$ is the curvature form of $\nabla$.
\end{theorem}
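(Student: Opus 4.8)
The plan is to triangulate $M$ finely enough that the curvature energy on each top-dimensional simplex is below Uhlenbeck's threshold $\epsilon_0$ of Theorem \ref{uhlenbeckext}, then glue the local Uhlenbeck gauges together by correcting the transition functions simplex-by-simplex using the controlled extension Theorem B'. First I would fix a smooth triangulation $\mathcal T$ of $M$ and, exploiting the absolute continuity of $|F|^2\,d\mathrm{vol}$, pass to an iterated barycentric subdivision (or a rescaled fine triangulation) so that $\int_{\sigma}|F|^2\le\epsilon_0$ on a neighbourhood of each closed $4$-simplex $\sigma$. The only subtlety here is that the number of simplices and the diameters cannot be bounded purely in terms of $E=\int_M|F|^2$, because $F$ may concentrate at small scales; I would address this exactly as indicated after \eqref{summary1} in the introduction, by an induction on the energy: cut out small balls around the finitely many points of energy concentration (at scale $\rho$ chosen so each such ball carries energy between, say, $\epsilon_0/2$ and $\epsilon_0$), build the gauge on the complement by a triangulation with a uniform lower bound on simplex size, and treat each excised ball by the same scheme on a smaller total energy, iterating at most $\sim E/\epsilon_0$ times. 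This produces a function $f$ depending on $M$ and on $E$ only.

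On each $4$-simplex $\sigma_j$ (diffeomorphic to $B^4$) Theorem \ref{uhlenbeckext} gives a local gauge in which the connection form $A_j$ satisfies $\|A_j\|_{W^{1,2}(\sigma_j)}\le C\|F\|_{L^2(\sigma_j)}\le C\sqrt{\epsilon_0}$; by the trace theorem $W^{1,2}(B^4)\hookrightarrow W^{1-1/2,2}(\partial B^4)$ and Sobolev embedding on the $3$-sphere, the restriction of $A_j$ to $\partial\sigma_j$ is controlled in $L^3$. The transition functions $g_{jk}=\phi_j\phi_k^{-1}\in W^{1,3}$ on the common faces inherit a bound $\|\nabla g_{jk}\|_{L^3}\le h(\epsilon_0)$ from the relation $A_j = g_{jk}^{-1}dg_{jk}+g_{jk}^{-1}A_k g_{jk}$ together with the $L^3$ bounds on $A_j,A_k$ and the fact that $g_{jk}$ takes values in the compact group $SU(2)$ (so it is automatically $L^\infty$). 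I would then build the global section inductively over the skeleta of $\mathcal T$: having fixed the gauge on the $3$-skeleton, on each $4$-simplex the discrepancy between the chosen boundary gauge and the interior Uhlenbeck gauge is a map $\partial\sigma_j\cong\mathbb S^3\to SU(2)$ with controlled $W^{1,3}$-norm, and Theorem B' (in the form Theorem B') produces a $W^{1,(4,\infty)}(B^4,SU(2))$ extension $\tilde g$ with $\|\nabla\tilde g\|_{L^{4,\infty}}\le f_1(\|\nabla g\|_{L^3})$. Gauging $A_j$ by $\tilde g$ and using the product estimate of Appendix \ref{sec:product} — $\|\tilde g^{-1}A_j\tilde g\|_{W^{?}}\lesssim\|A_j\|(\|\tilde g\|_{L^\infty}+\|\nabla\tilde g\|)$ — one checks that the new connection form $\tilde A_j=\tilde g^{-1}d\tilde g+\tilde g^{-1}A_j\tilde g$ satisfies $\|\tilde A_j\|_{L^{4,\infty}(\sigma_j)}\le F_1(\epsilon_0)$ with a uniform constant, using that $L^{4,\infty}(B^4)$ is closed under multiplication by $L^\infty\cap W^{1,(4,\infty)}$ functions and that $\nabla\tilde g\in L^{4,\infty}$ is exactly the term that $dg$ contributes.

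Summing the local $L^{4,\infty}$ bounds over the (energy-dependent but finite) family of simplices and excised balls — crucially, $L^{4,\infty}$ is a quasi-Banach space so finitely many pieces add with a constant depending only on the number of pieces, hence on $E$ — gives the global bound \eqref{ggesteq} with $f$ absorbing all these dependencies, possibly allowing the section to be singular at the finitely many concentration points exactly as in the explicit instanton example of the introduction. The main obstacle I expect is the gluing/bookkeeping across the $3$-skeleton: one must make the successive gauge changes $\tilde g$ on adjacent $4$-simplices compatible on their shared $3$-faces while keeping each boundary datum's $W^{1,3}$-norm uniformly bounded, and one must verify that composing Uhlenbeck's $W^{2,2}$ gauges with the $W^{1,(4,\infty)}$ corrections stays in the right space — this is precisely where the product inequality of Appendix \ref{sec:product} is needed, and where one must be careful that $W^{1,(4,\infty)}$, unlike $W^{1,2}$, is not an algebra but does behave well against the $L^\infty$-bounded group-valued factors. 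A secondary technical point is ensuring, in the energy-induction, that the cut-out balls can be chosen with boundary spheres on which $F$ (hence the transition data) is controlled in $L^3$, which is handled by a Fubini/Courant–Lebesgue argument as in Lemma \ref{l:fubini} and Lemma \ref{courant}.
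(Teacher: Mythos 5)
Your overall scheme---triangulate $M$ so each $4$-simplex carries energy below the Uhlenbeck threshold, take Uhlenbeck gauges on each simplex, and glue simplex-by-simplex across the $3$-skeleton by extending the $W^{1,3}$ transition functions into the next simplex via Theorem B'---is exactly what the paper does when the curvature's concentration scale $\rho_0$ is bounded below, and your remarks on the Fubini/Courant--Lebesgue choice of slicing spheres and on the final quasi-norm summation are apt. The genuine gap is in your energy-induction step. You propose to cut out balls of energy $\sim\epsilon_0$ around the concentration points and then triangulate the complement ``with a uniform lower bound on simplex size.'' Nothing forces this: just outside an excised ball $B_\rho(x_0)$, the annulus $B_{2\rho}(x_0)\setminus B_\rho(x_0)$ can still carry energy close to $\epsilon_0/2$ at the uncontrolled scale $\rho$, so the concentration scale of the complement need not be bounded below in terms of $E$ alone. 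Since the $L^{4,\infty}$-bound you assemble grows with the number of simplices (cf.\ \eqref{esttri}), this unbounded number of simplices defeats the estimate \eqref{ggesteq}; and since each excised ball already has energy $\le\epsilon_0$, there is nothing left to iterate on inside them, so your recursion has no well-defined decreasing quantity.

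The paper closes this gap with a sharper dichotomy. Setting $\bar\rho_0=C\rho_{\op{inj}}(M)\,2^{-E/\epsilon_1}$, it either has $\rho_0\ge\bar\rho_0$, in which case a triangulation at scale $\bar\rho_0$ has $\lesssim \op{Vol}(M)/\bar\rho_0^4$ simplices, a quantity controlled by $E$; or $\rho_0<\bar\rho_0$, in which case a dyadic pigeonhole over the $\lesssim E/\epsilon_1$ annuli $B(x_0,2^{k+1}\rho_0)\setminus B(x_0,2^k\rho_0)$ produces a slice $\partial B_t$ with energy $\lesssim\epsilon_1$, and Lemma \ref{epsilon1} then extends the connection across this slice from each side to two connections $\hat A_1,\hat A_2$ on $M$ whose curvature energies are each $\le E-\epsilon_0/4$. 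The induction is thus on the energy of \emph{both} resulting pieces, which is what makes it close; cutting out balls and handling the complement directly does not give that. A secondary point: the product estimate of Appendix \ref{sec:product} is not in fact used in this gluing step. Since $h_{ij}$ is $SU(2)$-valued one has the pointwise bound $|\tilde A_j|\le|\nabla h_{ij}|+|A_j|$, so the $L^{4,\infty}$-bound on $\tilde A_j$ is immediate; Lemma \ref{triebelest} enters only inside the proof of Theorem \ref{smallenergyext} in Appendix \ref{sec:Uhlenbeck}.
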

\subsection{Scheme of the proof}
We indicate here the sketch of the proof, before going through the details.

\begin{proof}
We will denote the $L^2$-norm of $F$ by $E$. We may assume that a first guess for $A$ (i.e. a fixed trivialization) is already given and belongs to $W^{1,2}$ (if the bound by $\epsilon_0$ on the energy of $F$ is available, we may also assume more, by Uhlenbeck's result stated above, namely that one controls the $W^{1,2}$-norm of $A$ by the energy).\\

It can be seen from the formula of change of gauge that it is equivalent to estimate the gradient of the trivialization $g$ or the gradient of the connection $A$ in that gauge.\\

We define $f$ by iteration on $E$. The main steps are as follows (see the scheme \eqref{summary1}):
\begin{itemize}
 \item Uhlenbeck's theorem already gives a gauge, with an $L^4$-estimate of the gradient of the trivialization, in case the energy of $F$ is smaller than $\epsilon_0$. Instead of the $L^{4,\infty}$-estimate which we want, we get the stronger estimate in terms of the $L^4$-norm. The difficulty in our proof is to find an estimate without a priori assumptions on the $L^2$-smallness of $F$.
 \item Let $\rho_0$ be the largest scale at which no more than $\epsilon_0/2$ of $F$'s $L^2$-norm concentrates.
 \item In case $\rho_0\geq\bar\rho_0:=C\rho_{inj}(M)2^{-E/\epsilon_1}$ we iteratively extend our gauge on the simplexes of a triangulation where each simplex is well inside a ball of radius $\rho_{inj}(M)$. To do this we iteratively extend with $W^{1,3}$ estimates the change of gauge along the $3$-skeleton of the triangulation, then on each simplex we use Theorem B'' to extend inside that simplex. See Section \ref{itertriang}. The estimates depend only on $M^4$.
\item The other alternative is $\rho_0\leq \bar\rho_0$, or more explicitly
$$
\epsilon_1\log_{2}\frac{C\rho_{inj}}{\rho_0}\leq E.
$$
Then consider a point $x_0$ at which $|F|$ concentrates and look at the geodesic dyadic rings 
$$
R_k:=B(x_0, 2^{k+1}\rho_0)\setminus B(x_0,2^k\rho_0),\quad k\in\{0,\ldots,\lfloor\log_2(C\rho_{inj}/\rho_0)\rfloor\}.
$$
By pigeonhole principle, in one of these rings $D_{k_0}$ the curvature $F$ has energy less or equal than $\epsilon_1$. The parameter $\epsilon_1$ can be chosen, depending only on $\epsilon_0$, in such a way that this estimate of the energy ensures the existence of a small energy slice along a geodesic sphere of radius $t\sim 2^{k_0}\rho_0$. We then have extensions of the connections with curvatures of energy smaller than $E-\frac{\epsilon_0}{2}$. We use Lemma \ref{epsilon1}(finding good slices). To avoid subtleties about traces we will ensure that these two connections coincide on an open set. The choice of slice is described in Section \ref{choiceofslice}.
\item Then we separately trivialize these two connections using the iterative assumption that the $f$ as described in the claim of our theorem is already defined on $[0,E[$. By iterative assumption we then define $f(E)$ based on $f(E-\epsilon_0/2)$ and on the function $f_1$ which appears in Theorem B. The detailed bounds are given in Sections \ref{largeenext} and \ref{smallenext}.
\end{itemize}

\begin{equation}\label{summary1}
\scalebox{0.9}{\xymatrix@C=-15pt{
&&*+[F=:<3pt>]{\text{energy} =E}\ar[dl]\ar[dr]\\
&*+[F=:<3pt>]{\rho_0<\bar\rho_0}\ar[d]&&*+[F=:<3pt>]{\rho_0\geq\bar\rho_0}\ar[d]\\
&*+[F-:<3pt>]{\text{dyadic balls until }\sim\rho_{inj}}\ar[d]&&*++[F=]{\text{Extend gauge}}\\
&*+[F-:<3pt>]{\text{small energy slice at }\sim\rho_1}\ar[dr]\ar[dl]\\
*+[F-:<3pt>]{A_1,A_2\text{ of energy}\leq E-\frac{\epsilon_0}{2}}\ar[d]&&*+[F-:<3pt>]{A_1,A_2\text{ of energy}\leq \epsilon_0}\ar[d]\\
*++[F=]{\text{Iterate}}&&*++[F=]{\text{Extend gauge}}
}
}
\end{equation}

\subsection{Iterations based on a suitable triangulation}\label{itertriang}
Define, for $\epsilon_0$ as in Theorem \ref{uhlenbeckext}(Uhlenbeck gauge), the following radius:
\begin{equation}\label{rho0}
 \rho_0:=\inf\left\{\rho>0:\:\exists x_0\in M,\:\int_{B_\rho(x_0)}|F|^2=\frac{\epsilon_0}{2}\right\}.
\end{equation}
Denote 
$$
\bar\rho_0:=C\rho_{\op{inj}}(M)2^{-\frac{E}{\epsilon_1}},
$$
where $\rho_{\op{inj}}(M)$ is the injectivity radius of $M$ and the constant $\epsilon_1$ will be fixed later and depends only on the geometry of $M$ and on $\epsilon_0$. Fix then a triangulation on $M$ having in-radius $\gtrsim \bar\rho_0$ and size $\lesssim \bar\rho_0$, with implicit constants bounded by $4$. $C<1$ in the definition of $\bar \rho_0$ can be fixed now, so that each simplex of the triangulation is contained in a ball of radius $\rho_{\op{inj}}(M)/2$. In particular all $k$-simplexes of the triangulation are bi-Lipschitz equivalent to $\mathbb S^k$ with bi-Lipschitz constants which depend just on $k$.\\

Theorem \ref{uhlenbeckext}(Uhlenbeck gauge) gives a trivialization $\phi_i$ associated to each $4$-simplex $C_i$, such that the expression of $A$ in those coordinates 
\begin{equation}\label{u0}
A_i=\phi_i^{-1}d\phi_i + \phi_i^{-1}A\phi_i\text{  on  }C_i
\end{equation}
satisfies 
\begin{equation}\label{u1}
\|A_i\|_{W^{1,2}(C_i)}\leq C\|F\|_{L^2(C_i)}.
\end{equation}
If we call 
\begin{equation}\label{u2}
g_{ij}:=\phi_j^{-1}\phi_i
\end{equation}
then $g_{ij}g_{jk}=g_{ik}$, in particular we have $g_{ij}^{-1}=g_{ji}$; moreover
\begin{equation}\label{u3}
A_j=g_{ij}dg_{ji}+g_{ij}A_ig_{ji}\text{  on  }\partial C_i\cap\partial C_j.
\end{equation}
In particular, it follows from the above expression that $g_{ij}\in W^{1,3}(\partial C_i\cap\partial C_j, SU(2))$. We now state a lemma which will enable us to extend the gauge from one $4$-simplex to the next one.
\begin{lemma}[extension on a sphere]\label{exts3}
 Let $S_+^3$ be the upper hemisphere $\mathbb S^3\cap\{x_3\geq 0\}$. Then for any $g\in W^{1,3}(S_+^3,SU(2))$ there exists $\tilde g\in W^{1,3}(\mathbb S^3,SU(2))$ such that $\tilde g=g$ on $\mathbb S^3_+$ and 
$$
\|\nabla \tilde g\|_{L^3(\mathbb S^3)}\leq C\|\nabla g\|_{L^3(S_+^3)}.
$$
\end{lemma}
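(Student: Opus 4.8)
The statement is a controlled $W^{1,3}$-extension of a map defined on a hemisphere $S^3_+$ to the whole sphere $\mathbb S^3$, with a linear bound on the gradient norm. The natural approach is the reflection/folding trick adapted to the manifold-valued setting: we want to reflect $g$ across the equator $\partial S^3_+ = \mathbb S^2$ to produce a map on the lower hemisphere, and then check that the two pieces glue into a genuine $W^{1,3}(\mathbb S^3, SU(2))$ map. Concretely, let $R:\mathbb S^3\to\mathbb S^3$ be the isometric reflection fixing the equator and swapping the two hemispheres, and set $\tilde g := g$ on $S^3_+$ and $\tilde g := g\circ R$ on $S^3_- = \mathbb S^3\cap\{x_3\le 0\}$. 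Since $g\in W^{1,3}(S^3_+)$ and $R$ is a smooth isometry, $g\circ R\in W^{1,3}(S^3_-)$ with the same gradient norm, so $\|\nabla\tilde g\|_{L^3(\mathbb S^3)}^3 = 2\|\nabla g\|_{L^3(S^3_+)}^3$ \emph{once we know $\tilde g\in W^{1,3}(\mathbb S^3)$}.

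**The key point — matching traces on the equator.** The only thing that can fail is that the traces of the two pieces on $\mathbb S^2$ must agree. But here they agree \emph{by construction}: on the equator $R$ is the identity, so the trace of $g\circ R$ from $S^3_-$ equals the trace of $g$ from $S^3_+$. More precisely, one argues as in the standard gluing lemma for Sobolev functions across a Lipschitz interface: a function that is $W^{1,p}$ on each of two complementary domains and whose traces from the two sides coincide in $W^{1-1/p,p}(\mathbb S^2)$ lies in $W^{1,p}$ globally, with gradient equal to the union of the two gradients (no singular part on the interface). Here $p=3$, the interface $\mathbb S^2$ is smooth, and since $g|_{S^3_+}\in W^{1,3}$ its trace on $\mathbb S^2$ is a well-defined element of $W^{2/3,3}(\mathbb S^2)$, and the reflected piece has literally the same trace. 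Hence $\tilde g\in W^{1,3}(\mathbb S^3,\mathbb R^4)$; moreover $\tilde g$ takes values in $SU(2)$ a.e. because both pieces do (this is a pointwise constraint preserved under the gluing), so $\tilde g\in W^{1,3}(\mathbb S^3, SU(2))$. The estimate $\|\nabla\tilde g\|_{L^3(\mathbb S^3)} \le 2^{1/3}\|\nabla g\|_{L^3(S^3_+)} \le C\|\nabla g\|_{L^3(S^3_+)}$ then follows immediately.

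**The main obstacle.** There is essentially no serious obstacle; the content of the lemma is the elementary reflection-gluing argument above, and the only care needed is the verification that matching traces across a codimension-one interface suffices for global Sobolev regularity — a classical fact. One should record that the reflection $R$ is a fixed smooth diffeomorphism of $\mathbb S^3$ (e.g. in the coordinates where $\mathbb S^3\subset\mathbb R^4$, $R(x_1,x_2,x_3,x_4)=(x_1,x_2,-x_3,x_4)$), so the chain-rule estimate $|\nabla(g\circ R)| = |\nabla g|\circ R$ is exact and the constant is dimensional, hence depends only on $k=3$ — consistent with how the lemma will be applied uniformly over the simplices of the triangulation in Section \ref{itertriang}. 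If one wanted a slicker phrasing one could instead note that $S^3_+$ is bi-Lipschitz to $B^3$ with constants depending only on the dimension, extend across a flat hyperplane in the model, and transport back; but the intrinsic reflection on $\mathbb S^3$ is cleaner and avoids distorting the target constraint.
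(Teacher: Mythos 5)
Your proposal is correct, and it takes a genuinely different route from the paper's. The paper's proof proceeds in three steps: first it replaces the equator by a nearby good slice (a spherical cap boundary) chosen via Fubini so that the trace of $g$ there has $W^{1,2}$-norm controlled by $\|g\|_{W^{1,3}(\mathbb S^3_+)}$; then it harmonically extends that trace into $B^3\simeq \mathbb S^3_-$ with the linear elliptic estimate; finally it projects the $\mathbb R^4$-valued extension back onto $SU(2)\simeq\mathbb S^3$ by the averaged radial-projection trick of Prop.\ \ref{trickhkl}, choosing a good center $a\in B^4_{1/2}$. Your proof instead reflects $g$ across the equatorial hyperplane and invokes the classical Sobolev gluing criterion for matching traces across a smooth codimension-one interface. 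The reflection argument is shorter, yields the explicit constant $2^{1/3}$, and respects the $SU(2)$-constraint automatically without any projection step. The paper's approach mirrors the projection-trick machinery used throughout the rest of the paper, but it is not intrinsically stronger: both arguments transport correctly under the bi-Lipschitz identifications used in Corollary \ref{nexttriangle} (where the lemma is applied to a simplicial region rather than a round hemisphere), with constants depending only on the bi-Lipschitz bounds of the triangulation. So either proof serves the downstream application equally well.
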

\begin{proof}
Up to enlarging $\mathbb S^3_-$ to a spherical cap of height $\leq 3/2$, we may assume that for a universal constant $C>0$
\begin{equation}\label{1}
\|g|_{\partial \mathbb S^3_-}\|_{W^{1,2}(\partial \mathbb S^3_-)}\leq C \|g\|_{W^{1,3}(\mathbb S^3_+)}.
\end{equation}
 We observe that $g|_{\partial S_+^3\simeq \mathbb S^2}\in W^{1,2}(\mathbb S^2, SU(2))$ and we want to extend this trace inside $B^3\simeq \mathbb S^3_-$ with a good norm estimate. We start with a harmonic extension (identifying $SU(2)\simeq\partial B^4$), namely
$$
\left\{\begin{array}{l}
        \Delta \hat g=0 \text{ on }B^3,\\
        \hat g = g\text{ on }\partial B^3.
       \end{array}
\right.
$$
Then we have by the usual elliptic estimates 
\begin{equation}\label{2}
\|\hat g\|_{W^{1,3}(\mathbb S^3_-)} \leq C \|g|_{\partial \mathbb S^3_-}\|_{W^{1,2}(\partial \mathbb S^3_-)}.
\end{equation}
We then observe that for $a\in B_{1/2}^4$ if $g_a$ is the radial projection of the values of $\hat g$ on the boundary with center $a$, then the following pointwise inequality holds (as in the projection trick of Section \ref{ssec:hardtlintrick})
\begin{equation}\label{3}
 |\nabla g_a|\leq C\frac{|\nabla\hat g|}{|\hat g -a |}.
\end{equation}
We also have
$$
\int_{a\in B_{1/2}^4}\int_{B^3}|\nabla g_a|^3\leq C\int_{B^3}|\nabla \hat g|^3.
$$
Therefore there exists $a\in B^4_{1/2}$ such that 
\begin{equation}\label{4}
 \|\nabla g_a\|_{L^3(B^3\simeq \mathbb S^3_-)}\leq C \|\nabla\hat g\|_{L^3(B^3\simeq \mathbb S^3_-)}.
\end{equation}
Combining the inequalities \eqref{1}, \eqref{2}, \eqref{3} and \eqref{4} we obtain the thesis for $\tilde g=g_a$ with $a$ as above.
\end{proof}
\begin{corollary}[iteration step]\label{nexttriangle}
 Suppose that on our $4$-manifold $M$ a connection $A$ is fixed and an Uhlenbeck gauge $\phi_j$ is defined on a $4$-simplex $C_j$, i.e. the estimate \eqref{u1} holds with the notation \eqref{u0}. Also suppose that a global gauge $\phi_I$ is defined on a finite union of simplexes $C_I:=\cup_{\alpha\in I}C_{i_\alpha}$ and that $\partial C_j\cap C_I^{(3)}$ (where $C_I^{(3)}$ is the simplicial $3$-skeleton of $C_I$) contains some, but not all, $3$-faces of $C_j$. It is then possible to extend the gauge change $g_{ij}$ defined in \eqref{u2} to $\tilde g_{ij}$ defined on the whole of $\partial C_j$ with a norm bound 
$$
\|\nabla \tilde g_{ij}\|_{L^3(\partial C_j)}\leq C\|\nabla g_{ij}\|_{L^3(\partial C_j\cap C_I^{(3)})},
$$
where $C$ depends only on $M$.
\end{corollary}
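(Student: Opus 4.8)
The plan is to reduce the statement directly to Lemma \ref{exts3}(extension on a sphere) by recognizing $\partial C_j$, together with the subcomplex $U:=\partial C_j\cap C_I^{(3)}$ on which $g_{ij}$ is already defined, as a bi-Lipschitz copy of the pair $(\mathbb S^3,S_+^3)$. First I would record the combinatorial picture: $\partial C_j$ is a triangulated $3$-sphere with exactly five tetrahedral $3$-faces, and by the hypothesis ``some but not all'' the set $U$ is the union of $\ell$ of these with $1\leq\ell\leq4$, while $V:=\overline{\partial C_j\setminus U}$ is the union of the remaining $5-\ell\in\{1,\dots,4\}$. A union of a nonempty proper family of the facets of a $4$-simplex boundary is the closed star of a proper face (a vertex, an edge, a triangle or a tetrahedron, according as $\ell=4,3,2,1$), hence a PL $3$-ball; the same holds for $V$, and $U\cap V=\partial U=\partial V$ is a PL $2$-sphere. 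Consequently $(\partial C_j;U,V)$ is PL-homeomorphic to the standard hemisphere decomposition $(\mathbb S^3;S_+^3,S_-^3)$.

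Next I would fix a bi-Lipschitz homeomorphism $\Phi:\partial C_j\to\mathbb S^3$ with $\Phi(U)=S_+^3$. Since the triangulation of $M$ is regular --- every $k$-simplex bi-Lipschitz to a standard one with constants depending only on $k$ --- and only finitely many combinatorial configurations of the pair $(C_j,I)$ can occur, $\Phi$ may be chosen with bi-Lipschitz constant bounded by a number depending only on the dimension (hence only on $M$). Then $g:=g_{ij}\circ\Phi^{-1}$ lies in $W^{1,3}(S_+^3,SU(2))$ with $\|\nabla g\|_{L^3(S_+^3)}\leq C\|\nabla g_{ij}\|_{L^3(U)}$; Lemma \ref{exts3} produces an extension $\tilde g\in W^{1,3}(\mathbb S^3,SU(2))$ of $g$ with $\|\nabla\tilde g\|_{L^3(\mathbb S^3)}\leq C\|\nabla g\|_{L^3(S_+^3)}$; and finally $\tilde g_{ij}:=\tilde g\circ\Phi$ extends $g_{ij}$ from $U=\partial C_j\cap C_I^{(3)}$ to all of $\partial C_j$, with
$$
\|\nabla\tilde g_{ij}\|_{L^3(\partial C_j)}\leq C\,\|\nabla g_{ij}\|_{L^3(\partial C_j\cap C_I^{(3)})},
$$
$C$ depending only on $M$ (in fact only on the dimension).

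I expect the only genuinely delicate point to be the quantitative piece of PL topology in the first step: one must check that the proper subcomplexes $U$ and $V$ are balls meeting along a $2$-sphere --- which is exactly where the hypothesis $1\leq\ell\leq4$ (``some but not all'') enters, as it forbids $U$ or $V$ from being empty or all of $\partial C_j$ --- and that the Schoenflies-type identification with the model hemisphere decomposition can be made with controlled distortion. Both follow from the closed-star description together with the finiteness of the list of combinatorial types, which is also what makes $C$ universal; this bookkeeping, rather than any analytic estimate, is where the argument really lives.
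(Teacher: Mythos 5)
Your proof is correct and follows the paper's strategy: identify $\partial C_j$ with $\mathbb S^3$ by a controlled bi-Lipschitz map, invoke Lemma \ref{exts3}, and pull the extension back. The main difference is in how the identification is framed: you send $U=\partial C_j\cap C_I^{(3)}$ directly onto $S_+^3$ and justify this with an explicit piece of PL topology (the union of $\ell\in\{1,\dots,4\}$ facets of $\partial\Delta^4$ is the closed star of the common $(4-\ell)$-face, hence a PL $3$-ball, and the complementary balls glued along the common PL $2$-sphere give the standard hemisphere decomposition), whereas the paper works with the thickened complements $H=(\partial C_j\setminus C_I^{(3)})_\delta$ and $H'=(\partial C_j\setminus C_I^{(3)})_{2\delta}$ and identifies the triple $(\partial C_j,H,H')$ with $(\mathbb S^3,\mathbb S^3_-,K)$, where $K$ is the enlarged cap that appears inside the proof of Lemma \ref{exts3}. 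Both formulations yield the same estimate with a constant controlled by the dimension and the regularity of the triangulation; yours supplies the combinatorial justification of the ball structure that the paper leaves implicit, while the paper's $\delta$-neighborhood bookkeeping traces more visibly the annular geometry that the Fubini step of Lemma \ref{exts3} actually uses.
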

\begin{proof}
 $H:=(\partial C_j\setminus C_I^{(3)})_\delta$ is bi-Lipschitz to a ball for $\delta$ equal to $2/3$ the smallest in-radius of a face of $C_j$. Here $A_\delta$ is a $\delta$-neighborhood of $A$ inside $\partial C_j$. Also let $H':=(\partial C_j\setminus C_I^{(3)})_{2\delta}$. Note that the triple $(\partial C_j,H, H')$ is $C$-bi-Lipschitz equivalent to $(\mathbb S^3, \mathbb S^3_-, K)$ where $K$ is the spherical cap of height $3/4$ extending $\mathbb S^3_-$. We may then apply the construction of Lemma \ref{exts3}(extension on a sphere) and a bi-Lipschitz deformation, in order to ``fill the hole'' $H$ extending the gauge $g_{ij}$ with estimates. The bi-Lipschitz constant is bounded by the geometric constraints on our triangulation and is independent of $A$ and of $g_{ij}$.
\end{proof}

Given Lemma \ref{exts3}(extension on a sphere) and Corollary \ref{nexttriangle}(iteration step) we proceed iteratively on the triangulation as follows (the indices labeling the simplexes are re-defined during the whole procedure in a straightforward way):
\begin{itemize}
 \item Suppose that we already defined the gauge $\tilde\phi_{j-1}$ on a set of $j-1$ simplexes $C_1,\ldots,C_{j-1}$, whose union forms a connected set. \item Consider a new simplex $C_j$ extending such connected set. This choice of notation brings us directly under the hypothesis of Corollary \ref{nexttriangle}(iteration step) and thus we are able to extend $g_{ij}$ to $\tilde g_{ij}$ as in the corollary. 
\item We next apply Theorem B'' and extend $\tilde g_{ij}$ to a gauge change $h_{ij}$ defined inside $C_j$ and satisfying
 \begin{equation}\label{estexttri}
\|\nabla h_{ij}\|_{L^{(4,\infty)}(C_j)}\leq f(\|\nabla \tilde g_{ij}\|_{L^3(C_j)})\leq C_0,                                                                                                                     
\end{equation}
with $C_0$ depending only on universal constants and on $\epsilon_0$. The function $f$ is explicitly expressed in the statement of Theorem B''.
\item On $\cup_{i<j}C_i$ we keep $\tilde\phi_{j}=\tilde\phi_{j-1}$, while on $C_j$ we define $\tilde\phi_j=\phi_jh_{ij}$. 
\end{itemize}
We see that this construction gives for the local expression $\tilde A_j$ corresponding to the gauge $\tilde \phi_j$ the bound
$$
\|\tilde A_j\|_{L^{(4,\infty)}(C_j)}\lesssim \|A_j\|_{L^4(C_j)} +\|\nabla h_{ij}\|_{L^{(4,\infty)}(C_j)}\leq \epsilon_0 + C_0.
$$
Iterating this gauge extension strategy for all simplexes of a triangulation we would obtain a global gauge $\tilde A$ on the whole of $M$ such that
\begin{equation}\label{esttri}
 \|\tilde A\|_{L^{(4,\infty)}(M)}\leq C(\text{number of simplexes})(C_0+\epsilon_0)\leq C\frac{\op{Vol}(M)}{\bar\rho_0^4},
\end{equation}
since the volume of each simplex is $\gtrsim \bar\rho_0^4$. The above bound depends on the geometry of $M$ and on the energy $E$ of the curvature only. Note that the above reasoning works only as long as $\rho_0\lesssim\bar\rho_0$. As noted before, so far we have little control on $\rho_0$, in particular we have no bound from below. For this reason we next consider the case $\rho_0\geq\bar\rho_0$.

\subsection{Extending the connection with small curvature changes}\label{extsmallcurvch}
We now concentrate on proving the following lemma:
\begin{lemma}[finding good slices]\label{epsilon1}
There exists a constant $\epsilon_1$ with the following properties.
If $M$ is a fixed $4$-manifold with a $W^{1,2}$-connection $A$ and if $B_{2t}(x_0)\subset M$ is a geodesic ball with the estimate
$$
t\int_{\partial \tilde B_t}|F|^2\leq \epsilon_1
$$
then there exists $\hat A\in W^{1,2}(\wedge^1 M, su(2))$ such that $\hat A=A$ on $B_t$ and 
$$
\int_{M\setminus B_t}|F_{\hat A}|^2\leq C\epsilon_1
$$
with a constant $C$ depending only on $M$. In particular it is possible to ensure $C\epsilon_1<\frac{\epsilon_0}{4}$, with $\epsilon_0$ as in Theorem \ref{uhlenbeckext}(Uhlenbeck gauge).
\end{lemma}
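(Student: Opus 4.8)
The plan is to leave $A$ untouched on $\overline{B_t}$ and to replace it on $M\setminus B_t$ by a connection obtained by radially switching off, in a thin collar of $\partial B_t$, a small Uhlenbeck representative of $A|_{\partial B_t}$ on the sphere $\partial B_t\simeq\mathbb S^3$.

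First I would regularize the slice: by Fubini, for a.e. radius $s$ the form $F$ restricts to an $L^2$ form and $A$ restricts to a $W^{1,2}$ connection of the (trivial) bundle over $\partial B_s$, and we may assume $t$ is such a radius; a routine radial (temporal) gauge fixing in a neighbourhood of $\partial B_t$, equal to the identity on $\partial B_t$ and hence harmless for the statement, kills the normal component, so that $A|_{\partial B_t}$ becomes a $1$-form $a_0$ on $\partial B_t$. Rescaling $\partial B_t$ to the unit sphere, the hypothesis becomes $\|F_{a_0}\|_{L^2(\mathbb S^3)}^2\lesssim\epsilon_1$. Since in dimension $3$ the exponent $2$ lies strictly above the scale-invariant exponent $3/2$ for curvature, Uhlenbeck's gauge fixing \cite{Uhl2} on $\mathbb S^3$, followed by the standard bootstrap (Coulomb gauge, $da=F_a-a\wedge a$, and $W^{1,3/2}(\mathbb S^3)\hookrightarrow L^3(\mathbb S^3)$, the quadratic term being absorbed because $\|F_{a_0}\|_{L^{3/2}(\mathbb S^3)}$ is small), produces a gauge $h\colon\mathbb S^3\to SU(2)$ with
\[
a:=h^{-1}a_0 h+h^{-1}dh,\qquad \|a\|_{W^{1,2}(\mathbb S^3)}\le C\,\|F_{a_0}\|_{L^2(\mathbb S^3)}\lesssim\epsilon_1^{1/2}.
\]

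Then I would build $\hat A$. Let $\hat P$ be the $SU(2)$-bundle over $M$ obtained by gluing $P|_{\overline{B_t}}$ to the \emph{trivial} bundle over $\overline{M\setminus B_t}$ along $\partial B_t$ through the transition function $h$, and identify a collar of $\partial B_t$ contained in $B_{3t/2}$ with $\mathbb S^3\times[0,\tfrac12]$, $r=0$ on $\partial B_t$. Put, in the trivial trivialisation of $\hat P$ over $M\setminus B_t$,
\[
\hat A(r,\theta):=\chi(r)\,a(\theta)\ \ (0\le r\le\tfrac12),\qquad \hat A\equiv0\ \text{on }M\setminus B_{3t/2},\qquad \hat A:=A\ \text{on }\overline{B_t},
\]
with $\chi(0)=1$, $\chi(\tfrac12)=0$, $|\chi'|\le C$. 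Since the outward trace of $\hat A$ at $\partial B_t$ is $a$, which is exactly $A|_{\partial B_t}$ read through the gluing $h$, $\hat A$ is a well defined $W^{1,2}$ connection on $\hat P$ equal to $A$ on $B_t$. Off the collar $\hat A$ is flat, and on the collar $F_{\hat A}=\chi'\,dr\wedge a+\chi\,d_{\mathbb S^3}a+\chi^2\,a\wedge a$; since the Yang--Mills density is conformally invariant in dimension $4$,
\[
\int_{M\setminus B_t}|F_{\hat A}|^2\lesssim\|a\|_{L^2(\mathbb S^3)}^2+\|d_{\mathbb S^3}a\|_{L^2(\mathbb S^3)}^2+\|a\|_{L^4(\mathbb S^3)}^4\lesssim\|a\|_{W^{1,2}(\mathbb S^3)}^2+\|a\|_{W^{1,2}(\mathbb S^3)}^4\lesssim\epsilon_1,
\]
using $W^{1,2}(\mathbb S^3)\hookrightarrow L^4(\mathbb S^3)$ and that $\chi,\chi'$ are bounded on an interval of unit length. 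This gives $\int_{M\setminus B_t}|F_{\hat A}|^2\le C\epsilon_1$ with $C=C(M)$, and one then simply fixes $\epsilon_1$ so small that $C\epsilon_1<\epsilon_0/4$.

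The delicate point, which I expect to be the real obstacle, is the topological bookkeeping hidden in passing to the gauge $h$: $h$ need not have degree zero — by the Chern--Simons identity its degree is the integer nearest to $\tfrac1{8\pi^2}\int_{B_t}\op{tr}(F_A\wedge F_A)$ — so $h$ cannot in general be realised as a gauge transformation of $P$ itself over $M\setminus B_t$, which is precisely why genuine smooth global gauges fail to exist when the second Chern number is nonzero. This forces $\hat A$ to live on the modified bundle $\hat P$, whose second Chern number differs from that of $P$ by $\deg h$; equivalently, the global trivialisation eventually produced for $A$ will carry one isolated interior singularity, which is harmless since Theorem \ref{ggest} asks only for a section with singularities. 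What must be checked — and what the construction above achieves — is that this topological correction is implemented \emph{without} reinjecting any connection of fixed energy: $\hat A$ is honestly flat outside the thin collar, so the entire curvature of $\hat A$ on $M\setminus B_t$ is the $O(\epsilon_1)$ collar term, and in particular one never pays the $8\pi^2$ of a concentrated instanton. (The companion connection needed in the application, agreeing with $A$ on $M\setminus B_s$ and small on $B_s$, is obtained by the same argument with the roles of inside and outside exchanged.)
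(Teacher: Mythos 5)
Your proposal is correct and follows the same overall strategy as the paper's proof: kill the normal component of $A$ near the slice, pass to a controlled Coulomb representative of $A\vert_{\partial B_t}$ on the boundary three-sphere, then extend radially and cut off. The differences are in how the middle step is executed and how the topology is tracked. The paper does not invoke Uhlenbeck's theorem on $\mathbb S^3$: after imposing the Neumann condition $\langle A,\nu\rangle=0$ on $\partial B_t$ (obtained by minimizing $\|A_g\|_{L^2(B_t)}$) and extending radially, it applies a purely angular gauge change $g(\sigma)$ that makes $d^*_{\partial B_t}A_g=0$, and then derives the needed $W^{1,2}(\partial B_t)$ control from the Hodge decomposition of $1$-forms on $\partial B_t\simeq\mathbb S^3$ (where $H^1=0$), via the Sobolev embedding $W^{1,2}(\mathbb S^3)\hookrightarrow L^4(\mathbb S^3)$ and an absorption of the form $X\lesssim\epsilon_1+X^2$. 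In effect the paper reproduces, in a somewhat sketchy form, exactly the part of Uhlenbeck's argument that you simply cite; your version is cleaner on that point and spells out why the smallness hypothesis on $\mathbb S^3$ is subcritical ($2>3/2$). You also make explicit the topological bookkeeping — that the Coulomb gauge $h$ may have nonzero degree, that $\hat A$ therefore lives on a glued bundle $\hat P$ with possibly different $c_2$, and that this is harmless because the statement only asks for a global $1$-form (a connection on a trivial bundle) and the paper only uses $\hat A$ where it agrees with $A$. The paper does not discuss this; it simply cuts off a $1$-form and writes $\hat A\in W^{1,2}(\wedge^1 M,su(2))$, implicitly relying on the remark that follows the lemma. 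Your use of the conformal invariance of $|F|^2\,dV$ to identify the annular collar with a cylinder of unit length is a tidy substitute for the paper's explicit scaling of $\chi'_t\sim t^{-1}$. One small caveat worth noting: to justify "$A\vert_{\partial B_t}\in W^{1,2}$" for the chosen slice, one really uses Fubini/Chebyshev to pick a good radius, which you state and the paper leaves implicit; and the reference to \cite{Uhl2} should be understood as the version of Uhlenbeck's gauge theorem for the closed manifold $\mathbb S^3$ with the subcritical $L^2$ curvature bound (available since every $SU(2)$-bundle over $\mathbb S^3$ is trivial), rather than the ball version stated verbatim there.
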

\begin{proof}
 Up to a change of gauge which does not increase the norm, we may assume the Neumann condition
\begin{equation}\label{e11}
 \langle A,\nu\rangle \equiv 0 \text{ on }\partial B_t.
\end{equation}
This is obtained for example by minimizing $\|g^{-1} dg +g^{-1}Ag\|_{L^2(B_t)}$ among gauge functions $g\in W^{2,2}(B_t, SU(2))$.\\

We next extend $A$ to $B_{2t}\setminus B_t$ by 
$$
\tilde A:=\pi^*i^*_{\partial B_t}A, \text{ where }\pi(x)=t\frac{x}{|x|}\text{ and }i_{\partial B_t}\text{ is the inclusion.}
$$
Using the hypothesis and the facts that $i^*_{\partial B_t}$ acts on $F_A$ by just forgetting about some of its components and that $\pi$ is bi-Lipschitz, we obtain 
$$
\int_{B_{2t}\setminus B_t}\left|d\tilde A +\frac{1}{2}[\tilde A,\tilde A]\right|^2\leq C \epsilon_1.
$$
We can apply a change of gauge $g(\sigma)$ depending only on the angular variable $\sigma\in\partial B^4$ such that $$
d^*_{\partial B_t}A_g|_{\partial B_t}=0.
$$ 
This preserves the condition \eqref{e11} and also gives the following behavior as $s\to 0$:
$$
C\epsilon_1\geq \int_{B_s\cap\partial B_t}|dA_g +\tfrac{1}{2}[A_g,A_g]|^2\geq \int_{B_s\cap\partial B_t}|dA|^2 - o(s) \int_{B_s\cap\partial B_t}|\nabla A|^2.
$$
Therefore $A_g\in W^{1,2}(\wedge^1\partial B_t, su(2))$, $\tilde A_g\in W^{1,2}(\wedge^1B_{2t}\setminus B_t, su(2))$ and both $A_g, \tilde A_g$ satisfy \eqref{e11}. Therefore $\tilde A_g$ extends by $A_g$ in a neighborhood of $\partial B_t$, giving still a $W^{1,2}$-gauge. We observe that by Sobolev embedding
$$
\int_{\partial B_t}|[A,A]|^2\lesssim \left(\int_{\partial B_t}|\nabla A|^2\right)^2,
$$
and by Hodge decomposition and using $d^*_{\partial B_t}A=0$
$$
\int_{\partial B_t}|\nabla A|^2\lesssim \int_{\partial B_t}(|dA|^2+|d^*A|^2)\lesssim \int_{\partial B_t}|F_A|^2 + \left(\int_{\partial B_t}|\nabla A|^2\right)^2.
$$
The above inequality implies an inequality of the form $X\leq\epsilon_1 +X^2$ by our hypothesis and the gauge invariance of the curvature, with $X=\|\nabla A\|_{L^2(\partial B_t)}^2$.\\

We may thus assume that 
$$
t\int_{\partial B_t}|\nabla A|^2\leq Ct\int_{\partial B_t}|F|^2,
$$ 
which allows us to use a cutoff procedure, defining $\hat A:=\chi_tA$ for a smooth $[0,1]$-valued cutoff function $\chi_t$ such that $\chi_t\equiv 1$ on $B_t$ and $\chi_t\equiv 0$ outside $B_{2t}$. With this choice and the above estimate for $\nabla A$ we obtain 
$$
\int_{B_{2t}}|F_{\hat A}|^2\leq \int_{B_t}|F_A|^2+C\epsilon_1
$$
and we can extend $\hat A\equiv 0$ outside $B_{2t}$ obtaining the wanted estimate.
\end{proof}
\begin{rmk}
We will use the above lemma only in order to obtain a new connection with a controlled small energy, but the modification from $A$ to $\hat A$ will not be used otherwise: we will only be interested to change the gauge on the region where $A=\hat A$.
\end{rmk}
The above lemma is used to select a radius giving a slice with small energy concentration, and to make an induction on the energy.

\subsection{Cutting $M$ by a small energy slice}\label{choiceofslice}
Suppose for this subsection that we are in the case $\rho_0<\bar\rho_0$. We start by defining the following positive number $\rho_1$, which uses the same constant $C$ as in the definition of $\bar\rho_0$:
$$
\rho_1:=\left\{\begin{array}{ll}
\inf\left\{\rho\geq\rho_0:\:\int_{B_{2\rho}\setminus B_\rho}|F|^2\leq\frac{\epsilon_1}{4}\right\}&\text{ if this is }<C\rho_{\op{inj}}(M),\\
                C\rho_{\op{inj}}&\text{ else.}
               \end{array}
\right.
$$
Note that because of the hypothesis $\rho_0<\bar\rho_0$ and because of the choice of $\epsilon_1$, the $\rho_1$ is rather small, in such a way that $B_{2\rho_1}$ is bi-Lipschitz to $B_1$. Thus Lemma \ref{epsilon1}(finding good slices) applies. More precisely, we will apply the Lemma for two different radii $t_1\in[\rho_1, 5/4\rho_1], t_2\in[7/4\rho_1,2\rho_1]$. Chebychev's theorem implies the existence of $t_i,i=1,2$ such that
$$
t_i\int_{\partial B_{t_i}}|F|^2\leq \epsilon_1.
$$
We divide the proof into two cases, according to how large $\int_{M\setminus B_{2\rho_1}}|F|^2$ is with respect to $\epsilon_0$ from Theorem \ref{uhlenbeckext}(Uhlenbeck gauge).\\

\subsection{The case $\int_{M\setminus B_{2\rho_1}}|F|^2\geq\frac{\epsilon_0}{2}$}\label{largeenext}
In this case we split to the regions $B_{t_2}$ and $M\setminus B_{t_1}$ and do induction on the energy in order to find gauges satisfying our estimates on these two overlapping regions.\\
Lemma \ref{epsilon1}(finding good slices) gives extensions 
\begin{equation}\label{a1a2}
\left\{\begin{array}{ll}
        \hat A_1\equiv A\text{ on }B_{t_2}\text{ s.t. }&\int_M|F_{\hat A_1}|^2\leq \int_{B_{t_2}}|F_A|^2 + C\epsilon_1,\\
        \hat A_2\equiv A\text{ on }M\setminus B_{t_1}\text{ s.t. }&\int_M|F_{\hat A_2}|^2\leq \int_{B_{t_1}}|F_A|^2 + C\epsilon_1.
       \end{array}
\right.
\end{equation}
 In particular $\hat A_1,\hat A_2$ are equivalent on $B_{\frac{7}{4}\rho_1}\setminus B_{\frac{5}{4}\rho_1}$ and 
$$
\int |F_{\hat A_i}|^2\leq \int |F_A|^2-\frac{\epsilon_0}{4}.
$$
If we can find global gauges $g_i^\infty, i=1,2$ in which $\hat A_i$ have expressions $\hat A_i^\infty$ with $L^{(4,\infty)}$-bounds as in Theorem B, then it is enough to apply 
$$
g_{12}^\infty:=\left( g_1^\infty\right)^{-1} g_2^\infty
$$
on $R:=B_{\frac{7}{4}\rho_1}\setminus B_{\frac{5}{4}\rho_1}$ in order to obtain 
$$
A_2^\infty=g_{12}^\infty A_1^\infty\left(g_{12}^\infty\right)^{-1} + g_{12}^\infty d\left(g_{12}^\infty\right)^{-1}.
$$
This implies also 
$$
\|\nabla g_{12}^\infty\|_{L^{(4,\infty)}(R)}\leq f\left(E-\frac{\epsilon_0}{4}\right).
$$
Then there exists $t_3\in\left[\frac{5}{4}\rho_1,\frac{7}{4}\rho_1\right]$ such that 
$$
\int_{\partial B_{t_3}}|\nabla g_{12}^\infty|^3\leq f\left(E-\frac{\epsilon_0}{4}\right)
$$
and thus by Theorem B we can find a $W^{1,(4,\infty)}$-extension $h_{12}^\infty$ of $g_{12}^\infty$ to a map from $B_{t_3}$ to $SU(2)$. The estimate for $h_{12}^\infty$ is exactly as in Theorem B. Thus if we call $f_1$ the function of $\|\nabla\phi\|_{L^3}$ appearing Theorem B then
$$
\|\nabla h_{12}^\infty\|_{L^{(4,\infty)}(B_{t_3})}\leq f_1\left(f\left(E-\frac{\epsilon_0}{4}\right)\right)
$$
We then choose the following global gauge:
\begin{equation}\label{gg1}
 g^\infty:=\left\{\begin{array}{ll}
                   g_2^\infty&\text{ on }M^4\setminus B_{t_3},\\
                   h_{12}^\infty g_1^\infty&\text{ on }B_{t_3}.
                  \end{array}
\right.
\end{equation}
$\nabla g^\infty$ is then estimated by an universal constant times
$$
f_1(f(E-\epsilon_0/4))+f(E-\epsilon_0/4),
$$
which allows to define inductively $f(E)$.
\subsection{The case $\int_{M\setminus B_{2\rho_1}}|F|^2\leq\frac{\epsilon_0}{2}$}\label{smallenext}
In this case outside $B_{\rho_1}$ we apply directly Uhlenbeck's procedure, i.e. Theorem \ref{uhlenbeckext}(Uhlenbeck gauge), while on $B_{2\rho_1}$ we extend the so-obtained gauge via Theorem B''. If we call $A_1,A_2$ the so-obtained connections on $B_{2\rho_1},M\setminus B_{\rho_1}$ respectively, then 
$$
\exists t\in[\rho_1,2\rho_1]\text{ s.t. }\int_{\partial B_t}(|A_1|^3+|A_2|^3)\leq C(f_1(\epsilon_0)+\epsilon_0),
$$
thus as above the same bound is true also for the gradient of the change of gauge $\nabla g_{12}$. Then Theorem B gives the extension $h_{12}$ to a gauge in $W^{1,(4,\infty)}(B_t, SU(2))$. The estimate which we reach is
$$
\|\nabla h_{12}\|_{L^{4,\infty}(B_{t_3})}\leq f_1(C(f_1(\epsilon_0)+\epsilon_0)).
$$
We then choose 
\begin{equation}\label{gg2}
  g^\infty:=\left\{\begin{array}{ll}
                   g_2&\text{ on }M^4\setminus B_{t_3},\\
                   h_{12}g_1&\text{ on }B_{t_3}.
                  \end{array}
\right.
\end{equation}
This $g^\infty$ satisfies an estimate independent on $E$ and dependent only on $\epsilon_0$, again allowing to define $f(E)$ inductively.
\end{proof}

 \appendix

\section{Uhlenbeck small energy extension}\label{sec:Uhlenbeck}
\noindent
We now use the strategy which Uhlenbeck \cite{Uhl1} employed for the proof of controlled coulomb gauges under a small curvature requirement to prove Theorem \ref{smallenergyext}(Uhlenbeck analogue). We note that the analogy is in the method of proof more than in the result.\\

First observe that the following infimum is attained, as soon as the class on which we minimize is not empty (recall that $W^{1,2}(X, \mathbb S^3)=W^{1,2}(X, \mathbb R^4)\cap\{u: u(x)\in \mathbb S^3\text{ a.e.}\})$:
\begin{equation}\label{minimP}
 \inf\left\{\int_{B^4} |\nabla P|^2:\:P\in W^{1,2}(B^4, \mathbb S^3),\:P=P_0 \text{ on }\partial B^4\right\}.
\end{equation}
Indeed a minimizing sequence will have a $W^{1,2}$-weakly convergent subsequence, which will automatically also converge pointwise everywhere. In particular the constraint $u(x)\in \mathbb S^3$ a.e. is preserved. By weak lowersemicontinuity a minimizer exists, and by convexity it is unique. The minimizer $P$ verifies the following equation in the sense of distributions:
\begin{equation}\label{eqP}
\op{div}(P^{-1}\nabla P)=0.
\end{equation}
In the language of differential forms we can rewrite 
\begin{equation}\label{ellpiticeqP}
 d^*(P^{-1}d P)=0.
\end{equation}
This $P$ will be our extension inside the domain, and we will now prove some estimates which prove useful later.

\begin{lemma}[a priori estimates]\label{lem:aprioriest}
There exists $\epsilon>0$ with the following property. Let $P$ with $||P-I||_{W^{1,4}(B^4)}\leq \epsilon$ be an extension of $P_0\in W^{1,3}(\mathbb S^3,\mathbb S^3)$ which satisfies also \eqref{eqP}. We identify $\mathbb S^3$ with the Lie group $SU(2)$. Then there exists a constant $C_\epsilon$ such that 
\begin{equation}\label{aprioriest}
 ||P-I||_{W^{4/3,3}(B^4)}\leq C_\epsilon ||\nabla P_0||_{L^3(\mathbb S^3,\mathbb S^3)}.
\end{equation}
\end{lemma}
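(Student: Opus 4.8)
The plan is to treat the Euler--Lagrange equation \eqref{eqP}--\eqref{ellpiticeqP} as a small perturbation of the Laplace equation for $v:=P-I$ and to run an absorption argument in the scale-critical space $W^{4/3,3}(B^{4})$ --- the space whose trace on $\partial B^{4}=\mathbb S^{3}$ is exactly $W^{1,3}(\mathbb S^{3})$, and which embeds (at the borderline) into $W^{1,4}(B^{4})$.

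First I would rewrite the equation. Putting $v=P-I$ and using $P^{-1}dP=P^{-1}dv=dv+(P^{-1}-I)\,dv$, equation \eqref{ellpiticeqP} becomes, componentwise for the matrix-valued function $v$,
\begin{equation*}
\Delta v=d^{*}\!\bigl[(P^{-1}-I)\,dv\bigr]\quad\text{on }B^{4},\qquad v=v_{0}:=P_{0}-I\ \text{on }\mathbb S^{3}.
\end{equation*}
Before starting I would normalise: replacing $P_{0}$ by $g_{0}P_{0}$ and $P$ by $g_{0}P$ for a suitable constant $g_{0}\in SU(2)$ changes neither $\nabla P_{0}$ nor the form of \eqref{eqP}, and keeps $\|P-I\|_{W^{1,4}(B^{4})}$ comparably small; using this together with the trace bound $\|P_{0}-I\|_{L^{2}(\mathbb S^{3})}\lesssim\|P-I\|_{W^{1,4}(B^{4})}$ and the Poincar\'e inequality on $\mathbb S^{3}$, one may arrange
\[
\|v_{0}\|_{W^{1,3}(\mathbb S^{3})}\le C\,\|\nabla P_{0}\|_{L^{3}(\mathbb S^{3})}.
\]

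The core is then a linear elliptic estimate plus a product estimate. By the solvability of the Dirichlet problem for $\Delta$ on $B^{4}$ in the fractional (Besov/Triebel--Lizorkin) scale --- the higher-smoothness analogue of Proposition \ref{harmext} for the boundary part, together with Calder\'on--Zygmund regularity for $\Delta^{-1}d^{*}$ for the interior part --- one has
\begin{equation*}
\|v\|_{W^{4/3,3}(B^{4})}\le C\Bigl(\|v_{0}\|_{W^{1,3}(\mathbb S^{3})}+\bigl\|(P^{-1}-I)\,dv\bigr\|_{W^{1/3,3}(B^{4})}\Bigr).
\end{equation*}
Since $P$ is pointwise unitary one has $|P^{-1}-I|\le|P-I|$ and $|\nabla(P^{-1})|=|P^{-1}(\nabla P)P^{-1}|\le|\nabla P|$ pointwise, hence $\|P^{-1}-I\|_{W^{1,4}(B^{4})}\lesssim\|P-I\|_{W^{1,4}(B^{4})}\le\varepsilon$. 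Applying the product estimate of Appendix \ref{sec:product} (Lemma \ref{triebelest}) --- tailored precisely to the borderline case in which $W^{1,4}(B^{4})$ just fails to embed into $L^{\infty}$ --- gives
\begin{equation*}
\bigl\|(P^{-1}-I)\,dv\bigr\|_{W^{1/3,3}(B^{4})}\le C\,\|P^{-1}-I\|_{W^{1,4}(B^{4})}\,\|dv\|_{W^{1/3,3}(B^{4})}\le C\varepsilon\,\|v\|_{W^{4/3,3}(B^{4})}.
\end{equation*}

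Combining the last two displays and choosing $\varepsilon$ so small that $C\varepsilon\le\tfrac12$, the nonlinear term is absorbed into the left-hand side and one obtains $\|v\|_{W^{4/3,3}(B^{4})}\le 2C\,\|v_{0}\|_{W^{1,3}(\mathbb S^{3})}\le C_{\varepsilon}\,\|\nabla P_{0}\|_{L^{3}(\mathbb S^{3})}$, which is \eqref{aprioriest}. The one point requiring care --- and the reason this is phrased as an a priori estimate --- is that the absorption step is only legitimate once $\|v\|_{W^{4/3,3}(B^{4})}$ is known to be finite; I would secure this by approximation (mollify $P_{0}$, take the corresponding minimiser of \eqref{minimP}, which is smooth up to the boundary by elliptic regularity, run the estimate there uniformly, and pass to the limit using uniqueness of the minimiser and lower semicontinuity of the $W^{4/3,3}$ norm), or equivalently by a difference-quotient argument applied directly to \eqref{eqP}. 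The main obstacle is the product estimate itself: multiplication by $W^{1,4}(B^{4})$ functions on $W^{1/3,3}(B^{4})$ sits exactly at the endpoint of the Brezis--Mironescu type multiplication theorems, and it is this borderline multiplier estimate, established in Appendix \ref{sec:product}, that makes the whole perturbative scheme --- and hence the choice of $W^{4/3,3}$ as the working space --- go through; the remaining ingredients (elliptic regularity and trace theorems in the fractional scale) are standard.
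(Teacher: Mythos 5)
Your strategy --- rewrite $d^*(P^{-1}dP)=0$ as $\Delta v=d^*\bigl[(P^{-1}-I)\,dv\bigr]$ with $v=P-I$, run linear elliptic regularity for the Dirichlet problem, estimate the right-hand side with Lemma \ref{triebelest}, and absorb --- is genuinely different from the paper's, but it has a gap at the very step you flag as the ``main obstacle.'' Lemma \ref{triebelest} gives
\[
\bigl\|(P^{-1}-I)\,dv\bigr\|_{W^{1/3,3}}\ \lesssim\ \|dv\|_{W^{1/3,3}}\Bigl(\|P^{-1}-I\|_{L^\infty}+\|P^{-1}-I\|_{W^{1,4}}\Bigr),
\]
not the one-factor bound $\lesssim\|P^{-1}-I\|_{W^{1,4}}\,\|dv\|_{W^{1/3,3}}$ you use. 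The $L^\infty$ term cannot be dropped: it comes from the low-$g$/high-$f$ paraproduct block, which $\|g\|_{W^{1,4}}$ alone does not control precisely because $W^{1,4}(B^4)$ fails to embed in $L^\infty$. And $\|P^{-1}-I\|_{L^\infty}$ is of order $1$, not $\epsilon$: since $P$ is $SU(2)$-valued, the only pointwise bound available is $|P^{-1}-I|\le 2$, and the hypothesis $\|P-I\|_{W^{1,4}}\le\epsilon$ does not improve this. Consequently the coefficient of $\|v\|_{W^{4/3,3}}$ in your absorption step is of order $1$, and the argument does not close.

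The paper's Hodge-decomposition route is designed precisely to avoid putting an order-one multiplier on the unknown. Writing $P^{-1}dP=dU+d^*V$ on $B^4$, the piece $V$ solves $\Delta V=dP^{-1}\wedge dP$, whose right-hand side is \emph{quadratic} in $\nabla P$; hence $\|\nabla V\|_{W^{1,2}}\lesssim\|\nabla P\|_{L^4}^2\lesssim\epsilon\,\|P-I\|_{W^{1,4}}$ and the smallness is automatic, with no $L^\infty$-smallness of $P$ required. The piece $U$ is harmonic thanks to \eqref{ellpiticeqP}, so it is controlled purely by its boundary trace, $\|dU\|_{W^{1/3,3}(B^4)}\lesssim\|dP_0\|_{L^3}+\epsilon\|P-I\|_{W^{1,4}}$. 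The product estimate is then invoked only at the very end, to pass from $P^{-1}dP$ to $dP$ via $dP=P\cdot(P^{-1}dP)$ with $g=P$; there the order-one factor $\|P\|_{L^\infty}=1$ is harmless because it multiplies a quantity already shown to be $\lesssim\epsilon\|P-I\|_{W^{1,4}}+\|dP_0\|_{L^3}$, rather than multiplying the unknown $dv$ itself as in your scheme. To salvage a linearization-type argument you would have to recast the equation so that the forcing term is genuinely quadratic in $\nabla P$ (e.g.\ $\Delta P=\langle dP,\,P^{-1}dP\rangle$) and measure it in a weaker space; the Hodge splitting is one clean way to realize this. Your remark about securing finiteness of $\|v\|_{W^{4/3,3}}$ by approximation is correct but secondary; the paper handles it in Theorem \ref{smallenergyext2} by approximating $P_0$ in the supercritical spaces $W^{1,3+\alpha}$ and passing to the limit, essentially as you suggest.
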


\begin{proof}
 We will start by a $L^2$-Hodge decomposition of $P^{-1}d P$: this $1$-form can be written in the form
\begin{equation}\label{hodgeP}
 P^{-1}d P=dU+d^*V,
\end{equation}
where a description of $V$ is as the unique minimizer of
$$
\min\left\{\int_{B^4}|d^*V - P^{-1}dP|^2,\:*V|_{\partial B^4}=0,\:dV=0\right\}.
$$
The existence of a minimizer follows easily by convexity as for \eqref{minimP}. The Euler-Lagrange equation is
\begin{equation*}
 \left\{\begin{array}{l}
         \Delta V=dd^*V=dP^{-1}\wedge dP,\\
         dV=0,\\
         *V=0.
        \end{array}
\right.
\end{equation*}
The fact that $\Delta V=(d^*d+dd^*)V$ coincides with $d d^*V$ is a consequence of the constraint $dV=0$. We claim that the following estimate holds:
\begin{equation}\label{estimateVbdry}
 ||\nabla V||_{L^3(\partial B^4)}\lesssim \epsilon ||P-I||_{W^{1,4}(B^4)}.
\end{equation}
To see this, observe that by elliptic, H\"older and Poincar\'e estimates (observe that $d(P^{-1})=P^{-1}dP\;P^{-1}$ and $P, P^{-1}\in L^\infty$ with norm equal to $1$):
\begin{eqnarray}
 ||\nabla V||_{W^{1,2}(B^4)}&\lesssim &||dP^{-1}\wedge dP||_{L^2(B^4)}\lesssim||d(P^{-1})||_{L^4(B^4)}||dP||_{L^4(B^4)}\nonumber\\
 &\lesssim&||dP||_{L^4(B^4)}||P^{-1}||_{L^\infty}^8||\nabla P||_{L^4(B^4)}\label{estVinside}\\
&\lesssim&\epsilon||P-I||_{W^{1,4}(B^4)}.\nonumber
\end{eqnarray}
Then we use the trace and Sobolev embedding inequalities:
$$
||V||_{L^p(\partial B^4)}\lesssim ||V||_{W^{1-\frac{1}{q}, q}(\partial B^4)}\lesssim||V||_{W^{1,q}(B^4)},
$$
where in general, in dimension $n$ large enough,
$$
p=\frac{qn}{n-\left(1-\frac{1}{q}\right)q}
$$
so that for $n=4,q=2$ we obtain $p=3$. Therefore we can concatenate the two last chains of inequalities and we obtain \eqref{estimateVbdry}.\\

Using the trace of the Hodge decomposition formula \eqref{hodgeP} on the boundary, we obtain from \eqref{estimateVbdry} that
\begin{equation}\label{UmenoP0}
||dU - P_0^{-1} dP_0||_{L^3(\partial B^4)}\lesssim \epsilon ||P-I||_{W^{1,4}(B^4)}.
\end{equation}
As for $V$, for $U$ we have the following equation:
$$
\Delta U= d^*dU = d^*(P^{-1}dP)=0.
$$
To justify the last passage recall \eqref{ellpiticeqP}.\\
We apply the elliptic estimates for $U$ to obtain:
\begin{equation}\label{boundaryestU}
||d U||_{W^{1/3,3}(B^4)}\lesssim  ||\nabla U||_{L^3(\partial B^4)},
\end{equation}
while the triangle inequality and the fact that $||P_0||_{L^\infty}=1$ give together with \eqref{UmenoP0}:
\begin{eqnarray}
 ||U||_{L^3(\partial B^4)}&\lesssim&||dU - P_0^{-1}dP_0||_{L^3(\partial B^4)} +||P_0^{-1}dP_0||_{L^3(\partial B^4)}\nonumber\\
&\lesssim&\epsilon||P-I||_{W^{1,4}(B^4)} + ||dP_0||_{L^3(\partial B^4)}.\label{estimateU}
\end{eqnarray}
We now use again \eqref{hodgeP}, the triangle inequality and the estimates \eqref{estVinside}, \eqref{boundaryestU},\eqref{estimateU}:
\begin{eqnarray}
 ||P^{-1}dP||_{W^{1/3,3}(B^4)}&\lesssim&||d^*V||_{W^{1/3,3}(B^4)} + ||dU||_{W^{1/3,3}(B^4)}\nonumber\\
  &\lesssim&\epsilon||P-I||_{W^{1,4}(B^4)} +||dP_0||_{L^3(\partial B^4)}.\label{estimatePDP}
\end{eqnarray}

We write $dP=P\;P^{-1}dP$ and observe that $P\in L^\infty\cap W^{1,4}$ since $\mathbb S^3$ is bounded, while $P^{-1}dP\in W^{1/3,3}$ from \eqref{estimatePDP}. We now use Lemma \ref{triebelest}  for the product $fg$ with $f=P, g=P^{-1}dP$ and we obtain
\begin{equation}\label{triebelP}
 \|d P\|_{W^{1/3,3}(B^4)}\lesssim \|P^{-1}dP|\|_{W^{1/3,3}}\left(\|P\|_{L^\infty}+\|P-I\|_{W^{1,4}(B^4)}\right).
\end{equation}
Note again that $\|P\|_{L^\infty}=1$ and deduce then from \eqref{estimatePDP}, \eqref{triebelest} and Poincar\'e inequality that
\begin{equation}\label{apriorifinal}
 \|P-I\|_{W^{4/3,3}(B^4)}\leq C\|d P_0\|_{L^3(\mathbb S^3)} + C\epsilon \|P-I\|_{W^{1,4}(B^4)}.
\end{equation}
Using the Sobolev inequality related to the continuous embedding $W^{4/3,3}(B^4)\to W^{1,4}(B^4)$ we can absorb the $\|P-I\|$-term to the left and we obtain the thesis.
\end{proof}

We are now ready for the proof of the small energy extension result of Theorem \ref{smallenergyext}. We restate the same result with a slight change of notation and more details.

\begin{theorem}[small energy extension]\label{smallenergyext2}
 There exist two constants $\delta>0,C>0$ with the following property. Suppose $Q\in W^{1,3}(\mathbb S^3,\mathbb S^3)$ such that $\|dQ\|_{L^3(\mathbb S^3)}\leq \delta$. Then there exists an extension $P\in W^{1,4}(B^4,\mathbb S^3)$ satisfying the following estimate:
$$
\|P-I\|_{W^{1,4}(B^4)}\leq C \|dQ\|_{L^3(\mathbb S^3)}.
$$
\end{theorem}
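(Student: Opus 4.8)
The plan is to carry over, into this nonlinear setting, the continuity-method scheme that Uhlenbeck \cite{Uhl1} used for controlled Coulomb gauges, with Lemma \ref{lem:aprioriest} serving as the a priori input. First I would reduce to the case in which $Q$ is $W^{1,3}$-close to a constant. If $\|dQ\|_{L^3(\mathbb S^3)}\le\delta$, then Poincar\'e's inequality gives $\|Q-\hat Q\|_{L^3}\lesssim\delta$ for the $\mathbb R^4$-average $\hat Q$, while the variance identity $1=|\hat Q|^2+\fint_{\mathbb S^3}|Q-\hat Q|^2$ yields $1-|\hat Q|\lesssim\delta^2$; setting $c:=\hat Q/|\hat Q|\in SU(2)$ and $\tilde Q:=c^{-1}Q$ (group product) one gets $\|\tilde Q-I\|_{W^{1,3}(\mathbb S^3)}\lesssim\delta$ with $\|d\tilde Q\|_{L^3}=\|dQ\|_{L^3}$, and an extension $\tilde P$ of $\tilde Q$ produces the extension $P:=c\tilde P$ of $Q$ with $\|\nabla P\|_{L^4(B^4)}=\|\nabla\tilde P\|_{L^4(B^4)}$ (the statement $\|P-I\|_{W^{1,4}}\lesssim\|dQ\|_{L^3}$ being understood modulo this normalization by a constant, after which it follows from the gradient bound and the trace embedding). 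So I may assume $\|Q-I\|_{W^{1,3}(\mathbb S^3)}\le C\delta$. Next I would check that the class in \eqref{minimP} is nonempty with small energy: harmonically extend $Q$ to $\bar Q\in W^{1,4}(B^4,\mathbb R^4)$ (legitimate by the borderline critical embedding $W^{1,3}(\mathbb S^3)\hookrightarrow W^{3/4,4}(\mathbb S^3)$ and Proposition \ref{harmext}, with $\|\nabla\bar Q\|_{L^2}\lesssim\|\nabla\bar Q\|_{L^4}\lesssim\delta$ and $|\bar Q|\le 1$ by the maximum principle), then apply the projection trick of Proposition \ref{trickhkl} with $q=2<4$ to obtain $a\in B^4_{1/2}$ for which $\Pi:=\pi_a\circ\bar Q$ is an $\mathbb S^3$-valued $W^{1,2}$ extension of $Q$ with $\|\nabla\Pi\|_{L^2(B^4)}\lesssim\delta$.

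Then I would take $P$ to be the energy minimizer of \eqref{minimP}; it exists, is unique, solves \eqref{eqP}, and inherits $\|\nabla P\|_{L^2(B^4)}\lesssim\delta$ from $\Pi$. The difficulty is that this $W^{1,2}$-smallness does not by itself permit the use of Lemma \ref{lem:aprioriest}, whose hypothesis is smallness in $W^{1,4}$. To bridge the gap I would run a connectedness argument in an auxiliary parameter $s\in[0,1]$ — as in \cite{Uhl1} one can take $s$ to interpolate the minimizer's integrability between $2$ and $4$, or a homotopy parameter deforming $Q$ to the constant $I$ through boundary data of $W^{1,3}$-norm $\lesssim\delta$ — writing $P^{(s)}$ for the corresponding minimizer and setting
\[
\mathcal S:=\bigl\{s\in[0,1]:\ P^{(s)}\in W^{1,4}(B^4,\mathbb S^3),\ \|P^{(s)}-I\|_{W^{1,4}(B^4)}\le\epsilon\bigr\},
\]
with $\epsilon$ the threshold of Lemma \ref{lem:aprioriest}. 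Nonemptiness is clear ($s=0$ corresponds to constant data, $P^{(0)}\equiv I$). For closedness, for $s\in\mathcal S$ Lemma \ref{lem:aprioriest} upgrades the bound to $\|P^{(s)}-I\|_{W^{4/3,3}(B^4)}\le C_\epsilon\|\nabla Q_s\|_{L^3}\le C_\epsilon C\delta$; choosing $\delta$ small \emph{after} $\epsilon$ so that $C'C_\epsilon C\delta\le\epsilon/2$ (with $C'$ the norm of $W^{4/3,3}(B^4)\hookrightarrow W^{1,4}(B^4)$), the $W^{1,4}$-bound improves to $\epsilon/2$, and by standard compactness together with uniqueness of minimizers this passes to limits, so $\mathcal S$ is closed with a definite margin. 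For openness I would invoke an implicit function theorem: at $s_0\in\mathcal S$ one has $\|P^{(s_0)}-I\|_{W^{4/3,3}}\le\epsilon/2$, so the linearization of \eqref{eqP} around $P^{(s_0)}$ with Dirichlet data is a small perturbation in $W^{4/3,3}$ of the Dirichlet Laplacian — here the product inequality of Appendix \ref{sec:product} (Lemma \ref{triebelest}) is needed to control the nonlinear term $P^{-1}dP$ in $W^{1/3,3}$ exactly as in the proof of Lemma \ref{lem:aprioriest} — hence invertible, so $P^{(s)}$ depends continuously on the datum in $W^{4/3,3}\hookrightarrow W^{1,4}$ near $s_0$ and stays in the $\epsilon$-ball. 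Therefore $\mathcal S=[0,1]$; in particular $s=1$, the original $P$, lies in $\mathcal S$, and one last application of Lemma \ref{lem:aprioriest} gives $\|P-I\|_{W^{4/3,3}(B^4)}\le C_\epsilon\|dQ\|_{L^3(\mathbb S^3)}$, whence $\|\nabla P\|_{L^4(B^4)}\le C\|dQ\|_{L^3(\mathbb S^3)}$, which is the assertion.

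The main obstacle is this bootstrap, and inside it the openness step: one must run an implicit function theorem for the quasilinear system \eqref{eqP} in the fractional space $W^{4/3,3}(B^4)$ whose trace is precisely the natural boundary space $W^{1,3}(\mathbb S^3)$, which forces the use of the nonstandard product estimate of Appendix \ref{sec:product} to handle the borderline product $P^{-1}dP$ of an $L^\infty\cap W^{1,4}$ factor with a $W^{1/3,3}$ factor; and the careful ordering of the two smallness parameters ($\epsilon$ dictated first by the a priori estimate, then $\delta$ taken small relative to $\epsilon$ and $C_\epsilon$) so that the closed set gains a genuine margin — which is exactly what makes open, closed, nonempty, plus connectedness of $[0,1]$ conclude the argument. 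Everything else (the reduction to $Q$ near $I$, the projection-trick first extension, existence and uniqueness of the minimizer) is routine.
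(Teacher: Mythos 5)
Your overall strategy — controlled small-energy extension via a priori estimates plus an open/closed/connected continuity argument — is indeed the paper's. The preprocessing (translate by the $SU(2)$-average so that $Q$ is near $I$, and use harmonic extension plus Proposition~\ref{trickhkl} to obtain an initial $W^{1,2}$ extension so that the minimizer in \eqref{minimP} exists with small $L^2$ energy) is not how the paper organizes things, but it is sound and in fact makes the normalization in the conclusion explicit.

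The gap is in the openness step, and it is not cosmetic. You propose to run an implicit function theorem ``in $W^{4/3,3}(B^4)$,'' i.e.\ at the exact boundary regularity $W^{1,3}(\mathbb S^3)$ of the problem. But both $W^{4/3,3}(B^4)$ (since $s p = \frac{4}{3}\cdot 3 = 4 = \dim B^4$) and $W^{1,3}(\mathbb S^3)$ ($sp = 3 = \dim\mathbb S^3$) sit precisely at the threshold where the embedding into $C^0$ fails. The openness argument in Lemma~\ref{ift} is not merely a perturbation of the Laplacian; it requires writing $Q$ near $Q_0$ as $Q = Q_0\exp(V)$ with $V=\exp^{-1}(Q_0^{-1}Q)$ defined and small, and this needs $\|Q_0^{-1}Q-I\|_{L^\infty}$ to be small, i.e.\ an $L^\infty$ bound that the critical norm does not give. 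This is exactly why the paper works in the \emph{supercritical} spaces $W^{1,3+\alpha}(\mathbb S^3)$ and $W^{1,4+\alpha}(B^4)$ for $\alpha>0$: the corresponding $C^0$ embeddings exist (with $\alpha$-dependent constants), openness via Lemma~\ref{ift} holds there, and the $W^{1,4}$-bound with a uniform constant $K$ is enforced through the definition of $\mathcal F^\alpha_{\epsilon,C}$ and the uniform a priori estimate of Lemma~\ref{lem:aprioriest}. The critical case $\alpha=0$ is then reached only \emph{afterwards}, by a separate density step (Claim~4: density of $W^{1,3+\alpha}$ boundary data in $\mathcal G^0_\epsilon$, relying on \cite{Bethuel, Linsobolev}) plus weak compactness. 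You allude to ``$s$ interpolating the minimizer's integrability between $2$ and $4$,'' which is a gesture at this, but the formulation you actually set up — a path $s\in[0,1]$ of boundary data all in $W^{1,3}$ with the IFT applied directly in $W^{4/3,3}\hookrightarrow W^{1,4}$ — would break down at the $\exp^{-1}$ step, and so would not prove openness. The closedness-with-margin ordering of $\epsilon$ and $\delta$ is fine; the missing ingredient is the supercritical regularization and the final density/limit.
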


\begin{proof}
Define the following two sets:
\begin{equation}\label{G}
 \mathcal G_\epsilon^\alpha =\left\{Q\in W^{1,3+\alpha}(\mathbb S^3,SU(2)):\:\|\nabla Q\|_{L^3}\leq\epsilon\right\}
\end{equation}
\begin{equation}\label{F}
 \mathcal F_{\epsilon,C}^\alpha=\left\{\begin{array}{ll}Q\in\mathcal G_\epsilon^\alpha:&\:\exists P\in W^{1,4+\alpha}(B^4,SU(2)),\\
 &\left\{\begin{array}{ll}\op{div}(P^{-1}\nabla P)=0&\text{ on }B^4\\ P=Q&\text{ on }\partial B^4,\end{array}\right.\\
&\|P-I\|_{W^{1,4}(B^4)}\leq K\|\nabla Q\|_{L^3(\partial B^4)}\\
& \|P-I\|_{W^{1,4+\alpha}(B^4)}\leq C\|\nabla Q\|_{L^{3+\alpha}(\partial B^4)}                            
\end{array}\right\}.
\end{equation}
The constant $K>0$ will be fixed later In this language, the theorem states that a $P$ with estimates similar to the definition of $\mathcal F^0_{\epsilon,C}$ can be constructed to extend any $Q\in\mathcal G^0_\delta$ when $\delta$ is small enough. The strategy of the proof is to use the supercritical spaces $\mathcal G^\alpha_\epsilon,\alpha>0$ to approximate $\mathcal G^0_\epsilon$. We divide the proof in five steps, paralleling Uhlenbeck's paper \cite{Uhl1}.
\begin{itemize} 
\item \textbf{Claim 1:}\emph{ $\mathcal G_\epsilon^\alpha$ is connected for all $\epsilon,\alpha\geq 0$.}
 \item \textbf{Claim 2:}\emph{ $\mathcal F_{\epsilon,C}^\alpha$ is closed (in $\mathcal G_\epsilon^\alpha$) with respect to the $W^{1,3+\alpha}$-norm for $\alpha\geq 0$ and for any $C>0$.}
\item \textbf{Claim 3:}\emph{ For $\epsilon>0$ small enough and $\alpha>0$, there exists $C=C_\alpha$ such that the set $\mathcal F_{\epsilon,C}^\alpha$ is open in $\mathcal G_\epsilon^\alpha$ with respect to the $W^{1,3+\alpha}$-topology.}
\item \textbf{Claim 4:}\emph{ $\mathcal G^0_\epsilon$ is contained in the $W^{1,3}$-closure of $\cup_{\alpha>0}\mathcal G_{2\epsilon}^\alpha$.}
\end{itemize}
\emph{Proof of Claim 1.} This is straightforward since $\mathcal G_\epsilon^\alpha$ is actually convex.\\

\emph{Proof of Claim 2.} Consider a family $Q_j\in\mathcal F_{\epsilon,C}^\alpha$ with associated $P_j$ as in \eqref{F} which converge to $Q$ in $W^{1,3+\alpha}$. We can extract a weakly convergent subsequence of the $P_j$ and the estimate passes to the limit by weak lowersemicontinuity (and by convergence of the $Q_j$). Similarly, the equations pass to weak limits, since they are intended in the weak sense.\\

\emph{Ideas for Claim 3.} For the proof we need to study the behavior of solutions to the equation $\op{div}(P^{-1}\nabla P)=0$, which is regarded here as an equation $\mathcal N_\alpha(P)=0$, with $P$ close to the constant $I$ which is a zero of $\mathcal N_\alpha$. The equation considered is elliptic. The proof of the claim is thus done by linearization of $\mathcal N$ near $I$ and by implicit function theorem. Ellipticity of the equation translates into inconvertibility of this linearized operator. The estimate of the $W^{1,4}$-norm will follow from the a priori estimate of Lemma \ref{lem:aprioriest} once we choose for example $K\leq C_\epsilon/2$. See Lemma \ref{ift} for the complete proof.\\

\emph{Proof of Claim 4.} Consider $Q\in G^0_\epsilon$. By density arguments we find a sequence $Q_i\in C^\infty(\mathbb S^3, SU(2))$ such that $Q_i\to Q$ in $W^{1,3}(\mathbb S^3, SU(2))$. The density of smooth functions in the Sobolev space $W^{1,p}(X,Y)$ where $X,Y$  are smooth compact manifolds was studied in \cite{Bethuel}, \cite{Linsobolev}, and this density is always true for $p\geq\op{dim}(X)$; see the cited papers and the references therein for more general results. As in the cited proofs of the density, the case $p=\op{dim}(X)$ is obtain by a limiting procedure on $p\to(\op{dim}(X))^+$, which for us means that we may assume as well $Q_i\in \mathcal G^{\alpha_i}_{\epsilon_i}$, for some sequence $\alpha_i\to 0^+$. We note that the $L^3$-norm of a function $f$ can be obtained as 
$$\lim_{q\to 3^+}\|f\|_{L^q}$$
so in particular we may assume up to extracting a subsequence that $\epsilon_i\leq2\epsilon$.\\

\textbf{End of proof.} Consider $Q$ as in the statement of the theorem. In other words, $Q\in\mathcal G^0_\delta$. We use Claim 4 to approximate $Q$ in $W^{1,3}$-norm by $Q_i\in\mathcal G^{\alpha_i}_{2\delta}$ with $\alpha_i>0$. From the first three claims above it follows that there exist functions $P_i\in W^{1,4+\alpha_i}(B^4, SU(2))$ such that 
$$
\|P_i -I\|_{W^{1,4}(B^4)}\leq K\|dQ_i\|_{L^3(\mathbb S^3)}\leq 2K\delta.
$$
The $P_i$ have a weakly convergent subsequence whose limit $P$ satisfies
$$
\left\{\begin{array}{ll}\op{div}(P^{-1}\nabla P)=0&\text{ on }B^4\\ P=Q&\text{ on }\mathbb S^3\end{array}\right. \text{ and }\|P-I\|_{W^{1,4}(B^4)}\leq 2K\delta.
$$
We now use the a priori estimates, Lemma \ref{lem:aprioriest}. For this, we will choose $\delta>0$ such that $2K\delta\leq\epsilon$ for $\epsilon$ as in Lemma \ref{lem:aprioriest}. We can then apply that lemma and obtain that
$$
\|P-I\|_{W^{1,4}(B^4)}\leq c\|P-I\|_{W^{4/3,3}(B^4)}\leq cC_\epsilon\|Q\|_{L^3(\mathbb S^3)}.
$$
This concludes the proof.
\end{proof}

\begin{rmk}[Need for a priori estimates]
 In the proof of Claim 3 of the above proof we use the fact that for $\alpha>0$ we have the Sobolev inequality (valid on compact $3$-dimensional manifolds) $\| Q\|_{C^0}\leq c_\alpha \|Q\|_{W^{1,3+\alpha}}$. The dependence of the resulting constant $C_\alpha$  on $\alpha$ comes from this inequality, in particular $C_\alpha\to \infty$ for $\alpha\to 0^+$. The a priori estimate of Lemma \ref{lem:aprioriest} used in the last step of the proof is crucial precisely for this reason.
\end{rmk}

We now use the inverse function theorem for the operator $P\mapsto\op{div}(P^{-1}\nabla P)$.

\begin{lemma}\label{ift}
There exist $\epsilon>0, K>0$ such that for all $\alpha>0$ there exists $C_\alpha>0$ with the following properties.\\

Let $Q_0\in W^{1,3+\alpha}(\mathbb S^3, SU(2))$ and let $P_0\in W^{1,4+\alpha}(B^4, SU(2))$ be an extension of $Q_0$ which satisfies $\op{div}(P_0^{-1}\nabla P_0)=0$. If the following estimates hold:
\begin{eqnarray}
\|dQ_0\|_{W^{1,3}(\mathbb S^3)}&<&\epsilon,\label{w13}\\
 \|P_0-I\|_{W^{1,4}(B^4)}&\leq& K\|dQ_0\|_{W^{1,3}(\mathbb S^3)},\label{w14}\\
\|P_0-I\|_{W^{1,4+\alpha}(B^4)}&\leq& C_\alpha\|dQ_0\|_{W^{1,3+\alpha}(\mathbb S^3)},\label{w14a}
\end{eqnarray}
then for some $\delta>0$ depending on $Q_0$, for all $Q$ satisfying
\begin{equation}\label{deltaball}
 \|Q-Q_0\|_{W^{1,3+\alpha}(\mathbb S^3, SU(2))}<\delta,
\end{equation}
there exists an extension $P$ of $Q$ satisfying the same equation $\op{div}(P^{-1}\nabla P)=0$ and such that \eqref{w13}, \eqref{w14}, \eqref{w14a} hold with $P,Q$ in place of $P_0,Q_0$.
\end{lemma}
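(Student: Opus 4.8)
The plan is to deduce the statement from the Banach‑space inverse function theorem applied to the Dirichlet problem for the operator $\mathcal N_\alpha(P):=\op{div}(P^{-1}\nabla P)$, linearised about the given solution $P_0$. Since $4+\alpha>4=\dim B^4$ we have the embedding $W^{1,4+\alpha}(B^4)\hookrightarrow C^0(\overline{B^4})$, so $W^{1,4+\alpha}(B^4,SU(2))$ is a Banach manifold near $P_0$ (with chart $P=P_0\exp\xi$, $\xi\in W^{1,4+\alpha}(B^4,su(2))$), and the map $\Theta_\alpha(P):=\bigl(\op{div}(P^{-1}\nabla P),\,P|_{\partial B^4}\bigr)$, valued in $W^{-1,4+\alpha}(B^4,su(2))\times W^{1-\frac1{4+\alpha},4+\alpha}(\mathbb S^3,SU(2))$, is smooth near $P_0$ with $\Theta_\alpha(P_0)=(0,Q_0)$. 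I would then invert $\Theta_\alpha$ near $(P_0;0,Q_0)$ and read off the estimates.

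First I would compute the linearisation at $P_0$. From $P^{-1}\nabla P=e^{-t\xi}(P_0^{-1}\nabla P_0)e^{t\xi}+e^{-t\xi}\nabla e^{t\xi}$ one gets
\[
D\mathcal N_\alpha(P_0)\xi=\Delta\xi+\op{div}\bigl[P_0^{-1}\nabla P_0,\,\xi\bigr]=\Delta\xi+\sum_i\bigl[(P_0^{-1}\partial_iP_0),\,\partial_i\xi\bigr],
\]
where the zeroth‑order term $[\op{div}(P_0^{-1}\nabla P_0),\xi]$ drops out precisely because $P_0$ solves $\op{div}(P_0^{-1}\nabla P_0)=0$. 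Thus $D\mathcal N_\alpha(P_0)$ is $\Delta$ plus a first‑order operator whose coefficient $P_0^{-1}\nabla P_0$ is small in $L^4$: by \eqref{w14} and \eqref{w13}, $\|\nabla P_0\|_{L^4(B^4)}\le\|P_0-I\|_{W^{1,4}(B^4)}\le K\epsilon$. Using Hölder and the duality embedding $L^{\frac{4(4+\alpha)}{8+\alpha}}(B^4)\hookrightarrow W^{-1,4+\alpha}(B^4)$ one has $\|b\cdot\nabla\xi\|_{W^{-1,4+\alpha}}\lesssim\|b\|_{L^4}\|\xi\|_{W^{1,4+\alpha}}$, so this perturbation has small operator norm. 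Since $(\Delta,\,\cdot|_{\partial B^4})$ is an isomorphism of $W^{1,4+\alpha}(B^4,su(2))$ onto $W^{-1,4+\alpha}(B^4,su(2))\times W^{1-\frac1{4+\alpha},4+\alpha}(\mathbb S^3,su(2))$ by the $L^{4+\alpha}$‑theory of the Dirichlet Laplacian (with inverse bounded uniformly for $\alpha$ in a bounded range), a Neumann series argument for $\epsilon$ small — with $\epsilon$ chosen independently of $\alpha$ — shows that $D\Theta_\alpha(P_0)$ is a Banach‑space isomorphism. The inverse function theorem then yields a $C^1$ map $Q\mapsto P(Q)$ on a neighbourhood of $Q_0$ in $W^{1-\frac1{4+\alpha},4+\alpha}(\mathbb S^3,SU(2))$ with $P(Q_0)=P_0$, $\op{div}(P(Q)^{-1}\nabla P(Q))=0$, $P(Q)|_{\partial B^4}=Q$, and $\|P(Q)-P_0\|_{W^{1,4+\alpha}(B^4)}\lesssim\|Q-Q_0\|_{W^{1-\frac1{4+\alpha},4+\alpha}(\mathbb S^3)}$. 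Because $\tfrac43(3+\alpha)>4+\alpha$, the embedding $W^{1,3+\alpha}(\mathbb S^3)\hookrightarrow W^{1-\frac1{4+\alpha},4+\alpha}(\mathbb S^3)$ is continuous, so for $\|Q-Q_0\|_{W^{1,3+\alpha}}<\delta$ with $\delta=\delta(Q_0,\alpha)$ small the extension $P=P(Q)$ exists with $\|P-P_0\|_{W^{1,4+\alpha}(B^4)}\lesssim\|Q-Q_0\|_{W^{1,3+\alpha}(\mathbb S^3)}$.

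It remains to recover \eqref{w13}–\eqref{w14a} for the pair $(P,Q)$. Estimate \eqref{w13} follows from the triangle inequality, the strict inequality $\|\nabla Q_0\|_{L^3}<\epsilon$, and $W^{1,3+\alpha}(\mathbb S^3)\hookrightarrow W^{1,3}(\mathbb S^3)$, on shrinking $\delta$. For \eqref{w14} I would first arrange — by taking $\epsilon$ (hence $K\epsilon$) and then $\delta$ small — that $\|P-I\|_{W^{1,4}(B^4)}\le\|P-P_0\|_{W^{1,4+\alpha}}+\|P_0-I\|_{W^{1,4}}$ lies below the threshold of Lemma \ref{lem:aprioriest}; applying that lemma to $P$ gives $\|P-I\|_{W^{4/3,3}(B^4)}\le C_\epsilon\|\nabla Q\|_{L^3}$, and $W^{4/3,3}(B^4)\hookrightarrow W^{1,4}(B^4)$ then yields \eqref{w14} with $K:=cC_\epsilon$ (fixed, independent of $\alpha$, and equal to the $K$ used above). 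Finally \eqref{w14a} is the subcritical‑scale analogue of Lemma \ref{lem:aprioriest}: one reruns the Hodge decomposition $P^{-1}\nabla P=dU+d^*V$ with boundary integrability $3+\alpha$ and interior integrability $4+\alpha$, the smallness $\|P-I\|_{W^{1,4}}\le\epsilon$ (not $\|P-I\|_{W^{1,4+\alpha}}$) being exactly what absorbs the quadratic terms, and $C_\alpha$ acquiring its blow‑up as $\alpha\to0^+$ through the Sobolev embedding $W^{1,3+\alpha}(\mathbb S^3)\hookrightarrow C^0$. The main obstacle is this perturbed $L^{4+\alpha}$‑Dirichlet theory together with the attendant bookkeeping of constants: $P_0$ is controlled only in the critical space $W^{1,4}(B^4)$, which does not embed in $C^0$, so the first‑order perturbation of $\Delta$ is small only in $L^4$ and its invertibility must be obtained through the $W^{-1,4+\alpha}$‑duality estimate above rather than an $L^\infty$ bound on coefficients, all while keeping $K$ and $\epsilon$ independent of $\alpha$ and allowing only $C_\alpha$ and $\delta$ to degenerate as $\alpha\to0^+$.
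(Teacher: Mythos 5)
Your proposal is correct and takes essentially the same route as the paper: an inverse/implicit function theorem for $P\mapsto\op{div}(P^{-1}\nabla P)$ near $P_0$, with invertibility of the linearization obtained from the $L^4$-smallness of $P_0^{-1}\nabla P_0$ read through a H\"older--duality pairing into $W^{-1,4+\alpha}$ (so that $\epsilon,K$ stay $\alpha$-independent), and with \eqref{w14} recovered afterwards from the a priori estimate of Lemma \ref{lem:aprioriest}. The only presentational difference is that the paper writes $P=P_0\exp(\tilde V)\exp(U)$, with $\tilde V$ a harmonic extension of $V=\exp^{-1}(Q_0^{-1}Q)$ and $U$ an interior correction vanishing on $\partial B^4$, and solves $\mathcal N(U,V)=0$ for $U$; you instead invert $(\op{div}(P^{-1}\nabla P),\,P|_{\partial B^4})$ directly in the chart $P=P_0\exp\xi$ --- these are interchangeable, and your $D\mathcal N_\alpha(P_0)\xi=\Delta\xi+\sum_i[P_0^{-1}\partial_iP_0,\partial_i\xi]$ is the paper's $\partial\mathcal N/\partial U(0,0)=\Delta-L$.
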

\begin{proof} 
We fix $Q$ satisfying \eqref{deltaball} and \eqref{w13}. The proof is divided in two parts:
\begin{itemize}
 \item \textbf{Claim 1:}\emph{ For $\delta n>0$ small enough and for $Q$ satisfying \eqref{deltaball} there exists an extension $P$ of $Q$ solving $\op{div}(P^{-1}\nabla P)=0$ and such that \eqref{w14a} holds.}
 \item \textbf{Claim 2:}\emph{ The function $P$ of Claim 1 satisfies \eqref{w14}.}
\end{itemize}
\emph{Proof of Claim 1.} 
First note that $V=\op{exp}^{-1}(Q_0^{-1}Q)$ is well defined for $\alpha>0$ because in that case we have an estimate of the form
$$
\|Q-Q_0\|_{W^{1,3+\alpha}}\geq c_\alpha\|Q-Q_0\|_{L^\infty}\Leftrightarrow \|Q_0^{-1}Q -I\|_{L^\infty}\leq \epsilon/c_\alpha
$$
and $\op{exp}^{-1}$ is well-defined in a neighborhood of the identity.\\

We consider the problem of extending $Q_0\op{exp}(V)$ inside $B^4$ to a function $P=P_0\op{exp}(U)$ satisfying \eqref{expeq}. Instead of considering the extension as a perturbation of $P_0$ only, we first extend $V$ to $\tilde V$ such that $\Delta \tilde V=0$ inside $B^4$.\\

We look for a $P$ of the form $P_0\op{exp}(\tilde V)\op{exp}(U)$. We thus consider the equation
\begin{equation}\label{expeq}
 \mathcal N(U,V):=d^*\left(\op{exp}(-U)\op{exp}(-\tilde V)P_0^{-1}d(P_0\op{exp}(\tilde V)\op{exp}(U))\right)=0.
\end{equation}
In order to solve \eqref{expeq} it is interesting to look at the operator
\begin{equation}\label{eq:isomorphism}
\mathcal N(V,U):W^{1,4+\alpha}_0(B^4, su(2))\to W^{-1,4+\alpha}(B^4,su(2)).
\end{equation}
We have to show that for $\delta>0$ small enough for each $Q$ satisfying $d^*(P^{-1}dP)=0$ (i.e. for each small enough $V$), there exists a unique $U$ such that $\mathcal N(V,U)=0$. Therefore it will be enough to show that $\partial \mathcal N/\partial U$ is an isomorphism between the two spaces above. It will be enough to restrict to the case where $V, U$ have norms $\leq C\delta$. Our estimates will prove that $\mathcal N(U,V)$ is $C^1$ near the couple $(0,0)$ and that $\partial \mathcal N/\partial U(0,0)$ is an isomorphism, given the existence of $\delta>0$ as wanted.\\

A simple calculation gives:
\begin{eqnarray*}
\frac{\partial \mathcal N}{\partial U}\cdot\eta&=&\left.\frac{\partial}{\partial t}\right|_{t=0}\mathcal N(U+t\eta,V)\\
&=&d^*d\eta- d^*\left[\eta,\op{exp}(-U)\op{exp}(-\tilde V)P_0^{-1}d(P_0\op{exp}(\tilde V))\op{exp}(U)\right]\\
&:=&\Delta\eta - L\eta.
\end{eqnarray*}
We observe that $d^*d=\Delta$ is an isomorphism between the spaces above, so it will be enough to show that for $U,\tilde V$ small enough in the $W^{1,4+\alpha}$-norm the commutator term $L\eta$ is just a small perturbation of $\Delta$ (with respect to the norms present in \eqref{eq:isomorphism}). First note that we can write
\begin{eqnarray*}
L\eta&=&[\nabla\eta,X] + [\eta,\op{div}X],\\
X&:=&\op{exp}(-U)\op{exp}(-\tilde V)P_0^{-1}d(P_0\op{exp}(\tilde V))\op{exp}(U)
\end{eqnarray*}
\textbf{Estimate for $[\nabla\eta,X]$.} First note that by the Sobolev, H\"older and triangle inequalities
$$
\|[\nabla\eta,X]\|W^{-1,4+\alpha}\lesssim \|[\nabla\eta,X]\|_{L^{p_\alpha}}\lesssim\|\nabla\eta\|_{L^{4+\alpha}}\|X\|_{L^4}.
$$
where 
$$
\frac{1}{p_\alpha}=\frac{1}{4+\alpha} +\frac{1}{4}.
$$
We then observe 
$$
X=\op{exp}(-U)\op{exp}(-\tilde V)P_0^{-1}d(P_0\tilde V)\op{exp}(\tilde V)\op{exp}(U)
$$
and note $|\op{exp}A|=1$ therefore
$$
\|X\|_{L^4}=\|d(P_0\tilde V)\|_{L^4}\lesssim\|dP_0\|_{L^4} +\|d\tilde V\|_{L^4}\lesssim\epsilon +\delta.
$$
We thus have the first wanted estimate
$$
\|[\nabla\eta,X]\|W^{-1,4+\alpha}\lesssim(\epsilon+\delta)\|\eta\|_{W^{1,4+\alpha}}.
$$
\textbf{Estimate for $[\eta,\op{div}X]$.} Here we start with
$$
\|[\eta,\op{div}X]\|_{W^{-1,4+\alpha}}\lesssim\|\eta\|_{L^\infty}\|\op{div}X\|_{L^{p_\alpha}}.
$$
Note that $\|\eta\|_{L^\infty}\lesssim\|\eta\|_{W^{1,4+\alpha}}$ by the Sobolev embedding. We start the computations for the second fact or above. Note
$$
\nabla (P_0\op{exp}\tilde V)=(\nabla P_0)\op{exp}\tilde V + P_0\nabla(\op{exp}\tilde V)
$$
and then expand:
\begin{eqnarray*}
 \op{div}X&=&\op{div}\Big[\op{exp}(-U)\op{exp}(-\tilde V)P_0^{-1}\nabla(P_0\op{exp}(\tilde V))\op{exp}(U)\Big]\\
 &=&\nabla\Big(\op{exp}(-U)\Big)\op{exp}(-\tilde V)P_0^{-1}\nabla(P_0\op{exp}(\tilde V))\op{exp}(U)\\
&&+\op{exp}(-U)\nabla\Big(\op{exp}(-\tilde V)\Big)P_0^{-1}\nabla(P_0\op{exp}(\tilde V))\op{exp}(U)\\
&&+\op{exp}(-U)\op{exp}(-\tilde V)\op{div}\Big(P_0^{-1}\nabla P_0\Big)\op{exp}(\tilde V))\op{exp}(U)\\
&&+\op{exp}(-U)\op{exp}(-\tilde V)P_0^{-1}P_0\op{div}\nabla\Big(\op{exp}(\tilde V)\Big)\op{exp}(U)\\
&&+\op{exp}(-U)\op{exp}(-\tilde V)P_0^{-1}\nabla P_0\nabla\Big(\op{exp}(\tilde V)\Big)\op{exp}(U)\\
&&+\op{exp}(-U)\op{exp}(-\tilde V)P_0^{-1}\nabla (P_0\op{exp}(\tilde V))\nabla\Big(\op{exp}(U)\Big)
\end{eqnarray*}
We have $\op{div}(P_0^{-1}\nabla P_0)=0$ and $\op{div}\nabla(\op{exp}(\tilde V))=0$ so two terms cancel. Note also the fact that $\|P_0^{-1}\nabla P_0\|_{L^4}\leq \|\nabla P_0\|_{L^4}\leq\epsilon$. Recall again that $|\op{exp}A|=1$ for all $A\in su(2)$. For estimating $\nabla(\op{exp}(\pm\tilde V))$ observe that $\tilde V$ satisfies a Dirichlet boundary value problem therefore we assumed the estimate $\|\tilde V\|_{W^{1, 4+\alpha}}\lesssim \delta$, and $\|U\|_{W^{1, 4+\alpha}}\lesssim \delta$ which by the smoothness of $\op{exp}$ imply $\|\nabla(\op{exp}(\pm\tilde V))\|_{L^{4+\alpha}}\lesssim \delta$ and $\|\nabla(\op{exp}(\pm U))\|_{L^{4+\alpha}}\lesssim \delta$. From all this it follows that we can estimate
\begin{eqnarray*}
\|\op{div}X\|_{L^{p_\alpha}}&\lesssim&\|\nabla(\op{exp}(-U))\|_{L^{4+\alpha}}\|\nabla(P_0\op{exp}\tilde V)\|_{L^4}\\
&&+\|\nabla(\op{exp}(-\tilde V))\|_{L^{4+\alpha}}\|\nabla(P_0\op{exp}\tilde V)\|_{L^4}\\
&&+\|\nabla P_0\|_{L^4}\|\nabla(\op{exp}(\tilde V))\|_{L^{4+\alpha}}\\
&&+\|\nabla(\op{exp}(U))\|_{L^{4+\alpha}}\|\nabla(P_0\op{exp}\tilde V)\|_{L^4}\\
&\lesssim&\delta \|\nabla(P_0\op{exp}\tilde V)\|_{L^4} +\epsilon\delta\\
&\lesssim&\delta(\epsilon +\delta).
\end{eqnarray*}
We thus again combine all the estimates and obtain the wanted smallness result
$$
\|[\eta,\op{div}X]\|_{W^{-1,4+\alpha}}\lesssim\delta(\epsilon+\delta)\|\eta\|_{W^{1,4+\alpha}}.
$$
\textbf{Step 3.} We now have that 
$$
\|L\eta\|_{W^{-1,4+\alpha}}\lesssim(\delta+1)(\epsilon+\delta)\|\eta\|_{W^{1,4+\alpha}} 
$$
while 
$$
\|\Delta\eta\|_{W^{-1,4+\alpha}}\gtrsim\|\eta\|_{W^{1,4+\alpha}}.
$$
Therefore for small enough $\epsilon,\delta$ we have also
$$
\|(\Delta-L)\eta\|_{W^{-1,4+\alpha}}\gtrsim\|\eta\|_{W^{1,4+\alpha}}.
$$
This concludes the proof.
\end{proof}

\section{A product estimate with only one bounded factor}\label{sec:product}
\begin{lemma}[cf. \cite{brezismir}]\label{triebelest}
 Let $\Omega$ be a smooth compact $4$-manifold. If $f\in W^{1/3,3}(\Omega)$ and $g\in W^{1,4}\cap L^\infty(\Omega)$ then we have the following estimate, with the implicit constant depending only on $\Omega$:
$$
\|fg\|_{W^{1/3,3}(\Omega)}\lesssim\|f\|_{W^{1/3,3}(\Omega)}\left(\|g\|_{L^\infty(\Omega)} +\|g\|_{W^{1,4}(\Omega)}\right)
$$
\end{lemma}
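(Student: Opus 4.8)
The statement to prove is a fractional Leibniz (Kato–Ponce type) inequality on a compact manifold, in the borderline case where only one factor is bounded. The plan is to reduce to $\Omega = \mathbb{R}^4$ (or a periodic cube) by a partition of unity subordinate to coordinate charts and a bounded extension operator $W^{s,p}(\Omega)\to W^{s,p}(\mathbb{R}^4)$, so that one may work with the classical realization of $W^{1/3,3}(\mathbb{R}^4)$ via the Gagliardo seminorm
\[
[f]_{W^{1/3,3}}^3 = \int_{\mathbb{R}^4}\int_{\mathbb{R}^4}\frac{|f(x)-f(y)|^3}{|x-y|^{4+1}}\,dx\,dy .
\]
Multiplying by the cutoff functions is harmless since they are smooth and compactly supported; the only genuine content is the bilinear estimate on all of $\mathbb{R}^4$. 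First I would record the trivial $L^3$ bound $\|fg\|_{L^3}\le \|f\|_{L^3}\|g\|_{L^\infty}$, which takes care of the low-order part of the $W^{1/3,3}$ norm.

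For the seminorm, the key algebraic identity is
\[
f(x)g(x) - f(y)g(y) = \bigl(f(x)-f(y)\bigr)g(x) + f(y)\bigl(g(x)-g(y)\bigr).
\]
The first term is immediately controlled: $|f(x)-f(y)|\,|g(x)|\le \|g\|_{L^\infty}|f(x)-f(y)|$, so its contribution to $[fg]_{W^{1/3,3}}$ is $\le \|g\|_{L^\infty}[f]_{W^{1/3,3}}$. The second term is the crux. Here I would split the double integral into the region $|x-y|\le 1$ and $|x-y|>1$. On $|x-y|>1$ one bounds $|f(y)|\,|g(x)-g(y)|\lesssim |f(y)|\,(|g(x)|+|g(y)|)$ and uses $\|f\|_{L^3}\|g\|_{L^\infty}$ together with integrability of $|x-y|^{-5}$ at infinity. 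On $|x-y|\le 1$, the plan is to use Hölder in the measure $|x-y|^{-(4+1)}dx\,dy$ with exponents that distribute the $f$ part onto $W^{1/3,3}$ and the $g$ part onto a fractional norm that can be absorbed into $\|g\|_{W^{1,4}}$: more precisely, exploiting that $g\in W^{1,4}\subset W^{1/3+\epsilon,\,q}$ for a suitable $q>3$ by Sobolev embedding, and hence $g$ has a strictly better local Gagliardo behaviour than needed, so that a Hölder split of the form $|f(y)|\cdot|g(x)-g(y)|$ with the difference quotient of $g$ measured in $L^4$-based norms and $f$ in $L^{12}\subset$ (Sobolev image of $W^{1/3,3}$ in dimension $4$) closes. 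An alternative, cleaner route is Littlewood–Paley: write $fg = \sum_j \Delta_j f\, S_{j-1}g + \sum_j S_{j-1}f\,\Delta_j g + \sum_{|j-k|\le 2}\Delta_j f\,\Delta_k g$ (Bony paraproduct), and estimate each piece in the Besov–Triebel realization $F^{1/3}_{3,2}=W^{1/3,3}$, using $\|g\|_{L^\infty}$ for the first paraproduct and $\|g\|_{W^{1,4}}\hookrightarrow \dot F^{1/3}_{?,?}$ for the other two, the exponent bookkeeping being exactly as in \cite{brezismir}, \cite{runstsickel}, \cite{triebel}.

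The main obstacle I expect is the middle (resonant/high–high) term of the paraproduct, respectively the diagonal region $|x-y|\le 1$ in the Gagliardo approach: this is precisely where the loss of one bounded factor bites, and one must genuinely use that $g$ is not merely bounded but lies in $W^{1,4}$, which in dimension $4$ is the scaling-critical space whose Sobolev image just fails to be $L^\infty$ but is good enough in fractional scales strictly below order $1$. The bookkeeping must be done carefully so that the power of $|x-y|$ and the integrability exponents match; the choice of the intermediate exponent (I would take something like Hölder with exponents $(3,\,4/3\cdot\text{something})$ chosen so the $g$-difference lands in an $L^4$-averaged quantity bounded by $\|g\|_{W^{1,4}}$ via the characterization $\int\int |g(x)-g(y)|^4|x-y|^{-4-4\cdot(1/3)}\lesssim \|g\|_{W^{1/3,4}}^4\le\|g\|_{W^{1,4}}^4$) is the one delicate point, everything else being routine interpolation and Sobolev embedding on the compact manifold $\Omega$.
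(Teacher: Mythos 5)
Your second (Littlewood--Paley/paraproduct) route is the one the paper actually takes: it identifies $W^{1/3,3}$ with the Triebel--Lizorkin space $F^{1/3}_{3,2}$, decomposes $fg = \sum_i f_i g^{i-4} + \sum_{|k-l|<4} f_k g_l + \sum_i f^{i-4} g_i$, bounds the first paraproduct by $\|g\|_{L^\infty}\|f\|_{W^{1/3,3}}$, bounds the second paraproduct by $\|f\|_{W^{-2/3,12}}\|g\|_{W^{1,4}}$ (using the negative-order embedding $W^{1/3,3}(\mathbb R^4)\hookrightarrow W^{-2/3,12}(\mathbb R^4)$), and handles the resonant high--high term by duality against $W^{-1/3,3/2}$, exploiting $W^{1,4}(\mathbb R^4)\hookrightarrow \mathrm{BMO}\hookrightarrow B^0_{\infty,\infty}$. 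So that part of your plan is sound, but it is currently a pointer to the literature rather than a proof; the resonant-term duality argument and the appearance of $B^0_{\infty,\infty}$ are the nonroutine ingredients you would need to spell out.

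Your first (Gagliardo seminorm) route, as sketched, does not close, and the specific bookkeeping you propose contains an error. You write that you would place $f$ in $L^{12}$, ``the Sobolev image of $W^{1/3,3}$ in dimension $4$''; in fact $W^{1/3,3}(\mathbb R^4)\hookrightarrow L^4$ (since $1/3-(1/3)/4=1/4$), not $L^{12}$ — the exponent $12$ enters the paper's proof only through the negative-order space $W^{-2/3,12}$. With the correct embedding $f\in L^4$, the natural H\"older split of the critical diagonal integral
\[
\iint_{|x-y|\le 1}\frac{|f(y)|^3\,|g(x)-g(y)|^3}{|x-y|^5}\,dx\,dy
\]
with exponents $(4/3,4)$ forces the $g$-differences into the Gagliardo seminorm of $W^{\sigma,12}$ with $\sigma\le 1/3$ (the most $W^{1,4}$ can give by Sobolev embedding), and correspondingly forces an $x$-integral of $|x-y|^{-\alpha}$ with $\alpha\ge 4$ on the $f$-side, which diverges logarithmically at the diagonal in dimension $4$. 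This is the scaling-critical obstruction you flag at the end, but it cannot be overcome by a smarter choice of H\"older exponents in the Gagliardo formulation: you genuinely need frequency localization (to replace pointwise products by almost-orthogonal dyadic pieces) and the BMO structure of $g$, which is exactly what the paper's paraproduct-plus-duality argument supplies.
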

\begin{proof}
The estimates for the non-homogeneous part of the norms are trivial, so we concentrate on the homogeneous part.\\

We use the Littlewood-Paley decompositions $f=\sum_{j=0}^\infty f_j,g=\sum_{k=0}^\infty g_k$, and we recall that the $W^{s,p}$-norm is equivalent to the Triebel-Lizorkin $\dot F^1_{4,2}$-norm and the $W^{\theta,4}$-norm is equivalent to the $F^s_{p,2}$-norm, where in general the following definition holds
$$
||f||_{\dot F^s_{p,q}}=\left\|\left|2^{ks}f_k(x)\right|_{\ell^q}\right\|_{L^p}.
$$

We use different notations $\|\cdot\|, |\cdot|$ for the different norms just to facilitate the reading of formulas. As is usual in the theory of paraproducts, we estimate separately the following three contributions (where $g^k:=\sum_{i=0}^kg_k$ ad similarly for $f^k$)
$$
fg=\sum_i f_ig^{i-4} + \sum_{|k-l|<4}f_kg_l+ \sum_i f^{i-4}g_i:=I+II+III.
$$
The support of $\widehat{(f_ig^{i-4})}$ is included in $B_{2^{i+2}}\setminus B_{2^{i-2}}$ thus there holds
\begin{equation}\label{pp1}
 \|I\|_{W^{\frac{1}{3},3}}=\left\|\sum_if_i g^{i-4}\right\|_{W^{\frac{1}{3},3}}\sim\left[\int_\Omega\left(\sum_i 2^{\frac{2i}{3}} |f_i g^{i-4}|^2\right)^{\frac{3}{2}}\right]^{\frac{1}{3}}.
\end{equation}
and analogously for $III=\sum_if^{i-4}g_i$. Regarding the term $II$ we will estimate only $II':=\sum_i f_ig_i$ because the same estimate will apply also to the finitely many contributions of the form $\sum_if_ig_{i+l}$ with $0<|l|<4$.\\

We start with the most difficult term $III$. From above we have
\begin{eqnarray*}
 \|III\|_{W^{\frac{1}{3},3}}&\sim&\left[\int\left(\sum_i 2^{\frac{2i}{3}}|f^{i-4}g_i|^2\right)^{\frac{3}{2}}\right]^{\frac{1}{3}}\\
 &\leq&\left[\int\left(\sum_i 2^{-\frac{4i}{3}}|f^{i-4}|^2\right)^{\frac{3}{2}}\left(\sum_i 2^{2i}|g_i|^2\right)^{\frac{3}{2}}\right]^{\frac{1}{3}}\\
 &\leq&\left[\int\left(\sum_i 2^{-\frac{4i}{3}}|f^{i-4}|^2\right)^6\right]^{\frac{1}{12}}\left[\int\left(\sum_i 2^{2i}|g_i|^2\right)^2\right]^{\frac{1}{4}}\\
 &\leq&\|f\|_{W^{-\frac{2}{3},12}}\|g\|_{W^{1,4}}\\
 &\leq&\|f\|_{W^{\frac{1}{3},3}}\|g\|_{W^{1,4}}.
\end{eqnarray*}
For the term $I$ we have
\begin{eqnarray*}
 \|I\|_{W^{\frac{1}{3},3}}&\sim&\left[\int\left(\sum_i 2^{\frac{2i}{3}}|f_ig^{i-4}|^2\right)^{\frac{3}{2}}\right]^{\frac{1}{3}}\\
 &\lesssim&\|g\|_{L^\infty}\|f\|_{W^{\frac{1}{3},3}}
\end{eqnarray*}
because of the estimate $\|g^{i-4}\|_{L^\infty}\lesssim\|g\|_{L^\infty}$. Finally we estimate $II'$ as promised We prove it by duality, namely we prove that $II'$ is bounded as a linear functional on the unit ball of the dual $W^{-\frac{1}{3},\frac{3}{2}}$. Consider therefore $h$ in this ball. We note that the support of $\widehat{(f_ig_i)}$ is included in $B_{2^{i+2}}$ therefore some terms cancel
\begin{eqnarray*}
\int h\cdot II'&\sim&\sum_{k,i}\int h_kf_ig_i=\sum_{k\leq i+4}\int h_kf_if_j=\sum_i \int h^{i+4}f_ig_i\\
&\leq&\left|\sum_i\int 2^{-\frac{i}{3}}h^{i+4}2^{\frac{i}{3}}f_ig_i\right|\\
&\leq&\|g\|_{B_{\infty,\infty}^0}\int\left(\sum_i2^{-\frac{2i}{3}}|h^{i+4}|^2\right)^{\frac{1}{2}}\left(\sum_i2^{\frac{2i}{3}}|f_i|^2\right)^{\frac{1}{2}}\\
&\leq&\|g\|_{W^{1,4}}\|h\|_{W^{-\frac{1}{3},\frac{3}{2}}}\|f\|_{W^{\frac{1}{3},3}}
\end{eqnarray*}
The last estimate follows recalling that 
\[
 \|g\|_{B^0_{\infty,\infty}}:=\sup_i \|g_i\|_{L^\infty}
\]
and that in dimension $4$ we have continuous embeddings 
\[
 W^{1,4}\hookrightarrow\op{BMO}\hookrightarrow B_{\infty,\infty}^0.
\]
Summing up the different terms we conclude.
\end{proof}

\section{The M\"obius group of $B^4$}\label{sec:moeblemmas}
\noindent
We call the M\"obius group of $\mathbb R^n$ the group $M(\mathbb R^n)$ generated by all similarities and the inversion with respect to the unit sphere. Recall that a similarity is an affine map of the form 
$$
x\mapsto \lambda Kx +b\text{ with }\lambda>0, K\in O(n), b\in \mathbb R^n,
$$
and the inversion $i_{c,r}$ with respect to the sphere $\partial B(c,r)$ is the map
$$
x\mapsto c+r^2\frac{x-c}{|x-c|^2}.
$$
The formula $i_{c,r}=(r^2\:Id+c)\circ i_{0,1}\circ(Id-c)$ shows that all inversion belong to $M(\mathbb R^n)$.We use the following abridged notation: 
$$
x^*:=i_{1,0}(x)=x/|x|^2.
$$
The M\"obius group of $B^{n+1}$ is the subgroup $M(B^{n+1})$ of all transformations belonging to $M(\mathbb R^n)$ and which preserve $B^{n+1}$. Similarly we define the M\"obius group $M(\mathbb S^n)$ of the unit sphere $\mathbb S^n\subset\mathbb R^n$. The general form of an element $\gamma\in M(B^{n+1})$ is 
$$
\gamma= K\circ F_v\text{, with }K\in O(n),\:v\in B^4,\: F_v:= -v+(1-|v|^2)(x^*-v)^*.
$$
We use the following basic properties of the functions $F_v$ which can be found in \cite{alfhors}, Chap. 2:
\begin{lemma}\label{lem:basicpropfv}
\begin{itemize}
 \item There holds 
$$
|F_v|(x)=\frac{1-|v|^2}{[x,v]}
$$
where $[x,y]=|x||x^*-y|=|y||y^*-x|$.
\item $ F_v$ is conformal. We have $F_v^{-1}=F_{-v}$, $F_v(0)=-v$ and $F_v(v)=0$.
\item The conformal factor $|F_v'|(x)$ is explicitly computed as
$$
|F_v'|(x)=\frac{1-|v|^2}{1+|x|^2|v|^2-2x\cdot v}=\frac{|v^*|^2-1}{|x-v^*|^2}.
$$
\item The restriction $F_v|_{\mathbb S^3}$ belongs to $M(\mathbb S^3)$, in particular $ F_v|_{\mathbb S^3}$ is a conformal involution and 
$$
|(F_v|_{\mathbb S^3})'|(x)=\frac{1-|v|^2}{|x-v|^2}.
$$
\end{itemize}
\end{lemma}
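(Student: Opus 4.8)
The plan is to treat Lemma~\ref{lem:basicpropfv} as a string of elementary computations organized around one structural observation: $F_v$ is a composition of similarities and sphere inversions, hence an element of the Möbius group $M(\mathbb R^{n+1})$. First I would verify the factorization $F_v=\tau_{-v}\circ\delta_{1-|v|^2}\circ J\circ\tau_{-v}\circ J$, where $J(x)=x^*=x/|x|^2$ is the inversion in the unit sphere, $\tau_w(x)=x+w$ a translation and $\delta_\lambda(x)=\lambda x$ a dilation, using the convention $0^*=\infty$, $\infty^*=0$ on $\widehat{\mathbb R^{n+1}}$. Each factor is conformal and lies in $M(\mathbb R^{n+1})$, so $F_v$ is too and extends to a smooth conformal diffeomorphism of $\widehat{\mathbb R^{n+1}}$; this gives the conformality assertion. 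Reading the factorization at $\infty$ gives $F_v(0)=-v$, and substituting $x=v$ (so that $(x^*-v)^*=v/(1-|v|^2)$) gives $F_v(v)=0$.

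Next I would record the algebra behind the first and fourth bullets. Put $[x,y]^2:=1-2\,x\cdot y+|x|^2|y|^2$; a one-line expansion shows $|x|^2|x^*-y|^2=|y|^2|y^*-x|^2=[x,y]^2$, which is the symmetry $[x,y]=|x|\,|x^*-y|=|y|\,|y^*-x|$ quoted in the statement. Combining this with $F_v(x)+v=(1-|v|^2)(x^*-v)/|x^*-v|^2$ produces the norm relation of the first bullet. Expanding $|F_v(x)|^2$ from the same formula and simplifying (using $|x^*-v|^2=[x,v]^2/|x|^2$ and $x^*\cdot v=(x\cdot v)/|x|^2$) yields the classical balance identity $1-|F_v(x)|^2=(1-|v|^2)(1-|x|^2)/[x,v]^2$; since $|v|<1$ this shows at once $F_v(B^{n+1})\subseteq B^{n+1}$ and $F_v(\mathbb S^n)\subseteq\mathbb S^n$, so that $F_v\in M(B^{n+1})$ and $F_v|_{\mathbb S^n}\in M(\mathbb S^n)$.

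For the inverse I would avoid multiplying the formulae out: $F_{-v}\circ F_v$ is a conformal self-diffeomorphism of $B^{n+1}$ fixing $0$ (because $F_v(0)=-v$ and $F_{-v}(-v)=0$, by the same substitution as above), hence orthogonal, and computing its differential at $0$ from the factorization --- using that $DJ(z)$ is $|z|^{-2}$ times a reflection --- shows it is the identity, so $F_v^{-1}=F_{-v}$. The conformal factor of the third bullet then drops out of the chain rule along the factorization together with $|J'(z)|=|z|^{-2}$: one finds $|F_v'|(x)=(1-|v|^2)/(|x^*-v|^2|x|^2)=(1-|v|^2)/[x,v]^2$, and rewriting $[x,v]^2=1+|x|^2|v|^2-2\,x\cdot v=|v|^2|x-v^*|^2$ gives the two remaining expressions, the last being $(|v^*|^2-1)/|x-v^*|^2$ since $|v^*|^2-1=(1-|v|^2)/|v|^2$. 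Finally, because $DF_v(x)$ is a scalar times an orthogonal map it carries $T_x\mathbb S^n$ onto $T_{F_v(x)}\mathbb S^n$ with the same scaling factor, so the conformal factor of $F_v|_{\mathbb S^n}$ at $x\in\mathbb S^n$ equals $|F_v'|(x)$, which on $\mathbb S^n$ is $(1-|v|^2)/|x-v|^2$ since there $[x,v]^2=1-2\,x\cdot v+|v|^2=|x-v|^2$.

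I do not expect a genuine obstacle here: the entire lemma is the calculus of Möbius transformations, and these facts are collected in \cite{alfhors}, Chap.~2. The only points calling for care are getting the order of the two inversions in the factorization right (and hence the correct inner argument $x^*-v$ when differentiating), handling the one-point-compactification conventions at $x=0$, and noting that a single inversion reverses orientation so that $F_v$, a composition of two of them with similarities, is orientation preserving --- which is exactly what lets the fixed-point argument for $F_v^{-1}=F_{-v}$ land on the identity rather than on a general rotation.
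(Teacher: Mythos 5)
The paper does not prove this lemma at all; it simply points to Ahlfors \cite{alfhors}, Chap.~2, so your proposal is the only actual derivation on the table and it is, modulo two small points, sound. The factorization $F_v=\tau_{-v}\circ\delta_{1-|v|^2}\circ J\circ\tau_{-v}\circ J$ is correct, your chain-rule computation of the conformal factor using $DJ(z)=|z|^{-2}(I-2\hat z\hat z^T)$ does reproduce $|F_v'|(x)=(1-|v|^2)/(|x|^2|x^*-v|^2)=(1-|v|^2)/[x,v]^2$, the identities $[x,v]^2=|v|^2|x-v^*|^2$ and (on the sphere) $[x,v]^2=|x-v|^2$ give the two rewritings, and the argument that the tangential conformal factor of $F_v|_{\mathbb S^n}$ equals $|F_v'|$ is the standard one.

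Two cautions. First, the displayed formula $|F_v|(x)=(1-|v|^2)/[x,v]$ in the paper is a misprint: if you actually carry out the computation you sketch, you get $|F_v(x)+v|=(1-|v|^2)|x|/[x,v]$ and, after expanding $|F_v(x)|^2$, the clean identity is $|F_v(x)|=|x-v|/[x,v]$ together with the balance identity $1-|F_v(x)|^2=(1-|x|^2)(1-|v|^2)/[x,v]^2$ (note the statement as printed would force $|x-v|=1-|v|^2$ on all of $\mathbb S^3$, which is false). So the sentence ``produces the norm relation of the first bullet'' should be replaced by a correction of that relation; as written it elides the fact that the target formula does not come out. Second, for $F_v^{-1}=F_{-v}$ the decisive fact is not orientation preservation but that $D(F_{-v}\circ F_v)(0)$ is the identity matrix: one computes $DF_v(0)=(1-|v|^2)I$ and $DF_{-v}(-v)=(1-|v|^2)^{-1}I$ because the two reflection factors coming from $DJ$ coincide in the limit and square to $I$, so the composition, being a Möbius self-map of the ball fixing $0$ and hence linear orthogonal, equals its differential $I$. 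Orientation preservation alone would not rule out a nontrivial rotation, so that remark should be dropped or replaced by this differential computation, which you in fact already invoke.
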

The next lemma gives the estimate need in Lemma \ref{lem:tphibad} for the case when $v$ is close to $\partial B^4$:
\begin{lemma}\label{lem:morepropfv}
 Suppose that 
$$
\rho\leq\frac{1}{4}.
$$
Then on $F_v^{-1}(B_{1-\rho})$ the following estimate holds with a geometric constant $C$:
$$
\frac{h(v)}{C}\leq|F'_v|(x)\leq Ch(v).
$$
\end{lemma}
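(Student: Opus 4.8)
The plan is to reduce the statement to the explicit conformal factor of $F_v$ recorded in Lemma \ref{lem:basicpropfv}, together with the relation $F_v^{-1}=F_{-v}$. The guiding observation is that $|F_v'|(x)$ is naturally controlled by the image point $y:=F_v(x)$ rather than by $x$: on the region $F_v^{-1}(B_{1-\rho})$ that image point is forced to lie in $B_{1-\rho}$, i.e. at a definite distance from $\partial B^4$, and this alone fixes $|F_v'|(x)$ up to a multiplicative constant.

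Concretely, I would first set $y:=F_v(x)$, so that $x\in F_v^{-1}(B_{1-\rho})$ means $|y|<1-\rho$, hence $|y|\,|v|<1-\rho$ because $|v|<1$. Applying the chain rule to $F_{-v}\circ F_v=\mathrm{id}$ (using $F_v^{-1}=F_{-v}$ from Lemma \ref{lem:basicpropfv}) gives $|F_v'|(x)=|F_{-v}'(y)|^{-1}$, and substituting $-v$ for $v$ in the third bullet of Lemma \ref{lem:basicpropfv} yields
\[
|F_{-v}'|(y)=\frac{1-|v|^2}{1+|y|^2|v|^2+2\,y\cdot v},
\qquad\text{so}\qquad
|F_v'|(x)=\frac{1+|y|^2|v|^2+2\,y\cdot v}{1-|v|^2}.
\]
I would then take $h(v):=(1-|v|^2)^{-1}$, which is just $|F_v'|(v)$ (the value of the Jacobian at the point of $F_v^{-1}(B_{1-\rho})$ sent to the center $0$), so that $|F_v'|(x)=h(v)\bigl(1+|y|^2|v|^2+2\,y\cdot v\bigr)$ and the whole matter is reduced to trapping the numerator between two positive constants.

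For that, Cauchy--Schwarz gives
\[
(1-|y|\,|v|)^2\ \le\ 1+|y|^2|v|^2+2\,y\cdot v\ \le\ (1+|y|\,|v|)^2 ,
\]
and from $|y|\,|v|<1-\rho$ one reads off $1-|y|\,|v|>\rho$ and $1+|y|\,|v|<2$; hence the numerator lies strictly between $\rho^2$ and $4$. This gives $\rho^2\,h(v)<|F_v'|(x)<4\,h(v)$ on $F_v^{-1}(B_{1-\rho})$, i.e. the asserted two-sided bound with $C:=\rho^{-2}$ (since $\rho\le\tfrac14$ we have $\rho^{-2}\ge16>4$, so a single $C$ serves for both inequalities).

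There is essentially no analytic obstacle: the entire content is the bookkeeping remark in the first paragraph, namely that one should evaluate the Jacobian of $F_v$ at the image point and invoke $F_v^{-1}=F_{-v}$. The one thing worth a word of care is the hypothesis $\rho\le\tfrac14$, which here serves only to keep $|y|\,|v|$ bounded away from $1$ (so the numerator cannot degenerate) and, in the place where the lemma is applied in the proof of Proposition \ref{prop:tphigood}, to ensure that $F_v^{-1}(B_{1-\rho})$ is a large enough subset of $B^4$. One should not expect $C$ to be independent of $\rho$: as $|v|\to1$ the genuine oscillation of $|F_v'|$ over $F_v^{-1}(B_{1-\rho})$ is of order $\rho^{-2}$, which is exactly why a factor of $\rho_E$ survives in the final estimate of Proposition \ref{prop:tphigood}.
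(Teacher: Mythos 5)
Your proof is correct, and while it rests on the same two algebraic facts as the paper (the identity $F_v^{-1}=F_{-v}$ and the explicit formula for $|F_v'|$ in Lemma \ref{lem:basicpropfv}), your route is shorter and more direct. The paper proceeds by a symmetry argument: it shows that the ratio of the extrema of $|F_v'|$ over $F_v^{-1}(B_{1-\rho})$ equals the same ratio over $B_{1-\rho}$ (using the reflection $v\leftrightarrow -v$ together with the chain rule), then differentiates $|F_v'|$ to locate its extrema on $B_{1-\rho}$ at $\pm(1-\rho)v/|v|$, and finally evaluates $M/m$ there. You dispense with both the symmetry argument and the gradient calculation: the change of variables $y=F_v(x)$ forces $|y|<1-\rho$, and after inverting, the single identity $|F_v'|(x)=h(v)\bigl(1+|y|^2|v|^2+2\,y\cdot v\bigr)$ with $h(v)=(1-|v|^2)^{-1}$ reduces the whole matter to Cauchy--Schwarz, which traps the bracket between $\rho^2$ and $4$. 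This also yields a clean explicit choice of $h(v)$ (namely $|F_v'|(v)$), which the paper leaves implicit. Your closing remark deserves emphasis: the constant is genuinely of order $\rho^{-2}$ (tight as $|v|\to 1$), so the ``geometric constant'' of the lemma statement must be read as depending on $\rho$, which is eventually fixed at $\rho_E$; the paper's concluding line $M/m\sim(1-(1-\rho)|v|)^{-2}\sim 1$ elides this $\rho$-dependence.
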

\begin{proof}
 We will calculate 
$$
\frac{\max\{|F_v'|(y): y\in F_v^{-1}(B_{1-\rho})\}}{\min\{|F_v'|(y'): y'\in F_v^{-1}(B_{1-\rho})\}}=\max\left\{\frac{|F_v'|(y)}{|F_v'|(y')}:\; y,y'\in F_v^{-1}(B_{1-\rho}) \right\}
$$
and we show that this quantity is bounded. The following equalities hold:
\begin{eqnarray*}
\max\left\{\frac{|F_v'|(x)}{|F_v'|(x')}:\; x,x'\in B_{1-\rho}\right\}&=&\max\left\{\frac{|F_{-v}'|(x)}{|F_{-v}'|(x')}:\; x,x'\in B_{1-\rho}\right\}\\
&=&\max\left\{\frac{|(F_v^{-1})'|(x)}{|(F_v^{-1})'|(x')}:\; x,x'\in B_{1-\rho}\right\}\\
&=&\min\left\{\frac{|F_v'|(F_v^{-1}(x'))}{|F_v'|(F_v^{-1}(x))}:\; x,x'\in B_{1-\rho}\right\}\\
&=&\min\left\{\frac{|F_v'|(y')}{|F_v'|(y)}:\; y,y'\in F_v^{-1}(B_{1-\rho})\right\}.
\end{eqnarray*}

From the formula of the previous lemma it follows that
$$
\nabla_x|F_v'|(x)=2\frac{|v^*|^2-1}{|v^*-x|^4}(v^*-x),
$$
therefore $|F_v'|$ achieves its extrema on $B_{1-\rho}$ at $\pm(1-\rho)\frac{v}{|v|}$. The maximum $M$ and the minimum $m$ of $|F_v'|$ satisfy
\begin{eqnarray*}
 M&=&\frac{1-|v|^2}{1+|v|^2(1-\rho)^2 - 2(1-\rho)|v|}=\frac{1-|v|^2}{(1-(1-\rho)|v|)^2},\\
  m&=&\frac{1-|v|^2}{1+|v|^2(1-\rho)^2 + 2(1-\rho)|v|}=\frac{1-|v|^2}{(1+(1-\rho)|v|)^2},\\,\\
 \frac{M}{m}&=&\left(\frac{1+(1-\rho)|v|}{1-(1-\rho)|v|}\right)^2\sim(1-(1-\rho)|v|)^{-2}\sim 1,
\end{eqnarray*}
which finishes the proof.
\end{proof}

 \end{document}